\documentclass[draft]{amsart}
\usepackage{amsfonts}
\usepackage{amssymb}
\usepackage{amsmath}
\usepackage{amsthm}
\usepackage[dvipdfmx]{graphicx,color}
\usepackage{ascmac}
\usepackage{moreverb}
\usepackage{fancybox}
\usepackage{fancyvrb}
\usepackage{comment}

\theoremstyle{plain}
\newtheorem{theorem}{Theorem}[section]
\newtheorem{lemma}[theorem]{Lemma}
\newtheorem{cor}[theorem]{Corollary}
\newtheorem{prop}[theorem]{Proposition}

\newtheorem{question}[theorem]{Question}

\newtheorem{fact}{Fact}
\newtheorem{obs}[theorem]{Observation}

\theoremstyle{definition}
\newtheorem{definition}[theorem]{Definition}
\newtheorem{example}[theorem]{Example}

\theoremstyle{definition}
\newtheorem{remark}[theorem]{Remark}

\newcommand{\upto}{\upharpoonright}
\newcommand{\fr}{\mbox{}^\smallfrown}

\newcommand{\om}{\omega}
\newcommand{\ep}{\varepsilon}

\newcommand{\pt}[2]{#1\colon #2}

\newcommand{\ubdim}{\overline{\dim}_B}
\newcommand{\lbdim}{\underline{\dim}_B}
\newcommand{\kdim}{\dim_{\rm c.o.}}

\newcommand{\interval}{[0,1]}

\newcommand{\locex}{{\rm ex}}
\newcommand{\locexv}{{\rm ex}\mbox{-}{\rm val}}
\newcommand{\locext}{{\rm ex}\mbox{-}{\rm time}}

\newcommand{\repsp}[1]{\mathcal{ #1 }}

\title[On metric $tt$-degrees]{On a metric generalization of the $tt$-degrees and effective dimension theory}
\author{Takayuki Kihara}
\address[Takayuki Kihara]{Department of Mathematical Informatics, Graduate School of Informatics, Nagoya University, Japan}
\email{kihara@i.nagoya-u.ac.jp}
\date{}

\begin{document}
\maketitle

\begin{abstract}
In this article, we study an analogue of $tt$-reducibility for points in computable metric spaces.
We characterize the notion of the metric $tt$-degree in the context of first-level Borel isomorphism.
Then, we study this concept from the perspectives of effective topological dimension theory and of effective fractal dimension theory.
\end{abstract}

\section{Introduction}

\subsection{Summary}

In recent years, various studies have revealed that the topological/metric generalization of degrees of algorithmic unsolvability is extremely useful.
In particular, the theory of generalized Turing degrees has achieved great success.
The researchers have found a number of unexpected applications of the notion of metric/topological Turing degrees:
Day-Miller \cite{DM13} explained the behavior of Levin's neutral measures in algorithmic randomness theory;
Gregoriades-Kihara-Ng \cite{GKNta} gave a partial answer to the open problem on generalizing the Jayne-Rogers theorem in descriptive set theory;
Kihara-Pauly \cite{KP} proved a result related to descriptive set theory, infinite dimensional topology, and Banach space theory;
Andrews-Igusa-Miller-Soskova \cite{AIMS} showed that the PA degrees are first-order definable in the enumeration degrees;
Kihara-Lempp-Ng-Pauly \cite{KLNPta} established a classification theory of the enumeration degrees;
et cetera.

An analogue of {\em truth-table (tt) reducibility} for points in computable metric spaces is recently introduced by McNicholl and Rute \cite{McRu} to study uniform relativization of Schnorr randomness.
It is known that the notion of $tt$-reducibility plays an important role in effective measure theory.
Moreover, it is often claimed that Turing reducibility is a right notion for studying relative Martin-L\"of randomness, but not for relative Schnorr randomness.
Therefore, it is not surprising that the recent study of Schnorr randomness has required the metric generalization of the $tt$-degrees.
A realizability-theoretic generalization of (weak) $tt$-reducibility has also been discussed by Bauer and Yoshimura, cf.\ \cite{BaYo}.

In this article, we show that the complexity of the metric generalization of $tt$-degree is {\em one level below} the metric $T$-degree.
To be precise, Kihara-Pauly \cite{KP} has revealed that a known partial generalization of the Jayne-Rogers theorem makes the connection between the metric $T$-degree and the {\em second-level Borel isomorphism}.
In a similar manner, we apply the Jayne-Rogers theorem to characterize the notion of the generalized $tt$-degree in the context of the {\em first-level Borel isomorphism}.

This characterization is the guidepost which indicates the right way to go.
It tells us which topological concepts and techniques are relevant to study the generalized $tt$-degrees.
For instance, first-level Borel isomorphisms have appeared in several literatures in topological dimension theory, cf.\ \cite{JR79,JR79b,Sh00}.
Thus the notion of effective topological dimension naturally emerges.
In this article, we will show the following:

\begin{itemize}
\item Polish spaces $\repsp{X}$ and $\repsp{Y}$ have the same $tt$-degree structures relative to some oracle if and only if $\om\times\repsp{X}$ and $\om\times\repsp{Y}$ are first-level Borel isomorphic (Theorem \ref{thm:first-level}).
\item A $tt$-degree $\mathbf{d}$ contains a point in a computable compact metric space whose topological dimension is at most $n$ if and only if, for any $\ep>0$, there are a point $y\in\mathbb{R}^{2n+1}$ of $tt$-degree $\mathbf{d}$ and a compression algorithm $M$ such that
\[\limsup_{r\to\infty}\frac{C_{M,r}(y)}{r}<n+\ep,\]
where $C_{M,r}$ is the Kolmogorov complexity at precision $r$ w.r.t.\ the compression algorithm $M$ (Theorem \ref{thm:main-Kol-complexity}).
\item For any $n>1$, the collection of $tt$-degree structures of $n$-dimensional computable continua forms a universal countable upper semilattice (Theorem \ref{thm:usl-embedding2}).
\end{itemize}

\subsection{Preliminaries}\label{sec:preliminaries} 

We use the standard terminology in computability theory and computable analysis.
For the basics, we refer the reader to \cite{DHBook,WeiBook,SoareBook}.
A {\em computable metric space} is a triple $\mathcal{X}=(X,d,\alpha)$ such that $(X,d)$ is a separable metric space, $\{\alpha_n:n\in\om\}$ is a countable dense subset of $X$, and $(m,n)\mapsto d(\alpha_m,\alpha_n)$ is computable.
If $(X,d)$ is complete metric space, then $\mathcal{X}$ is called a {\em computable Polish space}.
By $B_\ep(x)$ we denote the open ball of center $x$ and radius $\ep$.
A rational open ball is a ball of the form $B_r(\alpha_n)$ for some $n\in\om$ and rational $r>0$.
We say that $\repsp{X}$ is {\em computably compact} if it has a c.e.\ list of finite open covers consisting of rational open balls.
A {\em computable compactum} is a computably compact computable metric space.
A set $U\subseteq \repsp{X}$ is {\em c.e.\ open} if it is a union of a computable sequence of rational open balls.
The complement of a c.e.\ open set is called a $\Pi^0_1$ set or a co-c.e.\ closed set.

A {\em Cauchy name}, or simply, a {\em name}, is an infinite sequence $p\in \om^\om$ such that $d(\alpha_{p(n)},\alpha_{p(m)})<2^{-n}$ for any $m\geq n$.
It is easy to check that this condition ensures that $\overline{B}_{2^{-n}}(\alpha_{p(n+1)})\subseteq B_{2^{-n+1}}(\alpha_{p(n)})$, where $\overline{B}_\ep(x)$ is the corresponding closed ball.
If $x$ is the limit of the sequence $(\alpha_{p(n)})_{n\in\om}$ for such $p$, then $p$ is called a (Cauchy) name of $x$.
The collection of all Cauchy names is clearly $\Pi^0_1$ in $\om^\om$.
As above, a partial Cauchy name $\sigma\in\om^{<\om}$ determines an open ball $B_{\sigma}:=B_{2^{-|\sigma|+1}}(\alpha_{\sigma(|\sigma|-1)})\subseteq\repsp{X}$.
Then define $\delta(q)$ as a unique element of $\bigcap_{n}B_{q\upto n}$ for any Cauchy name $q$, and $\delta:\subseteq\om^\om\to\repsp{X}$ is called a Cauchy representation of $\repsp{X}$.
We say that {\em $B_\sigma$ is formally included in $B_\tau$} if $d(\alpha_{\sigma(|\sigma|-1)},\alpha_{\tau(|\tau|-1)})+2^{-|\sigma|+1}<2^{-|\tau|+1}$ holds.
This is a decidable property, which implies $\overline{B}_\sigma\subseteq B_\tau$.
For instance, if $p$ is a Cauchy name, $B_{p\upto n+1}$ is formally included in $B_{p\upto n}$.
One can also define the notion of formal disjointness in a similar manner.

A partial function $f:\subseteq\repsp{X}\to\repsp{Y}$ is computable if there is a computable function $\Phi:\subseteq\om^\om\to\om^\om$ which, given a name of $x\in{\rm dom}(f)$, returns a name of $f(x)$.
A {\em computable embedding} $f\colon\repsp{X}\to\repsp{Y}$ is a computable injection which has a computable left-inverse, that is, there is a computable function $g:f[\repsp{X}]\to\repsp{X}$ such that $g(f(x))=x$ for any $x\in\repsp{X}$.

\subsection{Reducibility notions}

The metric generalization of Turing reducibility is first introduced by Miller \cite{Mil04}.

\begin{definition}[Miller \cite{Mil04}]
Let $\repsp{X}$ and $\repsp{Y}$ be computable metric spaces.
For $x\in\repsp{X}$ and $y\in\repsp{Y}$, we say that {\em $\pt{y}{\repsp{Y}}$ is $T$-reducible to $\pt{x}{\repsp{X}}$} (written $\pt{y}{\repsp{Y}}\leq_T\pt{x}{\repsp{X}}$) if there is a partial computable function $\Phi:\subseteq\om^\om\to\om^\om$ which maps every name of $x$ to a name of $y$.
\end{definition}

It is equivalent to saying that there is a partial computable function $f:\subseteq\repsp{X}\to\repsp{Y}$ such that $x\in{\rm dom}(f)$ and $f(x)=y$.
If the underlying spaces are clear from the context, we simply write $y\leq_Tx$ instead of $\pt{y}{\repsp{Y}}\leq_T\pt{x}{\repsp{X}}$.
Clearly $\leq_T$ forms a preorder, and induces an equivalence relation $\equiv_T$ on points in computable metric spaces.
In \cite{Mil04}, an $\equiv_T$-equivalence class is called a {\em continuous degree}.

Classically, $x,y\in 2^\om$, we say that {\em $y$ is truth-table reducible to $x$} (written $y\leq_{tt}x$) if there is a computable function $f:\om\to\om$ such that $y(n)=1$ iff $x$ satisfies the $f(n)$-th propositional formula.
It is known that $y\leq_{tt}x$ iff, there is a {\em total} computable function $\Phi:2^\om\to 2^\om$ such that $\Phi(x)=y$.
McNicholl-Rute \cite{McRu} introduced the notion of $tt$-reducibility in computable Polish spaces as follows:

\begin{definition}[McNicholl-Rute \cite{McRu}]\label{def:MR-tt-degree}
Let $\repsp{X}$ and $\repsp{Y}$ be computable Polish spaces.
For $x\in\repsp{X}$ and $y\in\repsp{Y}$, we say that {\em $\pt{y}{\repsp{Y}}$ is $tt$-reducible to $\pt{x}{\repsp{X}}$} (written $\pt{y}{\repsp{Y}}\leq_{tt}\pt{x}{\repsp{X}}$) if there is a total computable function $\Phi:\om^\om\to\om^\om$ which maps every name of $x$ to a name of $y$.
\end{definition}
If the underlying spaces are clear from the context, we simply write $y\leq_{tt}x$ instead of $\pt{y}{\repsp{Y}}\leq_{tt}\pt{x}{\repsp{X}}$.
Again, $\leq_{tt}$ forms a preorder, and induces an equivalence relation $\equiv_{tt}$ on points in computable Polish spaces.
Then, the $tt$-degree of $x\in\repsp{X}$ is the $tt$-equivalence class containing $x$, and written as ${\rm deg}_{tt}(\pt{x}{\repsp{X}})$ or simply ${\rm deg}_{tt}(x)$.
This gives us the {\em $tt$-degree structure $\mathcal{D}_{tt}(\repsp{X})$ of a space $\repsp{X}$}, the collection of all $tt$-degrees of points in $\repsp{X}$.

It should be careful that $\Phi$ in Definition \ref{def:MR-tt-degree} does not necessarily induce a total function $f:\repsp{X}\to\repsp{Y}$.
Instead, we have the following:

%
%
%
%

\begin{fact}[McNicholl-Rute \cite{McRu}]\label{fact:MR}
Let $x$ and $y$ be points in computable Polish spaces $\repsp{X}$ and $\repsp{Y}$, respectively.
Then, $\pt{x}{\repsp{X}}\leq_{tt}\pt{y}{\repsp{Y}}$ iff there is a partial computable function $f:\subseteq\repsp{X}\to\repsp{Y}$ such that ${\rm dom}(f)$ is $\Pi^0_1$ in $\repsp{X}$, $x\in{\rm dom}(f)$, and $f(x)=y$.
\end{fact}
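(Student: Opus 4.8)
The plan is to prove both directions by translating between the "names" picture ($\Phi\colon\om^\om\to\om^\om$ on Cauchy names) and the "points" picture ($f\colon\subseteq\repsp{X}\to\repsp{Y}$ with $\Pi^0_1$ domain). For the easy direction, suppose such an $f$ is given, with $\mathrm{dom}(f)$ a $\Pi^0_1$ subset of $\repsp{X}$ and $x\in\mathrm{dom}(f)$, $f(x)=y$. Since $f$ is computable there is a computable partial $\Psi\colon\subseteq\om^\om\to\om^\om$ sending each name of a point $z\in\mathrm{dom}(f)$ to a name of $f(z)$; I need to totalize it. The key observation is that "$\delta(p)\in\mathrm{dom}(f)$" is, for a Cauchy name $p$, a $\Pi^0_1$ condition on $p$ (pull back the $\Pi^0_1$ set through the computable representation $\delta$, together with the $\Pi^0_1$ predicate "$p$ is a Cauchy name"). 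So I can run $\Psi$ on input $p$ while in parallel searching for a stage witnessing that $p$ fails this $\Pi^0_1$ condition; if such a stage appears, from that point on output a fixed default name (say a name of some computable point of $\repsp{Y}$, which exists since $\repsp{Y}$ is a computable Polish space, or even just repeat the last legitimate initial segment padded arbitrarily). This yields a total computable $\Phi$ agreeing with $\Psi$ on all names of points of $\mathrm{dom}(f)$, in particular sending every name of $x$ to a name of $y$; hence $y\leq_{tt}x$.

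For the converse — the substantive direction — suppose $\Phi\colon\om^\om\to\om^\om$ is total computable and maps every name of $x$ to a name of $y$. Define $A$ to be the set of all $p\in\om^\om$ such that $p$ is a Cauchy name of $\repsp{X}$ \emph{and} $\Phi(p)$ is a Cauchy name of $\repsp{Y}$. Both conjuncts are $\Pi^0_1$ in $p$ (the first since the set of Cauchy names is $\Pi^0_1$, as noted in the Preliminaries; the second since $\Phi$ is computable and total, so "$\Phi(p)$ is a Cauchy name" is a $\Pi^0_1$ condition in $p$), so $A$ is $\Pi^0_1$. Now set $D=\delta_{\repsp{X}}[A]\subseteq\repsp{X}$ and define $f\colon D\to\repsp{Y}$ by $f(z)=\delta_{\repsp{Y}}(\Phi(p))$ for any name $p\in A$ of $z$. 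The first thing to check is that $f$ is well-defined: if $p,p'\in A$ are both names of $z$, then so is the merged name, and $\Phi$ being total computable maps names of the same point to names of the same point (this uses that $\Phi$ respects the equivalence "naming the same point" on its total domain — a standard continuity/density argument: $\delta_{\repsp{Y}}\circ\Phi$ is continuous on the Cauchy names naming points of $D$ and agrees on a dense set of names of $z$, hence has a single value at $z$). Given well-definedness, $f$ is computable essentially by $\Phi$ itself, and $\mathrm{dom}(f)=D=\delta_{\repsp{X}}[A]$. It remains to see that $D$ is $\Pi^0_1$ in $\repsp{X}$: here I use that the Cauchy representation $\delta_{\repsp{X}}$ is a computably open, computably proper surjection onto a computable Polish space, so the image of the $\Pi^0_1$ set $A$ (which is moreover "saturated", i.e.\ a union of fibers of $\delta_{\repsp{X}}$, because membership in $A$ depends only on the point named once we know $p$ is a Cauchy name — indeed if $p\in A$ and $p'$ names the same point, then the merge argument shows $p'\in A$ too) is $\Pi^0_1$ in $\repsp{X}$. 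Finally $x\in D$ since $x$ has a name $p$ with $\Phi(p)$ a name of $y$, so $p\in A$; and $f(x)=y$.

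The main obstacle I anticipate is not the logical skeleton but the two "saturation/well-definedness" points in the converse: first, showing that $A$ is a union of $\delta_{\repsp{X}}$-fibers (so that $f$ is genuinely a function of the point, not of the name), and second, showing that a saturated $\Pi^0_1$ set of names projects to a $\Pi^0_1$ set of points. For the former, the crucial lemma is that if $p$ is a Cauchy name of $z$ and $q$ is \emph{any} Cauchy name of $z$, then for every $n$ one can uniformly build a common Cauchy name agreeing with $p$ below some level and with $q$ above a larger level; feeding such merges into the total $\Phi$ and using that $\Phi$'s output must remain a Cauchy name on all of them forces $\delta_{\repsp{Y}}(\Phi(q))=\delta_{\repsp{Y}}(\Phi(p))=y$ and $q\in A$. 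For the latter, the clean route is to note that $z\notin D$ iff \emph{every} Cauchy name of $z$ lies outside $A$, and by compactness of the "fiber" of names over a fixed point (after the usual pruning to a finitely-branching tree of names with formal inclusions) this is equivalent to a $\Sigma^0_1$ condition on $z$ — equivalently, use that $\delta_{\repsp{X}}$ admits a computable multi-valued section on rational balls, so one can express "$z\in D$" as a co-c.e.\ condition directly in terms of formal inclusions of rational balls. Once these two points are in hand, the rest is bookkeeping. (Alternatively, both directions can be derived more quickly from McNicholl–Rute's own development, but I prefer to give the self-contained argument above.)
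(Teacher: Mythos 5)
The easy direction of your argument (from $f$ with $\Pi^0_1$ domain to a total $\Phi$) is fine: pulling back the $\Pi^0_1$ set $\mathrm{dom}(f)$ through $\delta_{\repsp{X}}$ gives a $\Pi^0_1$ set of names, and dovetailing a realizer of $f$ against the $\Sigma^0_1$ test for leaving that set produces a total $\Phi$ agreeing with the realizer on all names of $x$.

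The hard direction, however, has a genuine gap: the well-definedness step is false. You take $A=\{p:\delta_{\repsp{X}}(p)\text{ defined and }\Phi(p)\text{ a Cauchy name}\}$, set $D=\delta_{\repsp{X}}[A]$, and assert that if $p,p'\in A$ name the same $z$ then $\delta_{\repsp{Y}}(\Phi(p))=\delta_{\repsp{Y}}(\Phi(p'))$ because ``$\Phi$ respects the equivalence `naming the same point' on its total domain,'' invoking a continuity/density/merge argument. This is simply not true. $\Phi$ is only guaranteed to be extensional on the names of $x$ itself (it must send all of them to names of $y$); on names of other points it can be arbitrarily non-extensional. A concrete counterexample: take $\repsp{X}=\repsp{Y}=\mathbb{R}$, $x=y=0$, and let $\Phi(p)$ output a standard name of $0$ when $|\alpha_{p(0)}|<1$ and a standard name of $10$ otherwise. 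Every name of $0$ has $|\alpha_{p(0)}|<1$, so $\Phi$ witnesses $0\le_{tt}0$; but $z=0.5$ has Cauchy names with $\alpha_{p(0)}=0$ (sent to a name of $0$) and others with $\alpha_{p(0)}=1.2$ (sent to a name of $10$). All of these names lie in your $A$, yet $\delta_{\repsp{Y}}\circ\Phi$ is not constant on the $\delta$-fiber over $0.5$. The merge construction does not rescue this: a name agreeing with $p$ on an initial segment and with $p'$ far out need not constrain $\Phi$'s value toward $\Phi(p')$ at all, since $\Phi$ may depend only on the head. For the same reason $\Phi$ does not realize the $f$ you want even on a correct domain, and the saturation claim used to get $\Pi^0_1$-ness of $\delta_{\repsp{X}}[A]$ also fails. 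The fix — indicated by the paper itself at the end of the proof of Proposition~\ref{prop:tt-time} (``This is the same as the standard argument to show Fact~\ref{fact:MR}'') — is to take the domain of $f$ to be the complement of the c.e.\ open \emph{singularity set}, namely the union of all $B_\sigma\cap B_\tau$ over pairs of partial Cauchy names $\sigma,\tau$ for which $B_{\Phi(\sigma)}$ and $B_{\Phi(\tau)}$ are formally disjoint (together with those $\sigma$ for which $\Phi(\sigma)$ already violates the Cauchy condition). That set is visibly $\Pi^0_1$, contains $x$ because every name of $x$ maps into the single shrinking system of balls around $y$, and on it well-definedness follows because the $\Phi$-images of any two names of $z$ can never produce formally disjoint balls, forcing a common limit.
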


If underlying spaces are computably compact, as in the classical situation, $tt$-reducibility corresponds to computability with bounded running time (cf.\ \cite[Section III.3]{OdiBook} and \cite[Section 3.8.3]{SoareBook}).

\begin{prop}\label{prop:tt-time}
Let $\repsp{X}$ and $\repsp{Y}$ be computable Polish spaces.
For a partial computable function $f:\subseteq\repsp{X}\to\repsp{Y}$, the direction (2)$\Rightarrow$(1) always holds.
Moreover, if $\repsp{X}$ is a computable compactum, the converse direction (1)$\Rightarrow$(2) holds as well, that is, the following (1) and (2) are equivalent:
\begin{enumerate}
\item There is a partial computable function $g:\subseteq\repsp{X}\to\repsp{Y}$ extending $f$ such that the domain of $g$ is $\Pi^0_1$.
\item $f$ is tracked by a Turing machine with computably bounded running time in the following sense:
There are a partial computable function $\Phi:\subseteq\om^\om\to\om^\om$ and a computable function $s:\om\to\om$ such that for every $x\in{\rm dom}(f)$,
\begin{enumerate}
\item there is an $s$-bounded name $q$ of $x$ such that $\Phi(q)$ is a name of $f(x)$, and the running time of a machine computing the $n$-th entry of $\Phi(q)$ is at most $s(n)$ for any $n\in\om$,
\item and for any $s$-bounded name $q$ of $x$, if the running time of a machine computing the $n$-th entry of $\Phi(q)$ is at most $s(n)$ for any $n\in\om$, then $\Phi(q)$ is a name of $f(x)$.
\end{enumerate}
\end{enumerate}
\end{prop}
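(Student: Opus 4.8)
\emph{Strategy.} I would prove the two implications separately, using computable compactness of $\repsp{X}$ only for $(1)\Rightarrow(2)$.

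\emph{Plan for $(2)\Rightarrow(1)$.} Suppose $f$ is tracked by $(\Phi,s)$. The point of the computable time bound is that it forces a \emph{use} bound: there is a computable $u$ such that whenever the $n$-th entry of $\Phi(q)$ is produced within $s(n)$ steps, that computation reads only $q\upto u(n)$. Hence all relevant behaviour takes place on the space of $s$-bounded Cauchy names, which is a $\Pi^0_1$ subset of the computably compact space $\prod_n\{0,\dots,s(n)\}$, and is therefore itself computably compact. Given an \emph{arbitrary} name $p$ of $x$ one cannot simply compute an $s$-bounded name of $x$, so instead one maintains the finitely branching, co-c.e.-in-$p$ tree of $s$-bounded finite sequences that are consistent with $p$, along which $\Phi$ keeps running within the time bound and keeps looking like a Cauchy name; one prunes the dead ends (detectable because the tree is finitely branching), runs all surviving branches in parallel, and at precision $k$ emits the common value once all sufficiently long surviving nodes have $\Phi$-outputs pairwise compatible up to precision $k$. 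Conditions (2)(a) and (2)(b) are exactly what guarantees that, for $x\in\mathrm{dom}(f)$, the surviving branches are initial segments of $s$-bounded names of $x$ on which $\Phi$ is fast, all of which are $\Phi$-mapped to names of the \emph{one} point $f(x)$ — with (2)(a) supplying at least one such branch — so the required agreement does occur at every precision. This produces a uniform $tt$-style reduction from names of points of $\mathrm{dom}(f)$ to names of their $f$-images; feeding it into the (uniform form of the) construction behind Fact~\ref{fact:MR} yields a partial computable $g$ extending $f$ whose domain is $\Pi^0_1$, which is (1). Note that only compactness of the explicitly bounded name space was used, not compactness of $\repsp{X}$ itself, so this implication is unconditional.

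\emph{Plan for $(1)\Rightarrow(2)$.} Assume $\repsp{X}$ is a computable compactum and let $g\supseteq f$ be partial computable with $P:=\mathrm{dom}(g)$ a $\Pi^0_1$ set. By computable compactness there is a computable $b$ such that every point of $\repsp{X}$ has a $b$-bounded Cauchy name and the set of such names is $[T]$ for a computable finitely branching tree $T$; so the restriction $\delta_b$ of the Cauchy representation to $[T]$ is total, onto, and computably equivalent to $\delta_{\repsp{X}}$, and one fixes a computable conversion $c$ from arbitrary Cauchy names to $b$-bounded ones. Let $\Gamma_b$ realize $g$ on $b$-bounded names and put $\Phi:=\Gamma_b\circ c$. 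Since $P$ is $\Pi^0_1$, the set $\delta_b^{-1}[P]$ is a $\Pi^0_1$, hence computably compact, subset of $[T]$ on which $\Gamma_b$ is total; therefore the use of the $n$-th output of $\Gamma_b$ on $\delta_b^{-1}[P]$ is bounded by a computable modulus, and the running time of $\Phi$ at precision $n$ on $b$-bounded inputs is bounded by a computable $t(n)$ \emph{with no reference to any $s$}. Now choose a computable $s$ dominating $b$, $t$, and the identity. Then (2)(a) holds: for $x\in\mathrm{dom}(f)$ take a $b$-bounded (hence $s$-bounded) Cauchy name $q$ of $x$; as $c(q)$ is a $b$-bounded name of $x\in P$, $\Phi(q)$ is a name of $g(x)=f(x)$, computed within time $t(n)\le s(n)$. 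And (2)(b) holds: if $q$ is any $s$-bounded name of $x\in\mathrm{dom}(f)$ along which $\Phi(q)(n)$ does halt within $s(n)$ steps for all $n$, then $c(q)$ is still a $b$-bounded name of $x$ (the conversion does not care whether $q$ was $b$-bounded), so $\Phi(q)=\Gamma_b(c(q))$ is again a name of $g(x)=f(x)$.

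\emph{Main obstacle.} The delicate direction is $(2)\Rightarrow(1)$. The König-style pruning and the verification that, on $\mathrm{dom}(f)$, the surviving branches' $\Phi$-outputs genuinely stabilize at every precision are routine once set up; the real care is in seeing that the resulting $g$ has a \emph{$\Pi^0_1$} (not merely $\Pi^0_2$) domain — i.e., that the residual ambiguity of $\Phi$ on $s$-bounded names of points outside $\mathrm{dom}(f)$ is absorbed correctly — which is precisely what the equivalence in Fact~\ref{fact:MR} is there to supply, and conditions (2)(a),(b) are tailored to make this go through. By contrast, $(1)\Rightarrow(2)$ is essentially bookkeeping once the two standard consequences of computable compactness — a computably bounded name tree equivalent to the Cauchy representation, and a uniform bound on the use of a computable function on a $\Pi^0_1$ (hence computably compact) set — are in hand, the only subtlety being the circularity-free choice of the time bound $s$ described above.
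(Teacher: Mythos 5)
Your overall layout --- two separate implications, with computable compactness of $\repsp{X}$ needed only for $(1)\Rightarrow(2)$ --- matches the paper's, and your $(1)\Rightarrow(2)$ argument is essentially the paper's: use computable compactness to pass to a computably bounded subtree of the name space, use $\Pi^0_1$-ness of the domain to get a computably compact set of names there, and read off a computable time bound. The detour through the normalization map $c$ is unnecessary and slightly muddies the time accounting (the running time of $\Phi=\Gamma_b\circ c$ includes the time to compute $c$, not just the modulus $t$ you extracted for $\Gamma_b$); one can simply restrict the realizer to the bounded subtree, as the paper does.

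The gap is in $(2)\Rightarrow(1)$, at exactly the spot you label the ``real care.'' Fact~\ref{fact:MR} and the closing argument the paper uses (domain of $g$ equals the complement of the union of $B_\sigma\cap B_\tau$ with $B_{\Psi(\sigma)}$, $B_{\Psi(\tau)}$ formally disjoint) require a \emph{total} $\Psi:\om^\om\to\om^\om$, so that $\Psi(\sigma)$ is meaningful for every finite string $\sigma$. Your $\Psi$ is not obviously total: you emit a value at precision $k$ only once \emph{all} sufficiently long surviving branches have pairwise compatible $\Phi$-outputs, and that wait can last forever. Let $Q$ be the (bounded $\Pi^0_1$, hence computably compact) set of $s$-bounded fast names; if $p$ names a point $z\in\delta[Q]\setminus\mathrm{dom}(f)$, your tree is infinite (it contains a name of $z$ from $Q$), but since $z\notin\mathrm{dom}(f)$, clauses (2)(a),(b) impose no constraint tying together the $\Phi$-outputs of distinct infinite branches, and eternal disagreement is not semidecidable, so the procedure can stall. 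The paper sidesteps this by \emph{not} waiting for consensus: at each $t$ it greedily takes the first $\sigma_t\in T_Q$ with $B_{p\upto u}$ formally inside $B_{\sigma_t}$ and outputs $\Phi(\sigma_t)(t)$ regardless of other branches, and it patches the remaining non-totality by noting that the only other failure modes (``$p$ is not a Cauchy name'' and ``$\delta(p)\notin\delta[Q]$'') are $\Sigma^0_1$ (the latter because $Q$ is computably compact, so $\delta[Q]$ is $\Pi^0_1$) and outputting arbitrarily when one fires; correctness on $\mathrm{dom}(f)$ is then recovered from (2)(b) essentially as you intended. Your plan works if you replace ``wait for all branches to agree'' with ``output from a single found candidate, with explicit arbitrary fallbacks on the two detectable junk cases''; as written it does not deliver the total $\Psi$ that the $\Pi^0_1$-domain argument needs.
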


\begin{proof}
(1)$\Rightarrow$(2):
Let $\Phi$ be a computable realizer of $g$, let $\delta:\subseteq\om^\om\to\repsp{X}$ be a Cauchy representation, and let $D$ be the domain of $g$.
Then, $\delta^{-1}[D]$ is a $\Pi^0_1$ subset of Cauchy names.
Let $T\subseteq\om^{<\om}$ be a computable tree whose infinite paths corresponds to $\delta^{-1}[D]$.
A node $\sigma\in T$ is an initial segment of a Cauchy name, and thus as explained in Section \ref{sec:preliminaries} one can naturally assign an open ball $B_\sigma$ in $\repsp{X}$ to each node $\sigma\in T$.
Since $D$ is $\Pi^0_1$ in a computable compactum $\repsp{X}$, one can effectively find $u(0)$ such that $(U_{\langle k\rangle})_{k<u(0),\langle k\rangle\in T}$ covers $D$.
In this way, one can easily find a computably bounded tree $S\subseteq T$ such that $D\subseteq\delta[S]$.
Since $[S]\subseteq[T]\subseteq{\rm dom}(\Phi)$, as in the usual argument, $\Phi\upto[S]$ can be easily simulated by a machine with computably bounded running time which ensures (a).
Since $[S]$ is $\Pi^0_1$, obviously, a machine can satisfy (b) as well.

(2)$\Rightarrow$(1):
Assume that a computable time bound $s$ is given.
Then, define
\[Q=\{q\in \om^\om:(\forall n)\;\Phi^{q\upto s(n)}_{s(n)}(n)\downarrow\mbox{ and }q(n)<s(n)\}.\]
Clearly, $Q$ is bounded $\Pi^0_1$ and $Q\subseteq{\rm dom}(\Phi)$.
Moreover the condition (a) ensures that ${\rm dom}(f)\subseteq\delta[Q]$.
We claim that there is a total computable function $\Psi:\om^\om\to\om^\om$ inducing an extension of $f$.

To see this, let $T_Q$ be a computable tree corresponding to $Q$.
Assign an open ball $B_\sigma$ to each $\sigma\in T_Q$ as above.
Assume that $q\in\om^\om$ is given, and we define $\Psi(q)$.
For $t\in\om$, wait for seeing one of the following holds: (i) there are $u\in\om$ and $\sigma\in T_Q$ of length $s(t)$ such that $B_{q\upto u}$ is formally included in $B_\sigma$; (ii) $q$ is not a Cauchy name; or (iii) $\delta(q)\not\in\delta[Q]$.
It is clear that $(i)$ and (ii) are $\Sigma^0_1$.
For (iii), note that $Q$ is computably compact; hence $\delta[Q]$ is also effectively compact and thus $\Pi^0_1$ (see also Observation \ref{obs:basic-top-dim}).
Hence, the condition (iii) is also a $\Sigma^0_1$ property.

Given $q$, if we see either (ii) or (iii), define $\Psi(q)$ to be some value just for the sake of totality of $\Psi$.
Otherwise, $q$ is a Cauchy name and $\delta(q)\in\delta[Q]$, and then $\delta(q)$ has a name $p\in Q$.
Since $\delta(q)\in B_{p\upto s(t)}$ for any $t$, there must be $u\in\om$ such that $B_{q\upto u}$ is formally included in $B_{p\upto s(t)}$ (consider $u$ such that $d(\delta(q),\repsp{X}\setminus B_{p\upto s(t)})>2^{-u}$).
Hence, we eventually find $\sigma_t\in T_Q$ of length $s(t)$ witnessing (i).
Then $\Phi(\sigma_t)\upto t+1$ determines an open set $C_t:=B_{\Phi(\sigma_t)\upto t+1}$ of diameter at most $2^{-t}$ in $\repsp{Y}$.
If $\delta(q)\in{\rm dom}(f)$, then since $\sigma_t$ extends to a name of $\delta(q)$ which ensures the premise of (b), $\Phi(\sigma_t)$ has to extend to a name of $f(\delta(q))$.
Therefore, $f(\delta(q))$ is a unique element of $\bigcap_tC_t$.
We define $\Psi(q)(t+1)=\Phi(\sigma_t)(t)$.
Note that $B_{\Psi(q)\upto t+1}$ is the result by doubling the radius of $C_t$.
Hence, if $\delta(q)\in{\rm dom}(f)$ then $(B_{\psi(q)\upto t})_{t\in\om}$ is a decreasing sequence of open balls, that is, $\Psi(q)$ is a Cauchy name representing $f(\delta(q))$.
This verifies the claim.

It remains to show that the domain of the function $g$ induced from $\Psi$ is $\Pi^0_1$.
This is the same as the standard argument to show Fact \ref{fact:MR}:
The domain of $g$ is the complement of the union of all c.e.\ open sets of the form $B_\sigma\cap B_\tau$ such that $B_{\Psi(\sigma)}$ and $B_{\Psi(\tau)}$ are formally disjoint.
\end{proof}

Note that a continuous function on a closed subset of a space $\repsp{X}$ is not necessarily extendible to a total continuous function on $\repsp{X}$.
This fact relates to the notion of topological dimension.
A topological space $Y$ is called an {\em absolute extensor of $X$} if for any closed set $P\subseteq X$, any continuous $f:P\to Y$ has a continuous extension $g:X\to Y$.
The {\em extension dimension of $X$}, written $\dim_e(X)$, is the smallest $n\in\om$ such that the $n$-sphere $\mathbb{S}^n$ is an absolute extensor of $X$ if such $n$ exists.
If there is no such $n$, $X$ is called infinite dimensional, denoted by $\dim_e(X)=\infty$.
If $X$ is separable metrizable, then the extension dimension $\dim_e(X)$ coincides with the Lebesgue covering dimension, the small inductive dimension, the large inductive dimension, etc.
Since we only deal with separable metrizable spaces in this article, we just call any of these dimensions the {\em topological dimension}, and write $\dim(X)$.

Among others, McNicholl-Rute \cite{McRu} has proven the following interesting characterization of points in computable planar arcs.

\begin{fact}[McNicholl-Rute \cite{McRu}]\label{fact:MR-computable-arc}
A point $x\in\mathbb{R}^2$ is contained in a computable arc if and only if $\pt{x}{\mathbb{R}^2}$ is $tt$-equivalent to a point in $\mathbb{R}$.
\end{fact}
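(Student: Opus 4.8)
The plan is to prove the two implications separately; throughout, a \emph{computable arc} is the range of a computable embedding $\gamma\colon\interval\to\mathbb{R}^2$, and I use the description of $\leq_{tt}$ by partial computable maps with $\Pi^0_1$ domain (Fact \ref{fact:MR}) in both directions. \emph{From an arc to a real.} This is the easy direction and follows at once from Fact \ref{fact:MR}: if $x=\gamma(z)$ with $\gamma\colon\interval\to\mathbb{R}^2$ a computable embedding and $z\in\interval$, then $A:=\gamma[\interval]$ is a computable compactum, hence $\Pi^0_1$ in $\mathbb{R}^2$, and the left inverse $\gamma^{-1}\colon A\to\interval$ is computable, so $z\leq_{tt}x$; conversely $\gamma$ is total computable on the $\Pi^0_1$ set $\interval\subseteq\mathbb{R}$ with $\gamma(z)=x$, so $x\leq_{tt}z$. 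Hence $x\equiv_{tt}z$ with $z\in\mathbb{R}$.

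\emph{From a real to an arc.} Assume $\pt{x}{\mathbb{R}^2}\equiv_{tt}\pt{z}{\mathbb{R}}$. Composing with the affine homeomorphism carrying a rational closed interval that contains $z$ in its interior onto $\interval$ (a $tt$-equivalence in both directions, by Fact \ref{fact:MR}), we may assume $z\in(0,1)$; and if $z$ is a computable real then so is $x$ (apply a $tt$-reduction to a computable name of $z$), and $x$ then trivially lies on a computable arc, so assume $z$ is not computable. Using Fact \ref{fact:MR}, fix partial computable $f\colon\subseteq\mathbb{R}\to\mathbb{R}^2$ with $\Pi^0_1$ domain containing $z$ and $f(z)=x$, and partial computable $h\colon\subseteq\mathbb{R}^2\to\mathbb{R}$ with $\Pi^0_1$ domain containing $x$ and $h(x)=z$. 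The key move is to pass to
\[D'=\{\,t\in\mathrm{dom}(f)\cap\interval:\ f(t)\in\mathrm{dom}(h)\ \text{and}\ h(f(t))=t\,\}.\]
Then $z\in D'$, and $D'$ is $\Pi^0_1$ in $\interval$: the first defining condition cuts out the preimage of a $\Pi^0_1$ set under a map computable on a $\Pi^0_1$ compactum, and the second cuts out the zero set of a computable real-valued function on a $\Pi^0_1$ compactum. Hence $D'$ is a computable compactum, and $\iota:=f\upto D'$ is \emph{injective}, because $f(t)=f(t')$ forces $t=h(f(t))=h(f(t'))=t'$; since $h\upto\iota[D']$ is then a computable left inverse of $\iota$, the map $\iota\colon D'\to\mathbb{R}^2$ is a computable embedding with $\iota(z)=x$. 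It now suffices to \emph{extend $\iota$ to a computable embedding $\gamma\colon\interval\to\mathbb{R}^2$}: then $\gamma[\interval]$ is a computable arc and $\gamma(z)=\iota(z)=x$.

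\emph{The hard part} is exactly this extension, i.e.\ an effective form of the classical fact that every embedding of a compact set $C\subseteq\interval$ into $\mathbb{R}^2$ extends to an embedding of $\interval$. Non-effectively one writes $\interval\setminus C$ as a countable disjoint union of open intervals $(a_n,b_n)$ and chooses, stage by stage, pairwise disjoint arcs $J_n$ joining $\iota(a_n)$ to $\iota(b_n)$, each meeting the compactum built so far only in these two endpoints, with $\mathrm{diam}(J_n)$ shrinking quickly enough — controlled by a modulus of continuity of $\iota$ and organized along the branching structure of $C$ — that the union is homeomorphic to $\interval$. There is always room, because at every stage the current set is a continuous injective image of a closed subset of $\interval$, hence a nowhere dense compactum whose reduced first \v{C}ech cohomology vanishes (this holds for compact subsets of $\mathbb{R}$ and passes to homeomorphic images), so by Alexander duality its complement in $\mathbb{R}^2$ is connected, indeed locally connected with every point accessible, leaving space for the next arc. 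I would make this effective by assigning each gap a computably shrinking ``diameter budget'' keyed to the tree structure of $C$ and, at stage $n$, searching for a suitable (essentially rational polygonal) $J_n$ inside the c.e.\ open complement of the current computable compactum and within budget; the resulting $\gamma$ is a computable injection on the computable compactum $\interval$, hence a computable embedding, using the standard fact that a computable injection on a computable compactum has a computable inverse. The genuine difficulty I expect here is the combinatorial bookkeeping — keeping the $J_n$ mutually disjoint and correctly anchored to $\iota[C]$ while respecting shrinking budgets, and coping with the fact that the gap structure of $D'$ (its endpoints, and so on) need not itself be computable — whereas the topological input (nowhere density and non-separation in the plane) and the degree-theoretic input (the passage to $D'$) are comparatively routine.
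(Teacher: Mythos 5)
This statement is cited in the paper as a Fact attributed to McNicholl--Rute \cite{McRu}; the paper gives no proof of it, so there is no in-paper argument to compare yours against. I can only assess your proposal on its own terms.

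Your forward direction is correct: given a computable embedding $\gamma\colon\interval\to\mathbb{R}^2$ with $x=\gamma(z)$, the image $\gamma[\interval]$ is a computable compactum (hence $\Pi^0_1$), $\gamma^{-1}$ is computable by Observation \ref{obs:basic-top-dim}(3), and both reductions fall out of Fact \ref{fact:MR}. The setup of the backward direction is also correct and is in fact the same device as the paper's Lemma \ref{lem:main-lemma}: your $D'$ is exactly the paper's $R$, it is $\Pi^0_1$, $\iota=f\upto D'$ is injective with computable left inverse $h$, and $\iota(z)=x$. So far so good.

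The genuine gap is the extension step, and you have in effect named it yourself without closing it. What you need is a lemma of the form: \emph{every computable embedding of a $\Pi^0_1$ subset $C\subseteq\interval$ into $\mathbb{R}^2$ extends to a computable embedding of $\interval$ into $\mathbb{R}^2$.} You give the classical picture (fill the complementary gaps $(a_n,b_n)$ with arcs $J_n$ of controlled diameter; the topological input is that a compactum homeomorphic to a subset of a line has connected complement in the plane, by Alexander duality), and that picture is right. But the effectivization you sketch is where all the content lies, and two concrete problems are not resolved: (i) the gap endpoints $a_n,b_n$ of the $\Pi^0_1$ set $D'$ are only $\Delta^0_2$, not computable, so the target endpoints $\iota(a_n),\iota(b_n)$ of your filler arcs are not computable points and cannot simply be hit; and (ii) as you learn more about the complement of $D'$ at later stages, the filler arcs already committed to must be re-routed or split while preserving global injectivity, uniform convergence, and the fact that the final map has a computable inverse. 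Handling (i) and (ii) simultaneously is the hard combinatorial core of the theorem (this is essentially what McNicholl--Rute's proof has to do), and ``assign shrinking diameter budgets and search for rational polygonal arcs in the c.e.\ open complement'' is a plan, not an argument. Until that extension lemma is actually proved, the backward direction is not established.
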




%

\section{First level Borel isomorphisms}

\subsection{$tt$-equivalence and embeddability}

In this section, we show the following very useful lemma, which characterizes $tt$-equivalence in the context of computable embeddability.

\begin{lemma}\label{lem:main-lemma}
Let $\repsp{X}$ and $\repsp{Y}$ be computable Polish spaces, and $x$ and $y$ be points in $\repsp{X}$ and $\repsp{Y}$, respectively.
Then, the following are equivalent:
\begin{enumerate}
\item $\pt{x}{\repsp{X}}$ is $tt$-equivalent to $\pt{y}{\repsp{Y}}$.
\item There are a $\Pi^0_1$ set $P\subseteq\repsp{X}$ with $x\in P$ and a computable embedding $\Phi$ of $P$ into $\repsp{Y}$ such that its embedded image is $\Pi^0_1$ and $\Phi(x)=y$.
\end{enumerate}
\end{lemma}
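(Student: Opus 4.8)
The plan is to prove the two directions separately, with the implication (2)$\Rightarrow$(1) being essentially formal and the implication (1)$\Rightarrow$(2) being where the real work lies.

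For (2)$\Rightarrow$(1): suppose $P\subseteq\repsp{X}$ is $\Pi^0_1$ with $x\in P$, and $\Phi\colon P\to\repsp{Y}$ is a computable embedding with $\Pi^0_1$ image $Q=\Phi[P]$ and $\Phi(x)=y$. Since $\Phi$ is computable with $\Pi^0_1$ domain $P$ containing $x$ and $\Phi(x)=y$, Fact \ref{fact:MR} immediately gives $\pt{y}{\repsp{Y}}\leq_{tt}\pt{x}{\repsp{X}}$. For the reverse reduction, the computable left-inverse $g\colon Q\to\repsp{X}$ of $\Phi$ is a computable partial function with $\Pi^0_1$ domain $Q$ containing $y$ and $g(y)=x$; applying Fact \ref{fact:MR} again yields $\pt{x}{\repsp{X}}\leq_{tt}\pt{y}{\repsp{Y}}$. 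Hence $\pt{x}{\repsp{X}}\equiv_{tt}\pt{y}{\repsp{Y}}$.

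For (1)$\Rightarrow$(2): assume $\pt{x}{\repsp{X}}\equiv_{tt}\pt{y}{\repsp{Y}}$. By Fact \ref{fact:MR} we obtain a partial computable $f\colon\subseteq\repsp{X}\to\repsp{Y}$ with $\Pi^0_1$ domain $D_f\ni x$ and $f(x)=y$, and a partial computable $h\colon\subseteq\repsp{Y}\to\repsp{X}$ with $\Pi^0_1$ domain $D_h\ni y$ and $h(y)=x$. The natural candidate is $P := D_f\cap f^{-1}[D_h]$ together with the restriction $\Phi := f\upharpoonright P$. First I would check that $P$ is $\Pi^0_1$ in $\repsp{X}$: it is the intersection of the $\Pi^0_1$ set $D_f$ with the preimage under the (partial computable, $\Pi^0_1$-domain) map $f$ of the $\Pi^0_1$ set $D_h$, and such a preimage is $\Pi^0_1$ relative to $D_f$, hence $\Pi^0_1$. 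Clearly $x\in P$ since $x\in D_f$ and $f(x)=y\in D_h$, and $\Phi(x)=y$. The key point is that $\Phi$ is a \emph{computable embedding}: it is computable as a restriction of $f$; the map $h$ restricted to $\Phi[P]$ is a computable left-inverse, because for $z\in P$ we have $f(z)\in D_h$ so $h(f(z))$ is defined, and — here is the subtlety — we must ensure $h(f(z))=z$, i.e.\ that $\Phi$ is injective on $P$. This does \emph{not} follow automatically, and this is the main obstacle.

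To overcome the injectivity problem I would symmetrize the construction using a back-and-forth / ``locking'' argument: replace the one-sided preimage by the set of points $z$ on which the composite $h\circ f$ is defined and equals $z$, i.e.\ $P := \{z\in D_f : f(z)\in D_h \text{ and } h(f(z))=z\}$. Since $h\circ f$ is partial computable with $\Pi^0_1$ domain (as a composition of two such maps), the equalizer $\{z : h(f(z))=z\}$ is $\Pi^0_1$ (it is the complement of a c.e.\ union of basic ball pairs witnessing $h(f(z))$ and $z$ lying in formally disjoint balls — exactly the style of argument used in the proof of Fact \ref{fact:MR} and at the end of the proof of Proposition \ref{prop:tt-time}), so $P$ is $\Pi^0_1$; and $x\in P$ since $h(f(x))=h(y)=x$. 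Now $\Phi=f\upharpoonright P$ is injective with computable left-inverse $h\upharpoonright\Phi[P]$. It remains to see that the image $Q:=\Phi[P]$ is $\Pi^0_1$ in $\repsp{Y}$; for this, by the symmetric construction $Q$ should equal $\{w\in D_h : h(w)\in D_f \text{ and } f(h(w))=w\}$ — I would verify the two sets coincide (one inclusion: if $z\in P$ then $w=f(z)\in D_h$, $h(w)=z\in D_f$, and $f(h(w))=f(z)=w$; the reverse inclusion is symmetric using $x\in P$ and the fact that on the equalizer $f$ and $h$ are mutually inverse bijections between $P$ and $Q$) — and then $Q$ is $\Pi^0_1$ for the same reason $P$ is. This completes the proof, the only genuinely delicate checks being the $\Pi^0_1$-ness of the equalizer sets and the mutual-inverse bookkeeping, both routine given the techniques already displayed in the excerpt.
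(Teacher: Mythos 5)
Your proposal is correct and, after the exploratory detour, arrives at exactly the paper's construction: your final $P=\{z\in D_f: f(z)\in D_h\mbox{ and }h(f(z))=z\}$ is the paper's set $R$, your symmetric set for the image is the paper's $S$, and the verification that the image equals $S$ and that the restriction is an embedding follows the same steps. The only differences are cosmetic — you spell out the (2)$\Rightarrow$(1) direction that the paper calls clear, and you narrate a failed first attempt with $D_f\cap f^{-1}[D_h]$ before fixing it with the equalizer trick; the remark about the reverse inclusion ``using $x\in P$'' is a slight misstatement (it uses only the symmetry between $f$ and $h$), but the conclusion is unaffected.
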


\begin{proof}
The direction from (2) to (1) is clear.
To show the converse direction, assume that $x\in\repsp{X}$ is $tt$-equivalent to $y\in\repsp{Y}$.
By Fact \ref{fact:MR}, there are computable functions $\Phi:\repsp{X}\to\repsp{Y}$ and $\Psi:\repsp{Y}\to\repsp{X}$ with $\Pi^0_1$ domains $P\subseteq\repsp{X}$ and $Q\subseteq\repsp{Y}$, respectively, such that $\Phi(x)=y$ and $\Psi(y)=x$.
Now, we consider the following set:
\[R=\{p\in P:\Phi(p)\in Q\mbox{, and }\Psi\circ\Phi(p)=p\}.\]

We show that $R$ is the desired $\Pi^0_1$ set.
It is clear that $x\in R$ and $y\in\Phi[R]$.
Note that the condition $\Psi\circ\Phi(p)=p$ ensures that $\Phi$ is injective on $R$.
To see that $R$ is $\Pi^0_1$, note that, if $g$ is computable, the set $\{z:g(z)=z\}$ is $\Pi^0_1$ in ${\rm dom}(g)$.
The domain of $\Psi\circ\Phi$ is $P\cap\Phi^{-1}[Q]$, which is $\Pi^0_1$, and therefore we have that $R$ is $\Pi^0_1$.
Now, consider the following set:
\[S=\{q\in Q:\Psi(q)\in P\mbox{, and }\Phi\circ\Psi(q)=q\}.\]
By a similar argument as above, one can easily check that $S$ is $\Pi^0_1$.
We claim that $S=\Phi[R]$.
We first check that $\Phi[R]\subseteq S$.
Given $q\in \Phi[R]$, it can be written as $q=\Phi(p)$ for some $p\in R$.
By our definition of $R$, $q\in Q$, $\Psi(q)=\Psi\circ\Phi(p)=p\in P$, and $\Phi\circ\Psi(q)=\Phi(p)=q$.
To see $S\subseteq\Phi[R]$, let $q\in S$ be given.
Let $p$ be the value of $\Psi(q)$, which exists since $q\in Q={\rm dom}(\Psi)$.
Then, $p\in P$, $\Phi(p)=\Phi\circ\Psi(q)=q\in Q$, and $\Psi\circ\Phi(p)=\Psi(q)=p$.
This shows that $p\in R$, and therefore $q\in\Phi[R]$.

Since $\Phi$ is injective on $R$, this argument also shows that $\Phi^{-1}$ agrees with $\Psi$ on $\Phi[R]$.
Consequently, $\Phi\upto R$ is an embedding of a $\Pi^0_1$ set $R$ into $\repsp{Y}$, where $x\in R$, and its embedded image $S$ is $\Pi^0_1$ and contains $y$.
This verifies our claim.
\end{proof}

It should be noted that the proof of Lemma \ref{lem:main-lemma} does not require completeness (or even metrizability) of underlying spaces except for Fact \ref{fact:MR}.
Therefore, if we adopt the characterization in Fact \ref{fact:MR} as the definition of $tt$-reducibility, then Lemma \ref{lem:main-lemma} holds for more general represented spaces.

Fix an oracle $\alpha\in 2^\om$.
Let $\repsp{X}$ and $\repsp{Y}$ be $\alpha$-computable metric spaces.
We say that {\em $y\in\repsp{Y}$ is $tt$-reducible to $x\in\repsp{X}$ relative to $\alpha$} (written as $y\leq_{tt}^\alpha x$) if there is a partial $\alpha$-computable function $\Phi:\subseteq\repsp{X}\to\repsp{Y}$ such that ${\rm dom}(\Phi)$ is $\Pi^0_1(\alpha)$ in $\repsp{X}$, $x\in{\rm dom}(\Phi)$, and $\Phi(x)=y$.
The $\alpha$-relative $tt$-bireducibility $\equiv^\alpha_{tt}$ defines an equivalence relation on points in $\alpha$-computable metric spaces.
Then, the $\alpha$-relative $tt$-degree of $x\in\repsp{X}$ is the equivalence class containing $x$, and we define the {\em $\alpha$-relative $tt$-degree structure $\mathcal{D}_{tt}^\alpha(\repsp{X})$ of a space $\repsp{X}$} as the collection of all $\alpha$-relative $tt$-degrees of points in $\repsp{X}$.

The following lemma is a straightforward relativization of Lemma \ref{lem:main-lemma}.

\begin{lemma}\label{lem:main-embedd-relativized}
Let $\repsp{X}$ and $\repsp{Y}$ be $\alpha$-computable metric spaces for some oracle $\alpha$, and $x$ and $y$ be points in $\repsp{X}$ and $\repsp{Y}$, respectively.
Then, the  following are equivalent:
\begin{enumerate}
\item $\pt{x}{\repsp{X}}$ is $tt$-equivalent to $\pt{y}{\repsp{Y}}$ relative to some $\beta\geq_T\alpha$.
\item There are a closed set $P\subseteq\repsp{X}$ with $x\in P$ and a topological embedding $\Phi$ of $P$ into $\repsp{Y}$ such that its embedded image is closed and $\Phi(x)=y$.
\end{enumerate}
\end{lemma}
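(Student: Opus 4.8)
The plan is to derive this from Lemma~\ref{lem:main-lemma} by relativization, treating each direction separately and handling the passage between "$\Pi^0_1(\beta)$" and "closed" via a standard oracle-search argument.

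For the direction (1)$\Rightarrow$(2): suppose $\pt{x}{\repsp{X}}\equiv_{tt}^\beta\pt{y}{\repsp{Y}}$ for some $\beta\geq_T\alpha$. Since $\repsp{X}$ and $\repsp{Y}$ are $\alpha$-computable, hence $\beta$-computable, the relativized form of Lemma~\ref{lem:main-lemma} to the oracle $\beta$ applies verbatim: there are a $\Pi^0_1(\beta)$ set $P\subseteq\repsp{X}$ with $x\in P$ and a $\beta$-computable embedding $\Phi$ of $P$ into $\repsp{Y}$ with $\Pi^0_1(\beta)$ image and $\Phi(x)=y$. Now $\Pi^0_1(\beta)$ sets in a computable metric space are in particular closed, and a $\beta$-computable embedding is in particular a topological embedding, so forgetting the effectivity gives exactly (2). (Strictly, one checks that the relativized Lemma~\ref{lem:main-lemma} is immediate: every step in its proof — Fact~\ref{fact:MR}, the $\Pi^0_1$-ness of fixed-point sets, composition — relativizes to any oracle.)

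For the harder direction (2)$\Rightarrow$(1): given a closed $P\ni x$ and a topological embedding $\Phi\colon P\to\repsp{Y}$ with closed image $S=\Phi[P]\ni y=\Phi(x)$, I want to produce an oracle $\beta\geq_T\alpha$ relative to which everything becomes effective. The idea is to code enough information into $\beta$: encode the closed sets $P$ and $S$ as $\Pi^0_1(\beta)$ sets (possible since any closed set in a computable metric space is co-c.e.\ relative to a suitable oracle — namely the oracle listing a sequence of rational balls exhausting the complement), and encode the embedding $\Phi$ and its inverse $\Phi^{-1}\colon S\to P$ as $\beta$-computable maps. The point where care is needed is that "$\Phi$ is $\beta$-computable on the $\Pi^0_1(\beta)$ set $P$" should be arranged so that $\beta$ can actually read off, from any name of a point $p\in P$, a name of $\Phi(p)$; since $\Phi$ is continuous on $P$ and $P$ is closed (hence its Cauchy names form a $\Pi^0_1(\beta)$ subset of $\om^\om$), one can take $\beta$ to encode the graph of a modulus of continuity for $\Phi$ together with its action on a dense effective family of balls meeting $P$. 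A uniform way to do this: let $\beta$ Turing-compute $\alpha$ together with a complete enumeration of all pairs $(\sigma,\tau)$ of basic open balls in $\repsp{X}$, $\repsp{Y}$ such that $\Phi[P\cap B_\sigma]\subseteq B_\tau$, and symmetrically for $\Phi^{-1}$, and a list of rational balls covering $\repsp{X}\setminus P$ and $\repsp{Y}\setminus S$. From such a $\beta$ one reads off a partial $\beta$-computable $\Psi$ with $\Pi^0_1(\beta)$ domain $P$ realizing $\Phi$, and likewise for $\Phi^{-1}$; then $\pt{x}{\repsp{X}}\leq_{tt}^\beta\pt{y}{\repsp{Y}}$ and $\pt{y}{\repsp{Y}}\leq_{tt}^\beta\pt{x}{\repsp{X}}$, giving (1).

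The main obstacle I anticipate is purely bookkeeping: one must verify that the oracle $\beta$ sketched above really does make the domain $\Pi^0_1(\beta)$ and the map $\beta$-computable in the precise sense of the definition of $\leq_{tt}^\beta$ (i.e.\ a total-on-domain $\beta$-computable realizer with $\Pi^0_1(\beta)$ domain), rather than merely "computable with $\beta$ as a parameter in a vague sense." This is the same gap between the two formulations of $tt$-reducibility addressed by Fact~\ref{fact:MR} and Proposition~\ref{prop:tt-time}, and the cleanest route is to invoke the (relativized) Fact~\ref{fact:MR}: it suffices to exhibit a partial $\beta$-computable $f\colon\subseteq\repsp{X}\to\repsp{Y}$ with $\Pi^0_1(\beta)$ domain containing $x$ and $f(x)=y$, which is exactly what the encoding of $\Phi$ and the closed set $P$ into $\beta$ provides. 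No metric-specific difficulty arises beyond this, so the relativization goes through essentially mechanically once the oracle is set up correctly.
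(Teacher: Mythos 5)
Your proposal is correct and fills in exactly what the paper means by calling the lemma a ``straightforward relativization'' of Lemma~\ref{lem:main-lemma}: direction (1)$\Rightarrow$(2) is the oracle-$\beta$ form of that lemma, and for (2)$\Rightarrow$(1) one codes into $\beta$ the complements of the closed sets $P$ and $\Phi[P]$ together with enough of $\Phi$ and $\Phi^{-1}$ (e.g.\ the graph relation on basic balls) to make both maps $\beta$-computable with $\Pi^0_1(\beta)$ domains, then applies the relativized Fact~\ref{fact:MR}. This is the same route the paper leaves implicit, and no new idea is needed beyond the bookkeeping you identify.
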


\subsection{First-level Borel isomorphism}

Let $\repsp{X}$ and $\repsp{Y}$ be topological spaces.
An $F_\sigma$ set is a countable union of closed sets.
A $\mathbf{\Delta}^0_2$ set is a $F_\sigma$ set whose complement is also $F_\sigma$.
A function $f:\repsp{X}\to\repsp{Y}$ is a {\em first-level Borel function} if $f^{-1}[A]$ is $F_\sigma$ for any $F_\sigma$ set $A\subseteq\repsp{Y}$.
A {\em first-level Borel isomorphism} between $\repsp{X}$ and $\repsp{Y}$ is a bijection $f:\repsp{X}\to\repsp{Y}$ such that both $f$ and $f^{-1}$ are first-level Borel functions.
See also \cite{JR79,JR79b,Sh00}.
In this section, we give a characterization of $tt$-reducibility in the context of a first-level Borel isomorphism.

\begin{theorem}\label{thm:first-level}
The following are equivalent for Polish spaces $\repsp{X}$ and $\repsp{Y}$:
\begin{enumerate}
\item $\om\times\repsp{X}$ and $\om\times\repsp{Y}$ are first-level Borel isomorphic.
\item $\mathcal{D}_{tt}^\alpha(\repsp{X})=\mathcal{D}_{tt}^\alpha(\repsp{Y})$ for some oracle $\alpha$.
\end{enumerate}
\end{theorem}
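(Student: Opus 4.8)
The plan is to reduce everything to Lemma~\ref{lem:main-embedd-relativized} and then prove the purely topological equivalence: $\om\times\repsp{X}$ and $\om\times\repsp{Y}$ are first-level Borel isomorphic iff every closed subspace of $\om\times\repsp{X}$ topologically embeds into $\om\times\repsp{Y}$ with closed image, and vice versa. Call the latter condition ``mutual closed-embeddability of the countable sums''. Granting this, the theorem follows: by Lemma~\ref{lem:main-embedd-relativized}, $\mathcal{D}_{tt}^\alpha(\repsp{X})\subseteq\mathcal{D}_{tt}^\alpha(\repsp{Y})$ for a suitable $\alpha$ exactly when every point of $\repsp{X}$ sits in a closed set that closed-embeds into $\repsp{Y}$; since a single point generates, via the $\om$-indexed copies, all closed subsets of $\om\times\repsp{X}$ up to the degree structure (a closed set is determined by the $tt$-degrees of its points together with how they combine, and adding an $\om$ factor is a $tt$-harmless move because $\om$ is computably discrete), equality of degree structures for some oracle is equivalent to mutual closed-embeddability of $\om\times\repsp{X}$ and $\om\times\repsp{Y}$. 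One must be slightly careful that the oracle $\alpha$ in (2) can be chosen uniformly; since there are only countably many closed embeddings to realize and each needs only countably much information, a single $\alpha$ computing all the relevant topological data suffices, and Lemma~\ref{lem:main-embedd-relativized} then converts each topological embedding into an $\equiv_{tt}^\beta$ for some $\beta\geq_T\alpha$, which we absorb into $\alpha$ by a join.

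The topological heart is thus the claim: two Polish spaces $\repsp{A}$ and $\repsp{B}$ are first-level Borel isomorphic iff each closed-embeds into the other (with closed image), provided $\repsp{A}=\om\times\repsp{X}$ and $\repsp{B}=\om\times\repsp{Y}$. The forward direction is the standard observation that a first-level Borel isomorphism restricted to a closed set need not be a closed embedding, so one instead uses a Cantor--Schr\"oder--Bernstein-type decomposition: first-level Borel isomorphisms are built from countably many partial homeomorphisms between $\mathbf{\Delta}^0_2$ pieces (this is the content of results in \cite{JR79,JR79b}; a first-level Borel function is, on each space, a countable union of restrictions to closed sets of continuous functions, and a first-level Borel \emph{isomorphism} refines into a countable partition into $\mathbf{\Delta}^0_2$ sets on which it is a homeomorphism). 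Each $\mathbf{\Delta}^0_2$ piece of $\repsp{X}$ is itself Polish and, crucially, because we have the spare $\om$ factor, the countably many pieces can be ``spread out'' over disjoint copies of $\repsp{Y}$-clopen-indexed blocks, turning the countable family of partial homeomorphisms into one genuine closed embedding of $\om\times\repsp{X}$ into $\om\times\repsp{Y}$. The reverse direction is the classical Schr\"oder--Bernstein argument for first-level Borel isomorphisms: given closed embeddings $\om\times\repsp{X}\hookrightarrow\om\times\repsp{Y}$ and $\om\times\repsp{Y}\hookrightarrow\om\times\repsp{X}$, a closed embedding is in particular a first-level Borel isomorphism onto its (closed, hence $\mathbf{\Delta}^0_2$) image, and one runs the back-and-forth decomposition, noting that $F_\sigma$ and $\mathbf{\Delta}^0_2$ classes are closed under the countable Boolean operations used, so the resulting bijection and its inverse are both first-level Borel.

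I would organize the write-up as: (i) state and prove the topological lemma ``mutual closed embeddability of $\om\times\repsp{X}$, $\om\times\repsp{Y}$ $\iff$ they are first-level Borel isomorphic''; (ii) observe that $\mathcal{D}_{tt}^\alpha(\om\times\repsp{X})=\mathcal{D}_{tt}^\alpha(\repsp{X})$ for every $\alpha$ (because a name for a point of $\om\times\repsp{X}$ splits computably and totally into a natural number and a name for a point of $\repsp{X}$, and conversely $\repsp{X}$ closed-embeds into $\om\times\repsp{X}$ as the $0$-th slice); (iii) combine with Lemma~\ref{lem:main-embedd-relativized}, handling the ``some oracle'' quantifier by a join over the countably many witnesses. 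The main obstacle I expect is step (i), specifically verifying that the countable family of partial homeomorphisms arising from a first-level Borel isomorphism can genuinely be reassembled into a \emph{single} closed embedding once the $\om$ factor is available --- one has to check that the images of the pieces, suitably placed in distinct slices, have closed union and that the gluing map is continuous with continuous inverse, and that no two pieces' closures collide. The $\om$ factor is exactly what provides the room to separate them, but making the separation simultaneously closed on both the domain and range side is the delicate bookkeeping; I would handle it by choosing, for each piece, a clopen neighborhood in the target copy of $\om\times\repsp{Y}$ indexed by a fresh natural number, so that distinctness of the first coordinate forces formal disjointness of the blocks.
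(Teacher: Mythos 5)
Your step (i) --- the claimed topological equivalence ``$\om\times\repsp{X}$ and $\om\times\repsp{Y}$ are first-level Borel isomorphic iff every closed subset of each closed-embeds into the other'' --- is false in the forward direction, and this is the load-bearing step of your plan. Concretely, take $\repsp{X}=\mathbb{R}$ and $\repsp{Y}=[0,1]$. There is an injective $F_\sigma$-map $\mathbb{R}\to\om\times[0,1]$ (split $\mathbb{R}=\bigcup_{n\in\mathbb{Z}}[n,n+1)$ and send $[n,n+1)$ onto $\{n\}\times[0,1)$), and trivially a closed embedding $[0,1]\hookrightarrow\om\times\mathbb{R}$, so by Lemma~\ref{lemma:CSB} the spaces $\om\times\mathbb{R}$ and $\om\times[0,1]$ are first-level Borel isomorphic. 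But the closed subspace $\om\times\mathbb{R}$ does \emph{not} closed-embed into $\om\times[0,1]$: any continuous injection must carry each connected component $\{n\}\times\mathbb{R}\cong\mathbb{R}$ into a single component $\{m\}\times[0,1]\cong[0,1]$, and $\mathbb{R}$ does not closed-embed into the compact space $[0,1]$. The diagnosis of where your sketch goes wrong is exactly where you flagged ``the delicate bookkeeping'': the decomposition of a first-level Borel isomorphism is into $\mathbf{\Delta}^0_2$ pieces (Jayne--Rogers), and the map $z\mapsto(\text{index of the piece containing }z)$ is first-level Borel but not continuous, because the pieces are not clopen. The spare $\om$ factor is precisely what makes the \emph{CSB assembly} direction work (turning two piecewise $F_\sigma$-maps into a first-level Borel bijection, Lemma~\ref{lemma:CSB}), but it cannot repair the discontinuity of the indexing map, so no ``spreading out'' over $\om$-slices yields a genuine topological embedding. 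The same obstruction also blocks your other direction: the countably many closed sets and embeddings that Lemma~\ref{lem:main-embedd-relativized} provides (one per point, but only countably many distinct $\Pi^0_1(\alpha)$ witnesses) again form a piecewise family, not a single closed embedding.

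The paper deliberately works with a weaker intermediate --- an injective $F_\sigma$-map, equivalently a $\mathbf{\Delta}^0_2$-piecewise $\mathbf{\Delta}^0_2$-embedding of $\repsp{X}$ into $\om\times\repsp{Y}$ --- and shows this is what degree inclusion $\mathcal{D}_{tt}^\alpha(\repsp{X})\subseteq\mathcal{D}_{tt}^\alpha(\repsp{Y})$ corresponds to. In one direction, the piecewise structure (the closed pieces $P_i$, $J_i$, $Q_i$) directly witnesses $x\equiv_{tt}^\alpha f(x)$ for every $x$, using \emph{different} partial continuous reductions in each direction rather than a single embedding, so the oracle $\alpha$ is uniform from the start. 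In the other direction, the countably many sets $R_e$ (one per quadruple of indices of $\Pi^0_1(\alpha)$ sets and $\alpha$-computable functions) give the piecewise embedding, never a global one. Lemma~\ref{lemma:CSB} is then applied to these piecewise data, which is exactly the form of input it needs. So your plan substitutes ``closed embedding'' where the paper's (and any correct) argument needs ``piecewise/$F_\sigma$-embedding''; the strengthening is not available.
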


A function $f:\repsp{X}\to\repsp{Y}$ is said to be an {\em $F_\sigma$-map} (see \cite{JR80}) if it is a first level Borel function (i.e., $f^{-1}[A]$ is $F_\sigma$ for any $F_\sigma$ set $A\subseteq\repsp{Y}$), and moreover $f[B]$ is $F_\sigma$ for any $F_\sigma$ set $B\subseteq\repsp{X}$.
We define a $\mathbf{\Delta}^0_2$-map in a similar manner.


\begin{lemma}\label{lem:obs:delta02}
If there is an injective $F_\sigma$ map from $\repsp{X}$ into $\om\times\repsp{Y}$, then there is also an injective $\mathbf{\Delta}^0_2$-map from $\repsp{X}$ into $\om\times\repsp{Y}$.
\end{lemma}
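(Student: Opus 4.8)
The plan is to start from an injective $F_\sigma$-map $f\colon\repsp{X}\to\om\times\repsp{Y}$ and massage it into an injective $\mathbf{\Delta}^0_2$-map by ``spreading out'' the range over the extra copies of $\om$ that are available. The key point is that an $F_\sigma$-map is automatically a $\mathbf{\Delta}^0_2$-map onto its image when that image is $\mathbf{\Delta}^0_2$: indeed if $f[\repsp{X}]$ is $\mathbf{\Delta}^0_2$ in $\om\times\repsp{Y}$, then for any $F_\sigma$ set $A\subseteq\om\times\repsp{Y}$ the preimage $f^{-1}[A]=f^{-1}[A\cap f[\repsp{X}]]$ is $F_\sigma$, while for $F_\sigma$ $B\subseteq\repsp{X}$ the image $f[B]$ is $F_\sigma$ by hypothesis, and since the complement $f[\repsp{X}]\setminus f[B]=f[\repsp{X}\setminus B]$ is also $F_\sigma$, the set $f[B]$ is relatively $\mathbf{\Delta}^0_2$ in $f[\repsp{X}]$, hence (intersecting with the $\mathbf{\Delta}^0_2$ set $f[\repsp{X}]$) $\mathbf{\Delta}^0_2$ in $\om\times\repsp{Y}$. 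So the whole problem reduces to: modify $f$ so that its image becomes $\mathbf{\Delta}^0_2$, while keeping it an injective $F_\sigma$-map.

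First I would write $\repsp{X}=\bigcup_{n}X_n$ as a countable increasing union of closed sets, which is possible because $\repsp{X}$ is Polish, hence $\sigma$-compact is too strong but a decomposition into closed sets covering $\repsp{X}$ is trivially available (e.g. $X_n=\repsp{X}$ works, but more usefully one wants each $f[X_n]$ closed). Here I would instead exploit that $f$ is an $F_\sigma$-map to write $f[\repsp{X}]=\bigcup_n F_n$ as a countable union of closed subsets of $\om\times\repsp{Y}$; by the standard trick we may take the $F_n$ pairwise disjoint (replace $F_n$ by $F_n\setminus\bigcup_{m<n}F_m$, which is a difference of closed sets, i.e. $\mathbf{\Delta}^0_2$ — so this needs a little care). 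Then set $X_n=f^{-1}[F_n]$, a partition of $\repsp{X}$ into $F_\sigma$ (in fact $\mathbf{\Delta}^0_2$) pieces. Now define $g\colon\repsp{X}\to\om\times(\om\times\repsp{Y})$ by sending $x\in X_n$ to $\langle n, f(x)\rangle$; identifying $\om\times(\om\times\repsp{Y})\cong\om\times\repsp{Y}$ via a computable bijection on the $\om$-coordinate, this is again an injective map. The image of $g$ restricted to the $n$-th slice is $\{n\}\times F_n$, which is \emph{closed} in the $n$-th copy $\{n\}\times\om\times\repsp{Y}$; taking the union over $n$, the image $g[\repsp{X}]$ meets each clopen slice $\{n\}\times\om\times\repsp{Y}$ in a closed set, and therefore $g[\repsp{X}]$ is itself closed in $\om\times\om\times\repsp{Y}$, in particular $\mathbf{\Delta}^0_2$.

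It remains to check that $g$ is still an $F_\sigma$-map. For the preimage direction: given $F_\sigma$ $A\subseteq\om\times\om\times\repsp{Y}$, we have $g^{-1}[A]=\bigcup_n g^{-1}[A\cap(\{n\}\times\om\times\repsp{Y})]=\bigcup_n (X_n\cap f^{-1}[A_n])$ where $A_n$ is the $n$-th slice of $A$, an $F_\sigma$ set in $\om\times\repsp{Y}$; since $f$ is a first-level Borel function each $f^{-1}[A_n]$ is $F_\sigma$, each $X_n$ is $\mathbf{\Delta}^0_2$ (hence $F_\sigma$), and a countable union of $F_\sigma$ sets is $F_\sigma$. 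For the image direction: given $F_\sigma$ $B\subseteq\repsp{X}$, $g[B]=\bigcup_n (\{n\}\times f[B\cap X_n])$; each $B\cap X_n$ is $F_\sigma$ in $\repsp{X}$ (intersection of $F_\sigma$ sets), so $f[B\cap X_n]$ is $F_\sigma$ by hypothesis on $f$, and again the countable union is $F_\sigma$. So $g$ is an injective $F_\sigma$-map with $\mathbf{\Delta}^0_2$ (indeed closed) image, hence by the first paragraph an injective $\mathbf{\Delta}^0_2$-map from $\repsp{X}$ into $\om\times\repsp{Y}$, as required.

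The step I expect to be most delicate is making the pieces $F_n$ genuinely disjoint while keeping control of their complexity: the naive disjointification produces $\mathbf{\Delta}^0_2$ rather than closed sets, so one must be slightly more careful — e.g. refine each $F_n$ further into countably many closed pieces and index all of them by $\om$ — to guarantee that the image lands inside \emph{one} clopen slice per piece and is closed there. Once the bookkeeping for the disjoint closed decomposition of $f[\repsp{X}]$ is set up correctly, everything else is the routine verification sketched above.
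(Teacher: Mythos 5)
Your strategy is the same as the paper's: decompose $f[\repsp{X}]$ into pairwise disjoint $\mathbf{\Delta}^0_2$ pieces, use the index of the piece containing $f(x)$ as the extra $\om$-coordinate to form $g(x)=(n,f(x))$, and check $g$ is a $\mathbf{\Delta}^0_2$-map; your first-paragraph reduction (an injective $F_\sigma$-map with $\mathbf{\Delta}^0_2$ image is automatically a $\mathbf{\Delta}^0_2$-map) is a clean way to organize that check. However, there is a genuine flaw in the middle: after the disjointification $P_n=F_n\setminus\bigcup_{m<n}F_m$ the pieces are only $\mathbf{\Delta}^0_2$, so the slice image $\{n\}\times P_n$ and hence $g[\repsp{X}]$ are \emph{not} closed, contrary to your assertion. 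Worse, the repair you float at the end --- further refining each $P_n$ into countably many disjoint closed pieces --- is impossible in general: a connected $\mathbf{\Delta}^0_2$ set such as $(0,1)\subseteq\mathbb{R}$ cannot be partitioned into two or more nonempty closed subsets of $\mathbb{R}$, since any two points in distinct pieces bound a compact subinterval that the restriction would closed-partition, contradicting connectedness (and Sierpi\'nski's theorem rules out the countably infinite case).

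Fortunately the repair is unnecessary, and this is exactly where the paper parts ways with your writeup: one only needs $g[\repsp{X}]$ to be $\mathbf{\Delta}^0_2$, not closed. Since each slice $\{n\}\times\repsp{Y}$ is clopen and meets $g[\repsp{X}]$ in the $\mathbf{\Delta}^0_2$ set $\{n\}\times P_n$, both $g[\repsp{X}]=\bigcup_n\{n\}\times P_n$ and its complement $\bigcup_n\{n\}\times(\repsp{Y}\setminus P_n)$ are countable unions of $F_\sigma$ sets, hence $F_\sigma$, so $g[\repsp{X}]$ is $\mathbf{\Delta}^0_2$. Combined with your (correct) verification that $g$ remains an $F_\sigma$-map, your first paragraph then finishes the argument. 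The paper proceeds in essentially this way, working directly with the $\mathbf{\Delta}^0_2$ pieces $P_n$ and verifying the $\mathbf{\Delta}^0_2$-map condition slice by slice, never requiring the image to be closed. One further minor wording slip in your first paragraph: for $f[\repsp{X}\setminus B]$ to be $F_\sigma$ you need $\repsp{X}\setminus B$ to be $F_\sigma$, i.e.\ $B\in\mathbf{\Delta}^0_2$ rather than merely $F_\sigma$ --- which is what the $\mathbf{\Delta}^0_2$-map condition calls for anyway.
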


\begin{proof}
Let $f:\repsp{X}\to\repsp{Y}$ be a injective $F_\sigma$ map.
Then $f[\repsp{X}]$ is $F_\sigma$ in $\repsp{Y}$; hence it is written as a union of pairwise disjoint $\mathbf{\Delta}^0_2$ subsets of $\repsp{Y}$, say $f[\repsp{X}]=\bigcup_nP_n$.
Define $g(x)=(n,f(x))$, where $n$ is the unique number such that $f(x)\in P_n$.
It is clear that $g:\repsp{X}\to\om\times\repsp{Y}$ is injective.
It remains to show that $g$ is a desired $\mathbf{\Delta}^0_2$-map.

For the image, let $A$ be a $\mathbf{\Delta}^0_2$ set in $\repsp{X}$.
Since $f$ is injective and $P_n\subseteq f[\repsp{X}]$, we have $P_n\setminus f[\repsp{X}\setminus A]=P_n\setminus (f[\repsp{X}]\setminus f[A])=P_n\cap f[A]$.
Since both $A$ and $\repsp{X}\setminus A$ are $F_\sigma$, and $f$ is a $F_\sigma$-map, we have that $P_n\cap f[A]$ is $F_\sigma$ and $P_n\setminus f[\repsp{X}\setminus A]$ is $G_\delta$; hence $P_n\cap f[A]$ is $\mathbf{\Delta}^0_2$.
Now it is clear that $(n,y)\in g[A]$ iff $y\in P_n\cap f[A]$, which is a $\mathbf{\Delta}^0_2$ condition in $\om\times\repsp{Y}$.
Hence, $g[A]$ is $\mathbf{\Delta}^0_2$ whenever $A$ is $\mathbf{\Delta}^0_2$.

For the preimage, let $B$ be a $\mathbf{\Delta}^0_2$ set in $\om\times\repsp{Y}$, and define $B_n=\{y:(n,y)\in B\mbox{ and }y\in P_n\}$, which is clearly $\mathbf{\Delta}^0_2$ in $\repsp{Y}$.
Moreover, It is easy to see that $g(x)\in B$ iff $f(x)\in B_n$ for the unique $n$ such that $f(x)\in P_n$.
Hence, we have
\[x\in g^{-1}[B]\iff (\exists n)\;x\in f^{-1}[B_n]\iff (\forall n)\;[x\in f^{-1}[P_n]\;\rightarrow\;x\in f^{-1}[B_n]].\]
Since $f$ is a $F_\sigma$ map, $f^{-1}[B_n]$ and $f^{-1}[P_n]$ are $\mathbf{\Delta}^0_2$, and therefore, the above condition gives a $\mathbf{\Delta}^0_2$ definition of $g^{-1}[B]$ as desired.
\end{proof}


Our main idea for proving Theorem \ref{thm:first-level} is the use of the Cantor-Schr\"oder-Bernstein (CSB) argument.
Note that there is no analogue of the CSB theorem in the category of topological spaces and continuous functions; fortunately however we have the following variant of the CSB argument for first-level Borel functions.

\begin{lemma}[see also {\cite[Lemma 5.2]{JR79b}}]\label{lemma:CSB}
Assume that there are injective $F_\sigma$-maps $f:\repsp{X}\to\repsp{Y}$ and $g:\repsp{Y}\to\repsp{X}$.
Then, there is a first level Borel isomorphism between $\om\times\repsp{X}$ and $\om\times\repsp{Y}$.
\end{lemma}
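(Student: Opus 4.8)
The plan is to carry out the Cantor–Schröder–Bernstein back-and-forth argument in its standard form, but paying attention to the fact that we only have first-level Borel maps rather than continuous ones, so the resulting partition of the space must be definable at the $\mathbf{\Delta}^0_2$ level and the final bijection assembled from $\mathbf{\Delta}^0_2$-pieces is only a first-level Borel isomorphism (hence the shift to $\om\times\repsp{X}$ and $\om\times\repsp{Y}$).

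First I would upgrade the hypothesis: by Lemma \ref{lem:obs:delta02}, from the injective $F_\sigma$-maps $f\colon\repsp{X}\to\repsp{Y}$ and $g\colon\repsp{Y}\to\repsp{X}$ we obtain injective $\mathbf{\Delta}^0_2$-maps $\tilde f\colon\repsp{X}\to\om\times\repsp{Y}$ and $\tilde g\colon\repsp{Y}\to\om\times\repsp{X}$. Composing with the obvious $\mathbf{\Delta}^0_2$-isomorphisms $\om\times(\om\times\repsp{Y})\cong\om\times\repsp{Y}$ and similarly for $\repsp{X}$, I get injective $\mathbf{\Delta}^0_2$-maps $F\colon\om\times\repsp{X}\to\om\times\repsp{Y}$ and $G\colon\om\times\repsp{Y}\to\om\times\repsp{X}$. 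So it suffices to prove: if there are injective $\mathbf{\Delta}^0_2$-maps $F\colon\repsp{A}\to\repsp{B}$ and $G\colon\repsp{B}\to\repsp{A}$, then $\repsp{A}$ and $\repsp{B}$ are first-level Borel isomorphic (applying this with $\repsp{A}=\om\times\repsp{X}$, $\repsp{B}=\om\times\repsp{Y}$).

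Now I run the usual CSB construction. Set $\repsp{A}_0=\repsp{A}\setminus G[\repsp{B}]$, and inductively $\repsp{A}_{n+1}=G[F[\repsp{A}_n]]$, and let $\repsp{A}_\infty=\bigcup_n\repsp{A}_n$. The bijection $h\colon\repsp{A}\to\repsp{B}$ is defined by $h(a)=F(a)$ if $a\in\repsp{A}_\infty$ and $h(a)=G^{-1}(a)$ if $a\notin\repsp{A}_\infty$. The key points to verify are: (i) each $\repsp{A}_n$ is $\mathbf{\Delta}^0_2$ in $\repsp{A}$ — this follows by induction since $\mathbf{\Delta}^0_2$-maps send $\mathbf{\Delta}^0_2$-sets to $\mathbf{\Delta}^0_2$-sets ($\repsp{A}_0$ is $\mathbf{\Delta}^0_2$ because $G[\repsp{B}]$, being the image of a $\mathbf{\Delta}^0_2$ set under a $\mathbf{\Delta}^0_2$-map, is $\mathbf{\Delta}^0_2$); (ii) the countable union $\repsp{A}_\infty$ is $F_\sigma$, and I claim it is in fact $\mathbf{\Delta}^0_2$ — here I use that $\repsp{A}\setminus\repsp{A}_\infty = G[\repsp{B}\setminus F[\repsp{A}_\infty]]$, so once we know $\repsp{A}_\infty$ is $F_\sigma$ its complement is also $F_\sigma$ (being a $\mathbf{\Delta}^0_2$-map image of an $F_\sigma$-complement... actually a $G_\delta$-image, which a $\mathbf{\Delta}^0_2$-map need not send to $F_\sigma$ — see the obstacle below); (iii) $h$ is a bijection by the standard CSB bookkeeping; (iv) $h$ and $h^{-1}$ are first-level Borel: for an $F_\sigma$ set $S\subseteq\repsp{B}$, $h^{-1}[S]=(F^{-1}[S]\cap\repsp{A}_\infty)\cup(G[S]\setminus\repsp{A}_\infty)$, and since $F$ is first-level Borel, $G$ is a $\mathbf{\Delta}^0_2$-map, and $\repsp{A}_\infty$ is $\mathbf{\Delta}^0_2$, both pieces are $F_\sigma$; symmetrically for $h$ using that $F$ is also an $F_\sigma$-map on images and $G^{-1}$ on its domain $G[\repsp{B}]$ behaves like a restriction of a first-level Borel map.

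The step I expect to be the main obstacle is establishing that $\repsp{A}_\infty$ (equivalently its complement) is $\mathbf{\Delta}^0_2$ rather than merely $F_\sigma$, since a $\mathbf{\Delta}^0_2$-map need not carry $G_\delta$ sets to $F_\sigma$ sets and a countable increasing union of $\mathbf{\Delta}^0_2$ sets is only $F_\sigma$. The resolution is precisely why the theorem is stated for $\om\times\repsp{X}$: after tensoring with $\om$, I can arrange the $\mathbf{\Delta}^0_2$-maps so that the images $F[\repsp{A}]$ and $G[\repsp{B}]$ are \emph{clopen in the $\om$-coordinate} in a suitable sense — or more robustly, I split $\repsp{A}$ along the $\om$-factor so that $\repsp{A}_\infty$ becomes a union over a \emph{decidable} subset of the index set of $\mathbf{\Delta}^0_2$-pieces that are \emph{pairwise disjoint and each genuinely $\mathbf{\Delta}^0_2$}, making the union $\mathbf{\Delta}^0_2$. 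Concretely, I would redo the construction keeping the summands $\repsp{A}_n\setminus\repsp{A}_{n-1}$ disjoint and observe $\repsp{A}\setminus\repsp{A}_\infty=G[\repsp{B}]\setminus\bigcup_n G[F[\repsp{A}_n]]$; writing this as $G$ applied to the $\mathbf{\Delta}^0_2$ set $\repsp{B}\setminus F[\repsp{A}_\infty]$ after first verifying $F[\repsp{A}_\infty]$ is $\mathbf{\Delta}^0_2$ (which in turn needs $\repsp{A}_\infty$ to be $\mathbf{\Delta}^0_2$ — circular unless handled via the $\om$-splitting). Once this bookkeeping is set up correctly, the verification that $h$ is a first-level Borel isomorphism is routine, following the pattern of the image/preimage computations in Lemma \ref{lem:obs:delta02}.
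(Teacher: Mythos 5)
Your proposal correctly identifies the central obstacle — that the standard CSB set $\repsp{A}_\infty=\bigcup_n\repsp{A}_n$ is only $F_\sigma$, not $\mathbf{\Delta}^0_2$, so the resulting bijection fails to be first-level Borel — but the attempted resolution via ``$\om$-splitting'' is not carried out and, as you yourself note, is circular as stated. A decidable index set of pairwise disjoint $\mathbf{\Delta}^0_2$ pieces still only gives an $F_\sigma$ union in general, and you do not explain how the $\om$-factor would make $\repsp{A}_\infty$ or its complement into an $F_\sigma$ set. Moreover, the reduction ``it suffices to prove: injective $\mathbf{\Delta}^0_2$-maps both ways implies $\repsp{A}$ and $\repsp{B}$ are first-level Borel isomorphic'' asks for something strictly stronger than the lemma requires, and it is precisely that stronger statement that runs into the $\repsp{A}_\infty$ obstruction.

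The paper's proof avoids infinite unions entirely by using the $\om$-coordinate not to stabilize images but to \emph{index the CSB stages}. After upgrading to injective $\mathbf{\Delta}^0_2$-maps $f,g$ via Lemma \ref{lem:obs:delta02}, it defines $\repsp{X}_0=\emptyset$, $\repsp{Y}_n=f[\repsp{X}\setminus\repsp{X}_n]$, $\repsp{X}_{n+1}=g[\repsp{Y}\setminus\repsp{Y}_n]$ (each a single $\mathbf{\Delta}^0_2$ set, by a finite induction), and then sets $h(n,x)=(n-1,g^{-1}(x))$ if $x\in\repsp{X}_n$ and $h(n,x)=(n,f(x))$ otherwise. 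The crucial point is that $h$ shifts the $\om$-coordinate, so the partition of $\om\times\repsp{X}$ used to define $h$ is $\bigcup_n\{n\}\times\repsp{X}_n$ versus $\bigcup_n\{n\}\times(\repsp{X}\setminus\repsp{X}_n)$; these are complementary and each is $\mathbf{\Delta}^0_2$ in $\om\times\repsp{X}$ because they are determined slice by slice. No stage $\repsp{X}_\infty$ is ever formed, so the $F_\sigma$-vs-$\mathbf{\Delta}^0_2$ problem never arises. This is the missing idea in your proposal: it is a different back-and-forth bookkeeping, not a repair of the classical one.
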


\begin{proof}
By Lemma \ref{lem:obs:delta02}, we can assume that there are injective $\mathbf{\Delta}^0_2$-maps $f:\repsp{X}\to\repsp{Y}$ and $g:\repsp{Y}\to\repsp{X}$ (by replacing $\repsp{X}$ and $\repsp{Y}$ with $\om\times\repsp{X}$ and $\om\times\repsp{Y}$, respectively, if necessary).
We inductively define $\mathbf{\Delta}^0_2$ sets $\repsp{X}_n\subseteq\repsp{X}$ and $\repsp{Y}_n\subseteq\repsp{Y}$ for $n\in\om$.
Begin with $\repsp{X}_0=\emptyset$.
Assume that $\repsp{X}_n$ is given.
Then, we define 
\[
\repsp{Y}_n=f[\repsp{X}\setminus\repsp{X}_n],\qquad \repsp{X}_{n+1}=g[\repsp{Y}\setminus\repsp{Y}_n].
\]

By induction, we see that $\repsp{X}_n\subseteq\repsp{X}$ and $\repsp{Y}_n\subseteq\repsp{Y}$ are $\mathbf{\Delta}^0_2$ since $f$ and $g$ are $\mathbf{\Delta}^0_2$-maps.
We now define $h:\om\times\repsp{X}\to\om\times\repsp{Y}$ as follows:
\[
h(n,x)=
\begin{cases}
(n-1,g^{-1}(x)) & \mbox{ if }x\in\repsp{X}_n,\\
(n,f(x)) & \mbox{ if }x\not\in\repsp{X}_n.
\end{cases}
\]

As in the usual CSB argument, one can check that $h$ is bijective, and
\[
h^{-1}(n,x)=
\begin{cases}
(n,f^{-1}(x)) & \mbox{ if }x\in\repsp{Y}_n,\\
(n+1,g(x)) & \mbox{ if }x\not\in\repsp{Y}_n.
\end{cases}
\]

This shows that both $h$ and $h^{-1}$ are $\mathbf{\Delta}^0_2$-maps.
Consequently, $h$ is a first-level Borel isomorphism between $\om\times\repsp{X}$ and $\om\times\repsp{Y}$.
\end{proof}

For a pointclass $\Gamma$, we say that a function $f:\repsp{X}\to\repsp{Y}$ is {\em $\Gamma$-piecewise continuous} if there is a $\Gamma$-cover $(G_i)_{i\in\om}$ of $\repsp{X}$ such that $f\upto G_i$ is continuous for each $i\in\om$.
To show Theorem \ref{thm:first-level}, we will use the following fact:

\begin{fact}[Jayne-Rogers \cite{JR82}]\label{thm:Jayne-Rogers}
Let $\repsp{X}$ be an analytic subset of a Polish space, and $\repsp{Y}$ be a separable metrizable space.
The following are equivalent for $f:\repsp{X}\to\repsp{Y}$:
\begin{enumerate}
\item $f$ is a first-level Borel function.
\item $f$ is closed-piecewise continuous.
\end{enumerate}
\end{fact}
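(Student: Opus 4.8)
I would prove the two implications separately. The implication $(2)\Rightarrow(1)$ is soft; all the content is in $(1)\Rightarrow(2)$, which I would reduce to a single ``localization lemma'' via a Cantor--Bendixson-style transfinite recursion, and whose proof I expect to be the main obstacle.

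\textbf{Direction $(2)\Rightarrow(1)$.} Suppose $f$ is closed-piecewise continuous, say $\repsp{X}=\bigcup_n F_n$ with each $F_n$ closed in $\repsp{X}$ and $f\upto F_n$ continuous. For an $F_\sigma$ set $A\subseteq\repsp{Y}$ we have $f^{-1}[A]=\bigcup_n(f\upto F_n)^{-1}[A]$, and since $f\upto F_n$ is continuous, $(f\upto F_n)^{-1}[A]$ is $F_\sigma$ in $F_n$, hence $F_\sigma$ in $\repsp{X}$ because $F_n$ is closed in $\repsp{X}$. So $f^{-1}[A]$ is $F_\sigma$, i.e. $f$ is first-level Borel. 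No analyticity is used here.

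\textbf{Direction $(1)\Rightarrow(2)$: the reduction.} Fix a countable basis $\{W_n:n\in\om\}$ of $\repsp{Y}$. Since $f$ is first-level Borel, for every $n$ \emph{both} $f^{-1}[W_n]$ and $f^{-1}[\repsp{Y}\setminus W_n]$ are $F_\sigma$ in $\repsp{X}$ (this uses the full strength of ``first-level Borel'', not merely $\mathbf{\Sigma}^0_2$-measurability). I would run a transfinite recursion: put $\repsp{X}_0=\repsp{X}$; given $\repsp{X}_\alpha$ closed in $\repsp{X}$, set $U_\alpha=\bigcup\{V:V\text{ open in }\repsp{X}_\alpha\text{ and }f\upto V\text{ continuous}\}$ and $\repsp{X}_{\alpha+1}=\repsp{X}_\alpha\setminus U_\alpha$, taking intersections at limits. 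Each $\repsp{X}_\alpha$ is closed in $\repsp{X}$; $f\upto U_\alpha$ is continuous by the open-cover pasting lemma; and $U_\alpha$, being open in the closed set $\repsp{X}_\alpha$, is $F_\sigma$ in $\repsp{X}$, hence a countable union of closed-in-$\repsp{X}$ sets on each of which $f$ is continuous. As $\repsp{X}$ is second countable, a strictly decreasing transfinite sequence of closed subsets has countable length, so the recursion stabilizes at some $\theta<\om_1$. If $\repsp{X}_\theta=\emptyset$, then $\repsp{X}=\bigcup_{\alpha<\theta}U_\alpha$ is a countable union of closed sets carrying continuous restrictions of $f$, which is $(2)$. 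Thus everything reduces to the following statement, which (given the Jayne--Rogers theorem) is essentially equivalent to the theorem itself: \emph{Localization Lemma: if $\repsp{X}$ is a nonempty analytic space and $f\colon\repsp{X}\to\repsp{Y}$ is first-level Borel, then there is a nonempty open $V\subseteq\repsp{X}$ with $f\upto V$ continuous.}

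\textbf{Strategy for the Localization Lemma, and the main obstacle.} For each $n$ one checks, using that \emph{both} $f^{-1}[W_n]$ and its complement are $F_\sigma$, that the boundary $\partial(f^{-1}[W_n])$ is nowhere dense in $\repsp{X}$: if it had nonempty interior $U$, then $f^{-1}[W_n]$ and its complement would both be dense in $U$, and expressing each as an increasing union of closed sets would exhibit $U$ as meager in itself, contradicting the Baire category theorem when $\repsp{X}$ is Polish. Then $N:=\bigcup_n\partial(f^{-1}[W_n])$ is meager, $\repsp{X}\setminus N$ is a dense $G_\delta$, and $f\upto(\repsp{X}\setminus N)$ is continuous (off $\partial(f^{-1}[W_n])$ the set $f^{-1}[W_n]$ is locally open). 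The hard part will be to upgrade ``$f$ is continuous on a comeager set'' to ``$f$ is continuous on a nonempty open set'', i.e. to show that $N$ is in fact nowhere dense; I expect this to require a finer analysis — iterating the boundary operator transfinitely, or a change-of-topology (or game-theoretic) argument making $f$ continuous — rather than a single use of Baire category. Finally, passing from Polish $\repsp{X}$ to a general analytic $\repsp{X}$ (which need not be a Baire space) is a further delicate point: here I would first refine the topology of $\repsp{X}$ by adjoining the countably many $\mathbf{\Delta}^0_2$ sets $f^{-1}[W_n]$, obtaining a finer analytic (and, in the Polish case, Polish) topology along which $f$ is continuous, and transfer the argument through it. This combined step — the Localization Lemma for analytic domains — is the main obstacle.
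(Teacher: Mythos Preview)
The paper does not prove this statement at all: it is recorded as a \textbf{Fact} with a citation to Jayne--Rogers \cite{JR82} and is used as a black box in the proof of Theorem~\ref{thm:first-level}. So there is no ``paper's own proof'' to compare against.

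As for your outline on its own merits: the easy direction $(2)\Rightarrow(1)$ and the transfinite-derivative reduction of $(1)\Rightarrow(2)$ to a Localization Lemma are both standard and correct. The genuine gap is exactly where you flag it: your Baire-category argument shows only that $f$ is continuous on a comeager $G_\delta$ set, and you yourself note that upgrading this to continuity on a nonempty \emph{open} set is the crux and is left undone. That step is the whole content of the Jayne--Rogers theorem; the known proofs (the original, and the later simplifications by Solecki, Motto~Ros--Semmes, Ka\v{c}ena--Motto~Ros--Semmes, etc.) all require a substantially more delicate argument than a single application of Baire category---typically a careful analysis of how the countably many $F_\sigma$ decompositions of the sets $f^{-1}[W_n]$ and their complements interact, or a game/change-of-topology argument as you suggest. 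Your proposal is thus an accurate roadmap that correctly isolates the hard kernel, but it is not yet a proof: the Localization Lemma for analytic domains remains to be established, and none of the hints you list (iterated boundaries, refined topology) is fleshed out enough to see that it actually closes the gap.
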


\begin{remark}
Indeed, for Polish spaces $\repsp{X}$ and $\repsp{Y}$, the conditions (1)--(5) are all equivalent for $f:\repsp{X}\to\repsp{Y}$:
\begin{enumerate}
\item[(3)] $f$ is $F_\sigma$-piecewise continuous.
\item[(4)] $f$ is $G_\delta$-measurable, that is, $f^{-1}[A]$ is $G_\delta$ for any open set $A\subseteq\repsp{Y}$.
\item[(5)] $f$ is $\mathbf{\Delta}^0_2$-measurable, that is, $f^{-1}[A]$ is $\mathbf{\Delta}^0_2$ for any open set $A\subseteq\repsp{Y}$.
\end{enumerate}

If $\repsp{Y}=\mathbb{R}$, the following conditions (6) and (7) are also equivalent to (1)--(5):
\begin{enumerate}
\item[(6)] $f$ is a discrete-Baire-one (a.k.a.\ stable-Baire-one) function, that is, $f$ is the discrete limit of a sequence $(f_n)_{n\in\om}$ of continuous functions (see \cite{CL75}).
\item[(7)] $f$ is a Baire-one-star function, that is, for every nonempty closed set $C\subseteq\repsp{X}$, there is an open set $U\subseteq\repsp{X}$ such that $U\cap C$ is nonempty and $f\upto C$ is continuous on $U$ (see \cite{OM77}).
\end{enumerate}

We can think of each of the above characterizations as saying that $f$ is {\em continuous with finite mind-changes}.
For further equivalences, see also Banakh-Bokalo \cite{TB10}.
\end{remark}

A {\em $\Gamma$-piecewise embedding} is an injection $f$ such that both $f$ and $f^{-1}\upto{\rm Im}(f)$ are $\Gamma$-piecewise continuous.
Such a function is called a {\em $\Gamma$-piecewise $\Lambda$-embedding of $\repsp{X}$ into $\repsp{Y}$} if its image is $\Lambda$ in $\repsp{Y}$.
By the Jayne-Rogers theorem (Fact \ref{thm:Jayne-Rogers}) and Lemma \ref{lem:obs:delta02}, there is an injective $F_\sigma$-map from $\repsp{X}$ into $\om\times\repsp{Y}$ if and only if there is a $\mathbf{\Delta}^0_2$-piecewise $\mathbf{\Delta}^0_2$-embedding of $\repsp{X}$ into $\om\times\repsp{Y}$.

\begin{proof}[Proof of Theorem \ref{thm:first-level}]
By the above discussion and by Lemma \ref{lemma:CSB}, it suffices to show that there is a $\mathbf{\Delta}^0_2$-piecewise $\mathbf{\Delta}^0_2$-embedding of $\repsp{X}$ into $\om\times\repsp{Y}$ if and only if $\mathcal{D}_{tt}^\alpha(\repsp{X})\subseteq\mathcal{D}_{tt}^\alpha(\repsp{Y})$ for some oracle $\alpha$.

Let $f$ be a $\mathbf{\Delta}^0_2$-piecewise $\mathbf{\Delta}^0_2$-embedding from $\repsp{X}$ into $\om\times\repsp{Y}$.
Note that $f$ is clearly a closed-piecewise $\mathbf{\Delta}^0_2$-embedding.
Then, there are an oracle $\alpha\in 2^\om$, and a uniform $\Pi^0_1(\alpha)$ collections $(J_i)_{i\in\om}$, $(P_i)_{i\in\om}$, and $(Q_i)_{i\in\om}$ such that ${\rm Im}(f)=\bigcup_{i\in\om}J_i$, and that $f\upto P_i$ and $f^{-1}\upto {\rm Im}(f)\cap Q_i$ are $\alpha$-computable for any $i\in \om$.
Fix $x\in\repsp{X}$.
Then, there is $i$ such that $x\in P_i$.
Since $f\upto P_i$ is an $\alpha$-computable function with the $\Pi^0_1(\alpha)$ domain $P_i$, we have $f(x)\leq_{tt}^\alpha x$.
To see $x\leq_{tt}^\alpha f(x)$, choose $j$ such that $f(x)\in Q_j$.
Since $f(x)\in{\rm Im}(f)$, there is $k$ such that $f(x)\in J_k\cap Q_j$.
Since $f^{-1}\upto {\rm Im}(f)\cap Q_j$ is $\alpha$-computable, so is $f^{-1}\upto J_k\cap Q_j$.
Moreover, $f^{-1}\upto J_k\cap Q_j$ has the $\Pi^0_1(\alpha)$ domain $J_k\cap Q_j$, and therefore $f^{-1}(f(x))=x\leq_{tt}^\alpha f(x)$.
Consequently, we have $f(x)\equiv_{tt}^\alpha x$ for any $x\in\repsp{X}$.
This shows that $\mathcal{D}_{tt}^\alpha(\repsp{X})\subseteq\mathcal{D}_{tt}^\alpha(\om\times\repsp{Y})=\mathcal{D}_{tt}^\alpha(\repsp{Y})$.

Conversely, assume that $\mathcal{D}_{tt}^\alpha(\repsp{X})\subseteq\mathcal{D}_{tt}^\alpha(\repsp{Y})$.
Assume that the symbol $\Phi$ ($\Psi$ resp.)\ ranges over partial $\alpha$-computable functions from $\repsp{X}$ to $\repsp{Y}$ ($\repsp{Y}$ to $\repsp{X}$, resp.)\ and the symbol $P$ ($Q$ resp.)\ ranges over $\Pi^0_1(\alpha)$ subsets of $\repsp{X}$ ($\repsp{Y}$, resp.)
Define $I^\repsp{X}$ as the set of pairs (of indices of) $\langle \Phi,P\rangle$ such that the domain of $\Phi$ is $P$.
Define $I^\repsp{Y}$ in a similar manner.
By our assumption, for any $x\in\repsp{X}$, there is $y\in\repsp{Y}$ such that $x\equiv_{tt}^\alpha y$.
This is equivalent to saying that for any $x\in\repsp{X}$, there are $\langle \Phi,P,\Psi,Q\rangle\in I^\repsp{X}\times I^\repsp{Y}$ such that $\Psi\circ\Phi(x)=x$.
Now, we consider the following closed set for $e=\langle\Phi,P,\Psi,Q\rangle$:
\[R_{e}=\{x\in P:\Phi(x)\in Q\mbox{, and }\Psi\circ\Phi(x)=x\}.\]

In Lemma \ref{lem:main-lemma}, we have shown that $\Phi\upto R_e$ is an embedding of $R_e$ into $\repsp{Y}$ such that the image $\Phi[R_e]$ is closed.
Hereafter, we write the quadruple coded by $e$ as $\langle\Phi_e,P_e,\Psi_e,Q_e\rangle$.
Now, given $x\in\repsp{X}$, let $e(x)$ be the least number $e\in I^\repsp{X}\times I^\repsp{Y}$ such that $x\in R_{e}$.
Then define $R^\ast_{e}$ be the set of all $x\in\repsp{X}$ such that $e(x)=e$, that is, $R^\ast_e=R_e\setminus\bigcup_{d<e}R_d$.
Clearly, $R^\ast_{e}$ is $\mathbf{\Delta}^0_2$, and so is $\Phi_e[R^\ast_{e}]=\Psi_e^{-1}[R^\ast_{e}]$.
We now consider the following function:
\[f(x)=( e(x),\Phi_{e(x)}(x)).\]

We claim that $f$ is a $\mathbf{\Delta}^0_2$-piecewise $\mathbf{\Delta}^0_2$-embedding of $\repsp{X}$ into $\om\times\repsp{Y}$.
Note that the $e$-th section of ${\rm Im}(f)$ is $\Phi[R^\ast_e]$.
Therefore, ${\rm Im}(f)$ is $\mathbf{\Delta}^0_2$.
Clearly, $f$ is $\mathbf{\Delta^0_2}$-piecewise continuous since $(R^\ast_{e})$ covers $\repsp{X}$ and $f\upto R^\ast_{e}$ sends $x$ to $(e,\Phi_e(x))$.
To see that $f^{-1}$ is $\mathbf{\Delta^0_2}$-piecewise continuous, consider $S_{e}=\{e\}\times\Phi_e[R^\ast_{e}]$.
Then $S_{e}$ is $\mathbf{\Delta}^0_2$, and note that $(S_{e})$ covers ${\rm Im}(f)$.
Moreover, $f^{-1}\upto S_{e}$ sends $(e,y)$ to $\Psi_e(y)$.
Hence, $f^{-1}$ is $\mathbf{\Delta^0_2}$-piecewise continuous.
This verifies our claim.
\end{proof}

\begin{cor}\label{cor:dimension-is-invariant}
Let $\repsp{X}$ and $\repsp{Y}$ be Polish spaces.
If $\dim(\repsp{X})<\dim(\repsp{Y})$ then $\mathcal{D}_{tt}^\alpha(\repsp{Y})\not\subseteq\mathcal{D}_{tt}^\alpha(\repsp{X})$ for any oracle $\alpha$.
\end{cor}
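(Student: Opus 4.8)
The plan is to derive this as a direct consequence of Theorem \ref{thm:first-level} together with the fact that topological dimension is monotone under the relevant maps. First I would argue the contrapositive: suppose $\mathcal{D}_{tt}^\alpha(\repsp{Y})\subseteq\mathcal{D}_{tt}^\alpha(\repsp{X})$ for some oracle $\alpha$. By the ``only if'' half of the argument establishing Theorem \ref{thm:first-level} (more precisely, the equivalence proved inside its proof between $\mathcal{D}_{tt}^\alpha(\repsp{Y})\subseteq\mathcal{D}_{tt}^\alpha(\repsp{X})$ and the existence of a $\mathbf{\Delta}^0_2$-piecewise $\mathbf{\Delta}^0_2$-embedding of $\repsp{Y}$ into $\om\times\repsp{X}$), we obtain in particular an injective first-level Borel map $f\colon\repsp{Y}\to\om\times\repsp{X}$ whose inverse on the image is also first-level Borel. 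By the Jayne--Rogers theorem (Fact \ref{thm:Jayne-Rogers}), such an $f$ is closed-piecewise continuous: there is a countable closed cover $(C_i)_{i\in\om}$ of $\repsp{Y}$ with $f\upto C_i$ a topological embedding of $C_i$ into $\om\times\repsp{X}$.

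The key dimension-theoretic input is then the countable sum theorem for topological dimension together with subspace monotonicity. Since $\repsp{Y}=\bigcup_i C_i$ with each $C_i$ closed in $\repsp{Y}$, the countable closed sum theorem gives $\dim(\repsp{Y})=\sup_i\dim(C_i)$. But $f$ embeds $C_i$ homeomorphically into $\om\times\repsp{X}$, so $\dim(C_i)=\dim(f[C_i])\leq\dim(\om\times\repsp{X})$; and $\om\times\repsp{X}$ is a countable disjoint (clopen) union of copies of $\repsp{X}$, hence $\dim(\om\times\repsp{X})=\dim(\repsp{X})$. Combining these, $\dim(\repsp{Y})=\sup_i\dim(C_i)\leq\dim(\repsp{X})$, contradicting the hypothesis $\dim(\repsp{X})<\dim(\repsp{Y})$. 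This yields the corollary.

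I would include a one-line justification that $\dim(\om\times\repsp{X})=\dim(\repsp{X})$: a countable free union of separable metrizable spaces has dimension equal to the supremum of the dimensions of the summands (and here all summands are homeomorphic to $\repsp{X}$), which again is an instance of the countable closed (indeed clopen) sum theorem. The main technical point to be careful about is simply citing the right classical facts — the countable closed sum theorem for covering/inductive dimension in separable metrizable spaces, and the monotonicity of dimension under passing to subspaces — both of which are standard (e.g.\ Engelking's dimension theory text); no step here is genuinely difficult, the content is entirely in the translation provided by Theorem \ref{thm:first-level} and Fact \ref{thm:Jayne-Rogers}. One minor subtlety worth a remark: we only extract a one-sided embedding-type map from $\mathcal{D}_{tt}^\alpha(\repsp{Y})\subseteq\mathcal{D}_{tt}^\alpha(\repsp{X})$, not a full first-level Borel isomorphism, but that one-sided map (closed-piecewise embedding into $\om\times\repsp{X}$) is already enough for the dimension comparison, so the CSB machinery of Lemma \ref{lemma:CSB} is not needed for the corollary.
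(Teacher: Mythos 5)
Your plan is sound and reaches the corollary by the same dimension-theoretic facts the paper ultimately relies on (countable closed sum theorem, subspace monotonicity, $\dim(\om\times\repsp{X})=\dim(\repsp{X})$), but the route differs from the paper's ``direct proof'' and has one imprecise step worth flagging.

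The paper actually offers two proofs. The first is exactly in the spirit you take: cite Theorem~\ref{thm:first-level} together with the classical fact (Jayne--Rogers) that topological dimension is a first-level Borel invariant. The second, ``more direct'' proof bypasses Theorem~\ref{thm:first-level} entirely: relativizing Lemma~\ref{lem:main-lemma} (i.e.\ Lemma~\ref{lem:main-embedd-relativized}), for each $y\in\repsp{Y}$ one obtains a closed set $Q_y\ni y$ that topologically embeds into $\repsp{X}$ with closed image, so $\dim(Q_y)\le\dim(\repsp{X})$; since there are only countably many $\Pi^0_1(\alpha)$ sets, the $Q_y$'s give a countable closed cover of $\repsp{Y}$, and the sum theorem finishes. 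This is lighter than your approach because Lemma~\ref{lem:main-lemma} hands you the embedding on each piece directly, with no refinement or appeal to Jayne--Rogers needed.

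The imprecision in your write-up: after extracting a $\mathbf{\Delta}^0_2$-piecewise $\mathbf{\Delta}^0_2$-embedding from the proof of Theorem~\ref{thm:first-level}, you re-package it as ``an injective first-level Borel map whose inverse is first-level Borel'' and then apply Jayne--Rogers to assert that the restrictions $f\upto C_i$ are topological \emph{embeddings}. Jayne--Rogers only gives closed-piecewise \emph{continuity} of $f$; it does not by itself ensure that $f\upto C_i$ is an embedding (a continuous injection of a non-compact closed piece need not be a homeomorphism onto its image). You should instead keep the piecewise-embedding structure you already extracted from the theorem's proof: the pieces $R^\ast_e$ there are $\mathbf{\Delta}^0_2$, hence $F_\sigma$, hence each refines to a countable closed cover, and the restriction of a topological embedding to a closed subset of its domain is still a topological embedding. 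With that fix the Jayne--Rogers invocation is redundant, and the rest of your argument (sum theorem, monotonicity, $\dim(\om\times\repsp{X})=\dim(\repsp{X})$) goes through exactly as you describe.
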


\begin{proof}
It follows from the fact that the topological dimension is first-level Borel invariant (cf.\ \cite[Theorem 4.2]{JR79}) and Theorem \ref{thm:first-level}.
Here we give a more direct proof.
If $\mathcal{D}^\alpha(\repsp{Y})\subseteq\mathcal{D}^\alpha(\repsp{X})$ for some oracle $\alpha$, then every $y\in\repsp{Y}$ is $tt$-reducible to some $x\in\repsp{X}$ relative to $\alpha$.
By relativizing Lemma \ref{lem:main-lemma}, we get a $\Pi^0_1(\alpha)$ set $Q_y\subseteq\repsp{Y}$ containing $y$ which embeds into $\repsp{X}$ as a closed set.
Since $Q_y$ is homeomorphic to a subset of $\repsp{X}$, we have $\dim(Q_y)\leq\dim(\repsp{X})$ (cf.\ \cite[Theorem III.1]{HWBook}).
However, there are only countably many $\Pi^0_1(\alpha)$ set, and therefore $\repsp{Y}$ is a countable union of closed subsets whose dimension is less than or equal to $\dim(\repsp{X})$.
By the sum theorem (cf.\ \cite[Theorem III.2]{HWBook}), we conclude $\dim(\repsp{Y})\leq\dim(\repsp{X})$.
\end{proof}

\section{Effective topological dimension theory}

There are several works on first-level Borel isomorphisms in the context of topological dimension theory, cf.\ \cite{JR79,JR79b,Sh00}.
For instance, Jayne-Rogers \cite[Theorem 4.2]{JR79} showed that the topological dimension is a first-level Borel invariant.
In Theorem \ref{thm:first-level}, we characterized the notion of metric $tt$-degree in the context of first-level Borel isomorphism.
Therefore, it is natural to expect that effective topological dimension theory is useful to investigate metric $tt$-degrees.
Unfortunately, there is very few previous works on effective topological dimension theory, with the exception of \cite{Ken15}.
As a consequence, we need to develop effective topological dimension theory from the very basic.

All the results in this section are straightforward effectivizations of known topological facts, which do not involve any nontrivial computability-theoretic ideas; hence, from the computability-theoretic perspective, there is nothing interesting in our proofs in this section.


\subsection{Basic observations}

Recall that a computable compactum is a computably compact computable metric space.
An {\em $\ep$-mapping} is a function $g:\repsp{X}\to\repsp{Y}$ such that for any $y\in\repsp{Y}$ the diameter of $g^{-1}\{y\}$ is less than $\ep$.
It is easy to see the following.

\begin{obs}\label{obs:basic-top-dim}
Let $X,Y$ be computable compacta.
\begin{enumerate}
\item A computable image of a computably compactum is computably compact.
\item There is a computable transformation between a name of $A\subseteq X$ as a computable compact set and a name of $A$ as a $\Pi^0_1$ set.
\item If $f:X\to Y$ is an injective computable function, then $X$ is computably homeomorphic to $f[X]$.
\item If $f:X\to Y$ is a computable $\ep$-mapping for any $\ep>0$ then $f$ is a computable embedding.
\end{enumerate}
\end{obs}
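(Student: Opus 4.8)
The plan is to prove the four items in order, since each feeds into the next, effectivizing the classical facts that a continuous image of a compactum is compact, that a closed subset of a compactum is compact, and that a continuous bijection out of a compactum is a homeomorphism. For (1), let $f\colon X\to Y$ be a computable surjection (equivalently, replace $Y$ by $f[X]$). I would first recall the standard fact that the preimage $f^{-1}[V]$ of a c.e.\ open set $V\subseteq Y$ is c.e.\ open in $X$, uniformly in indices. Then, running over all finite tuples $(V_0,\dots,V_k)$ of rational open balls of $Y$, I form the c.e.\ open set $\bigcup_{i\le k}f^{-1}[V_i]\subseteq X$ and use computable compactness of $X$ to semidecide whether it equals $X$ (wait until finitely many of the rational balls comprising it already cover $X$). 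By surjectivity this happens exactly when $(V_0,\dots,V_k)$ covers $Y$, so the tuples that pass this test form the required c.e.\ list of finite covers of $Y$.

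For (2), suppose first $A\subseteq X$ is $\Pi^0_1$, so $X\setminus A$ is c.e.\ open. A finite tuple $(B_i)_{i\le k}$ of rational balls covers $A$ iff $\bigcup_{i\le k}B_i\cup(X\setminus A)=X$, which is semidecidable by computable compactness of $X$; enumerating the passing tuples presents $A$ as a computable compact set. Conversely, given $A$ as a computable compact set, I would present $X\setminus A$ as the c.e.\ open set $\bigcup\{B : B\text{ a rational ball},\ \overline B\cap A=\emptyset\}$ (this really is $X\setminus A$ because $A$ is closed and $X$ is metric). Here ``$\overline B\cap A=\emptyset$'' is semidecidable, since it holds iff $A$ is covered by a finite tuple of rational balls each formally disjoint from $B$, and ``$A\subseteq U$'' is semidecidable whenever $A$ is computably compact and $U$ is c.e.\ open.

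For (3), let $f$ be an injective computable function. By (1) and (2), for every basic rational ball $B_\sigma$ of $X$ the set $X\setminus B_\sigma$ is $\Pi^0_1$, hence computably compact, hence its computable image $f[X\setminus B_\sigma]$ is computably compact, hence $Y\setminus f[X\setminus B_\sigma]$ is c.e.\ open, all uniformly in $\sigma$. Injectivity gives, for $y\in f[X]$, that $y\in Y\setminus f[X\setminus B_\sigma]$ iff $f^{-1}(y)\in B_\sigma$. To compute $f^{-1}$ from a name of $y$, at stage $n$ I search, among the balls appearing in a fixed c.e.\ cover of $X$ by rational balls of radius $<2^{-n}$ that are moreover formally included in the ball chosen at stage $n-1$, for one, say $B_{\sigma_n}$, with $y\in Y\setminus f[X\setminus B_{\sigma_n}]$; such a ball exists (take one tightly around $f^{-1}(y)$) and it confines $f^{-1}(y)$ to $B_{\sigma_n}$. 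The nested sequence $(B_{\sigma_n})_n$ is then the data of a Cauchy name of $f^{-1}(y)$, so $f^{-1}\colon f[X]\to X$ is computable, and together with computability of $f$ this shows $X$ is computably homeomorphic to $f[X]$.

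Item (4) is then immediate: if $f$ is a computable $\ep$-mapping for every $\ep>0$, then for each $y\in f[X]$ the fiber $f^{-1}\{y\}$ is nonempty, compact, and of diameter $0$, hence a singleton, so $f$ is injective and (3) applies. The only points needing care are the uniformity bookkeeping in (1) and (3) and, in (3), the requirement that the selected balls be formally nested so that they genuinely assemble into a legitimate Cauchy name; none of this involves any nontrivial computability-theoretic idea.
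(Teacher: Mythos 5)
Your proof is correct and takes essentially the same approach as the paper: items (1), (2), and (4) are argued identically, and for (3) you implement the same key observation --- namely that $f^{-1}(y)\in B_\sigma$ iff $y\in Y\setminus f[X\setminus B_\sigma]$, which is uniformly c.e.\ open by (1) and (2) --- only at the level of Cauchy names, whereas the paper phrases it as uniform computability of preimages of c.e.\ open sets under the left inverse.
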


\begin{proof}
(1) $(U_i)_{i<n}$ is a cover of $f[X]$ iff $(f^{-1}[U_i])_{i<n}$ is a cover of $X$.

(2) It is clear that every $\Pi^0_1$ subset of a computably compactum is computably compact.
Conversely, let $A\subseteq X$ be computably compact.
For any finite tuple $(U_0,\dots,U_n,V)$ of basic open sets, if $(U_i)_{i\leq n}$ covers $A$ and $U_i$ and $V$ are formally disjoint, then enumerate $V$.
Let $(V_n)_{n\in\om}$ be an enumeration of all such $V$.
Then, we get $A=X\setminus\bigcup_nV_n$.
(This is because for any $x\not\in A$, $x$ and $A$ are separated by formally disjoint open sets.
For instance, consider an open neighborhood of $x$ of rational radius $<d(x,A)/2$.)

(3) Let $g:f[X]\to X$ be the left inverse of $f$.
We show that $g$ is computable.
Given a c.e.\ open set $A\subseteq X$, since $X\setminus A$ is computably compact, so is $g[X\setminus A]$ by (1).
Therefore, by (2) we get a name of $f[X\setminus A]$ as a $\Pi^0_1$ set.
Note that $g^{-1}[X\setminus A]=f[X\setminus A]$.
Hence, we obtain a name of $g^{-1}[A]=f[X]\setminus g^{-1}[X\setminus A]$ as a c.e.\ open set in $f[X]$.

(4) If $f:X\to Y$ is a computable $\ep$-map for any $\ep>0$ then clearly $f$ is injective.
\end{proof}

A computable $T_0$-space $\repsp{X}$ is {\em computably normal} if given negative informations of disjoint closed sets $A,B\subseteq\repsp{X}$, one can effectively find positive informations of disjoint open sets $U,V\subseteq\repsp{X}$ such that $A\subseteq U$ and $B\subseteq V$ (cf.\ Grubba-Schr\"oder-Weihrauch \cite[Definition 4.3]{GSW07}).
We need a characterization of a normal space in the context of a shrinking.
A {\em shrinking} of a cover $\mathcal{U}$ of a space $\repsp{X}$ is a cover $\mathcal{V}=\{V(U):U\subseteq\mathcal{U}\}$ of $\repsp{X}$ such that $V(U)\subseteq U$.
If $\mathcal{V}$ consists of open (closed, resp.)\ sets, we say that $\mathcal{V}$ is an open (closed, resp.)\ shrinking.

\begin{lemma}\label{lem:comp-normal}
A space $\repsp{X}$ is computably normal if and only if given a finite open cover $\mathcal{U}$ of $\repsp{X}$, one can effectively find a closed shrinking $\mathcal{F}$ of $\mathcal{U}$ and an open shrinking $\mathcal{V}$ of $\mathcal{F}$.
\end{lemma}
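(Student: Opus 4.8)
The plan is to effectivize the classical equivalence ``$\repsp{X}$ is normal iff every finite open cover admits an open shrinking'', keeping careful track of the representations throughout: a closed set is presented by negative information (an enumeration of basic open sets exhausting its complement), an open set by positive information (an enumeration of basic open sets exhausting it), and a finite open cover by a finite tuple of positive codes.

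First I would record the convenient reformulation of computable normality that the construction actually uses: \emph{from negative information of a closed set $A$ and positive information of an open set $U$ with $A\subseteq U$, one can effectively produce positive information of an open set $W$ and negative information of a closed set $C$ with $A\subseteq W\subseteq C\subseteq U$.} This follows directly from the definition: $\repsp{X}\setminus U$ is closed with negative information and disjoint from $A$, so computable normality yields disjoint open $W\supseteq A$ and $W'\supseteq \repsp{X}\setminus U$ (with positive information); then $C:=\repsp{X}\setminus W'$ is closed with negative information, $W\subseteq C$ since $W\cap W'=\emptyset$, and $C\subseteq U$ since $\repsp{X}\setminus U\subseteq W'$. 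Call this operation ``sandwiching $A$ inside $U$''.

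For the direction ($\Rightarrow$), given a finite open cover $\mathcal{U}=\{U_1,\dots,U_n\}$, I would shrink its members one at a time, maintaining the invariant that $\{W_1,\dots,W_k,U_{k+1},\dots,U_n\}$ covers $\repsp{X}$, where each $W_j$ is open (positive information) and comes with a closed $C_j$ satisfying $W_j\subseteq C_j\subseteq U_j$. At step $k+1$ the set $A_{k+1}:=\repsp{X}\setminus(W_1\cup\dots\cup W_k\cup U_{k+2}\cup\dots\cup U_n)$ is closed with negative information and, by the invariant, is contained in $U_{k+1}$; sandwiching it inside $U_{k+1}$ produces $W_{k+1}\subseteq C_{k+1}\subseteq U_{k+1}$, and since $A_{k+1}\subseteq W_{k+1}$ the invariant is restored. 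After $n$ steps, $\mathcal{V}:=\{W_1,\dots,W_n\}$ is an open cover of $\repsp{X}$ with $W_j\subseteq C_j\subseteq U_j$, so $\mathcal{F}:=\{C_1,\dots,C_n\}$ is a closed shrinking of $\mathcal{U}$ and $\mathcal{V}$ is an open shrinking of $\mathcal{F}$; the whole procedure is uniformly effective, as it only iterates sandwiching together with finite unions (of c.e.\ open sets) and complementation (turning positive into negative information). For the direction ($\Leftarrow$), given negative information of disjoint closed sets $A,B$, observe that $\{\repsp{X}\setminus A,\repsp{X}\setminus B\}$ is a finite open cover presented by positive information; applying the hypothesis yields a closed shrinking $\{F_1,F_2\}$ with $F_1\subseteq\repsp{X}\setminus A$, $F_2\subseteq\repsp{X}\setminus B$, and $F_1\cup F_2=\repsp{X}$, whence $U:=\repsp{X}\setminus F_1\supseteq A$ and $V:=\repsp{X}\setminus F_2\supseteq B$ are open with positive information and disjoint since $F_1\cup F_2=\repsp{X}$ (the accompanying open shrinking $\{V_1,V_2\}$ can be used to strengthen this to $\overline{U}\cap\overline{V}=\emptyset$, as $\overline{U}\subseteq\repsp{X}\setminus V_1$ and $\overline{V}\subseteq\repsp{X}\setminus V_2$, but this is not needed here).

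There is no genuine obstacle: as the authors note, this is a routine effectivization carrying no computability-theoretic content. The only points demanding care are the bookkeeping of positive versus negative information at each stage — in particular verifying that every operation invoked (finite union, complementation, and the appeal to computable normality via the sandwiching lemma) transforms the kind of information available into the kind required next — and checking that the covering invariant is genuinely preserved at each shrinking step.
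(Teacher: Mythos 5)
Your proof is correct and follows essentially the same iterative-shrinking strategy as the paper's: reduce the $\ell$-element case to a sequence of one-at-a-time shrinking steps, invoking computable normality at each step and checking that the covering invariant is preserved. The only (cosmetic) differences are that you package each step as a single ``sandwiching'' application of computable normality to the uncovered remainder $A_{k+1}$, whereas the paper applies computable normality twice per step (first to produce a two-element closed shrinking $(U^-,V^-)$, then to sandwich $U^-$ inside $U$), and that you spell out the easy ($\Leftarrow$) direction which the paper leaves implicit.
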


\begin{proof}
Assume $\mathcal{U}=(U_k)_{k<\ell}$.
First put $U=U_0$, and $V=\bigcup\{W\in\mathcal{U}:W\not=U\}$.
It is straightforward to see that a space  $\repsp{X}$ is computably normal if and only if given open cover $(U,V)$ of $\repsp{X}$, one can effectively find a closed shrinking $(U^-,V^-)$.
Note that $U^-$ and $\repsp{X}\setminus U$ is a disjoint pair of closed sets.
Thus, again, by using computable normality, one can effectively find a disjoint pair $V(U),G(U)$ of open sets such that $U^-\subseteq V(U)$ and $\repsp{X}\setminus U\subseteq G(U)$.
Put $F(U)=\repsp{X}\setminus G(U)$.
Note that $V(U)\subseteq F(U)\subseteq U$, and $\mathcal{U}_1=(\mathcal{U}\setminus\{U\})\cup\{V(U)\}$ forms an open shrinking of $\mathcal{U}$.
Then proceed similar procedure with $U_1\in\mathcal{U}_1$ to get $V(U_1)\subseteq F(U_1)\subseteq U_1$. 
By iterating this procedure, we get desired $\mathcal{V}$ and $\mathcal{F}$.
\end{proof}

Let $\mathcal{U}$ be an open cover of $X$.
An {\em order} of $\mathcal{U}$ (denoted by ${\rm ord}(\mathcal{U})$) is the maximal cardinality of $\mathcal{V}\subseteq\mathcal{U}$ such that $\bigcap\mathcal{V}\not=\emptyset$.
It is well-known that $\dim(X)\leq n$ iff for any open cover $\mathcal{U}$ of $X$ there exists an open refinement $\mathcal{V}$ of $\mathcal{U}$ with ${\rm ord}(\mathcal{V})\leq n$ (cf.\ van Mill \cite[Theorem 3.2.5]{vMBook}).
We need to effectivize this fundamental dimension-theoretic fact.

\begin{lemma}\label{lem:dim-refinement}
Let $X$ be a $\Pi^0_1$ subset of $[0,1]^\om$.
Then, $\dim(X)\leq n$ iff given an open cover $\mathcal{U}$ of $X$ one can effectively find an open refinement $\mathcal{V}$ of $\mathcal{U}$ with ${\rm ord}(\mathcal{V})\leq n$.
\end{lemma}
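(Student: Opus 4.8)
The strategy is to effectivize the classical proof of the covering-dimension characterization (van Mill \cite[Theorem 3.2.5]{vMBook}), using the tools available in a computable compactum: computable normality and the shrinking machinery of Lemma \ref{lem:comp-normal}. The direction from the refinement property to $\dim(\repsp{X})\le n$ requires nothing: the refinement property asserts, in particular, the mere existence for every finite open cover of $\repsp{X}$ of an open refinement of order $\le n$, and since $\repsp{X}$ is compact every open cover has a finite refinement (indeed one by rational open balls), so van Mill's classical theorem yields $\dim(\repsp{X})\le n$.

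For the converse, assume $\dim(\repsp{X})\le n$. As a $\Pi^0_1$ subset of the computable compactum $[0,1]^\om$, $\repsp{X}$ is itself a computable compactum (Observation \ref{obs:basic-top-dim}), and, being a computably compact computable metric space, it is computably normal (cf.\ \cite{GSW07}); hence Lemma \ref{lem:comp-normal} computes closed and open shrinkings of finite open covers. First I would reduce to finite covers: given an open cover presented effectively, computable compactness produces a finite subcover $\mathcal{U}=(U_i)_{i\le k}$ by rational open balls refining it, and a refinement of $\mathcal{U}$ refines the original cover, so it suffices to compute an open refinement of $\mathcal{U}$ of order $\le n$. Throughout I use the following semidecidability: a finite intersection of rational closed balls with $\repsp{X}$ is $\Pi^0_1$ in the computable compactum $\repsp{X}$, and emptiness of such a $\Pi^0_1$ set is c.e.\ (cf.\ Observation \ref{obs:basic-top-dim}); thus any condition ``these finitely many closed sets have empty common intersection'' can be waited for.

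Now one runs the classical induction on $n$. In the inductive step one takes, via Lemma \ref{lem:comp-normal}, a double open shrinking $\overline{W_i}\subseteq V_i$ and $\overline{V_i}\subseteq U_i$ of $\mathcal{U}$; for each $i\le k$ one produces a partition $L_i$ between the disjoint closed sets $\overline{W_i}$ and $\repsp{X}\setminus V_i$ with $\dim(L_i)\le n-1$; by the finite closed sum theorem (\cite[Theorem III.2]{HWBook}) the set $L=\bigcup_{i\le k}L_i$ has $\dim(L)\le n-1$, and $L$ is again a $\Pi^0_1$ subset of $[0,1]^\om$, so the class is closed under the recursion and the induction hypothesis, applied to $L$, computes an open refinement of order $\le n-1$ of the trace cover $(U_i\cap L)_{i\le k}$. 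One extends this refinement (computable normality again) to an open family covering a neighbourhood of $L$ in $\repsp{X}$ and glues it with the open ``sides'' $A_i$ of the partitions (the side of $L_i$ that contains $\overline{W_i}$) to assemble the required open refinement of $\mathcal{U}$ of order $\le n$. Every shrinking, extension and gluing is realized by computable normality, and each auxiliary choice (a radius, a finite union of basic open sets separating two disjoint closed sets, and the like) is located by an unbounded search that halts because the corresponding classical existence statement is true.

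The one genuinely delicate step, and the one I expect to be the main obstacle, is the effective production of the partition $L_i$: one must compute, from a pair of disjoint co-c.e.\ closed sets, a co-c.e.\ closed partition that \emph{actually has} dimension $\le n-1$ (not just some partition), and, in order to feed it to the induction hypothesis, one cannot wait for a finite certificate of ``$\dim(L_i)\le n-1$'', as no such certificate exists. I would resolve this by making the whole argument uniform in a $\Pi^0_1$-index for the ambient space, so that ``$\dim(\cdot)\le m$'' enters only as a classically true promise driving the algorithm and is never verified, and by recasting the classical construction of a low-dimensional partition --- normally obtained from a basis of $\repsp{X}$ whose members have $(n-1)$-dimensional boundaries --- so that such a basis, with the required control on its boundaries, is produced along the same induction. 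This is routine but technical; past it, no computability-theoretic idea beyond the semidecidability of emptiness in a computable compactum intervenes, in accordance with the remark preceding the lemma.
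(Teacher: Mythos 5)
Your proposal takes a genuinely different, and substantially harder, route than the paper, and the step you yourself flag as ``the main obstacle'' is in fact a real gap, not a routine one.

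The paper's proof of the forward direction is a pure search argument and never effectivizes the inductive proof of the classical dimension characterization at all. The observation it rests on is the following: for a finite sequence $\mathcal{V}=(V_k)$ of finite unions of rational basic open sets in $[0,1]^\om$, all three conditions (i) $\mathcal{V}$ covers $X$, (ii) $\mathcal{V}$ (formally) refines $\mathcal{U}$, and (iii) ${\rm ord}(\overline{\mathcal{V}})\le n$ are c.e.\ uniformly in $\mathcal{V}$, the key point being that emptiness of $X\cap\overline{B_0}\cap\dots\cap\overline{B_{n+1}}$ is a c.e.\ predicate for a $\Pi^0_1$ set $X$ in the computable compactum $[0,1]^\om$. (Then ${\rm ord}(\mathcal{V})\le{\rm ord}(\overline{\mathcal{V}})\le n$.) Classically, $\dim(X)\le n$ together with normality guarantees the \emph{existence} of such a $\mathcal{V}$ --- take any open refinement of order $\le n$, pass to a closed and then an open shrinking, and approximate from inside by rational basic open sets --- so an unbounded search over finite candidate sequences terminates. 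That is the whole proof; there is no induction on $n$, no partitions, and no need to construct co-c.e.\ closed sets of controlled dimension.

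Your version instead tries to effectivize van Mill's inductive proof via partitions $L_i$ between $\overline{W_i}$ and $\repsp{X}\setminus V_i$ of dimension $\le n-1$. The difficulty you identify --- that the recursive call must be fed a $\Pi^0_1$ set whose dimension bound is only a classical promise and cannot be certified by any finite object --- is not routine as you assert. To use the induction hypothesis you must \emph{produce} a $\Pi^0_1$ index for $L_i$ for which the dimension bound is actually true, not merely search for one; but the naive choice ($L_i$ as the boundary of some c.e.\ open set $U$ with $\overline{W_i}\subseteq U\subseteq\overline{U}\subseteq V_i$) carries no dimension control at all, and the classical arguments that arrange $\dim L_i\le n-1$ (Baire-category selection of a ``good'' separator, or a basis with small boundaries) do not obviously yield a uniform $\Pi^0_1$ index. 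Recasting the construction so that it produces such a basis along the same induction is precisely the hard content you would have to supply; sketching it as ``routine but technical'' leaves the proof incomplete. The paper's insight is that none of this machinery is needed, because the target property of $\mathcal{V}$ is itself semi-decidable and classical existence suffices to make a brute-force search halt. Your opening reduction (backward direction is trivial; pass to a finite subcover by rational balls; $X$ is a computable compactum; Observation \ref{obs:basic-top-dim} gives the semi-decidability of emptiness) is all correct and matches the preparatory steps the paper uses implicitly --- it is only the core of the forward direction where you went down a much steeper path and did not reach the bottom.
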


\begin{proof}
Assume that $\dim(X)\leq n$, and let $\mathcal{U}=(U_k)_{k<\ell}$ be an open cover of $X$.
By compactness, each $U_k$ can be a finite union of basic open sets, that is, $U_k$ is of the form $X\cap\bigcup_{m<b} B^k_m$ for a finite collection $(B^k_m)_{m<b}$ of rational open balls in $\interval^\om$.
Note also that the predicate $X\cap\overline{B_0}\cap\dots\cap\overline{B_{n+1}}=\emptyset$ is c.e., where $B_m$ ranges over all finite unions of basic open sets in $\interval^\om$.
Thus, ${\rm ord}(\overline{\mathcal{V}})\leq n$, where $\overline{\mathcal{V}}=(\overline{V_k})$, is a c.e.\ predicate uniformly in a sequence $\mathcal{V}$ of basic open sets in $X$.
By normality, there must exist an open refinement $\mathcal{V}$ of $\mathcal{U}$ such that ${\rm ord}(\overline{\mathcal{V}})\leq n$.
We only need to search such $\mathcal{V}$.
\end{proof}

The following is an easy effectivization of a very basic observation (cf.\ Engelking \cite[Theorem 1.10.2]{EngBook}).

\begin{obs}\label{obs:general-position}
Given $\ep>0$ and points $q_1,\dots,q_k\in \mathbb{R}^m$, one can effectively find $p_1,\dots p_k\in \mathbb{R}^m$ in
general position such that $d(p_i,q_i)<\ep$ for any $i\leq k$.
\end{obs}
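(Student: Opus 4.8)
The plan is to follow the classical inductive construction (Engelking \cite[Theorem 1.10.2]{EngBook}) but to keep all of the $p_i$ rational, so that every step of the construction becomes decidable. Recall that $p_1,\dots,p_k\in\mathbb{R}^m$ are in \emph{general position} if every subfamily of size at most $m+1$ is affinely independent; equivalently, for every $S\subseteq\{p_1,\dots,p_k\}$ with $|S|\le m$ and every $p\in\{p_1,\dots,p_k\}\setminus S$, the point $p$ does not lie on the affine hull $\operatorname{aff}(S)$.

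First I would reduce to a purely rational search: given a name of $\ep$, compute a rational $\ep'$ with $0<\ep'<\ep$, and, given names of the $q_i$, compute rational points $r_i\in\mathbb{R}^m$ and rational radii $\rho_i>0$ with $B_{\rho_i}(r_i)\subseteq B_{\ep'}(q_i)$. It then suffices to produce rationals $p_i\in B_{\rho_i}(r_i)$ such that $\{p_1,\dots,p_k\}$ is in general position, since then $d(p_i,q_i)<\ep'<\ep$.

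Then I would build $p_1,\dots,p_k$ by recursion on $i$. Having found rationals $p_1,\dots,p_{i-1}$ in general position, let $A_i$ be the union of the affine hulls $\operatorname{aff}(S)$ over all $S\subseteq\{p_1,\dots,p_{i-1}\}$ with $|S|\le m$. Each such hull has dimension at most $m-1$, so $A_i$ is a finite union of proper affine subspaces of $\mathbb{R}^m$, hence closed and nowhere dense; in particular $B_{\rho_i}(r_i)\setminus A_i$ is a nonempty open set and therefore contains a rational point. A routine affine-independence check shows that any $p_i\in B_{\rho_i}(r_i)\setminus A_i$ keeps $\{p_1,\dots,p_i\}$ in general position: a subfamily of size $\le m+1$ not containing $p_i$ is independent by the inductive hypothesis, while a subfamily $S\cup\{p_i\}$ with $|S|\le m$ is independent because $S$ is independent and $p_i\notin\operatorname{aff}(S)\subseteq A_i$. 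Since the $p_\ell$ are rational, each $\operatorname{aff}(S)$ is a rational affine subspace and ``a given rational point lies on $\operatorname{aff}(S)$'' is decidable by linear algebra over $\mathbb{Q}$, so membership of a rational point in $A_i$ is decidable. Hence one can enumerate the rational points of $B_{\rho_i}(r_i)$ and output the first one not in $A_i$ as $p_i$; this search terminates precisely because $A_i$ cannot cover the ball $B_{\rho_i}(r_i)$.

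The only things requiring genuine verification are the affine-independence step just sketched and the decidability of whether a rational point lies on the affine span of finitely many rational points; both are elementary. As the section's preamble indicates, there is no real computability-theoretic obstacle here — the one point to be careful about is that the search for $p_i$ provably halts, and this is exactly the Baire-category fact that a finite union of proper affine subspaces cannot contain a ball.
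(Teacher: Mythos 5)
Your proof is correct and is precisely the intended argument. The paper gives no proof of this observation; it simply declares it ``an easy effectivization'' of Engelking's Theorem 1.10.2, and your rational-search version of the classical inductive general-position argument (pick rationals one at a time avoiding the decidable finite union of rational affine hulls, with termination guaranteed because a finite union of proper affine subspaces cannot cover a ball) is a clean, complete realization of exactly that.
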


A {\em polyhedron} is a geometric realization $|\mathcal{K}|$ of a simplicial complex $\mathcal{K}$ in a Euclidean space.
We approximate a given space by a polyhedron as follows:
Let $\mathcal{U}=(U_i)_{i<k}$ be a finite open cover of $X$.
The {\em nerve of $\mathcal{U}$} is a simplicial complex $\mathcal{N}(\mathcal{U})$ with $k$ many vertices $\{p_i\}_{i<k}$ such that an $m$-simplex $\{p_{j_0},\dots,p_{j_{m+1}}\}$ belongs to $\mathcal{N}(\mathcal{U})$ iff $U_{j_0}\cap\dots\cap U_{j_{m+1}}$ is nonempty.
We define the function $\kappa:X\to |\mathcal{N}(\mathcal{U})|$ as follows:
\[\kappa(x)=\frac{\sum_{i=0}^{k-1}d(x,X\setminus U_i)p_i}{\sum_{j=0}^{k-1}d(x,X\setminus U_j)}.\]
The function $\kappa$ is called {\em the $\kappa$-mapping (or Kuratowski mapping) determined by $\mathcal{U}$ and $(p_i)_{i<k}$}.
For basics on the $\kappa$-mapping, see also Engelking \cite[Definition 1.10.15]{EngBook}, van Mill \cite[Section 2.3]{vMBook}, and Nagata \cite[Section IV.5]{NagBook}.

\subsection{The universal N\"obeling spaces}

In this section, we effectivize the imbedding theorem.
The following is easy effectivizations of \cite[Lemmas 1.11.2 and 1.11.3]{EngBook}.

\begin{obs}\label{obs:func-space-open}
Let $X$ and $Y$ be computable compacta.
\begin{enumerate}
\item If $\ep>0$ is rational, the set of all $\ep$-mappings is c.e.\ open in $C(X,Y)$.
\item If $A$ is a co-c.e.\ closed subset of $Y$, then $\{f\in C(X,Y):f[X]\cap A=\emptyset\}$ is c.e.\ open in $C(X,Y)$.
\end{enumerate}
\end{obs}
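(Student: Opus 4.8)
\textbf{Proof proposal for Observation \ref{obs:func-space-open}.}

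The plan is to effectivize the classical arguments from Engelking, making explicit that the witnessing data (finite open covers, moduli of continuity, rational parameters) can be searched effectively. Recall that a point of $C(X,Y)$ is presented by a name from which one can compute, for each rational $\delta>0$, a finite $\delta$-net of values together with a modulus of uniform continuity; since $X$ and $Y$ are computable compacta this is the standard representation, and the basic open sets of $C(X,Y)$ are given by finite conditions of the form ``$f$ maps each cell of a fixed finite closed cover of $X$ into a fixed rational open ball of $Y$.'' For (1), I would unwind the definition of an $\ep$-mapping: $f$ is an $\ep$-mapping iff there is a rational $\delta>0$ and a finite closed cover $\mathcal{F}=(F_j)_{j<m}$ of $X$ by sets of diameter $\geq\ep$... no --- rather, the correct reformulation (cf.\ \cite[Lemma 1.11.2]{EngBook}) is that $f$ is an $\ep$-mapping iff there is a finite open cover $\mathcal{V}=(V_i)_{i<k}$ of $Y$ such that every member of the cover $(f^{-1}[V_i])_{i<k}$ of $X$ has diameter $<\ep$. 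The point is that this is an \emph{open} condition on $f$: if it holds for $f$, then since $X$ is compact there is a rational $\rho>0$ such that every $f^{-1}[V_i]$ still has diameter $<\ep$ after being enlarged by $\rho$, and any $g$ that is $\rho/2$-close to $f$ in the sup metric satisfies $g^{-1}[V_i]\subseteq (f^{-1}[V_i])_{\rho}$. So the set of $\ep$-mappings is the union, over all finite tuples of rational open balls $(V_i)_{i<k}$ covering $Y$ and all rational $\rho>0$ such that ``$\mathrm{diam}\big(\{x: d(f,g)<\rho \text{ forces } g(x)\in V_i\}\big)<\ep$'' is forced, of basic open sets in $C(X,Y)$; and because all the relevant quantities ($Y$ being covered, diameters being $<\ep$) are c.e.\ predicates in a computable compactum, this union is effectively enumerable.

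For (2), I would argue similarly but more directly. Since $A$ is co-c.e.\ closed in the computable compactum $Y$, by Observation \ref{obs:basic-top-dim}(2) we may also view $A$ as a computable compact set; in particular the open set $Y\setminus A$ is c.e.\ open, and $f[X]\cap A=\emptyset$ iff $f[X]\subseteq Y\setminus A$. Now $f[X]$ is a computable compact subset of $Y$ by Observation \ref{obs:basic-top-dim}(1) applied to $f$, but to stay effective \emph{uniformly in $f$} it is cleaner to use: $f[X]\cap A=\emptyset$ iff there is a finite subcover $(B_i)_{i<k}$ of $Y\setminus A$ by rational open balls, together with a rational $\rho>0$, such that the basic open condition ``the closed cells of a $\rho$-fine closed cover of $X$ each get mapped by $f$ into some $B_i$'' is met --- which is exactly a basic open neighborhood of $f$ in $C(X,Y)$. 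Every $g$ in that neighborhood then also satisfies $g[X]\subseteq\bigcup_i B_i\subseteq Y\setminus A$, so the condition is open; and the outer search (over finite subcovers of the c.e.\ open set $Y\setminus A$, over rational $\rho$, over $\rho$-fine closed covers of $X$ — all c.e.\ data) shows it is c.e.\ open.

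The main obstacle, as usual in this kind of effectivization, is purely bookkeeping: one must fix a concrete admissible representation of $C(X,Y)$ for computable compacta $X,Y$ (e.g.\ via rational ``polygonal'' approximants together with a modulus), check that the basic open sets I use above are genuinely basic in that representation, and verify that the compactness arguments producing the slack parameter $\rho$ go through uniformly. No genuinely computability-theoretic difficulty arises — as the section preamble already warns, everything here is a routine translation of \cite[Lemmas 1.11.2 and 1.11.3]{EngBook} — so I would keep the write-up short and point the reader to Engelking for the underlying topology.
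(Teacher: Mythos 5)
Your proposal is correct in substance, and for part (2) it unpacks an argument the paper leaves in more compressed form; for part (1) the paper declares the claim ``Obvious'' and gives no proof, so your Engelking-style argument is the expected one. A few remarks on the comparison.

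For (2), the paper's proof is a one-liner: invoke Observation~\ref{obs:basic-top-dim} to produce, \emph{uniformly in a name of $f$}, a name of $f[X]$ as a computable compact set, then observe that $f[X]\subseteq Y\setminus A$ is a c.e.\ condition (search for a finite rational cover of $f[X]$ formally included in the c.e.\ open $Y\setminus A$). Because this semidecision procedure is uniform in $f$, the set $\{f: f[X]\cap A=\emptyset\}$ is c.e.\ open in $C(X,Y)$ in the represented-space sense. Your worry that computing $f[X]$ ``may not stay effective uniformly in $f$'' is unfounded: Observation~\ref{obs:basic-top-dim}(1)--(2) is uniform, so the short route works; your explicit search over basic open neighborhoods of $f$ reaches the same conclusion at the cost of more bookkeeping. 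Both are fine, and your version has the pedagogical advantage of making the openness visibly stable under perturbation.

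For (1), your first attempt at stability wobbles: the claim that ``any $g$ that is $\rho/2$-close to $f$ satisfies $g^{-1}[V_i]\subseteq (f^{-1}[V_i])_\rho$'' does not hold as stated, because $g(x)\in V_i$ and $d(f(x),g(x))<\rho/2$ do not force $f(x)\in V_i$ when $V_i$ is merely open. The correct move, which your final description implicitly recovers, is to shrink: replace each $V_i$ by $V_i'=\{y: B_\rho(y)\subseteq V_i\}$, choose $\rho$ less than a Lebesgue number of $\mathcal{V}$ so that $(V_i')$ still covers $Y$, and then for $g$ with $d_{\sup}(f,g)<\rho$ one gets $g^{-1}[V_i']\subseteq f^{-1}[V_i]$, hence diameter $<\ep$. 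That plus the c.e.-ness of ``$(V_i')_{i<k}$ covers $Y$'' and ``$\mathrm{diam}\,f^{-1}[V_i]<\ep$'' in computable compacta gives the enumeration. So the sketch is essentially right; just make the shrinking explicit rather than phrased as ``forcing.''
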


\begin{proof}
(1) Obvious.
(2) By Observation \ref{obs:basic-top-dim}, one can find an index of the computable compact set $f[X]$.
The condition $f[X]\cap A=\emptyset$ is equivalent to that $f[X]\subseteq Y\setminus A$, which is a c.e.\ condition since $f[X]$ is computably compact, and $Y\setminus A$ is c.e.\ open.
\end{proof}

A N\"obeling space $N^m_n$ be the set of all $m$-tuples $(x_\ell)_{\ell<m}$ of reals such that the number of $\ell<m$ such that $x_\ell$ is rational is at most $n$.
It is easy to effectivize the N\"obeling imbedding theorem as follows:

\begin{prop}
Every $n$-dimensional computable compactum is effectively embedded into the $n$-dimensional N\"obeling space $N^{2n+1}_n$.
\end{prop}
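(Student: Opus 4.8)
The plan is to follow the classical proof of the Nöbeling imbedding theorem (see Engelking \cite[Theorem 1.11.4]{EngBook}) and check that each step is effective. Let $X$ be an $n$-dimensional computable compactum. First I would effectively embed $X$ into the Hilbert cube $[0,1]^\omega$ by the standard computable embedding (using the computable metric structure), and then compose with a projection to some $[0,1]^{2n+1}$ obtained via a $\kappa$-mapping argument; concretely, the first step is to produce a computable map $X \to [0,1]^{2n+1}$ that is an $\varepsilon$-mapping for a chosen small rational $\varepsilon$. This is done by taking a finite open cover $\mathcal{U}$ of $X$ of mesh $<\varepsilon$, refining it using Lemma \ref{lem:dim-refinement} to an open cover $\mathcal{V}$ with ${\rm ord}(\mathcal{V})\leq n$, and then forming the $\kappa$-mapping $\kappa$ into the geometric realization $|\mathcal{N}(\mathcal{V})|$; since ${\rm ord}(\mathcal{V})\le n$, the nerve has dimension $\le n$, so $|\mathcal{N}(\mathcal{V})|$ embeds linearly into $\mathbb{R}^{2n+1}$, and by Observation \ref{obs:general-position} one can effectively choose the vertices $(p_i)$ in general position so that the realization actually lands in the Nöbeling space $N^{2n+1}_n$ (any point of a $\le n$-dimensional polyhedron in general position has at most $n$ rational coordinates).

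The second step is the approximation/Baire-category argument that upgrades a single $\varepsilon$-map into a genuine embedding into $N^{2n+1}_n$. Here I would work in the function space $C(X, [0,1]^{2n+1})$ (or, better, $C(X, N^{2n+1}_n)$ with the sup metric), which is a computable Polish space since $X$ is a computable compactum. By Observation \ref{obs:func-space-open}(1), for each rational $\varepsilon>0$ the set $E_\varepsilon$ of $\varepsilon$-maps is c.e.\ open in $C(X,[0,1]^{2n+1})$, and by the density part of the classical argument $E_\varepsilon$ is dense. Similarly, for each basic rational "forbidden" closed condition defining $N^{2n+1}_n$ (namely, for each $\ell_0 < \dots < \ell_n \le 2n$ and each rational point $r$, the set of $f$ whose image avoids the closed set where coordinates $\ell_0,\dots,\ell_n$ are simultaneously rational — formalized via a suitable co-c.e.\ closed approximation), Observation \ref{obs:func-space-open}(2) gives that the corresponding set of $f$ is c.e.\ open, and density follows from the dimension hypothesis exactly as in Engelking. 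Intersecting these countably many dense c.e.\ open sets by an effective Baire-category construction yields a computable point $f_\infty \in C(X,[0,1]^{2n+1})$ that is an $\varepsilon$-map for all $\varepsilon>0$ and whose image lies in $N^{2n+1}_n$. By Observation \ref{obs:basic-top-dim}(4), $f_\infty$ is a computable embedding; and by Observation \ref{obs:basic-top-dim}(3) its image is computably homeomorphic to $X$, so $f_\infty$ is an effective embedding of $X$ into $N^{2n+1}_n$.

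The main obstacle I expect is making the "avoid the rational-flat" conditions genuinely effective. The set of points in $[0,1]^{2n+1}$ with at least $n+1$ rational coordinates is a countable union of $n$-dimensional rational flats; it is neither open nor closed, so one cannot directly apply Observation \ref{obs:func-space-open}(2) to "$f[X]$ avoids this set." The fix is the standard one: for each choice of coordinates $\ell_0 < \dots < \ell_n$ and each rational $(q_{\ell_0},\dots,q_{\ell_n})$, and each rational $\delta>0$, consider the co-c.e.\ closed set $A_{\vec\ell,\vec q,\delta} = \{z : |z_{\ell_i}-q_{\ell_i}|\le\delta \text{ for all } i\le n\}$ and require $f[X]\cap A_{\vec\ell,\vec q,\delta} = \emptyset$; this is a legitimate c.e.\ open condition on $f$ by Observation \ref{obs:func-space-open}(2), its density is the content of the dimension-theoretic lemma, and the intersection over all $\vec\ell,\vec q,\delta$ forces $f_\infty[X] \subseteq N^{2n+1}_n$. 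Once this bookkeeping is set up, the remaining verification is routine and mirrors \cite[Section 1.11]{EngBook} line by line, so the effectivization introduces no new computability-theoretic content, as promised at the start of the section.
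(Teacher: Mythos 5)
Your overall strategy — run an effective Baire category argument in the function space $C(X,[0,1]^{2n+1})$, intersecting dense c.e.\ open sets of $\varepsilon$-mappings with dense c.e.\ open sets avoiding the rational flats — is the same as the paper's. But the way you handle the Nöbeling condition has a genuine error.

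The ``obstacle'' you raise is a red herring, and your fix for it does not work. You correctly observe that the set $R$ of points in $[0,1]^{2n+1}$ with at least $n+1$ rational coordinates is neither open nor closed, so one cannot apply Observation \ref{obs:func-space-open}(2) to ``$f[X]\cap R=\emptyset$'' in one shot. But you do not need to: $R$ is a countable union of individual $n$-dimensional rational flats $L_{\vec\ell,\vec q}=\{z : z_{\ell_i}=q_{\ell_i}\text{ for }i\le n\}$, each of which \emph{is} a co-c.e.\ closed (indeed effectively compact) subset of $[0,1]^{2n+1}$. So Observation \ref{obs:func-space-open}(2) applies directly, flat by flat: each set $D_{\vec\ell,\vec q}=\{f:f[X]\cap L_{\vec\ell,\vec q}=\emptyset\}$ is c.e.\ open, and its density is exactly Engelking's Lemma 1.11.3. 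Intersecting these countably many dense c.e.\ open sets (together with the $\varepsilon$-map conditions) is precisely what the effective Baire category argument is for. This is what the paper does (with the $L_r$'s enumerating the rational flats).

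Your thickened-flat fix $A_{\vec\ell,\vec q,\delta}=\{z:|z_{\ell_i}-q_{\ell_i}|\le\delta\}$ breaks the argument. For fixed $\vec\ell$ and fixed $\delta>0$, the sets $A_{\vec\ell,\vec q,\delta}$ as $\vec q$ ranges over $\mathbb{Q}^{n+1}$ cover all of $[0,1]^{2n+1}$; hence the collection $\{f:f[X]\cap A_{\vec\ell,\vec q,\delta}=\emptyset\}$ over all $\vec q$ has empty intersection (any nonempty image must meet one of them). Equivalently, for moderately large $\delta$ the individual condition is not even dense, so the Baire category hypothesis fails. So ``the intersection over all $\vec\ell,\vec q,\delta$'' does not force $f_\infty[X]\subseteq N^{2n+1}_n$ — it forces $f_\infty$ not to exist. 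If you insist on using thickened flats you would have to replace ``for all $\delta$'' by ``for some $\delta>0$ depending on $(\vec\ell,\vec q)$'', but by effective compactness of $X$ (and hence of $f[X]$) that existential statement is already $\Sigma^0_1$ and is simply another way of saying $f[X]\cap L_{\vec\ell,\vec q}=\emptyset$ — which Observation \ref{obs:func-space-open}(2) gave you in the first place. Dropping the thickening entirely both simplifies and repairs the argument. (A minor additional remark: your ``first step'' of constructing a single computable $\varepsilon$-map by $\kappa$-mapping is not needed; the Baire category argument produces the map from scratch, as in the paper, given only density of the $D_\varepsilon$'s, which is a classical fact about $\dim(X)\le n$. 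You may of course keep it, but it is dead weight.)
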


\begin{proof}
Given a compactum $X$, let $C_\ep(X,Y)$ be the set of all $\ep$-mapping of $X$ to $Y$.
Assume that $\dim(X)\leq n$ and $L$ is an $n$-dimensional linear subspace of $\mathbb{R}^{2n+1}$.
Then, it is known that $D_\ep=\{f\in C_\ep(X,\mathbb{R}^{2n+1}):f[X]\cap L=\emptyset\}$ is dense for any $\ep>0$ (cf.\ \cite[Lemma 1.11.3]{EngBook}).
Since $D_\ep$ is uniformly c.e.\ open in $\ep$ by Observation \ref{obs:func-space-open}, the effective Baire category argument provides a computable map $f\in\bigcap_k D_{1/k}$, which is a computable embedding by Observation \ref{obs:basic-top-dim} (4).
\end{proof}

However, this result is not very useful in our context.
We often need to consider a $\Pi^0_1$ set $X\subseteq[0,1]^\om$ rather than a computable compactum.

\begin{theorem}\label{thm:imbedding}
Every $n$-dimensional $\Pi^0_1$ subset of $[0,1]^\om$ is computably embedded into the $n$-dimensional N\"obeling space $N^{2n+1}_n$.
\end{theorem}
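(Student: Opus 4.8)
The plan is to mimic the classical N\"obeling imbedding theorem (Engelking \cite[Theorem 1.11.4]{EngBook}) but carried out effectively in the function space $C(X,[0,1]^{2n+1})$, using the effective Baire category argument that already appears in the proof of the Proposition for computable compacta. The key difference from that Proposition is that $X$ is now only a $\Pi^0_1$ subset of $[0,1]^\om$, not itself a computable metric space with a presented dense sequence; so the first task is to set up a usable computable structure on $C(X,[0,1]^{2n+1})$ and on the relevant dense open sets. Since $[0,1]^\om$ is a computable compactum and $X\subseteq[0,1]^\om$ is $\Pi^0_1$, $X$ is computably compact (Observation \ref{obs:basic-top-dim}(2)), so $C(X,[0,1]^{2n+1})$ is a computable Polish space under the sup metric, and restriction of a computable map on $[0,1]^\om$ gives a dense computable sequence of ``polynomial-like'' candidate maps.

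The main steps, in order, would be: (1) Fix for each $n$-dimensional affine subspace $L$ with rational data in $\mathbb{R}^{2n+1}$ and each rational $\ep>0$ the set $D_{\ep,L}=\{f\in C(X,\mathbb{R}^{2n+1}) : f \text{ is an }\ep\text{-mapping and } f[X]\cap L=\emptyset\}$; by Observation \ref{obs:func-space-open} this is uniformly c.e.\ open (the $\ep$-mapping condition uses that $X$ is computably compact, and $f[X]$ is a computable compact set by Observation \ref{obs:basic-top-dim}(1), so the disjointness from the co-c.e.\ closed set $L\cap[-M,M]^{2n+1}$ is c.e.). (2) Show each $D_{\ep,L}$ is dense: this is the one genuinely dimension-theoretic ingredient, and it is exactly the classical argument --- given any $f$ and $\delta>0$, use $\dim(X)\le n$, a fine finite open cover $\mathcal U$ of $X$ refining the cover by $f$-preimages of small balls, a $\kappa$-mapping into the nerve $|\mathcal N(\mathcal U)|$ (which has dimension $\le n$), and Observation \ref{obs:general-position} to perturb the vertex images into general position in $\mathbb{R}^{2n+1}$ so that the composite misses $L$; one checks the composite is $\delta$-close to $f$ and is still an $\ep$-mapping by taking $\mathcal U$ fine enough. (3) Intersecting countably many rational $D_{\ep,L}$ --- enough to force the image to avoid every rational $n$-flat and to make $f$ a $1/k$-mapping for all $k$ --- apply the effective Baire category theorem in $C(X,[0,1]^{2n+1})$ to obtain a computable $f$ in the intersection. (4) By Observation \ref{obs:basic-top-dim}(4), $f$ being a computable $\ep$-mapping for every rational $\ep>0$ makes $f$ a computable embedding; and since $f[X]$ meets no $n$-dimensional flat through rational points, hence (by a density/limit argument) no coordinate-rational-constraint flat of the N\"obeling type, we get $f[X]\subseteq N^{2n+1}_n$.

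The main obstacle I expect is step (3), the bookkeeping for the effective Baire category argument: one must enumerate a single computable sequence $(D_m)_{m\in\om}$ of dense c.e.-open sets --- the $D_{1/k,L}$ over all rational flats $L$ and all $k$ --- together with effective moduli witnessing density and openness, so that the standard construction of a computable point in $\bigcap_m D_m$ goes through, and one must verify that avoiding all \emph{rational} $n$-flats really does place $f[X]$ inside $N^{2n+1}_n$ (this last point is classical: a point with $\ge n+1$ rational coordinates lies on a rational $n$-flat, so avoiding all rational $n$-flats suffices). A secondary care point is that $C(X,[0,1]^{2n+1})$ must genuinely be presented as a computable Polish space with a computable dense sequence --- here one uses that $X$ is computably compact so that restrictions of rational piecewise-linear (or rational polynomial) maps $[0,1]^\om\to[0,1]^{2n+1}$ form such a sequence and the sup-metric between two of them is computable. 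Everything else is a routine transcription of \cite[Lemmas 1.11.2, 1.11.3]{EngBook} into the effective setting, exactly parallel to the compactum case already proved above.
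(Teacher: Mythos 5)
There is a genuine gap, and it is exactly the one the paper flags as the central difficulty. You claim that because $X$ is computably compact, ``$C(X,[0,1]^{2n+1})$ is a computable Polish space under the sup metric, and restriction of a computable map on $[0,1]^\om$ gives a dense computable sequence.'' This is false when $X$ is merely a $\Pi^0_1$ subset of $[0,1]^\om$: for two such restrictions $f,g$, the quantity $\sup_{x\in X}\|f(x)-g(x)\|$ is only \emph{upper} semicomputable (approximate from above by taking sups over the stage-$s$ approximations $P_s\supseteq X$), but not lower semicomputable, since certifying that the sup exceeds $r$ requires exhibiting a point of $X$ at which $\|f-g\|$ is large, and a $\Pi^0_1$ set is given only by negative information --- there is no way to effectively locate points of $X$. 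Recall the distinction the paper maintains between a \emph{computable compactum} (a computably compact \emph{computable metric space}, i.e.\ one equipped with a computable dense sequence) and a bare computably compact $\Pi^0_1$ set; Observation~\ref{obs:basic-top-dim}(2) gives you only the latter. Without a computable metric on $C(X,\cdot)$, the effective Baire category argument of the compactum case has no computable Polish space to run in, so step~(3) of your plan does not get off the ground. The same phenomenon is why the paper warns that the $\kappa$-mapping $\kappa:P\to\mathcal{N}(\mathcal U)$ is not computable: it is built from $d(x,P\setminus U_i)$, and the distance to a $\Pi^0_1$ set is only lower semicomputable.

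The paper's actual proof is designed precisely to circumvent this. Instead of transplanting the Baire-category argument on $C(X,\cdot)$, it replaces the qualitative notion of $\ep$-mapping by the quantitative $(\ep;\eta)$-mapping (since the witness $\eta$ cannot be extracted effectively from an $\ep$-map on a $\Pi^0_1$ set), works at each finite stage with the approximation $P_s$ --- which \emph{is} an honest computable compactum, on which the $\kappa$-mapping and the function-space machinery are computable --- and iterates Lemma~\ref{lem:approx-imbedding} step by step, enlarging $s$ as needed, with the $(\ep;\eta)$-bookkeeping guaranteeing that the limit is a computable embedding of $P=\bigcap_sP_s$. So the two proofs diverge at the very first move: you need to do the perturbation on $P_s$ rather than on $P$, and you need to carry the modulus $\eta$ explicitly rather than recover it by compactness.
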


We should be careful that we do not know which point is contained in a given $\Pi^0_1$ set $P$, and so $P$ is not necessarily a computable metric space, which causes a difficulty to effectivize the usual topological proof of the imbedding theorem, since the $\kappa$-mapping $\kappa:P\to\mathcal{N}(\mathcal{U})$ is not necessarily computable.
To overcome this difficulty, we shall show the approximated version of the usual argument.

An {\em $\varepsilon$-cover} is a cover $(U_k)_{k<\ell}$ such that ${\rm diam}(U_k)<{\ep}$ for any $k<\ell$.
For the effective treatment, we need an approximated version of $\ep$-mapping.
An {\em $(\varepsilon;\eta)$-mapping} is a function $g:\repsp{X}\to\repsp{Y}$ such that for any $x,y\in\repsp{X}$,
\[d(g(x),g(y))<\eta\;\Longrightarrow\;d(x,y)<\varepsilon.\]
Classically, every $\varepsilon$-mapping between compact spaces is a $(\varepsilon;\eta)$-mapping for some positive number $\eta>0$.

\begin{lemma}\label{lem:approx-imbedding}
Let $(P_s)_{s\in\om}$ be a computable approximation of an $n$-dimensional $\Pi^0_1$ subset of $[0,1]^\om$, and $L$ be an $n$-dimensional linear subspace of $\mathbb{R}^{2n+1}$.
Given a computable function $f:P_s\to\interval^{2n+1}$ and $i,j\in\om$, one can effectively find $t\geq s$, $v\in\om$, and $g:P_t\to\mathbb{I}^{2n+1}\setminus L$ such that $d(f,g)<2^{-j}$ and $g$ is a $(2^{-i};2^{-v})$-mapping.
\end{lemma}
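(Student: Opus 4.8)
The plan is to effectivize the classical perturbation argument underlying the N\"obeling imbedding theorem (cf.\ \cite[Lemma 1.11.3]{EngBook}): build $g$ as a Kuratowski $\kappa$-mapping attached to a sufficiently fine finite open cover of order $\le n$, whose nerve is realized in $\mathbb{R}^{2n+1}$ with vertices in general position relative to $L$. Since the nerve then has dimension $\le n$ and $n+n<2n+1$, every one of its closed simplices misses the $n$-flat $L$; and smallness of the cover forces the resulting map to be an $\ep$-mapping in the $(2^{-i};2^{-v})$-sense. The one genuinely new issue is that $P:=\bigcap_sP_s$ is only $\Pi^0_1$ and not a computable metric space, so the $\kappa$-mapping on $P$ need not be computable; I would get around this by performing the construction on the \emph{computable compactum} $P_t$ for a large enough stage $t$, after first checking that the good combinatorial features of the cover survive the passage from $P$ to $P_t$.

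In detail: first extend $f$ to a computable $\tilde f\colon[0,1]^\om\to\interval^{2n+1}$ by the effective Tietze extension theorem (see \cite{WeiBook}), and compute a rational $2^{-w}$ with $\mathrm{diam}(\tilde f[B])<2^{-j-2}$ whenever $\mathrm{diam}(B)<2^{-w}$. Using $\dim P\le n$ together with Lemma~\ref{lem:dim-refinement}, search for a finite open cover $\mathcal{U}=(U_k)_{k<\ell}$ of $P$, each $U_k$ a finite union of rational open balls of diameter $<\min\{2^{-i},2^{-w}\}$, with $\mathrm{ord}(\overline{\mathcal{U}})\le n$ relative to $P$; the covering condition is c.e.\ because $P$ is co-c.e.\ closed in the compactum $[0,1]^\om$, the order condition is c.e.\ via the predicate ``$P\cap\overline{U_{k_0}}\cap\dots=\emptyset$'' as in Lemma~\ref{lem:dim-refinement}, and such a $\mathcal{U}$ exists by classical dimension theory. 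Then search for a stage $t\ge s$ for which, in addition, $P_t\subseteq\bigcup_kU_k$ and $\mathrm{ord}(\overline{\mathcal{U}})\le n$ relative to $P_t$; both are semidecidable (now using that $P_t$ is computably compact) and both become true once $t$ is large, since $P=\bigcap_tP_t$ and the sets $\overline{U_k}$ are compact. Finally, pick a rational point $c_k\in U_k$, put $q_k=\tilde f(c_k)$, and---arguing as in Observation~\ref{obs:general-position}---compute $p_0,\dots,p_{\ell-1}\in\interval^{2n+1}$ with $d(p_k,q_k)<2^{-j-2}$ such that $(p_k)_{k<\ell}$ is in general position and the affine span of any at most $n+1$ of them is disjoint from $L$ (a finite list of nowhere-dense semialgebraic conditions to avoid, each avoidance effectively certifiable). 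Let $g$ be the $\kappa$-mapping $P_t\to|\mathcal{N}(\mathcal{U}\upto P_t)|$ determined by $\mathcal{U}$ and $(p_k)_{k<\ell}$; it is computable because $P_t$ is a computable compactum, so $x\mapsto d(x,P_t\setminus U_k)$ is computable. As the output $v$, take any natural number with $2^{-v}$ below a positive rational lower bound for the Lebesgue number of the open-star cover $\{\mathrm{St}(p_k)\}_{k<\ell}$ of the finite polyhedron $|\mathcal{N}(\mathcal{U}\upto P_t)|$, which is computable from the finite geometric data.

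For the verification, note that $g(x)$ is a convex combination of those $p_k$ with $x\in U_k$, and for any such $k$ one has $d(p_k,f(x))\le d(p_k,q_k)+d(\tilde f(c_k),\tilde f(x))<2^{-j-1}$, using $\tilde f\upto P_t=f$ and $d(x,c_k)\le\mathrm{diam}(U_k)<2^{-w}$; hence $d(f(x),g(x))<2^{-j}$ for all $x\in P_t$, i.e.\ $d(f,g)<2^{-j}$. Moreover $g[P_t]\subseteq|\mathcal{N}(\mathcal{U}\upto P_t)|$ lies in the convex hull of $\{p_k\}_{k<\ell}\subseteq\interval^{2n+1}$, hence in $\mathbb{I}^{2n+1}$. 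Since $\mathrm{ord}(\overline{\mathcal{U}})\le n$ over $P_t$, the nerve $\mathcal{N}(\mathcal{U}\upto P_t)$ has dimension $\le n$, so each closed simplex of $|\mathcal{N}(\mathcal{U}\upto P_t)|$ lies in the affine span of at most $n+1$ of the $p_k$, hence is disjoint from $L$; thus $g[P_t]\subseteq\mathbb{I}^{2n+1}\setminus L$. Finally, if $d(g(x),g(y))<2^{-v}$ then the two-point set $\{g(x),g(y)\}$ has diameter below the Lebesgue number of $\{\mathrm{St}(p_k)\}_{k<\ell}$, so it lies in a single $\mathrm{St}(p_k)$; since the $p_k$-barycentric coordinate of $\kappa(z)$ equals $d(z,P_t\setminus U_k)/\sum_jd(z,P_t\setminus U_j)$, which is positive exactly when $z\in U_k$, we get $x,y\in U_k$ and therefore $d(x,y)\le\mathrm{diam}(U_k)<2^{-i}$. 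So $g$ is a $(2^{-i};2^{-v})$-mapping, as required.

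I expect the main obstacle to be precisely this interface between the $\Pi^0_1$ set $P$ and its stages $P_t$: one must be certain that a cover which is fine, covers $P$, and has a low-dimensional nerve over $P$ still covers $P_t$ and still has a low-dimensional nerve over $P_t$ for all sufficiently large $t$---true by compactness of the $\overline{U_k}$, but it has to be packaged as a pair of semidecidable conditions whose eventual truth is verified---together with, secondarily, the relative general-position step, where the perturbation must keep both $L$ and the box $\interval^{2n+1}$ fixed while still achieving genericity of the vertex configuration.
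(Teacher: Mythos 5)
Your proposal is correct and implements the same overall strategy as the paper's proof: effectivize the $\kappa$-mapping construction for the N\"obeling imbedding, pushing the construction onto a late stage $P_t$ so that the $\kappa$-mapping is actually computable. The main technical divergence is how you produce the vertices $p_k$ close to $f$. The paper invokes its effective shrinking lemma (Lemma~\ref{lem:comp-normal}) to get a closed shrinking $\mathcal{F}$ of $\mathcal{U}$, searches for rational balls $B_k$ with $f[P\cap F(U_k)]\subseteq B_k$ (a c.e.\ condition by effective compactness of the closed $F(U_k)$), and picks $p_k\in B_k$; it then takes the $\kappa$-mapping associated with the open shrinking $\mathcal{V}$. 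You instead extend $f$ to $\tilde f$ on all of $[0,1]^\om$ via effective Tietze, read off $q_k=\tilde f(c_k)$ at a single rational sample point $c_k\in U_k$, and perturb $q_k$. This is a genuine streamlining: it sidesteps the shrinking machinery entirely and avoids the quantifier over balls $B_k$, at the cost of invoking the effective Tietze extension theorem (which is available and uniform here since $P_s$ is a computable compactum). You are also more explicit than the paper at the stage-selection step: you correctly insist that the order condition ${\rm ord}(\overline{\mathcal{U}})\le n$ be re-verified over $P_t$, not just over $P$, before defining the $\kappa$-mapping on $P_t$ — this is needed (and true for large $t$ by compactness of the $\overline{U_k}$), and the paper passes over it silently. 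Your Lebesgue-number argument for the choice of $v$ is the standard one and is equivalent to the paper's computation of a lower bound on the separation of disjoint linear subspaces in $\mathcal{L}$.

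One small point of hygiene: where you say the $U_k$ are ``a finite union of rational open balls of diameter $<\min\{2^{-i},2^{-w}\}$,'' the verification actually uses ${\rm diam}(U_k)<\min\{2^{-i},2^{-w}\}$ (for both $d(x,c_k)<2^{-w}$ and $d(x,y)<2^{-i}$), and a union of small balls need not itself have small diameter. You should state the mesh bound on $U_k$ directly; this costs nothing, since Lemma~\ref{lem:dim-refinement} combined with normality produces arbitrarily fine refinements and the diameter of a finite union of rational balls is computable, so the condition remains c.e.\ and the search terminates.
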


\begin{proof}
By effective compactness of $P_j$, one can find a modulus $u(j)\geq i$ of uniform continuity of $f$, that is, if $d(p,q)<2^{-u(j)}$ then $d(f(p),f(q))<2^{-j-1}$.
By Lemma \ref{lem:dim-refinement}, one can effectively find a finite open $2^{-u(j)}$-cover $\mathcal{U}=(U_k)_{k<\ell}$ of $P$ such that ${\rm ord}(\mathcal{U})\leq n$.
Consider $\mathcal{V}$ and $\mathcal{F}$ from Lemma \ref{lem:comp-normal}.
By effective compactness, one can effectively find $t$ such that $\mathcal{U}$ covers $P_t$.

Now, by our choice of $u$, the diameter of $f[U_k]$ is at most $2^{-j-1}$.
Thus, for every $k<\ell$, there is a nonempty open ball $B_k$ whose diameter is at most $2^{-j-1}$ and $P\cap F(U_k)\subseteq f^{-1}[B_k]$, where $F(U_k)\in\mathcal{F}$ such that $F(U_k)\subseteq U_k$.
By effective compactness, this condition is c.e., and therefore, one can effectively find such $B_k$.
Choose $p_k\in B_k$ such that $p_k$'s are in general position in $\mathbb{R}^{2n+1}$ by Observation \ref{obs:general-position}, and moreover,  the linear $n$-subspace of $\interval^{2n+1}$ spanned by $(n+1)$-elements in $(p_k)_{k<\ell}$ does not intersect with $L$.
One can effectively find such $p_k$.

Let $\kappa:P_t\to N(\mathcal{V})$ be the $\kappa$-mapping determined by $\mathcal{V}$ and $(p_k)$.
Now $\kappa$ is computable.
There are only finite collection $\mathcal{L}$ of linear subspaces (spanned by at most $(n+1)$-elements in $(p_k)_{k<\ell}$).
Thus, it is easy to calculate a number $v$ such that if $L,L'\in\mathcal{L}$ are disjoint, then they have a distance $\geq 2^{-v}$ from each other.
One can check that $d(\kappa(x),\kappa(y))<2^{-v}$ implies $d(x,y)<2^{-u(j)}\leq 2^{-i}$.
\end{proof}

%
%

\begin{proof}[Proof of Theorem \ref{thm:imbedding}]
First note that $N^{2n+1}_n$ can be written as $\interval^{2n+1}\setminus\bigcup_{r}L_r$, where each $L_r$ is an at most $n$-dimensional linear subspace of $\interval^{2n+1}$.
Let $P$ be an $n$-dimensional $\Pi^0_1$ subset of $\interval^\om$, and $f_0:P_0=\interval^\om\to N^{2n+1}_n$ be a constant function.
At the $r$-th step, we assume that a $(2^{-r};2^{-v(r)})$-mapping $f_r:P_{s(r)}\to N^{2n+1}_n$ is given.
By applying Lemma \ref{lem:approx-imbedding} to $i=r+1$, $j=v(r)+1$, $s=s(r)$ and $f=f_r$, we get a $(2^{-r-1};2^{-v})$-mapping $g:P_t\to\interval^{2n+1}\setminus\bigcup_{u\leq r}L_u$ such that $d(f_r,g)<2^{-v(r)-1}$.
Define $s(r+1)=t$, $v(r+1)=\max\{v,v(r)+1\}$, and $f_{r+1}=g$.
This procedure is computable, and therefore, $f=\lim_rf_r$ gives a computable embedding of $P$ into $N^{2n+1}_n$ by Observation \ref{obs:basic-top-dim}.
\end{proof}

\subsection{The universal Menger compacta}

Let $z=(z_j)_{j\in\om}$ be a sequence of natural numbers such that $z_j\geq 3$ for all $j\in\om$.
A {\em $z$-bounded sequence} is a finite or infinite sequence $\sigma\in\om^\om$ such that $\sigma(j)<z_j$ for all $j\in\om$.
Any $z$-bounded sequence $\sigma$ determines a real $|\sigma|_z$ as follows:
\[|\sigma|_z=\sum_{j}\frac{\sigma(j)}{z^\ast_j}\mbox{, where }z^\ast_j=\prod_{k<j}z_k.\]
For instance, if $\mathbf{3}$ is a sequence consisting only of $3$, then $|\sigma|_\mathbf{3}$ is the real whose ternary expansion is $0.\sigma$.
A {\em Menger compactum} $M^m_n(z)$ is the set of all $m$-tuples $(|h_\ell|_z)_{\ell<m}\in[0,1]^m$ such that $h_\ell$ is a $z$-bounded sequence for any $\ell<m$, and, for any $j\in\om$, the following holds:
\[|\{\ell<m:h_\ell(j)\not\in\{0,z_j-1\}\}|\leq n.\]

For instance, $M^1_0(\mathbf{3})$ is the ternary Cantor set, $M^2_1(\mathbf{3})$ is the Sierpi\'nski carpet, $M^3_1(\mathbf{3})$ is the Menger sponge, $M^3_2(\mathbf{3})$ is the Sierpi\'nski sponge, and so on.
It is clear that a Menger compactum $M^m_n(z)$ is a computable compactum whenever $z$ is computable.
We effectivize the well-known fact that $M^{2n+1}_n(\mathbf{3})$ is a universal $n$-dimensional space (cf.\ \cite[Theorem 1.11.6]{EngBook}).

\begin{lemma}\label{lem:Menger-imbedding}
Let $z=(z_j)_{j\in\om}$ be a computable sequence such that $z_j\geq 3$ for all $j\in\om$, and $X$ be a $\Pi^0_1$ subset of $[0,1]^\om$ such that $X\subseteq N^m_n$.
Then, $X$ is computably embedded into $M^m_n(z)$.
\end{lemma}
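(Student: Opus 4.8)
The plan is to mimic the proof of Theorem~\ref{thm:imbedding} (the N\"obeling imbedding theorem), replacing the ambient target $\interval^{2n+1}$ by $\interval^m$ and the family of forbidden linear subspaces $(L_r)_r$ by a carefully chosen countable family of ``forbidden cubes'' whose removal produces $M^m_n(z)$. First I would observe, just as in the first line of the proof of Theorem~\ref{thm:imbedding}, that the Menger compactum $M^m_n(z)$ can be written as $\interval^m\setminus\bigcup_r W_r$, where each $W_r$ is a ``bad'' open box coming from the defining condition: for a fixed level $j$ and a fixed choice of more than $n$ coordinates $\ell$, the set of tuples whose $\ell$-th $z$-ary digit at level $j$ avoids $\{0,z_j-1\}$ is (relatively) open, and $M^m_n(z)$ is the complement of the union of all such boxes (taking the intersection over levels to handle endpoint identifications, exactly as in the ternary Cantor-set construction). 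Crucially, each $W_r$ is a finite union of rational open boxes, hence a co-c.e.\ closed complement, and the sequence $(W_r)_r$ is uniformly computable since $z$ is computable.

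The key step is to prove the appropriate analogue of Lemma~\ref{lem:approx-imbedding}: given a computable approximation $(P_s)_s$ of an $n$-dimensional $\Pi^0_1$ set $X\subseteq N^m_n$, a computable $f:P_s\to\interval^m$, and $i,j$, together with a \emph{finite} list $W_{r_1},\dots,W_{r_k}$ of forbidden boxes, one can effectively find $t\ge s$, $v$, and a $(2^{-i};2^{-v})$-mapping $g:P_t\to\interval^m\setminus(W_{r_1}\cup\dots\cup W_{r_k})$ with $d(f,g)<2^{-j}$. The argument follows the same template: find a modulus of uniform continuity, use Lemma~\ref{lem:dim-refinement} to get a finite open $2^{-u(j)}$-cover $\mathcal{U}$ of $X$ with ${\rm ord}(\mathcal{U})\le n$, pass to the shrinkings $\mathcal{V},\mathcal{F}$ from Lemma~\ref{lem:comp-normal}, and then build a $\kappa$-mapping $\kappa:P_t\to|\mathcal{N}(\mathcal{V})|$ determined by vertices $p_k$ chosen near $f[F(U_k)]$. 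The choice of the $p_k$ must be made so that (a) they are in general position (Observation~\ref{obs:general-position}) and (b) no $n$-dimensional face of the nerve meets any of the finitely many forbidden boxes $W_{r_1},\dots,W_{r_k}$; since $\dim\mathcal{N}(\mathcal{V})\le n$ and each $W_{r}$ lies in an affine coordinate slab of codimension $\ge 1$ (in fact codimension $>n$ after one intersects the relevant slabs), a generic perturbation avoids them, and the existence of such $p_k$ is a c.e.\ condition, so one searches. Here one uses that $X\subseteq N^m_n$ already guarantees $X$ itself misses every $W_r$, which is what makes the density (hence searchability) hold.

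Granting this lemma, the proof of Lemma~\ref{lem:Menger-imbedding} is the same ``effective Baire category'' bookkeeping as in the proof of Theorem~\ref{thm:imbedding}: start with $P_0=\interval^\om$ and a constant $f_0$; at stage $r$, with a $(2^{-r};2^{-v(r)})$-mapping $f_r:P_{s(r)}\to\interval^m$ minus the first $r$ forbidden boxes, apply the lemma with $i=r+1$, $j=v(r)+1$, and the forbidden list $W_0,\dots,W_r$ to obtain $f_{r+1}=g$ with $d(f_r,g)<2^{-v(r)-1}$ avoiding $W_0,\dots,W_r$; set $s(r+1)=t$, $v(r+1)=\max\{v,v(r)+1\}$. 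The sequence $(f_r)$ converges effectively (the bounds $2^{-v(r)-1}$ are summable by construction), the limit $f$ is a computable $\ep$-mapping for every $\ep>0$ hence a computable embedding by Observation~\ref{obs:basic-top-dim}(4), and its image avoids $\bigcup_r W_r$, so $f[X]\subseteq M^m_n(z)$.

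The main obstacle I expect is part (b) of the vertex-selection step in the analogue of Lemma~\ref{lem:approx-imbedding}: one must verify that the dimension-counting which works for a single $n$-plane $L$ in Lemma~\ref{lem:approx-imbedding} still works when $L$ is replaced by the more complicated forbidden set $W_r$ of $M^m_n(z)$, i.e.\ that after intersecting the finitely many coordinate slabs defining a given $W_r$ one really gets a set of codimension strictly greater than $n$ in $\interval^m$ (this is exactly where the ``at most $n$ special digits'' condition and $m\ge 2n+1$ enter, and it is the classical reason $M^{2n+1}_n$ is $n$-universal), and that the resulting avoidance condition is still c.e.\ and dense so that the effective search terminates. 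Everything else is a routine transcription of the N\"obeling argument already carried out in this section.
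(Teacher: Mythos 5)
There is a genuine gap at exactly the point you flag as your ``main obstacle,'' and it is fatal to the strategy as outlined. The forbidden sets $W_r$ for the Menger compactum are \emph{not} sets of codimension $>n$; they are full-dimensional open boxes. The defining condition that coordinate $\ell$ have $j$-th $z$-ary digit outside $\{0,z_j-1\}$ restricts $x_\ell$ to a union of nondegenerate subintervals of length $1/z_j^\ast$, not to a hyperplane, so each $W_r$ is a finite union of $m$-dimensional open boxes of positive measure. Imposing $n+1$ such interval constraints cuts down the \emph{measure} but not the \emph{dimension}. Consequently, the dimension count that makes the N\"obeling argument go through — an $n$-dimensional polyhedron generically misses an $n$-dimensional affine subspace inside $\mathbb{R}^{2n+1}$ — simply has no analogue here: no amount of general-position perturbation of the vertices $p_k$ will make an $n$-dimensional polyhedron avoid an open set, if that open set happens to intersect the region where the polyhedron must live. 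Avoidance is not a dense (let alone open dense) condition for these $W_r$, so the ``effective Baire category'' scaffolding you propose to transport from Theorem~\ref{thm:imbedding} does not apply.

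The paper's actual proof uses a fundamentally different mechanism that exploits the hypothesis $X\subseteq N^m_n$ in a sharper way. Rather than trying to avoid the (thick) boxes $W_r$ directly, it introduces the \emph{center arrangement}: for each level $i$, $S_i\subseteq[0,1]^m$ is the set of $m$-tuples in which at least $n+1$ coordinates equal some interval midpoint $c^i_\sigma$. Because each midpoint is rational, $S_i$ is a finite union of affine subspaces of codimension $\geq n+1$, and $N^m_n\cap S_i=\emptyset$; so $f_i[X]$ is disjoint from $S_i$ and, by effective compactness, one can compute $\ep>0$ with $d(f_i[X],S_i)>\ep$. The construction then applies a \emph{coordinatewise piecewise-linear homeomorphism} $h^i$ of $[0,1]$ that collapses everything more than $\ep$ below the center of each level-$i$ interval into the first subinterval, and everything more than $\ep$ above into the last subinterval. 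Since at most $n$ coordinates of any $f_i(x)$, $x\in X$, are within $\ep$ of a center, at most $n$ coordinates end up in a ``middle'' subinterval after applying $h^i$ — exactly the level-$(i+1)$ Menger constraint. Rational slopes keep the image inside $N^m_n$ so the induction continues, and the uniform convergence of $f_i=(h_i,\dots,h_i)\circ f_0$ produces the computable embedding. In short: the paper does not avoid the full-dimensional bad boxes by perturbation; it \emph{pushes} the image away from the thin center arrangement $S_i$ so that after a homeomorphism the image automatically lands inside the allowed boxes. That push-away step, not a dimension-counting density argument, is the missing idea in your proposal.
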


\begin{proof}
At the $i$-th level, divide $[0,1]$ into $z_{i}^\ast$ many intervals $(J^i_\sigma)$ of length $1/z_i^\ast$ indexed by $z$-bounded sequences $\sigma$ of length $i$, where we ensure that $J^i_\sigma$ is the union of subintervals $(J^{i+1}_{\sigma\fr j})_{j<z_i}$, and that $j<k$ implies $\max J^{i+1}_{\sigma\fr j}\leq\min J^{i+1}_{\sigma\fr k}$.
Let $c^i_\sigma$ be the center of the interval $J^i_\sigma$.
Then define $S_i\subseteq[0,1]^m$ as the set of all $m$-tuples $(x_\ell)_{\ell<m}$ such that at least $n+1$ many $\ell$ are of the form $c^i_\sigma$ for some $\sigma$.
Since $c^i_\sigma$ is rational, $N^m_n\cap S_i$ is empty.

We will define a sequence $(h_i)$ of piecewise linear homeomorphisms on $[0,1]$, and ensure that $f_i((x_\ell)_{\ell<m})=(h_i(x_\ell))_{\ell<m}$ uniformly converges to an embedding of $X$ into $M^m_n(z)$.
Assume that we have already constructed $f_i$, and that $f_i[X]\subseteq N^m_n$.
Since $f_i[X]$ and $S_i$ are computably compact, one can effectively find a finitary approximation $L$ of $f_i[X]$ which is disjoint from $S_i$.
Then one can calculate a sufficiently small rational $\ep>0$ such that $d(S_i,L)>\ep$.
It is easy to compute a piecewise linear homeomorphism $h^i_\sigma$ of rational slope on $J^i_\sigma$ satisfying that if $x\leq c^i_\sigma-\ep$ then $h^i_\sigma(x)\in J^i_{\sigma\fr 0}$, and if $x\geq c^i_\sigma+\ep$ then $h^i_\sigma(x)\in J^i_{\sigma\fr (z_i-1)}$.
Then define $h^i$ as the union of $(h^i_\sigma)$, and then $h_i=h^i\circ h_{i-1}$.
Define $f_{i+1}$ as above, and then by rationality of $h^i_\sigma$ one can ensure that $f_{i+1}[X]\subseteq N^m_n$, which enable us to continue the induction steps.
It is easy to check that the resulting $f$ is a computable embedding of $X$ into $M^m_n(z)$ whatever $z$ is.
\end{proof}

\subsection{Fractal dimensions}

We now connect effective topological dimension theory and effective fractal dimension theory.
The latter area is extensively studied in algorithmic randomness theory, cf.\ \cite[Chapter 13]{DHBook}.
There are many known classical results connecting the relationship between topological dimension and fractal dimension.
For instance, for a dimension-theoretic notion $Dim$, topologists often found a result of the following kind:
\[\dim(E)=\inf\{Dim(Y):Y\mbox{ is homeomorphic to }E\}.\]

If $Dim=\dim_H$, then it is known as the Szpilrajn theorem, and if $Dim=\ubdim$ it is the Pontrjagin-Schnirelmann theorem, where $\dim_H$ is the Hausdorff dimension, and $\ubdim$ is the upper box-counting dimension.

Let $E$ be a compactum.
Let $\mathcal{C}_r(E)$ be the collection of all finite covers $\mathcal{U}$ of $E$ consisting of balls of diameter $\leq r$, and put $|E|_r=\min\{|\mathcal{U}|:\mathcal{U}\in\mathcal{C}_r(E)\}$.

\begin{obs}\label{obs:box-dimension-rational}
If $E\subseteq\mathbb{R}^n$ is compact, one has a cover $\mathcal{U}\in\mathcal{C}_r(E)$ consisting of rational balls that attains the minimal cardinality $|\mathcal{U}|=|E|_r$.
\end{obs}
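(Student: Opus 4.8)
The statement asserts that for a compact set $E\subseteq\mathbb{R}^n$, among all finite covers by balls of diameter $\leq r$ that achieve the minimum cardinality $|E|_r$, at least one can be taken to consist of \emph{rational} balls (i.e. balls with rational center and rational radius). The plan is a straightforward perturbation-and-compactness argument: start with an optimal cover by arbitrary balls, then wiggle the centers and radii slightly to make them rational while preserving the covering property, using compactness of $E$ to guarantee that a small enough perturbation still covers.

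\textbf{Key steps.} First I would fix a cover $\mathcal{U}=\{B_{\rho_1}(x_1),\dots,B_{\rho_k}(x_k)\}$ of $E$ by balls of diameter $\leq r$ with $k=|E|_r$. The first observation is that, replacing each $\rho_i$ by $\rho_i+\eta$ for a sufficiently small $\eta>0$ (small enough that $2(\rho_i+\eta)\leq r$ still holds for all $i$ — this uses that only finitely many $i$ occur), we may assume each $x_i$ lies in the \emph{interior} of the region actually needed, and more importantly that $\mathcal{U}$ is an \emph{open} cover of the compact set $E$ with slightly enlarged balls still of diameter $\leq r$. Actually the cleaner route: since each $B_{\rho_i}(x_i)$ is open and $E$ is compact, by the Lebesgue number lemma there is $\lambda>0$ such that every point of $E$ lies in some $B_{\rho_i}(x_i)$ together with its $\lambda$-neighborhood; equivalently, for each $y\in E$ there is $i$ with $B_\lambda(y)\subseteq B_{\rho_i}(x_i)$. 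Then I would choose rational points $q_i$ with $d(q_i,x_i)<\lambda/3$ and rational radii $r_i$ with $\rho_i-\lambda/3<r_i<\rho_i$, while also ensuring $2r_i\leq r$ (possible since $r_i<\rho_i\leq r/2$; if $r$ itself is irrational one still has $2r_i<2\rho_i\leq r$). The final step is to verify $\{B_{r_i}(q_i)\}_{i\le k}$ covers $E$: given $y\in E$, pick $i$ with $B_\lambda(y)\subseteq B_{\rho_i}(x_i)$; then for any $z$ with $d(z,q_i)<r_i$ one checks $d(z,x_i)\le d(z,q_i)+d(q_i,x_i)< r_i+\lambda/3<\rho_i$, and in particular, is $y$ itself in $B_{r_i}(q_i)$? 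We have $d(y,q_i)\le d(y,x_i)+d(x_i,q_i)$, and $d(y,x_i)$ could be close to $\rho_i$. To fix this cleanly I would instead use the Lebesgue-number statement in the form: for each $y\in E$ there is $i$ with $d(y,x_i)<\rho_i-\lambda$; then $d(y,q_i)<\rho_i-\lambda+\lambda/3<r_i$, so $y\in B_{r_i}(q_i)$. Hence the rational cover has cardinality $\le k=|E|_r$, and it cannot be smaller by minimality of $|E|_r$, so equality holds.

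\textbf{Main obstacle.} There is no serious obstacle here — this is a routine observation, as the paper's phrasing ("It is easy to see") signals. The only point requiring the tiniest bit of care is the bookkeeping on the diameter constraint: one must perturb the radii \emph{downward} (and the centers by a controlled amount) so that the diameter bound $\le r$ is automatically preserved, rather than risking $2r_i>r$. Doing so forces the centers to be perturbed by less than the "slack" $\lambda$ in the Lebesgue number, which is exactly what the argument above arranges. Finiteness of the cover is used throughout to pass from "for each $i$" to "uniformly in $i$" (choosing a single $\eta$, a single $\lambda$, etc.), and compactness of $E$ is used to produce the Lebesgue number in the first place.
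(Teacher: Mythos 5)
Your argument is correct, but it is organized differently from the paper's. The paper works one ball at a time: for each $U\in\mathcal{U}$, the set $E\setminus\bigcup\mathcal{V}$ (the part of $E$ covered only by $U$) is a compact subset of the open ball $U$, hence at positive distance $\ep>0$ from $\mathbb{R}^n\setminus U$; this slack lets one replace $U$ by a nearby rational ball of slightly smaller radius while keeping a cover of the same size. You instead invoke the Lebesgue number lemma for the entire finite open cover, obtaining a single $\lambda>0$ and then perturbing all centers and radii simultaneously. Both are elementary compactness arguments reaching the same conclusion; the paper's version is a bit leaner (no Lebesgue number needed, and each ball is only asked to keep covering its own ``private'' part), whereas yours is more uniform and arguably more transparent to effectivize. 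One small quibble: the Lebesgue number lemma gives $d(y,x_i)\leq\rho_i-\lambda$ (non-strict) rather than $d(y,x_i)<\rho_i-\lambda$, but your estimate still closes because you reserve a strict $\lambda/3$ margin for the center perturbation; you might want to state it that way to avoid the appearance of a gap.
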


\begin{proof}
This is because for any $U\in\mathcal{U}$ and $\mathcal{V}=\mathcal{U}\setminus\{U\}$, there is $\ep>0$ such that $d(E\setminus\bigcup\mathcal{V},\mathbb{R}^n\setminus U)>\ep$ by compactness, and therefore one can replace $U$ with a rational ball.
\end{proof}

The {\em lower and upper box-counting dimension} of $E$ are defined as follows:
\begin{align*}
\lbdim(E)=\liminf_{r\to 0}\frac{\log |E|_r}{\log(r^{-1})},& &\ubdim(E)=\limsup_{r\to 0}\frac{\log |E|_r}{\log(r^{-1})}.
\end{align*}

It is easy to see that $\dim_H(X)\leq\underline{\dim}_B(X)$.
For a polyhedron, we have $\dim(X)=\dim_H(X)=\ubdim(X)=\lbdim(X)$.

It is not hard to effectivize the Pontrjagin-Schnirelmann theorem by a straightforward argument. 
%
%
%
Then it is natural to ask whether one can replace the lower box dimension $\lbdim(X)$ in the effective Pontrjagin-Schnirelmann Theorem with the upper box dimension $\ubdim$ or the packing dimension $\dim_P$.
In the classical setting, Joyce \cite{Joy98} has shown the Pontrjagin-Schnirelmann Theorem for the packing dimension $\dim_P$ by a slight modification of the standard $\kappa$-mapping argument.
Luukkainen \cite{Luu98} has shown the Pontrjagin-Schnirelmann for the Assouad dimension $\dim_A$.

The Assouad dimension is a modification of the upper box-counting dimension.
Note that if $\ubdim(E)\leq s$ then for any $\ep>0$ and for any sufficiently small $r$, $\log|E|_r\leq (s+\ep)\log(r^{-1})$, that is, $|E|_r\leq r^{-s+\ep}$ holds.
The {\em Assouad dimension} of $E$, denoted by $\dim_A(E)$, is the infimum of $s$ such that there are $c$ and $\rho$ such that for any positive reals $r<R<\rho$, the following holds.
\[\sup_{x\in E}|E\cap B_R(x)|_r\leq c\left(\frac{R}{r}\right)^s.\]

It is easy to see the following inequalities.
\[\dim(E)\leq\dim_H(E)\leq\dim_P(E)\leq\ubdim(E)\leq\dim_A(E).\]

\begin{fact}[Luukkainen {\cite[Lemma 3.8]{Luu98}}]\label{fact:Luukainen}
Let $z=(z_n)_{n\in\om}\in\om^\om$ be such that $\lim_nz_n=\infty$.
Then $\dim_A(M^m_n(z))=n$.
\end{fact}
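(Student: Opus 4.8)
The plan is to establish $\dim_A(M^m_n(z))\ge n$ and $\dim_A(M^m_n(z))\le n$ separately. Write $E=M^m_n(z)$ and assume $m\ge n$ (for $m=n$ we have $E=[0,1]^n$, so there is nothing to prove); recall from the classical theory of Menger compacta (see \cite{EngBook}) that $\dim(E)=n$, and that $E$ contains a topological copy of $[0,1]^n$ (freeze any $m-n$ of the coordinates at $0$). The lower bound is then immediate: $\dim_A(E)\ge\dim_A([0,1]^n)=n$, equivalently by the displayed chain $\dim(E)\le\dim_A(E)$. So the content is entirely in the upper bound, and the point of the hypothesis $\lim_n z_n=\infty$ is exactly to push the Assouad dimension down from its ``box-counting value'' — which for constant $z_j\equiv 3$ would be the strictly larger number $\limsup_k\big(\sum_{j<k}\log N_j\big)/\big(\sum_{j<k}\log z_j\big)$, e.g.\ $\log 20/\log 3$ for the Menger sponge — to the topological value $n$.

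For the upper bound I would use the construction as in the proof of Lemma~\ref{lem:Menger-imbedding}: a level-$j$ cube has side $s_j$, with $s_0=1$ and $s_{j+1}=s_j/z_j$, and it is subdivided into $z_j^m$ level-$(j+1)$ subcubes, of which $N_j:=\sum_{k\le n}\binom mk(z_j-2)^k2^{m-k}$ survive. Then I isolate three elementary observations. First, $N_j\le C_m\,z_j^{\,n}$ with $C_m\le 3^m$, so since $z_j\to\infty$ there is, for each fixed $\delta>0$, a level $j_0$ with $N_j\le z_j^{\,n+\delta}$ for all $j\ge j_0$; put $\rho=s_{j_0}$ and restrict to $0<r<R<\rho$, so only levels $\ge j_0$ ever occur. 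Second, the $N_j$ surviving level-$(j+1)$ subcubes of a level-$j$ cube all lie within $\sqrt m\,s_{j+1}$ of the $n$-skeleton of that cube, i.e.\ of the union of its $\binom mn2^{m-n}$ faces of dimension $n$. Third, as a direct coordinatewise count refining the second observation, a ball of radius $R\ge s_{j+1}$ meets at most $C_{m,n}(R/s_{j+1})^{n}$ of the surviving level-$(j+1)$ subcubes of a given level-$j$ cube. Two trivial facts are used freely: a ball of radius $R<s_k$ meets at most $3^m$ level-$k$ cubes, and the $\sqrt m\,s$-neighbourhood of an $n$-dimensional face of diameter $D$ is coverable by $C_{m,n}(D/r)^{n}$ sets of diameter $\le r$ whenever $s\le r$.

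The estimate is then a telescoping count. Given $0<r<R<\rho$, let $k$ and $k'$ be the unique levels with $s_{k+1}\le R<s_k$ and $s_{k'+1}\le r<s_{k'}$; then $j_0\le k\le k'$. If $k'=k$, the second observation places $E\cap B_R(x)$ inside the $\sqrt m\,s_{k+1}$-neighbourhood of at most $C_{m,n}$ pieces of $n$-faces of diameter $O(R)$, so $|E\cap B_R(x)|_r\le C_{m,n}(R/r)^{n}$. If $k'>k$, then combining ``$\le 3^m$ level-$k$ cubes meet $B_R(x)$'', the third observation at level $k$, and the multiplicativity ``$\prod_{j=k+1}^{k'-1}N_j$ survivors from level $k+1$ down to level $k'$'' bounds the number of surviving level-$k'$ cubes meeting $B_R(x)$ by $C_{m,n}(R/s_{k+1})^n\prod_{j=k+1}^{k'-1}N_j$; covering the part of $E$ in each such level-$k'$ cube — which by the second observation lies in the $\sqrt m\,s_{k'+1}$-neighbourhood of at most $C_{m,n}$ faces of diameter $O(s_{k'})$ — by $C_{m,n}(s_{k'}/r)^n$ sets of diameter $\le r$ gives
\[
|E\cap B_R(x)|_r\;\le\;C_{m,n}\Big(\frac{R}{r}\Big)^{n}\Big(\frac{s_{k'}}{s_{k+1}}\Big)^{n}\prod_{j=k+1}^{k'-1}N_j
\;=\;C_{m,n}\Big(\frac{R}{r}\Big)^{n}\prod_{j=k+1}^{k'-1}\frac{N_j}{z_j^{n}}
\;\le\;C_{m,n}\Big(\frac{R}{r}\Big)^{n}\Big(\frac{s_{k+1}}{s_{k'}}\Big)^{\delta},
\]
where the equality uses $s_{k'}/s_{k+1}=\prod_{j=k+1}^{k'-1}z_j^{-1}$ and the last step uses the first observation (all $j$ in the product satisfy $j\ge j_0$). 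Since $s_{k+1}\le R$ and $s_{k'}>r$ we have $s_{k+1}/s_{k'}<R/r$, hence $|E\cap B_R(x)|_r\le C_{m,n}(R/r)^{n+\delta}$ in every case, with $C_{m,n}$ independent of $z$. As $\delta>0$ is arbitrary this yields $\dim_A(E)\le n$, which with the lower bound finishes the proof.

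I expect the main obstacle to be the treatment of scales $r<R$ that are not ``level boundaries'' $s_j$: one must resist covering $E\cap B_R(x)$ by counting surviving cubes at the finest relevant level, which over-counts by a factor that can be as large as $z_j^{\,n}$ (a ball can meet $\approx z_j^{\,n}$ surviving cubes that nevertheless lie along a single $n$-dimensional face and so are jointly coverable by only $O((R/r)^n)$ sets of diameter $\le r$). Forcing this correction through is precisely what requires the second observation, and it is also the exact place where $\lim_n z_n=\infty$ enters, via $\prod_j(N_j/z_j^{\,n})\le\big(\prod_j z_j\big)^{\delta}$ with $\delta$ made small once $j_0=j_0(\delta)$ is large — a bound that fails with any fixed exponent for bounded $z_j$, which is why the classical Menger sponge has strictly larger Assouad dimension.
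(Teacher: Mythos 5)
The paper itself contains no proof of this statement: it is quoted as a black-box Fact from Luukkainen's paper, so there is no internal argument to compare yours against. Your blind proof, however, appears to be correct, and it is the natural covering argument one would expect Luukkainen's Lemma~3.8 to rest on. The lower bound via $\dim(E)\le\dim_A(E)$ (or via the isometric copy of $[0,1]^n$) is fine. The upper bound hinges on exactly the right three observations: (i) $N_j\le 3^m z_j^n$, so once $z_j\ge 3^{m/\delta}$ one has $N_j\le z_j^{n+\delta}$; (ii) the surviving level-$(j{+}1)$ subcubes lie in the $\sqrt m\,s_{j+1}$-neighbourhood of the $n$-skeleton of their parent; and (iii) a ball of radius $R\ge s_{j+1}$ meets only $C_{m,n}(R/s_{j+1})^n$ surviving children of a given parent, which I checked by grouping survivors by a choice of $n$-face of the parent (at most $\binom mn2^{m-n}$ groups, at most $(4R/s_{j+1})^n$ per group). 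The telescoping cancellation $(s_{k'}/s_{k+1})^n\prod_{j=k+1}^{k'-1}N_j=\prod_{j=k+1}^{k'-1}(N_j/z_j^n)$ and the final $s_{k+1}/s_{k'}<R/r$ step are both correct, and the $k'=k$ and $k'=k{+}1$ edge cases degenerate gracefully as you note. Your parenthetical diagnosis of where $\lim_j z_j=\infty$ enters — to make $\prod(N_j/z_j^n)$ controllable by $(R/r)^\delta$ rather than by a fixed positive power, which would be unavoidable for constant $z_j$ (e.g.\ $\log 20/\log 3>1$ for the Menger sponge) — is exactly the point. The only cosmetic imprecision is that your ``trivial fact (b)'' should read $C_{m,n}\max\bigl((D/r)^n,1\bigr)$ rather than $C_{m,n}(D/r)^n$, but in every use $D\ge r$ (either $D=O(R)\ge r$ or $D=O(s_{k'})>r$), so nothing breaks; similarly ``sets of diameter $\le r$'' versus ``balls of diameter $\le r$'' only costs a factor absorbed into the constant. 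So this is a correct, self-contained proof of a fact the paper only cites.
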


\begin{cor}\label{cor:Assouad-dimension}
Every $n$-dimensional $\Pi^0_1$ subset of $[0,1]^\om$ is computably embedded into a computable compact subset of $\mathbb{R}^{2n+1}$ of the Assouad dimension $n$.
\end{cor}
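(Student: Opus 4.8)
The plan is to obtain the desired embedding by \emph{composing} the effective N\"obeling imbedding theorem (Theorem~\ref{thm:imbedding}) with the effective Menger imbedding lemma (Lemma~\ref{lem:Menger-imbedding}), and then to read off the Assouad dimension of the target from Luukkainen's computation (Fact~\ref{fact:Luukainen}). No genuinely new construction is needed; the content is bookkeeping about which ambient space each step lands in.

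First I would fix a computable sequence $z=(z_j)_{j\in\om}$ with $z_j\geq 3$ for all $j$ and $\lim_j z_j=\infty$; for concreteness one may take $z_j=j+3$. With this choice $M^{2n+1}_n(z)$ is a computable compact subset of $\mathbb{R}^{2n+1}$ (as observed just before Lemma~\ref{lem:Menger-imbedding}), and $\dim_A(M^{2n+1}_n(z))=n$ by Fact~\ref{fact:Luukainen}. So $M^{2n+1}_n(z)$ is the space into which we will embed.

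Next, let $P$ be an $n$-dimensional $\Pi^0_1$ subset of $\interval^\om$. By Theorem~\ref{thm:imbedding} there is a computable embedding $f\colon P\to N^{2n+1}_n$. Since $\interval^\om$ is a computable compactum and $P$ is $\Pi^0_1$ in it, Observation~\ref{obs:basic-top-dim}(1),(2) shows that $f[P]$ is computably compact, hence a $\Pi^0_1$ subset of $\interval^{2n+1}$; identifying $\interval^{2n+1}$ with the $\Pi^0_1$ subspace $\{p\in\interval^\om:p(j)=0\text{ for }j\geq 2n+1\}$ of $\interval^\om$, we may regard $f[P]$ as a $\Pi^0_1$ subset of $\interval^\om$, and of course $f[P]\subseteq N^{2n+1}_n$. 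Thus Lemma~\ref{lem:Menger-imbedding} applies with $X=f[P]$, $m=2n+1$, and the chosen $z$, and produces a computable embedding $g\colon f[P]\to M^{2n+1}_n(z)$.

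Finally I would note that $g\circ f\colon P\to M^{2n+1}_n(z)$ is computable and injective, and that its computable left inverse is $f^{-1}\circ g^{-1}$, the composite of the computable left inverses supplied by the two embeddings; hence $g\circ f$ is a computable embedding of $P$ into the computable compactum $M^{2n+1}_n(z)\subseteq\mathbb{R}^{2n+1}$, which has Assouad dimension $n$. The only point requiring a moment's care — the ``main obstacle'', such as it is — is verifying that the image of the first embedding is again a $\Pi^0_1$ subset of $\interval^\om$ lying inside $N^{2n+1}_n$, so that Lemma~\ref{lem:Menger-imbedding} is legitimately applicable; this is exactly what Observation~\ref{obs:basic-top-dim} gives. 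Everything else is the routine fact that a composite of computable embeddings is a computable embedding.
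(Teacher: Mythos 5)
Your proposal is correct and is essentially the paper's proof: it cites Theorem~\ref{thm:imbedding}, Lemma~\ref{lem:Menger-imbedding}, and Fact~\ref{fact:Luukainen} in exactly the same chain, with the same choice of a computable $z$ with $z_j\to\infty$. The only thing you add is the explicit (and correct) bookkeeping via Observation~\ref{obs:basic-top-dim} that the image of the first embedding is a $\Pi^0_1$ subset lying inside $N^{2n+1}_n$, which the paper leaves implicit.
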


\begin{proof}
By Theorem \ref{thm:imbedding}, Lemma \ref{lem:Menger-imbedding}, and Fact \ref{fact:Luukainen}.
\end{proof}

\section{The metric $tt$-degree theory}

\subsection{Topological dimension of points}

As mentioned before, McNicholl-Rute (Fact \ref{fact:MR-computable-arc}) has shown that a point $x\in\mathbb{R}^2$ is contained in a computable arc if and only if $x$ is $tt$-equivalent to a point in $\mathbb{R}$.
Then, it is natural to ask a generalized question:
Which point in the Hilbert cube can be $tt$-equivalent to a point in $\mathbb{R}^n$ for some $n\in\om$?

\subsubsection{Universal Menger compacta}

We first give a characterization of the $tt$-degrees of $n$-dimensional points of computable compacta, that is, the $n$-dimensional points are exactly those of $M^{2n+1}_n(\mathbf{3})$-$tt$-degrees.

\begin{theorem}\label{thm:universal-menger}
The following are equivalent for a point $x\in[0,1]^\om$ and $n\in\om$.
\begin{enumerate}
\item $x$ is $tt$-equivalent to a point in an $n$-dimensional computable compactum.
\item $x$ is $tt$-equivalent to a point in the universal Menger compactum $M^{2n+1}_n(\mathbf{3})$.
\item $x$ is contained in an $n$-dimensional $\Pi^0_1$ subset of $[0,1]^\om$.
\end{enumerate}
\end{theorem}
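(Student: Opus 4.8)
The plan is to establish the cycle (3)$\Rightarrow$(2)$\Rightarrow$(1)$\Rightarrow$(3), exploiting the embedding machinery developed in the previous section together with Lemma \ref{lem:main-lemma}.

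First I would do (3)$\Rightarrow$(2). Suppose $x$ lies in an $n$-dimensional $\Pi^0_1$ set $P\subseteq[0,1]^\om$. By Theorem \ref{thm:imbedding}, $P$ computably embeds into the N\"obeling space $N^{2n+1}_n$; composing with the embedding of Lemma \ref{lem:Menger-imbedding} (applied with $z=\mathbf{3}$, which is legitimate since the image of $P$ is a $\Pi^0_1$ subset of $[0,1]^{2n+1}$ contained in $N^{2n+1}_n$) we obtain a computable embedding $\Phi$ of $P$ into $M^{2n+1}_n(\mathbf{3})$. A computable embedding has a $\Pi^0_1$ domain and, being a computable injection with computable left-inverse, its image is also $\Pi^0_1$ (this is essentially the content of Lemma \ref{lem:main-lemma}, or one can argue directly via formal disjointness as in the proof of Fact \ref{fact:MR}). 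Hence $x\equiv_{tt}\Phi(x)$ and $\Phi(x)\in M^{2n+1}_n(\mathbf{3})$, giving (2). The implication (2)$\Rightarrow$(1) is immediate, since $M^{2n+1}_n(\mathbf{3})$ is itself a computable compactum of topological dimension $n$ (it is a classical Menger compactum; its computability is noted just before Lemma \ref{lem:Menger-imbedding}, and $\dim M^{2n+1}_n(\mathbf{3})=n$ is standard, cf.\ \cite[Theorem 1.11.6]{EngBook}).

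The substantive direction is (1)$\Rightarrow$(3). Assume $x$ is $tt$-equivalent to a point $y$ in an $n$-dimensional computable compactum $\repsp{Y}$. By Lemma \ref{lem:main-lemma} there is a $\Pi^0_1$ set $P\subseteq[0,1]^\om$ with $x\in P$ and a computable embedding $\Psi$ of $P$ into $\repsp{Y}$ whose image $\Psi[P]$ is $\Pi^0_1$ (here I use that $[0,1]^\om$ is a computable Polish space into which we may regard $x$ as living, or first replace the ambient space of $x$ by $[0,1]^\om$ via a computable embedding). Now $P$ is homeomorphic to the subspace $\Psi[P]\subseteq\repsp{Y}$, and a subspace of an $n$-dimensional separable metrizable space has dimension $\le n$ (cf.\ \cite[Theorem III.1]{HWBook}). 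Hence $\dim(P)\le n$, and $P$ is the required $\Pi^0_1$ subset of $[0,1]^\om$ containing $x$ — this is exactly (3). (Strictly, one should also record that the ``if'' requires $\dim(P)\le n$ rather than $=n$, but that is harmless: (3) as stated asks only for an $n$-dimensional $\Pi^0_1$ set, and a lower-dimensional $\Pi^0_1$ set can be enlarged, e.g.\ by taking a disjoint union with a fixed $n$-cube inside $[0,1]^\om$, without disturbing membership of $x$; alternatively one reads (3) as $\dim\le n$ throughout, consistently with the statement of Theorem \ref{thm:main-Kol-complexity}.)

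I expect the main obstacle to be purely bookkeeping: making sure that the ambient space of $x$ in clause (1)/(2) — nominally $[0,1]^\om$ — is reconciled with the target spaces $\repsp{Y}$ and $M^{2n+1}_n(\mathbf{3})$, and that ``computable embedding with $\Pi^0_1$ image'' is invoked in precisely the form that Lemma \ref{lem:main-lemma} licenses, so that $tt$-equivalence is genuinely preserved in both directions. No new computability-theoretic idea is needed beyond Theorem \ref{thm:imbedding}, Lemma \ref{lem:Menger-imbedding}, and Lemma \ref{lem:main-lemma}; the one genuinely topological input is the subspace monotonicity of topological dimension together with the fact $\dim M^{2n+1}_n(\mathbf{3})=n$.
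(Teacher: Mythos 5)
Your proposal is correct and takes essentially the same route as the paper: (2)$\Rightarrow$(1) is immediate, (1)$\Rightarrow$(3) follows from Lemma \ref{lem:main-lemma} together with subspace monotonicity of dimension, and (3)$\Rightarrow$(2) is the composite of Theorem \ref{thm:imbedding} and Lemma \ref{lem:Menger-imbedding}, with Lemma \ref{lem:main-lemma} used at the end to convert the closed-image embedding back into $tt$-equivalence. One small slip in attribution: the fact that the embedded image $\Phi[P]\subseteq M^{2n+1}_n(\mathbf{3})$ is $\Pi^0_1$ comes from Observation \ref{obs:basic-top-dim} (a computable image of a computably compact set is computably compact, hence $\Pi^0_1$), not from Lemma \ref{lem:main-lemma} nor the formal-disjointness argument in Fact \ref{fact:MR}, which only yield $\Pi^0_1$-ness of the \emph{domain}.
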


\begin{proof}
(2)$\Rightarrow$(1): This is because $M^{2n+1}_n(\mathbf{3})$ is an $n$-dimensional computable compactum.
(1)$\Rightarrow$(3): 
Let $y$ be a point in an $n$-dimensional computable compactum $\repsp{Y}$.
By Lemma \ref{lem:main-lemma}, there is a computable embedding $\Phi$ of a $\Pi^0_1$ set $P\subseteq[0,1]^\om$ into $\repsp{Y}$ such that $x\in P$ and $\Phi(x)=y$.
Since $P$ is homeomorphic to the subset $\Phi[P]$ of the $n$-dimensional space $\repsp{Y}$, $P$ is also $n$-dimensional.
(3)$\Rightarrow$(2):
Let $P$ be an $n$-dimensional $\Pi^0_1$ subset of $[0,1]^\om$ containing $x$.
By Theorem \ref{thm:imbedding} and Lemma \ref{lem:Menger-imbedding}, there is a computable embedding $\Phi$ of $P$ into $M^{2n+1}_n(\mathbf{3})$.
By Observation \ref{obs:basic-top-dim}, the embedded image $\Phi[P]$ is $\Pi^0_1$ in $M^{2n+1}_n(\mathbf{3})$.
Hence, by Lemma \ref{lem:main-lemma}, $x$ is $tt$-equivalent to a point in $M^{2n+1}_n(\mathbf{3})$.
\end{proof}

The above result can also be seen as an effectivization of Jayne-Rogers' result \cite{JR79b} saying that there is only one universal $n$-dimensional compactum up to first level Borel isomorphism.

\begin{cor}\label{cor:n-dim-uniform-degree}
The following are equivalent for $x\in[0,1]^\om$ and $n\in\om$:
\begin{enumerate}
\item $x$ is $tt$-equivalent to a point in a Euclidean space.
\item $x$ is $tt$-equivalent to a point in a finite dimensional computable compactum.
\item $x$ is contained in a finite dimensional $\Pi^0_1$ subset of $[0,1]^\om$.
\end{enumerate}
\end{cor}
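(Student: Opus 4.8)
The plan is to derive this from Theorem~\ref{thm:universal-menger} by taking a union over $n\in\om$, with a little extra bookkeeping needed only to pass between a Euclidean space and a compact subspace of it.

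First I would establish the equivalence of (2) and (3). If $x$ is $tt$-equivalent to a point in a finite-dimensional computable compactum, that compactum has some dimension $n\in\om$, and the implication (1)$\Rightarrow$(3) of Theorem~\ref{thm:universal-menger} puts $x$ inside an $n$-dimensional $\Pi^0_1$ subset of $[0,1]^\om$, which is in particular finite-dimensional. Conversely, if $x$ lies in a finite-dimensional $\Pi^0_1$ set $P\subseteq[0,1]^\om$, say $\dim(P)=n$, then the implication (3)$\Rightarrow$(2) of Theorem~\ref{thm:universal-menger} makes $x$ $tt$-equivalent to a point of the $n$-dimensional computable compactum $M^{2n+1}_n(\mathbf{3})$. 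Hence (2)$\Leftrightarrow$(3).

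Next I would treat (1)$\Leftrightarrow$(2). For (1)$\Rightarrow$(2): given $y\in\mathbb{R}^m$ with $x\equiv_{tt}y$, choose $k\in\om$ with $y\in[-k,k]^m$; the cube $[-k,k]^m$ is an $m$-dimensional computable compactum, and both the inclusion $[-k,k]^m\hookrightarrow\mathbb{R}^m$ and the componentwise clamping map $\mathbb{R}^m\to[-k,k]^m$ are total computable, so $y$ regarded as a point of $[-k,k]^m$ is $tt$-equivalent to $y$ regarded as a point of $\mathbb{R}^m$, hence to $x$. For (2)$\Rightarrow$(1): if $x$ is $tt$-equivalent to a point in an $n$-dimensional computable compactum, then Theorem~\ref{thm:universal-menger} makes $x$ $tt$-equivalent to a point $z$ of the Menger compactum $M^{2n+1}_n(\mathbf{3})\subseteq\mathbb{R}^{2n+1}$; since $M^{2n+1}_n(\mathbf{3})$ is a computably compact, hence $\Pi^0_1$, subspace of $\mathbb{R}^{2n+1}$ (Observation~\ref{obs:basic-top-dim}), the inclusion and the partial identity map with domain this $\Pi^0_1$ set are both computable, so by Fact~\ref{fact:MR} the point $z$ of $M^{2n+1}_n(\mathbf{3})$ has the same $tt$-degree as the corresponding point of $\mathbb{R}^{2n+1}$; thus $x$ is $tt$-equivalent to a point of the Euclidean space $\mathbb{R}^{2n+1}$.

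The only mildly delicate point, and the one I would check most carefully, is the repeated claim that transporting a point between a computably compact subspace $A$ and the ambient Euclidean space (or cube) leaves its $tt$-degree unchanged. One direction is immediate from totality of the inclusion; the other amounts to showing that the identity map, regarded as a partial map into $A$ with $\Pi^0_1$ domain $A$, is computable, which is a routine consequence of the effective compactness of $A$ (Observation~\ref{obs:basic-top-dim}) together with Fact~\ref{fact:MR}. Everything else is a direct invocation of Theorem~\ref{thm:universal-menger}.
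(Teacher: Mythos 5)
Your argument is correct and follows essentially the same route the paper intends: take the union over $n$ of the equivalences in Theorem~\ref{thm:universal-menger} and use that $M^{2n+1}_n(\mathbf{3})$ sits inside $\mathbb{R}^{2n+1}$. The one place you slightly depart is (1)$\Rightarrow$(2), where you clamp $\mathbb{R}^m$ into a cube; the paper could just as well apply the argument of Theorem~\ref{thm:universal-menger}~(1)$\Rightarrow$(3), which only invokes Lemma~\ref{lem:main-lemma} and works directly with $\repsp{Y}=\mathbb{R}^m$ (no compactness is needed there), but your clamping trick is equally valid and arguably more elementary. You are also right to flag the transport step between a computably compact $\Pi^0_1$ subspace $A\subseteq\repsp{Y}$ and the ambient space: the inclusion $A\hookrightarrow\repsp{Y}$ is total computable, and conversely the partial identity $\repsp{Y}\supseteq A\to A$ is a partial computable function with $\Pi^0_1$ domain $A$ (via effective compactness of $A$), so Fact~\ref{fact:MR} gives the $tt$-equivalence of the two readings of the same point. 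This is exactly the content the paper's one-line proof sweeps under the phrase "$M^{2n+1}_n(\mathbf{3})\subseteq\mathbb{R}^{2n+1}$."
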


\begin{proof}
This follows from Theorem \ref{thm:universal-menger} and the fact that $M^{2n+1}_n(\mathbf{3})\subseteq\mathbb{R}^{2n+1}$.
\end{proof}

We now consider two hierarchies of dimension of points:
A point $x\in\repsp{X}$ is {\em $n$-Euclidean} if it is $tt$-equivalent to a point in $\mathbb{R}^n$.
A point $x\in\repsp{X}$ is {\em $n$-dimensional} if it is $tt$-equivalent to a point in a $n$-dimensional computable compactum.
We also say that a point $x$ is {\em finite dimensional} if it is $n$-dimensional for some $n\in\om$.
By Corollary \ref{cor:n-dim-uniform-degree}, $x$ is finite dimensional iff $x$ is $n$-Euclidean for some $n\in\om$.

Note that McNicholl-Rute's result says that a point $x\in\mathbb{R}^2$ is contained in a computable arc iff $x$ is $1$-Euclidean.
The following is a trivial consequence of Theorem \ref{thm:universal-menger}.

\begin{obs}
Every $n$-dimensional point is $(2n+1)$-Euclidean.
\end{obs}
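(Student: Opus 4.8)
The plan is to chain together Theorem~\ref{thm:universal-menger} with the elementary observation that a computable compactum keeps all of its $tt$-degrees when it is regarded as a $\Pi^0_1$ subset of a larger computable metric space. Suppose $x\in\repsp{X}$ is $n$-dimensional, i.e.\ $x\equiv_{tt}y$ for some point $y$ in an $n$-dimensional computable compactum $\repsp{Y}$. Since $\equiv_{tt}$ is transitive, it suffices to show that $y$ is $(2n+1)$-Euclidean. First I would move $\repsp{Y}$ into the Hilbert cube: the map $z\mapsto(\min\{1,d(z,\alpha_k)\})_{k\in\om}$ is a computable injection of $\repsp{Y}$ into $[0,1]^\om$, hence a computable embedding with $\Pi^0_1$ image by Observation~\ref{obs:basic-top-dim}, so by Lemma~\ref{lem:main-lemma} we may assume $y\in[0,1]^\om$ and that $y$ lies in a $\Pi^0_1$ subset of $[0,1]^\om$ homeomorphic to $\repsp{Y}$, which therefore has topological dimension $n$. (Alternatively one may skip this step and simply invoke clause~(1) of Theorem~\ref{thm:universal-menger}, whose proof already carries out this reduction.)

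Now apply Theorem~\ref{thm:universal-menger} to $y$: since $y$ is contained in an $n$-dimensional $\Pi^0_1$ subset of $[0,1]^\om$, clause~(2) yields a point $z$ of the universal Menger compactum $M^{2n+1}_n(\mathbf{3})$ with $y\equiv_{tt}z$, whence $x\equiv_{tt}z$. It then remains to pass from $M^{2n+1}_n(\mathbf{3})$ to $\mathbb{R}^{2n+1}$. Recall that $M^{2n+1}_n(\mathbf{3})$ was defined as a computably compact subset of $[0,1]^{2n+1}$, and $[0,1]^{2n+1}$ is itself a $\Pi^0_1$ subset of $\mathbb{R}^{2n+1}$; composing the two inclusions exhibits $M^{2n+1}_n(\mathbf{3})$ as a computable compactum sitting inside $\mathbb{R}^{2n+1}$ as a $\Pi^0_1$ set, i.e.\ as a computable embedding of $M^{2n+1}_n(\mathbf{3})$ into $\mathbb{R}^{2n+1}$ whose image is $\Pi^0_1$. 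By Lemma~\ref{lem:main-lemma}, $z$ viewed as a point of $M^{2n+1}_n(\mathbf{3})$ is $tt$-equivalent to $z$ viewed as a point of $\mathbb{R}^{2n+1}$. Chaining the equivalences gives $x\equiv_{tt}z$ in $\mathbb{R}^{2n+1}$, so $x$ is $(2n+1)$-Euclidean, as desired.

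There is essentially no real obstacle here; the paper itself flags the statement as a trivial consequence of Theorem~\ref{thm:universal-menger}. The only point worth a sentence is the transition from a point of the Menger compactum (or, more generally, of any computable compactum) to the same point of an ambient Euclidean space, and this is handled uniformly by Lemma~\ref{lem:main-lemma} together with the routine fact (Observation~\ref{obs:basic-top-dim}) that a computable compactum is $\Pi^0_1$ in any computable metric space containing it — the very fact used tacitly in the proof of Corollary~\ref{cor:n-dim-uniform-degree} through the inclusion $M^{2n+1}_n(\mathbf{3})\subseteq\mathbb{R}^{2n+1}$. One could equally route directly through Theorem~\ref{thm:imbedding} and Lemma~\ref{lem:Menger-imbedding} rather than citing Theorem~\ref{thm:universal-menger}, but that is the same argument with the intermediate packaging unbundled.
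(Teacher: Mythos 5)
Your proof is correct and follows exactly the route the paper has in mind: the paper labels the observation a ``trivial consequence of Theorem~\ref{thm:universal-menger},'' and what you have written is precisely the unpacking of that triviality, chaining the theorem with Lemma~\ref{lem:main-lemma} and the inclusion $M^{2n+1}_n(\mathbf{3})\subseteq\mathbb{R}^{2n+1}$, just as the proof of Corollary~\ref{cor:n-dim-uniform-degree} does. The extra care you take with the initial embedding of $\repsp{Y}$ into the Hilbert cube and with the $\Pi^0_1$-ness of the Menger compactum inside $\mathbb{R}^{2n+1}$ is a legitimate tightening of the argument but does not change the approach.
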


It is not hard to show that an $n$-dimensional point is not necessarily $(2n)$-Euclidean, by using the classical topological fact that there is an $n$-dimensional space which cannot be embedded into $\mathbb{R}^{2n}$ as follows.

\begin{prop}\label{prop:Menger-non-2n}
There is an $n$-dimensional point in $\interval^\om$ which is not $(2n)$-Euclidean.
\end{prop}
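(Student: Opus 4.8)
The plan is to take $K=M^{2n+1}_n(\mathbf{3})$, the universal $n$-dimensional Menger compactum, realized via the computable embedding $\interval^{2n+1}\hookrightarrow\interval^\om$ as a $\Pi^0_1$ subset of $\interval^\om$, and to extract a sufficiently generic point of $K$. Since $K$ is an $n$-dimensional computable compactum, every point of $K$ is automatically $n$-dimensional, so the only thing to arrange is that our chosen point fails to be $(2n)$-Euclidean. By Lemma \ref{lem:main-lemma}, a point $x$ is $(2n)$-Euclidean if and only if there is a $\Pi^0_1$ set $P\subseteq\interval^\om$ with $x\in P$ admitting a computable embedding into $\mathbb{R}^{2n}$ with $\Pi^0_1$ image; in particular such a $P$ embeds topologically into $\mathbb{R}^{2n}$. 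Since there are only countably many $\Pi^0_1$ subsets of $\interval^\om$ with respect to the fixed computable structure, it suffices to produce $x\in K$ that lies in no $\Pi^0_1$ set that embeds topologically into $\mathbb{R}^{2n}$.

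The key ingredient is the dimension-theoretic claim: if $F\subseteq K$ is closed and embeds topologically into $\mathbb{R}^{2n}$, then $F$ is nowhere dense in $K$. This rests on two classical facts about $K=M^{2n+1}_n(\mathbf{3})$. First, $K$ is \emph{self-similar}: every nonempty relatively open subset of $K$ contains an affine copy of $K$, namely one of the cylinder pieces obtained by fixing finitely many ternary digits of each of the $2n+1$ coordinates compatibly with the defining constraint (such a piece has nonempty interior in $K$ and is carried onto $K$ by a coordinatewise affine rescaling). Second, $K$ is universal for $n$-dimensional compacta, while by the Flores--van Kampen theorem the $n$-skeleton of the $(2n+2)$-simplex is an $n$-dimensional compactum which does not embed into $\mathbb{R}^{2n}$; hence $K$ itself does not embed into $\mathbb{R}^{2n}$ (cf.\ \cite{EngBook}). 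Granting these, if $F$ had nonempty interior in $K$ it would contain a nonempty relatively open subset of $K$, hence a homeomorphic copy of $K$, hence $K$ would embed into $\mathbb{R}^{2n}$, a contradiction.

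With the claim in hand, the argument concludes by a Baire category argument. Enumerate the $\Pi^0_1$ subsets $P_0,P_1,\dots$ of $\interval^\om$, and consider the subfamily of those $P_i$ that embed topologically into $\mathbb{R}^{2n}$. For each such $P_i$, the set $P_i\cap K$ is closed and, by the claim, nowhere dense in $K$; and $K$ is a nonempty compact metric space, hence Baire, so $\bigcup_i(P_i\cap K)\neq K$. Choose $x\in K$ outside this union. Then $x$ is $n$-dimensional, and if $x$ were $(2n)$-Euclidean, Lemma \ref{lem:main-lemma} would supply a $\Pi^0_1$ set $P\ni x$ embedding into $\mathbb{R}^{2n}$, so that $P$ is one of the $P_i$ above and $x\in P_i\cap K$, contradicting the choice of $x$. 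Hence $x$ is $n$-dimensional but not $(2n)$-Euclidean.

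The main obstacle is establishing that $K$ is ``non-Euclidean at every scale'', i.e.\ that no nonempty open subset of $K$ embeds into $\mathbb{R}^{2n}$; this is exactly where the fractal (self-similar) structure of the Menger compactum is essential, and why a polyhedral example such as the Flores--van Kampen space itself would not suffice, being locally Euclidean at generic points. Everything else — the countability of the $\Pi^0_1$ sets, the Baire category step, and the passage through Lemma \ref{lem:main-lemma} — is routine.
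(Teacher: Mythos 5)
Your argument is correct and follows essentially the same route as the paper: both proofs reduce matters to finding a point of the Menger compactum $M^{2n+1}_n(\mathbf{3})$ that is weakly $1$-generic in it, and both then invoke (i) that every nonempty relatively open subset of $M^{2n+1}_n$ contains a homeomorphic copy of $M^{2n+1}_n$ and (ii) that $M^{2n+1}_n$ does not embed into $\mathbb{R}^{2n}$, so that a $\Pi^0_1$ set supplied by Lemma \ref{lem:main-lemma} would have to have nonempty interior and hence yield a contradiction. The only cosmetic difference is that you obtain the generic point by a direct Baire category argument over the countably many $\Pi^0_1$ sets, whereas the paper gives an explicit point by pushing forward a weakly $1$-generic sequence in $(m(n))^\omega$ through a digitwise coding map onto $M^{2n+1}_n$.
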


\begin{proof}
Here, we give an explicit description of such a point.
Let $T_n$ be the set of all ternary sequences in $3^{2n+1}$ of length $(2n+1)$ containing at most $n$ many $1$'s, and $m(n)$ be the cardinality of $T_n$, that is,
\[m(n)=\sum_{k=0}^n\binom{2n+1}{k}2^{2n-k+1}.\]
For instance, $m(1)=20$, $m(2)=192$, and so on.
Fix a bijection $c\mapsto (b^c_0,\dots,b^c_{2n})$ between $m(n)$ and $T_n$.
Then, given $z\in(m(n))^\om$, we can get a $(2n+1)$-tuple $h(z)=(x^z_0,\dots,x^z_{2n})$ of reals as follows:
\[x^z_k=0.b^{z(0)}_kb^{z(1)}_kb^{z(2)}_k\dots\]

We claim that
\begin{itemize}
\item[] {\em if $z$ is a weakly $1$-generic sequence in $(m(n))^\om$, then the  $(2n+1)$-tuple $(x^z_0,\dots,x^z_{2n})\in\interval^{2n+1}$ is $n$-dimensional, but not $(2n)$-Euclidean.}
\end{itemize}
Since $z$ is not periodic, we have $h(z)\in M^{2n+1}_n$.
In particular, $h(z)$ is $n$-dimensional.
Note that if $U$ is a c.e.\ open subset of $\interval^{2n+1}$, dense in $M^{2n+1}_n$, then $h^{-1}[U]$ is dense c.e.\ open in $(m(n))^\om$.
Thus, $z\in h^{-1}[U]$, and therefore, $h(z)\in M^{2n+1}_n\cap U$.
That is, $h(z)$ is weakly $1$-generic in $M^{2n+1}_n$.

If $h(z)$ is $tt$-equivalent to a point in $\mathbb{R}^{2n}$, then by Lemma \ref{lem:main-lemma}, there is a $\Pi^0_1$ set $P\subseteq M^{2n+1}_n$ with $h(z)\in P$ such that $P$ embeds into $\mathbb{R}^{2n}$.
By weak $1$-genericity of $h(z)$, $P$ must contain a nonempty interior.
However, any nonempty open subset of $M^{2n+1}_n$ contains a copy of $M^{2n+1}_n$.
Thus, this gives an embedding of $M^{2n+1}_n$ into $\mathbb{R}^{2n}$, which contradicts the fact that there is an $n$-dimensional space which cannot be embedded into $\mathbb{R}^{2n}$.
This concludes that $h(z)$ is not $(2n)$-Euclidean.
\end{proof}

%

In particular, such a point is proper $n$-dimensional.
Here, we say that a point is {\em proper $(n+1)$-dimensional} if it is $(n+1)$-dimensional, but not $n$-dimensional.
By $\dim(x)$ we denote the $n\in\om$ such that $x$ is $n$-dimensional, but not $m$-dimensional for all $m<n$.
In particular, $\dim(x)=n$ iff $x$ is proper $n$-dimensional.
One can see that every $n$-dimensional computable compactum contains a proper $n$-dimensional point.
Indeed, we have the following.

\begin{obs}\label{obs:point-to-set-topdimension}
Let $P$ be a $\Sigma^0_2$ subset of $\interval^\om$.
Then, $\dim(P)=\sup_{x\in P}\dim(x)$.
\end{obs}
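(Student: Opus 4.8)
The plan is to prove the two inequalities $\sup_{x\in P}\dim(x)\le\dim(P)$ and $\dim(P)\le\sup_{x\in P}\dim(x)$ separately. In both cases the only inputs will be Theorem~\ref{thm:universal-menger} (which identifies the $n$-dimensional points with those lying in an $n$-dimensional $\Pi^0_1$ subset of $[0,1]^\om$) together with the two classical pillars of dimension theory for separable metric spaces: the subspace theorem $\dim(A)\le\dim(X)$ for $A\subseteq X$, and the countable closed sum theorem (\cite[Theorems III.1 and III.2]{HWBook}). Both sides of the equation are read in $\om\cup\{\infty\}$, and each inequality is vacuous once its right-hand side is $\infty$, so I may assume all dimensions in play are finite.

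\textbf{The inequality $\sup_{x\in P}\dim(x)\le\dim(P)$.} Put $n=\dim(P)$ and fix $x\in P$. Since $P$ is $\Sigma^0_2$, it is a countable union $P=\bigcup_kP_k$ of $\Pi^0_1$ subsets of $[0,1]^\om$ (take $P_k=\{y:\varphi(y,k)\}$ for a $\Pi^0_1$ predicate $\varphi$ with $P=\{y:\exists k\,\varphi(y,k)\}$), so $x\in P_k$ for some $k$. By the subspace theorem $\dim(P_k)\le\dim(P)=n$, and $P_k$ is a $\Pi^0_1$ set of dimension $\dim(P_k)$ containing $x$; hence Theorem~\ref{thm:universal-menger}, applied with $\dim(P_k)$ in place of $n$, shows that $x$ is $\dim(P_k)$-dimensional, so $\dim(x)\le\dim(P_k)\le n$. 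Taking the supremum over $x\in P$ gives the claim.

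\textbf{The inequality $\dim(P)\le\sup_{x\in P}\dim(x)$.} Put $n=\sup_{x\in P}\dim(x)$, so every $x\in P$ is $\dim(x)$-dimensional with $\dim(x)\le n$ and therefore, by Theorem~\ref{thm:universal-menger}, lies in some $\Pi^0_1$ subset of $[0,1]^\om$ of dimension $\le n$. Since there are only countably many $\Pi^0_1$ subsets of $[0,1]^\om$, enumerate those of dimension $\le n$ as $(Q_j)_{j\in\om}$; then $P=\bigcup_j(P\cap Q_j)$. Each $P\cap Q_j$ is closed in $P$ because $Q_j$ is closed in $[0,1]^\om$, and $\dim(P\cap Q_j)\le\dim(Q_j)\le n$ by the subspace theorem, so the countable closed sum theorem yields $\dim(P)\le n$. (This direction uses only that $P$ is a subspace of $[0,1]^\om$, not that it is $\Sigma^0_2$.)

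There is essentially no hard step here: the argument is a direct assembly of Theorem~\ref{thm:universal-menger} with classical dimension theory, paralleling the proof of Corollary~\ref{cor:dimension-is-invariant}. The one place that calls for mild care is the bookkeeping in the first inequality: one must invoke Theorem~\ref{thm:universal-menger} with the \emph{actual} dimension of the $\Pi^0_1$ witness $P_k$ --- which can be strictly smaller than $\dim(P)$ --- rather than with $\dim(P)$ itself, and one should keep track that the $\Sigma^0_2$ hypothesis enters exactly through the decomposition of $P$ into $\Pi^0_1$ pieces in that direction.
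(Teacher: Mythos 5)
Your proof is correct and follows essentially the same route as the paper's: write the $\Sigma^0_2$ set as a countable union of $\Pi^0_1$ pieces and apply Theorem~\ref{thm:universal-menger}~(3)$\Rightarrow$(1) for the inequality $\sup_x\dim(x)\le\dim(P)$, and for the converse use the countability of $\Pi^0_1$ sets together with the countable closed sum theorem. The only difference is cosmetic — you spell out the bookkeeping (applying Theorem~\ref{thm:universal-menger} with $\dim(P_k)$ rather than $\dim(P)$, and noting that $P\cap Q_j$ is closed in $P$) which the paper leaves implicit — so no substantive divergence.
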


\begin{proof}
Let $P$ be a countable union of $\Pi^0_1$ sets $(P_i)_{i\in\om}$.
Then $x\in P$ implies that $x\in P_i$ for some $i\in\om$.
By Theorem \ref{thm:universal-menger} (3)$\Rightarrow$(1), $x\in P_i$ implies $\dim(x)\leq\dim(P_i)\leq\dim(P)$.
Conversely, assume that $\dim(x)\leq n$ for all $x\in P$.
Then there is an at most $n$-dimensional $\Pi^0_1$ set $Q_x$ containing $x$.
However, there are countably many $\Pi^0_1$ sets, and thus $P$ is a union of countably many at most $n$-dimensional closed subsets.
Thus, by the sum theorem (cf.\ \cite[Theorem III.2]{HWBook}), we have $\dim(P)\leq n$.
\end{proof}

\subsubsection{Genericity}

We give a characterization of proper $n$-dimensionality for $n$-Euclidean points.
A point $x\in X$ is {\em weakly $1$-generic} if $x$ contains no nowhere dense $\Pi^0_1$ subset of $X$.

\begin{obs}\label{obs:proper-n-dim-w1g}
A point $x\in\mathbb{R}^{n}$ is proper $n$-dimensional if and only if $x$ is weakly $1$-generic in $\mathbb{R}^{n}$.
\end{obs}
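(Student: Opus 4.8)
The plan is to reduce the statement to the classical Hurewicz--Wallman characterization of the $n$-dimensional subsets of Euclidean space. First I would record the trivial fact that every point of $\mathbb{R}^n$ is already $n$-dimensional: given $x\in\mathbb{R}^n$, choose a rational $k$ with $x\in[-k,k]^n$ and note that $[-k,k]^n$ is an $n$-dimensional computable compactum, so $x$ is trivially $tt$-equivalent to a point of it. Hence, for $x\in\mathbb{R}^n$, being \emph{proper} $n$-dimensional means exactly that $\dim(x)=n$, i.e.\ that $x$ is \emph{not} $(n-1)$-dimensional. The only non-elementary ingredient will be the fact (cf.\ \cite[Theorem IV.3]{HWBook}) that a subset $A\subseteq\mathbb{R}^n$ has $\dim A=n$ if and only if $A$ has non-empty interior in $\mathbb{R}^n$; in particular, a subset of $\mathbb{R}^n$ with empty interior has dimension at most $n-1$.

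For the forward implication I would argue the contrapositive. Suppose $x$ is \emph{not} weakly $1$-generic, so $x\in P$ for some nowhere dense $\Pi^0_1$ set $P\subseteq\mathbb{R}^n$. Replacing $P$ by $P\cap[-k,k]^n$ for a large rational $k$ and transporting along a computable embedding of the cube into $[0,1]^\om$, we may assume that $P$ is a $\Pi^0_1$ subset of $[0,1]^\om$; it is still closed and nowhere dense as a subspace of $\mathbb{R}^n$, hence has empty interior there, so $\dim P\le n-1$ by the Hurewicz--Wallman fact. Since a $\Pi^0_1$ set is in particular $\Sigma^0_2$, Observation \ref{obs:point-to-set-topdimension} yields $\dim(x)\le\dim(P)\le n-1$, so $x$ is not proper $n$-dimensional.

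For the reverse implication, suppose $x$ is weakly $1$-generic in $\mathbb{R}^n$ but, toward a contradiction, $x$ is $(n-1)$-dimensional. By Lemma \ref{lem:main-lemma} there are a $\Pi^0_1$ set $P\subseteq\mathbb{R}^n$ with $x\in P$ and a computable embedding of $P$ into an $(n-1)$-dimensional computable compactum; since $P$ is then homeomorphic to a subspace of that compactum, the subspace theorem (\cite[Theorem III.1]{HWBook}) gives $\dim P\le n-1$. On the other hand, $P$ is a $\Pi^0_1$ set containing the weakly $1$-generic point $x$, so $P$ is not nowhere dense; being closed it therefore has non-empty interior in $\mathbb{R}^n$, whence $\dim P=n$ by Hurewicz--Wallman --- a contradiction. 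Thus $x$ is not $(n-1)$-dimensional, i.e.\ $x$ is proper $n$-dimensional. I do not expect any genuine obstacle here: there is no new computability-theoretic content, and the only point needing care is the bookkeeping that keeps all the closed sets involved genuinely $\Pi^0_1$ in a computable Polish space (handled by intersecting with a rational cube and using a computable embedding into $[0,1]^\om$), so that Lemma \ref{lem:main-lemma} and Observation \ref{obs:point-to-set-topdimension} apply; the one substantive external input is the Hurewicz--Wallman theorem relating dimension $n$ in $\mathbb{R}^n$ to having an interior point.
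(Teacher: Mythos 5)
Your proof is correct and follows essentially the same route as the paper's: both reduce the claim to the Hurewicz--Wallman fact that a subset of $\mathbb{R}^n$ is $n$-dimensional iff it has nonempty interior (equivalently, a closed set is $n$-dimensional iff somewhere dense), and then translate between ``contained in a nowhere dense $\Pi^0_1$ set'' and ``contained in an $(n-1)$-dimensional $\Pi^0_1$ set'' using Theorem \ref{thm:universal-menger}. The only cosmetic difference is that you unpack that last step explicitly via Lemma \ref{lem:main-lemma} and Observation \ref{obs:point-to-set-topdimension} and spell out the bookkeeping that transports $P$ into $[0,1]^\om$, whereas the paper simply invokes Theorem \ref{thm:universal-menger} in a single line.
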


\begin{proof}
Note that a subset of $\mathbb{R}^{n+1}$ is $(n+1)$-dimensional if and only if it has an nonempty interior (see \cite[Theorem IV.3]{HWBook}).
Thus, a closed subset of $\mathbb{R}^{n+1}$ is $(n+1)$-dimensional if and only if it is somewhere dense.
Therefore, $x\in\mathbb{R}^{n+1}$ is weakly $1$-generic if and only if $x$ is not contained in an $n$-dimensional $\Pi^0_1$ subset of $\mathbb{R}^{n+1}$, that is, $x$ is not $n$-dimensional by Theorem \ref{thm:universal-menger}.
\end{proof}

A topological space is countable dimensional if it is a countable union of finite dimensional subspaces.
We say that a point $x\in\interval^\om$ is {\em countable dimensional} if it is contained in a countable dimensional $\Pi^0_1$ set.
%
%
A point $x\in\interval^\om$ is {\em total} if there is $y\in 2^\om$ such that $y\equiv_Tx$.
Kihara-Pauly \cite{KP} has shown that a Polish space $\repsp{X}$ is countable dimensional iff some relativization makes all points in $\repsp{X}$ be total; however,

\begin{obs}\label{obs:non-ctbl-dim}
Every weakly $1$-generic point in $\interval^\om$ is total, but not countable dimensional.
\end{obs}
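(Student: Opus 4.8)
The plan is to treat the two assertions separately, in both cases exploiting the characterization of weak $1$-genericity already used above: a point $x$ is weakly $1$-generic precisely when it lies in no nowhere dense $\Pi^0_1$ subset of $\interval^\om$ (equivalently, it belongs to every dense c.e.\ open set).

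\emph{Totality.} First I would observe that for each coordinate index $i\in\om$ and each rational $q$, the fiber $\{z\in\interval^\om:z_i=q\}$ is a $\Pi^0_1$ set (its complement $\{z:z_i\neq q\}$ is c.e.\ open) which is nowhere dense, since any basic open box of $\interval^\om$ leaves coordinate $i$ unconstrained. Hence a weakly $1$-generic $x$ avoids all of them, so every coordinate $x_i$ is irrational. Now take $y\in 2^\om$ to be the digit-join, $y(\langle i,n\rangle)=$ the $n$-th binary digit of $x_i$. One checks $x\equiv_T y$: from $y$ one reconstructs each $x_i=\sum_n y(\langle i,n\rangle)2^{-(n+1)}$ and hence a name of $x$, so $x\leq_T y$; conversely, since no $x_i$ is a dyadic rational, the map sending a real to its $n$-th binary digit is locally constant at $x_i$, so this digit can be read off any sufficiently precise rational approximation to $x_i$ computed from a name of $x$, giving $y\leq_T x$. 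Thus $x$ is total. This half involves no real difficulty.

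\emph{Non-countable-dimensionality.} Suppose toward a contradiction that $x$ belongs to a countable-dimensional $\Pi^0_1$ set $P\subseteq\interval^\om$. I claim $P$ is nowhere dense. Otherwise $\mathrm{int}(P)$ is a nonempty open set, which therefore contains a basic box $\prod_{i<k}(a_i,b_i)\times\prod_{i\geq k}[0,1]$, and hence a closed box $\prod_{i<k}[a_i',b_i']\times\prod_{i\geq k}[0,1]$ homeomorphic to the Hilbert cube. Then $\interval^\om$ embeds into $P$; since every subspace of a countable-dimensional space is countable-dimensional, $\interval^\om$ would be countable-dimensional, contradicting the classical fact that the Hilbert cube is strongly infinite-dimensional and, in particular, not countable-dimensional (see, e.g., \cite{EngBook,vMBook}). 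Therefore $P$ is nowhere dense, and then $x\in P$ contradicts weak $1$-genericity of $x$.

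The point to emphasize is that the statement carries essentially no computability-theoretic content of its own: both halves reduce to elementary observations once the right $\Pi^0_1$ sets are identified, and the single nonroutine ingredient is the topological fact that the Hilbert cube is not countable-dimensional. It is also worth remarking, to see that the observation is not vacuous, that weakly $1$-generic points of $\interval^\om$ exist by the effective Baire category theorem.
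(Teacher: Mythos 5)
Your proof is correct and follows essentially the same route as the paper's: both halves hinge on (i) every coordinate of a weakly $1$-generic point being irrational, hence totality, and (ii) every countable-dimensional closed subset of $\interval^\om$ being nowhere dense because a nonempty open set contains a copy of the Hilbert cube, which is not countable-dimensional. You simply spell out the routine details (the digit-join construction for totality, and the subspace-hereditariness of countable dimensionality) that the paper leaves implicit.
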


\begin{proof}
Note that every countable dimensional closed set is nowhere dense in $\interval^\om$ since every nonempty open subset of $\interval^\om$ contains a copy of $\interval^\om$, which is not countable dimensional.
Moreover, all coordinates of a weakly $1$-generic point $x$ in $\interval^\om$ must be irrational, which clearly implies that $x$ is total.
This concludes the proof.
\end{proof}

This observation reflects the fact that the total degrees form a countable dimensional $\Sigma^0_3$ set, but are not covered by a countable dimensional $F_\sigma$ set.

Pol-Zakrzewski \cite[Remark 5.5]{PZ12} studied the forcing $\mathbb{P}_I$ obtained from the $\sigma$-ideal $I$ generated by finite dimensional closed subsets of a fixed compactum.
Zapletal \cite{Zap14} used this forcing to solve Fremlin's old problem asking whether there exists a ``{\em half-Cohen forcing}.''
That is, any $\mathbb{P}_I$-generic extension $V[G]$ of $V\models {\rm ZFC}$ does not contain a Cohen real over $V$ (indeed, $V[G]$ is a minimal extension of $V$), but any $P_I$-generic extension $V[G][H]$ of $V[G]$ must contain a Cohen real over $V$.
This forcing is further studied by \cite{Pol15,PZ16}.
Recall that the notion of weak $1$-genericity is an effective version of Cohen genericity.
Then, non-finite-dimensionality of a point can be thought of as an effective version of genericity w.r.t.\ this half-Cohen forcing $\mathbb{P}_I$, so one might call a non-finite-dimensional point {\em half-generic}.

We say that $f:\om\to\om$ is infinitely often equal to $g:\om\to\om$ if there are infinitely many $n\in\om$ such that $f(n)=g(n)$.
A function is {\em computably i.o.e.}\ if it is infinitely often equal to all computable functions.
Zapletal's proof \cite[Lemma 2.2]{Zap14} shows that if $x\in[0,1]^\om$ is not contained in any arithmetically-coded closed subset of $[0,1]^\om$ then there is an $x$-arithmetically definable function $f$ which is arithmetically i.o.e., that is, infinitely often equal to all arithmetically definable functions.
However, there is no computable analogue of Zapletal's result.

\begin{prop}
There is a non-countable-dimensional point which computes no computably i.o.e.\ function.
\end{prop}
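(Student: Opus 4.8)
\emph{Reduction.} The plan is to deduce the statement from a purely computability-theoretic property. It suffices to produce a point $x\in\interval^\om$ which (a) lies in no countable-dimensional $\Pi^0_1$ subset of $\interval^\om$ — i.e.\ $x$ is not countable dimensional in the sense of the paper — and (b) has computably dominated (hyperimmune-free) degree, meaning every total $f$ with $f\le_T x$ is dominated by a computable function. Indeed, if such an $f$ is dominated by a computable $h$, then $f$ differs from the computable function $n\mapsto h(n)+1$ at \emph{every} argument, so $f$ is not infinitely often equal to that computable function; hence a point of computably dominated degree computes no computably i.o.e.\ function.

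\emph{The forcing.} I would build $x$ by a perfect-set (Miller--Martin) construction inside $\interval^\om$. Conditions are rational sub-boxes $D=\prod_{k\in\om}I_k$, where each $I_k\subseteq[0,1]$ is a closed interval with rational endpoints and $I_k=[0,1]$ for all but finitely many $k$ (the lengths of the proper intervals being forced to $0$ as the construction proceeds). The key invariant is that every condition contains a homeomorphic copy of $\interval^\om$, since only finitely many coordinates are nondegenerate proper intervals; hence, as in the proof of Observation \ref{obs:non-ctbl-dim} (every nonempty open subset of $\interval^\om$ contains a copy of $\interval^\om$, which is not countable dimensional, and subspaces of countable-dimensional spaces are countable dimensional), every condition is \emph{not} countable dimensional, and therefore any countable-dimensional $\Pi^0_1$ set $C$ is nowhere dense in every condition $D$; thus below $D$ there is a subcondition $D'$ with $\overline{D'}\cap C=\emptyset$.

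\emph{The construction.} Relative to a sufficiently powerful oracle (effectivity of $x$ plays no role), build a decreasing sequence $D_0\supseteq D_1\supseteq\cdots$ of conditions with diameters tending to $0$, dovetailing two kinds of steps. For the $e$-th $\Pi^0_1$ set $C_e$: if some subcondition of the current one is disjoint from $C_e$, pass to it; otherwise $C_e$ contains the current condition, hence (by the invariant) is not countable dimensional, so nothing need be done. For the $e$-th functional $\Phi_e$ acting on Cauchy names: apply the Miller--Martin dichotomy, passing to a subcondition on which $\Phi_e$ either fails to compute (uniformly on the names of the points) a total function, or computes one whose values are bounded by a construction-defined — hence computable — function; the fusion chooses, coordinate by coordinate and scale by scale, subconditions forcing the next value to converge and records the finitely many values seen. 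Since each step restricts only finitely many coordinates, every $D_s$ keeps the invariant. Let $x$ be the unique point of $\bigcap_sD_s$.

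\emph{Verification and the main difficulty.} If $C$ is a countable-dimensional $\Pi^0_1$ set, say $C=C_e$, the ``otherwise'' alternative at stage $e$ cannot occur, so a subcondition disjoint from $C$ was chosen and $x\notin C$; this gives (a). If $f\le_T x$ is total, it is computed from the names of $x$ by some $\Phi_e$, and since $\Phi_e$ does produce a total function at $x$, the step for $\Phi_e$ must have landed in the ``bounded'' alternative, so $f$ is dominated by a computable function; this gives (b), and the Reduction step finishes the argument. The part requiring genuine care is the Miller--Martin fusion in this point-forcing setting: one must force statements about \emph{all} fast Cauchy names of points lying in a given box, not merely about the box as a set. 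This is exactly the issue resolved in the construction of hyperimmune-free continuous degrees (cf.\ Miller \cite{Mil04}); the only additional ingredient here — that the fusion can be carried out while keeping every condition non-countable-dimensional — is immediate, since one never collapses cofinitely many coordinates at a single step.
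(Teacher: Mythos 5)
Your \emph{Reduction} step is correct and matches what the paper uses implicitly: a hyperimmune-free degree computes no computably i.o.e.\ function. The gap is in the construction, and it is not a technicality but a genuine incompatibility. You keep every condition a rational box with only finitely many proper factors; every such box has nonempty interior in $\interval^\om$. Consequently, for \emph{every} nowhere-dense $\Pi^0_1$ set $C_e$ (not just the countable-dimensional ones) and every condition $D_s$, the set $D_s\setminus C_e$ is a nonempty relatively open set and hence contains a subcondition. So your ``$\Pi^0_1$-set step'' always succeeds at avoiding $C_e$ whenever $C_e$ is nowhere dense, and the resulting $x$ is therefore weakly $1$-generic in $\interval^\om$. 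But a weakly $1$-generic point of $\interval^\om$ has \emph{hyperimmune} Turing degree: its first coordinate $x_0$ is weakly $1$-generic in $[0,1]$, its binary expansion is weakly $1$-generic in $2^\om$, and such a real computes a function escaping every computable bound. This directly contradicts the hyperimmune-freeness you are simultaneously trying to force by the Miller--Martin step; with box conditions the two requirement families cannot be met together.

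Restricting the $\Pi^0_1$-set steps to countable-dimensional $C_e$'s (non-effective but legitimate, since effectivity of $x$ is not required) does not obviously repair this, because the Miller--Martin dichotomy asks you to pass to the $\Pi^0_1$ class of points of $D_s$ on which $\Phi_e$ diverges at $n$, and that class need not contain a box; once you leave box conditions you lose the invariant that every condition is non-countable-dimensional. The paper sidesteps this tension entirely by a combinatorial route: apply the hyperimmune-free basis theorem in $2^\om$ to get a hyperimmune-free PA degree, and then invoke Miller's theorem that every PA degree bounds a nontotal continuous degree. Nontotality (rather than explicit avoidance of $\Pi^0_1$ sets) is what delivers non-countable-dimensionality there, and that route forces no genericity whatsoever, so it never collides with hyperimmune-freeness.
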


\begin{proof}
We say that a $x$ has a PA-degree if it computes a complete consistent extension of Peano Arithmetic.
Miller \cite{Mil04} showed that any PA-degree bounds a nontotal degree.
By the hyperimmune-free basis theorem, cf.\ \cite[Theorem 2.9.11]{DHBook}, there is a hyperimmune-free PA-degree.
Therefore, there is a nontotal point $x\in[0,1]^\om$ such that every $f\leq_Tx$ is bounded by a computable function, but such an $f$ cannot be computably i.o.e.
\end{proof}

The following is an analogue of the argument in Zapletal \cite[Section 3]{Zap14}.

\begin{prop}\label{prop:nonz-fin-w1g}
Let $x$ be a non-zero-dimensional, finite-dimensional point.
Then, there is a weakly $1$-generic real $y\leq_{tt}x$.
\end{prop}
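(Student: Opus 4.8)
The plan is as follows. Since $x$ is finite-dimensional and non-zero-dimensional, set $n=\dim(x)$, so $n\geq 1$. By Theorem \ref{thm:universal-menger}, $x$ is $tt$-equivalent to a point $w$ of the universal Menger compactum $M:=M^{2n+1}_n(\mathbf{3})$; then $\dim(w)=n$, so (by Theorem \ref{thm:universal-menger} applied with $n-1$ in place of $n$) $w$ lies in no $(n-1)$-dimensional $\Pi^0_1$ subset of $[0,1]^\om$, and a fortiori in no $(n-1)$-dimensional $\Pi^0_1$ subset of $M$. By Fact \ref{fact:MR} it suffices to produce a $\Pi^0_1$ set $D\subseteq M$ with $w\in D$ and a computable $f\colon D\to 2^\om$ with $y:=f(w)$ weakly $1$-generic; then $y\leq_{tt}w\equiv_{tt}x$.

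To this end I would carry out, inside $M$, an effective Cantor-scheme construction producing a uniformly $\Pi^0_1$ family $(K_\sigma)_{\sigma\in 2^{<\om}}$ together with $(n-1)$-dimensional $\Pi^0_1$ ``walls'' $(W_\sigma)_{\sigma\in 2^{<\om}}$ with $K_{\sigma\fr 0}\cup K_{\sigma\fr 1}=K_\sigma$ and $K_{\sigma\fr 0}\cap K_{\sigma\fr 1}=W_\sigma$, where each $K_\sigma$ that is still $n$-dimensional is split, using the $\dim\leq n$ characterization developed in Section 3 (Lemmas \ref{lem:dim-refinement} and \ref{lem:comp-normal}), into two $n$-dimensional pieces separated by an $(n-1)$-dimensional relative boundary. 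The labels of the two children of a node are chosen dynamically: running the usual bookkeeping of all c.e.\ sets of strings, when the requirement for the $e$-th set $S_e$ is addressed at a node $\sigma$, one searches for two incomparable extensions $\tau_0,\tau_1\sqsupseteq\sigma$ both lying in $S_e$ (such a search succeeds whenever every extension of $\sigma$ has a further extension in $S_e$) and uses $\tau_0,\tau_1$ as the children's labels, so that every branch through $\sigma$ acquires a prefix in $S_e$. Because $w$ avoids every $W_\sigma$ and there are only countably many of them, $w$ lies at each level in exactly one child, hence determines a branch $y\in 2^\om$; the map sending a point lying off all the walls to its branch is computable on the $\Pi^0_1$ set $D$ obtained from $M$ by deleting suitably small c.e.\ open neighbourhoods of the walls, and $f(w)=y$.

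Finally one must check that $y$ is weakly $1$-generic, which is the effective counterpart of Zapletal's argument in \cite[Section~3]{Zap14}. The idea is that a branch $\beta$ fails the requirement for a dense $S_e$ only when the construction could not react to $S_e$ along $\beta$; tracing the construction, the set of points of $D$ whose branch fails some requirement for a dense $S_e$ --- i.e.\ the $f$-preimage of the nowhere-dense $\Pi^0_1$ class of branches missing $S_e$ --- is confined to an $(n-1)$-dimensional $\Pi^0_1$ subset of $M$, essentially because wherever the construction does react \emph{both} children are forced into $S_e$, and the exceptional nodes sit on the $(n-1)$-dimensional walls and their residues. Since $w$ is proper $n$-dimensional it avoids every such set, so $y$ meets every dense c.e.\ set of strings and is weakly $1$-generic. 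The hardest step is exactly this last one: arranging the construction so that the ``failure sets'' are genuinely $(n-1)$-dimensional while keeping $D$ a genuine $\Pi^0_1$ set containing $w$ --- which must be reconciled with the fact that no nonempty $\Pi^0_1$ class can consist entirely of weakly $1$-generic reals, forcing the scheme to degenerate, in a controlled lower-dimensional way, off the full-dimensional part of $M$ that contains $w$.
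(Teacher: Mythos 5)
Your plan is far more elaborate than what is actually needed, and it has a genuine, explicitly flagged gap at its core. The key simplification you miss is Corollary \ref{cor:n-dim-uniform-degree}: a finite-dimensional point is, up to $tt$-equivalence, a point of a Euclidean cube, so one may assume outright that $x=(x_i)_{i<n}\in[0,1]^n$. Each coordinate $x_i$ is $tt$-reducible to $x$ because the coordinate projection $[0,1]^n\to[0,1]$ is a total computable map. Now, if \emph{no} coordinate $x_i$ were weakly $1$-generic, each $x_i$ would lie in a nowhere dense $\Pi^0_1$ set $P_i\subseteq[0,1]$; since a nowhere dense closed subset of $[0,1]$ is zero-dimensional and finite products of zero-dimensional compacta are zero-dimensional, $\prod_{i<n}P_i$ would be a zero-dimensional $\Pi^0_1$ subset of $[0,1]^n$ containing $x$, contradicting the hypothesis that $x$ is non-zero-dimensional. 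Hence some coordinate is already the desired weakly $1$-generic real $y\leq_{tt}x$. No Cantor scheme inside a Menger compactum is required.

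By contrast, your route works directly in $M^{2n+1}_n(\mathbf{3})$ and tries to synthesize a weakly $1$-generic branch from a dynamically built scheme of $\Pi^0_1$ pieces separated by $(n-1)$-dimensional walls, arguing that the ``failure set'' for each dense c.e.\ set of strings is an $(n-1)$-dimensional $\Pi^0_1$ set which a proper $n$-dimensional $w$ must avoid. This central claim is not established in your sketch: you do not show how to make the splitting step both effective and \emph{requirement-responsive} (Lemmas \ref{lem:comp-normal} and \ref{lem:dim-refinement} give refinement and shrinking, not dynamic $(n-1)$-dimensional separators reacting to an enumeration of c.e.\ string sets), and you do not show the failure set is $\Pi^0_1$ and genuinely $(n-1)$-dimensional. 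You yourself flag this as ``the hardest step'' and leave it unresolved, and the tension you note --- that no nonempty $\Pi^0_1$ class can consist entirely of weakly $1$-generic reals, so the scheme must ``degenerate in a controlled lower-dimensional way'' --- is precisely the part that would need a real argument, which is absent. As written the proposal is a plan with a missing keystone, not a proof; and in any case the coordinate-projection observation collapses the whole problem to four lines.
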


\begin{proof}
Since $x$ is finite-dimensional, one can assume $x=(x_i)_{i<n}\in[0,1]^n$.
We claim that $x_i$ is weakly $1$-generic for some $i<n$.
Otherwise, for any $i<n$ there is a nowhere dense $\Pi^0_1$ set $P_i\subseteq[0,1]$ such that $x_i\in P_i$.
However, $\prod_{i<n}P_i$ is a zero-dimensional $\Pi^0_1$ subset of $[0,1]^n$, and therefore $x$ must be zero-dimensional.
\end{proof}

It is not hard to check that a real in $[0,1]$ is weakly $1$-generic if and only if its binary expansion is weakly $1$-generic in $2^\om$.
Moreover, a $T$-degree $\mathbf{d}$ contains a weakly $1$-generic real in $2^\om$ if and only if $\mathbf{d}$ is hyperimmune (see \cite[Corollary 2.24.19]{DHBook}), where a $T$-degree $\mathbf{d}$ is hyperimmune if there is a $\mathbf{d}$-computable function which dominates all computable functions.
In particular, Proposition \ref{prop:nonz-fin-w1g} implies that every non-zero-dimensional, finite-dimensional point has a hyperimmune $T$-degree.

\subsubsection{Inside $T$-degrees}

Classically, it is known that a $T$-degree $\mathbf{d}$ is hyperimmune-free iff $\mathbf{d}$ contains only one $tt$-degree (see \cite[Theorem VI.6.18]{OdiBook}).
It is easy to extend this classical fact as follows.
Recall that a $T$-degree $\mathbf{d}$ is total if there is $x\in 2^\om$ of $T$-degree $\mathbf{d}$.
The following strengthens the observation mentioned in the previous paragraph.

\begin{prop}\label{prop:hypim-free-str}
Let $\mathbf{d}$ be a hyperimmune-free total $T$-degree.
Then, for any $x\in\mathbf{d}$ and $y\in[0,1]^\om$, $y\leq_Tx$ if and only if $y\leq_{tt} x$.
\end{prop}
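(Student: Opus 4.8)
The implication $y\leq_{tt}x\Rightarrow y\leq_T x$ is immediate, since a total computable realizer witnessing $y\leq_{tt}x$ is in particular a partial computable function carrying every name of $x$ to a name of $y$. For the converse I would fix a computable realizer $\Theta$ of $y\leq_T x$ (so $\Theta$ carries \emph{every} name of $x$ to a name of $y$) and aim to produce a $\Pi^0_1$-domain witness as in Fact \ref{fact:MR}. The guiding idea is to combine the classical mechanism behind ``a hyperimmune-free degree carries a unique $tt$-degree'' — every function it computes has a computable majorant, hence every reduction out of it has computably bounded use and running time — with the leverage that totality provides, namely a name of $x$ of the same degree as $x$.

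First I would use totality: fix $w\in 2^\om$ with $w\equiv_T x$ and apply a realizer of $x\leq_T w$ to a name of $w$ computable from $w$, obtaining a name $q_0$ of $x$ with $q_0\leq_T w\equiv_T x$. Since $\mathbf d$ is hyperimmune-free, the function $q_0$ is majorized everywhere by a computable $h$; likewise the use function and the running-time function of $\Theta$ on the particular name $q_0$ are $\leq_T q_0\leq_T x$, hence majorized everywhere by a computable function. I would then choose an increasing computable $s$ majorizing $h$ and both of those, so that $q_0$ is $s$-bounded and the run of $\Theta$ on $q_0$ fits the budget $s$, and set
\[
Q=\{q\in\om^\om:\ q(n)<s(n)\ \text{and}\ \Theta^{q\upto s(n)}_{s(n)}(n)\downarrow\ \text{for all }n\}.
\]
Each conjunct is clopen in $q$, so $Q$ is a bounded $\Pi^0_1$ (hence compact) subset of $\om^\om$ with $q_0\in Q\subseteq{\rm dom}(\Theta)$, and $\delta[Q]$ is a $\Pi^0_1$ subset of $[0,1]^\om$ containing $x$.

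Then I would carry out the construction in the proof of Proposition \ref{prop:tt-time}\,((2)$\Rightarrow$(1)) directly, with $\Phi:=\Theta$ and this $s$ (the same tree search on $T_Q$ together with the radius-doubling bookkeeping); it produces a partial computable $g$ with $\Pi^0_1$ domain satisfying $g(\delta(q))=\delta(\Theta^{q})$ for every $q\in Q$ whose induced point lies in ${\rm dom}(g)$. What makes this work here is that both clauses of condition (2) of Proposition \ref{prop:tt-time} are automatic: clause (a) is witnessed by $q_0\in Q$, an $s$-bounded name of $x$ on which $\Theta$ runs inside the budget and outputs a name of $y$; and clause (b) — soundness of every budgeted $s$-bounded name — is vacuous, because \emph{every} name of $x$ whatsoever, in particular every $s$-bounded one, is already carried by $\Theta$ to a name of $y$, so there is nothing to check. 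Hence $x\in{\rm dom}(g)$ and $g(x)=y$, and since ${\rm dom}(g)$ is $\Pi^0_1$, Fact \ref{fact:MR} gives $y\leq_{tt}x$.

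The one genuine difficulty is the correctness of the tree-search realizer that defines $g$: from an arbitrary name of a point $z$ in the relevant domain one must uniformly recover a suitable $s$-bounded name $q_z\in Q$ of $z$ and verify that the balls $B_{\Theta(q_z)\upto n}$ assemble into a bona fide Cauchy name converging to $\delta(\Theta^{q_z})$. This is exactly what the $(2)\Rightarrow(1)$ direction of Proposition \ref{prop:tt-time} establishes, so I would simply invoke it; the only new input is that the hypotheses (total, hyperimmune-free) hand us the single good name $q_0$ and the computable budget $s$, after which the argument closes because $\Theta$ already handles all names of $x$ and there is no soundness obligation left to discharge.
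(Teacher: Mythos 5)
Your argument matches the paper's proof essentially step for step: use totality to produce a Cauchy name $q_0$ of $x$ with $q_0\leq_T x$, use hyperimmune-freeness to find a computable increasing $s$ bounding $q_0$ together with the use and running time of $\Theta$ on $q_0$, form the bounded $\Pi^0_1$ set $Q$, and invoke the $(2)\Rightarrow(1)$ direction of Proposition~\ref{prop:tt-time} to obtain a computable extension with $\Pi^0_1$ domain. Your remark that clause~(b) is automatic because $\Theta$ already realizes $y\leq_T x$ on \emph{all} names of $x$ is exactly the point the paper leaves implicit when it says the restriction to $\delta[Q]$ ``clearly satisfies the premise'' of that proposition.
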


\begin{proof}
%
%
%
%
%
Let $\mathbf{d}$ be a hyperimmune-free total $T$-degree.
Let $x\in\mathbf{d}$ in a computable metric space $\repsp{X}$.
Since $\mathbf{d}$ is total, there is a Cauchy name $p\in\om^\om$ of $x$ such that $p\equiv_Tx$.
Assume that $y\leq_Tx$ via a partial computable function $f$.
Let $\Phi$ be a realizer of $f$.
Since $p$ is computably bounded, there is a computable increasing function $s:\om\to\om$ such that $\Phi^{p\upto s(n)}_{s(n)}(n)\downarrow$ and $p(n)<s(n)$ for all $n\in\om$.
Let $Q$ be the set as in the proof of Proposition \ref{prop:tt-time}.
Then, $p\in Q$.
Moreover the restriction $f\upto\delta[Q]$ clearly satisfies the premise of Proposition \ref{prop:tt-time} (2).
Therefore, by Proposition \ref{prop:tt-time}, $f\upto\delta[Q]$ can be extended to a computable function $g:\subseteq\repsp{X}\to\repsp{Y}$ whose domain is $\Pi^0_1$.
Since $p\in Q$ and thus $x\in\delta[Q]$, we conclude $y\leq_{tt}x$.
\end{proof}

\begin{cor}
A hyperimmune-free total $T$-degree consists only of zero-dimensional points.
\end{cor}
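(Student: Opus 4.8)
The plan is to deduce this immediately from Proposition \ref{prop:hypim-free-str}, using that $2^\om$ is a $0$-dimensional computable compactum. Fix a hyperimmune-free total $T$-degree $\mathbf{d}$ and an arbitrary point $x\in\mathbf{d}$, lying in some computable metric space $\repsp{X}$. Since $\mathbf{d}$ is a total degree, it contains a point $y\in 2^\om$; in particular $y\equiv_T x$ and $y\in\mathbf{d}$.

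First I would establish $x\equiv_{tt}y$. From $y\leq_T x$ together with $x\in\mathbf{d}$, Proposition \ref{prop:hypim-free-str} yields $y\leq_{tt}x$. For the other direction, observe that $y$ is itself a point of the hyperimmune-free total degree $\mathbf{d}$ and $x\leq_T y$, so a second application of Proposition \ref{prop:hypim-free-str}, now with $y$ playing the role of the base point, gives $x\leq_{tt}y$. (Proposition \ref{prop:hypim-free-str} is stated with the reduced point in $[0,1]^\om$, but its proof only invokes Proposition \ref{prop:tt-time}, which holds for arbitrary computable Polish spaces, so the same argument applies when the codomain is the computable metric space $\repsp{X}$.) Hence $x\equiv_{tt}y$.

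Since $y$ is a point of $2^\om$, a $0$-dimensional computable compactum (homeomorphic to $M^1_0(\mathbf{3})$), the point $x$ is $tt$-equivalent to a point in a $0$-dimensional computable compactum; that is, $x$ is zero-dimensional (equivalently, by Theorem \ref{thm:universal-menger}, $x$ lies in a $0$-dimensional $\Pi^0_1$ subset of $[0,1]^\om$). As $x\in\mathbf{d}$ was arbitrary, $\mathbf{d}$ consists only of zero-dimensional points. There is no genuine obstacle here; the one point requiring care is that Proposition \ref{prop:hypim-free-str} must be applied twice, once with each of $x$ and $y$ as the base point --- which is legitimate precisely because $\mathbf{d}$, being total, contains the $2^\om$-point $y$ --- and that its proof tolerates an arbitrary computable metric space as codomain.
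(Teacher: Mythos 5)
Your proposal is correct and takes essentially the same route as the paper: both proofs pick a $2^\om$-representative $y$ (the paper calls it $p$) of the hyperimmune-free total degree $\mathbf{d}$, apply Proposition \ref{prop:hypim-free-str} in both directions to upgrade $x\equiv_T y$ to $x\equiv_{tt}y$, and then conclude that $x$ is zero-dimensional. You are slightly more careful than the paper in pointing out that the second application (with $y$ as base point) technically needs the reduced point to range over an arbitrary computable metric space rather than $[0,1]^\om$, and that the proof of Proposition \ref{prop:hypim-free-str}, which only invokes the direction of Proposition \ref{prop:tt-time} that holds for general computable Polish spaces, accommodates this.
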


\begin{proof}
By definition, every total $T$-degree $\mathbf{d}$ contains a zero-dimensional point $p$.
If $x\in\mathbf{d}$ then $x\equiv_Td$, and if $\mathbf{d}$ is hyperimmune-free, then by Proposition \ref{prop:hypim-free-str}, we have $x\equiv_{tt}p$.
Hence, $x$ is zero-dimensional.
\end{proof}

Miller's observation \cite[Proposition 5.3]{Mil04} on continuous degrees implies the following analogue of Proposition \ref{prop:hypim-free-str} for nontotal $T$-degrees.

\begin{prop}\label{prop:nontotal-T-vs-tt}
Let $\mathbf{d}$ be a nontotal $T$-degree.
For any $x\in\mathbf{d}$, and a finite dimensional point $y$, $y\leq_Tx$ if and only if $y\leq_{tt} x$.
\end{prop}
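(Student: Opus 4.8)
The implication $y\leq_{tt}x\Rightarrow y\leq_Tx$ is immediate, since every $tt$-reduction is in particular a $T$-reduction. For the converse I would proceed in three steps.

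\emph{Step 1: reduction to a Euclidean target.} Since $y$ is finite dimensional, Corollary~\ref{cor:n-dim-uniform-degree} gives $n\in\om$ and a point $y'\in\mathbb{R}^n$ with $y'\equiv_{tt}y$. Because $\leq_{tt}$ refines $\leq_T$ and both are preorders, $y\leq_Tx\iff y'\leq_Tx$ and $y\leq_{tt}x\iff y'\leq_{tt}x$, so it suffices to treat the case $y\in\mathbb{R}^n$. One may also fix a representative $x\in[0,1]^\om$ of $\mathbf{d}$, since every continuous degree is realized in the Hilbert cube.

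\emph{Step 2: what has to be produced.} Suppose $y\in\mathbb{R}^n$ and $y\leq_Tx$; fix a partial computable $f\colon\subseteq[0,1]^\om\to\mathbb{R}^n$ with $x\in{\rm dom}(f)$, $f(x)=y$, and a computable realizer $\Phi\colon\subseteq\om^\om\to\om^\om$ which is defined on, and sends to a name of $y$, every name of $x$. By Fact~\ref{fact:MR} it is enough to extend $f$ near $x$ to a partial computable function with $\Pi^0_1$ domain, and, exactly as in the proof of Proposition~\ref{prop:hypim-free-str}, this follows from Proposition~\ref{prop:tt-time} once we have a computable increasing $s\colon\om\to\om$ and a name $p$ of $x$ with $p(n)<s(n)$ and $\Phi^{p\upto s(n)}_{s(n)}(n)\downarrow$ for all $n$: then $p$ lies in the bounded $\Pi^0_1$ set $Q=\{q:(\forall n)\,q(n)<s(n)\ \text{and}\ \Phi^{q\upto s(n)}_{s(n)}(n)\downarrow\}$, the restriction $f\upto\delta[Q]$ meets clause~(2) of Proposition~\ref{prop:tt-time}, and $x\in\delta[Q]$, so $y\leq_{tt}x$. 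In the hyperimmune-free total case such an $s$ was obtained from a name $p\leq_Tx$ of $x$ (supplied by totality) together with hyperimmune-freeness.

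\emph{Step 3: the key input, and the main obstacle.} Here $\mathbf{d}$ is only assumed non-total, so no name of $x$ is $T$-below $x$ and the route of Step~2's last sentence is blocked. Instead the plan is to feed in Miller's study of the non-total continuous degrees \cite[Proposition~5.3]{Mil04}: its characterization of non-totality is exactly what lets one locate, for a $T$-reduction from $x$ to a finite-dimensional point, a computably bounded name $p$ of $x$ on which $\Phi$ runs within a computable time bound $s$ --- the data demanded in Step~2. The point that makes this viable is that Proposition~\ref{prop:tt-time}(2)(a) only requires the \emph{existence} of such a $p$, not that $p$ be computable from $x$; and it is the reduction to a Euclidean (hence finite-dimensional) target in Step~1 that brings Miller's observation to bear, so the finite-dimensionality of $y$ is used essentially. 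I expect this extraction of $p$ and $s$ from Proposition~5.3 to be the only substantive point; granting it, the descent to a $\Pi^0_1$-domain reduction is routine, paralleling Proposition~\ref{prop:hypim-free-str}.
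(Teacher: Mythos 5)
The direction $y\leq_{tt}x\Rightarrow y\leq_Tx$ and the Step~1 reduction to $z\in\interval^n$ are fine, but your Step~3 is where the proof actually has to live, and as written it contains a genuine gap: you do not explain how Miller's Proposition~5.3 would produce a computably bounded name $p$ of $x$ together with a computable time bound $s$, and in fact it does not produce any such thing. Miller's observation (as the paper cites it) is about \emph{coordinate appearance}: forming $\hat{x}$ from $x$ by closing the coordinates under rational affine maps (set $\hat{x}(i,j,k)=x(i)b_j+b_k$, so $\hat{x}\equiv_{tt}x$), non-totality of $x$ guarantees that every real $p\in\interval$ with $p\leq_T\hat{x}$ literally occurs as some coordinate $\hat{x}(m)$. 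There is nothing in that statement about Cauchy names, running times, or computable bounds, so it cannot supply the data demanded by Proposition~\ref{prop:tt-time}(2)(a); your plan to replay the Proposition~\ref{prop:hypim-free-str} argument is blocked exactly because non-total degrees are the opposite case from hyperimmune-free ones, and there is no reason to expect a computably bounded name of $x$ to exist at all. (Indeed the whole point of the non-total case is that one cannot route through Proposition~\ref{prop:tt-time}.)

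The paper's actual argument is shorter and sidesteps time bounds entirely. After replacing $y$ by $z=(z_i)_{i<n}\in\interval^n$ and $x$ by $\hat{x}\equiv_{tt}x$ as above, one has $z_i\leq_Ty\leq_Tx\equiv_T\hat{x}$, so Miller's observation yields indices $m_0,\dots,m_{n-1}$ with $z_i=\hat{x}(m_i)$. The map $\hat{x}\mapsto(\hat{x}(m_0),\dots,\hat{x}(m_{n-1}))$ is a total computable projection, hence a $tt$-reduction, giving $y\equiv_{tt}z\leq_{tt}\hat{x}\equiv_{tt}x$. So the fix to your proof is to drop Step~2 altogether: once you have the affine-closed $\hat{x}$, the finite tuple of coordinates of $z$ is found inside $\hat{x}$ directly, and the $tt$-reduction is a bare coordinate projection rather than a time-bounded simulation.
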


\begin{proof}
Assume that $y\leq_Tx$.
By finite dimensionality, there is $z\equiv_{tt}y$ such that $z=(z_i)_{i<n}\in\interval^n$ for some $n\in\om$.
Define $\hat{x}(i,j,k)=x(i)b_j+b_k$, where $b_e$ is the $e$-th rational.
Clearly, $\hat{x}\equiv_{tt}x$.
Miller \cite[Proposition 5.3]{Mil04} showed (the contrapositive of) the following:
Whenever $x$ is non-total, for any $p\in\interval$, if $p\leq_T\hat{x}$, then there is $m$ such that $\hat{x}(m)=p$.
Since $z_i\leq_T\hat{x}$ for any $i<n$, there are $m_0,\dots,m_{n-1}$ such that $z=(\hat{x}(m_0),\dots,\hat{x}(m_{n-1}))$, which is clearly $tt$-reducible to $\hat{x}$.
Thus, we have $y\equiv_{tt}z\leq_{tt}\hat{x}\equiv_{tt}x$.
\end{proof}

%


\subsubsection{Quasi-minimality}

Miller \cite{Mil04} showed that there is no quasi-minimal continuous degree, that is, there is no noncomputable point $x\in\interval^\om$ such that every $x$-computable point $y\in 2^\om$ is computable.
Then, does there exist a metric $tt$-degree which is quasi-minimal in $tt$-degrees?
For computable metric spaces $\repsp{X}$ and $\repsp{Y}$, we say that $x\in\repsp{X}$ is {\em $\repsp{Y}$-quasi-minimal} if $x$ is noncomputable, and every $y\in\repsp{Y}$ with $y\leq_{tt}x$ is computable.
It is easy to construct a $2^\om$-quasi-minimal $\mathbb{R}$-uniform degree.
A point $x\in\repsp{X}$ is {\em $1$-generic} if it is not contained in the boundary $\partial U$ of a c.e.\ open set $U\subseteq\repsp{X}$.

\begin{obs}
Every $1$-generic point in $\mathbb{R}$ is $2^\om$-quasi-minimal.
\end{obs}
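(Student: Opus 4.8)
The plan is to exploit two features: that a $1$-generic point of $\mathbb{R}$ is forced to lie in the \emph{interior} of every $\Pi^0_1$ set containing it, and that an interval, being connected, can only be mapped continuously onto a single point of the totally disconnected space $2^\om$.

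First I would handle the easy part, noncomputability: if $x\in\mathbb{R}$ is computable, then $\mathbb{R}\setminus\{x\}$ is a c.e.\ open set with boundary $\{x\}$, so $x\in\partial(\mathbb{R}\setminus\{x\})$ and $x$ is not $1$-generic; hence every $1$-generic point is noncomputable.

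For the main claim, suppose $y\in 2^\om$ with $y\leq_{tt}x$. By Fact~\ref{fact:MR} there is a partial computable $f:\subseteq\mathbb{R}\to 2^\om$ whose domain $P$ is $\Pi^0_1$, with $x\in P$ and $f(x)=y$. Now $\mathbb{R}\setminus P$ is c.e.\ open and does not contain $x$, so by $1$-genericity $x\notin\partial(\mathbb{R}\setminus P)=\partial P$; since $x\in P$, this forces $x$ into the interior of $P$. Fixing a rational open interval $I$ with $x\in I\subseteq P$, the function $f$ is total and continuous on $I$, so $f[I]$ is a nonempty connected subset of $2^\om$, hence (by total disconnectedness of $2^\om$) a singleton, and since $x\in I$ that singleton is $\{f(x)\}=\{y\}$. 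Finally I would pick any rational $q\in I$: then $q\in{\rm dom}(f)$, so $f(q)$ is a computable point of $2^\om$, and $f(q)=y$, whence $y$ is computable.

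I do not expect any step to pose a genuine obstacle; the whole content sits in the observation that $1$-genericity places $x$ in the interior of $P$ --- which is exactly what the ``$x\notin\partial U$ for every c.e.\ open $U$'' formulation of $1$-genericity is tailored to deliver, and which mere noncomputability (or, it seems, weak $1$-genericity) would not supply. The other point worth stressing is that the argument uses the ambient space $\mathbb{R}$ in an essential way: it is the connectedness of $I$ together with the total disconnectedness of $2^\om$ that collapses $f$ to a constant on $I$, and indeed the analogous assertion is false with $\mathbb{R}$ replaced by $2^\om$, since there $x\leq_{tt}x$ already exhibits a noncomputable member of $2^\om$ below a $1$-generic $x$.
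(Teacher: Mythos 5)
Your proof is correct and matches the paper's argument essentially step for step: $1$-genericity forces $x$ into the interior of the $\Pi^0_1$ domain $P$, connectedness of the containing interval together with total disconnectedness of $2^\om$ collapses the reduction to a constant there, and evaluating at a rational point yields computability of $y$. The extra remarks you include --- handling noncomputability separately, and noting that weak $1$-genericity would only make $P$ somewhere dense rather than placing $x$ itself in the interior --- are both accurate, though the paper does not spell them out.
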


\begin{proof}
Assume that $y\leq_{tt}x$ for $y\in 2^\om$ and $x\in\mathbb{R}$.
Then there is a $\Pi^0_1$ set $P\subseteq\mathbb{R}$ and $\Phi:P\to 2^\om$ such that $x\in P$ and $\Phi(x)=y$.
If $x$ is $1$-generic, then $P$ contains a nondegenerated interval $J\subseteq P$ such that $x\in J$ (otherwise, $x$ is contained in the closure of $\mathbb{R}\setminus P$, which contradicts $1$-genericity of $x$).
Then $\Phi$ is constant on $J$ since $J$ is connected while $2^\om$ is totally disconnected.
This means that $\Phi(z)=y$ for any $z\in J$ since $x\in J$.
By choosing a rational $q\in J\cap\mathbb{Q}$, we get that $y=\Phi(q)\leq_{tt}q$ is computable. 
\end{proof}

This is not true for higher dimensional case.

\begin{obs}
No finite dimensional point is $\mathbb{R}$-quasi-minimal.
\end{obs}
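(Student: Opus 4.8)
The plan is to reduce to the Euclidean case and then pass to a coordinate projection. Let $x\in\repsp{X}$ be a finite dimensional point. If $x$ is computable, then it is not $\mathbb{R}$-quasi-minimal by definition, so assume $x$ is noncomputable; it then suffices to exhibit a noncomputable real $y\in\mathbb{R}$ with $y\leq_{tt}x$. By Corollary \ref{cor:n-dim-uniform-degree} there are $n\in\om$ and a point $z=(z_0,\dots,z_{n-1})\in\mathbb{R}^n$ with $z\equiv_{tt}x$.

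First I would note that $z$ is noncomputable: since $x\leq_{tt}z$, if $z$ were computable then composing a total computable realizer of the reduction with a computable Cauchy name of $z$ would yield a computable name of $x$, contradicting the assumption. Because a tuple of finitely many computable reals is itself a computable point of $\mathbb{R}^n$, noncomputability of $z$ forces some coordinate $z_i$ to be noncomputable. Finally, the $i$-th coordinate projection $\mathbb{R}^n\to\mathbb{R}$ is a total computable map, so $z_i\leq_{tt}z$; combining this with $z\equiv_{tt}x$ and the transitivity of $\leq_{tt}$ (composition of total computable realizers is again total computable) we obtain $z_i\leq_{tt}x$. Thus $y=z_i$ is the desired witness, and $x$ is not $\mathbb{R}$-quasi-minimal.

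I do not expect any genuine obstacle here: the statement is a direct consequence of Corollary \ref{cor:n-dim-uniform-degree} together with the elementary facts that $\leq_{tt}$ is transitive, that it is witnessed by coordinate projections, and that a point of $\mathbb{R}^n$ is computable exactly when all of its coordinates are. The only step demanding a moment's attention is the transfer of noncomputability along the $tt$-equivalence $z\equiv_{tt}x$, which is immediate from the definition of $tt$-reducibility via total computable maps on Cauchy names.
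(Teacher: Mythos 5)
Your proof is correct and takes essentially the same approach as the paper: the paper's argument is the one-line observation that a point in Euclidean space is computable iff all its coordinates are, which is precisely the key fact you isolate and then unfold (passing to a $tt$-equivalent $z\in\mathbb{R}^n$ via Corollary \ref{cor:n-dim-uniform-degree}, picking a noncomputable coordinate, and projecting). Your write-up is a more explicit version of the same idea.
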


\begin{proof}
This is because a point in an Euclidean space is computable if and only if all of its coordinates are computable.
\end{proof}

By Proposition \ref{prop:nontotal-T-vs-tt}, if $x\in\interval^\om$ is nontotal, then $x$ has no $\mathbb{R}^n$-quasi-minimal $tt$-degree.
Therefore, an $\mathbb{R}$-quasi-minimal uniform degree has to be total, but not finite dimensional if it exists.
By Observation \ref{obs:non-ctbl-dim}, every weakly $1$-generic point in $\mathbb{I}^\om$ is total, and not countable dimensional.
However, no weakly $1$-generic point can be $\mathbb{R}$-quasi-minimal.

\begin{obs}
No weakly $1$-generic point in $\mathbb{I}^\om$ is $\mathbb{R}$-quasi-minimal.
\end{obs}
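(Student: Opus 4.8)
The plan is to exhibit, very explicitly, a noncomputable real lying $tt$-below the given point, namely one of its coordinates. Write the weakly $1$-generic point as $x=(x_i)_{i\in\om}\in\mathbb{I}^\om$.

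First I would observe that the projection $\pi_0\colon\mathbb{I}^\om\to\mathbb{R}$ sending $z$ to its $0$-th coordinate $z_0$ is a total computable function: from any Cauchy name of $z$ one recovers $z_0$ with only a constant loss of precision (the metric on $\mathbb{I}^\om$ controls the $0$-th coordinate up to a fixed factor). Hence $x_0\leq_{tt}x$, either directly from Definition~\ref{def:MR-tt-degree} via a total computable realizer, or from Fact~\ref{fact:MR} since $\pi_0$ is a partial computable function with $\Pi^0_1$ (indeed, full) domain.

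Second I would argue that $x_0$ is not computable. Suppose it were. Then $P=\{z\in\mathbb{I}^\om:z_0=x_0\}$ is a $\Pi^0_1$ subset of $\mathbb{I}^\om$: its complement $\{z:z_0\neq x_0\}$ is c.e.\ open, because $z_0\neq x_0$ is witnessed by a pair of rationals $(q,\varepsilon)$ with $z_0\in B_\varepsilon(q)$ and $d(q,x_0)>\varepsilon$, and the second conjunct is semidecidable when $x_0$ is computable. Moreover $P$ is nowhere dense in $\mathbb{I}^\om$, since its complement is open and dense: every nonempty basic open box constrains only finitely many coordinates, and even a box pinning the $0$-th coordinate to an interval of positive length still contains points with $z_0\neq x_0$. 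So $P$ would be a nowhere dense $\Pi^0_1$ set containing $x$, contradicting weak $1$-genericity of $x$.

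Finally, $x$ itself is noncomputable, for otherwise $\{x\}$ is a $\Pi^0_1$ singleton, which is nowhere dense because $\mathbb{I}^\om$ has no isolated points, again contradicting weak $1$-genericity. Combining the three steps, $x$ is noncomputable, yet the noncomputable real $x_0\in\mathbb{R}$ satisfies $x_0\leq_{tt}x$; hence $x$ fails the definition of $\mathbb{R}$-quasi-minimality. I do not expect a real obstacle here: the only points needing mild care are the verification that $P$ is $\Pi^0_1$ (using computability of $x_0$) and that $P$ is nowhere dense (using that the Hilbert cube has no isolated points), both entirely routine.
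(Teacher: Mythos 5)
Your proof is correct and takes essentially the same approach as the paper, whose one-line argument is that every coordinate of a weakly $1$-generic point in $\mathbb{I}^\om$ is noncomputable (and implicitly that a coordinate projection is a total computable map, hence a $tt$-reduction). You have simply spelled out the routine details the paper leaves implicit, including the nowhere-dense $\Pi^0_1$ obstruction and the (logically unnecessary) verification that $x$ itself is noncomputable.
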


\begin{proof}
All of coordinates of a weakly $1$-generic point are noncomputable.
\end{proof}

\begin{question}
Does there exist an $\mathbb{R}$-quasi-minimal uniform degree?
\end{question}

\subsection{Effective fractal dimension}

In computability theory (particularly in algorithmic randomness theory), it is usual to consider the algorithmic dimension (the algorithmic information density) of a point in the context of fractal dimension; see \cite[Section 13]{DHBook}.
The notions of Kolmogorov complexity and algorithmic dimension in a Euclidean space has been studied in \cite{LuMa08,LuLu17,LuSt17a,LuSt17b}, for instance.
In this section, we compare our notion of topological dimension of points and the notions of effective fractal dimension of points.

\subsubsection{Zero dimensional spaces}

Let $C$ and $K$ denote the plain and the prefix-free Kolmogorov complexity, respectively.
The {\em effective Hausdorff dimension} of an infinite binary sequence $x\in 2^\om$ is defined as follows.
\[\dim_H(x)=\liminf_{n\to\infty}\frac{K(x\upto n)}{n}=\liminf_{n\to\infty}\frac{C(x\upto n)}{n}.\]

Similarly, the {\em effective packing dimension} of $x\in 2^\om$ is defined as follows.
\[\dim_P(x)=\limsup_{n\to\infty}\frac{K(x\upto n)}{n}=\limsup_{n\to\infty}\frac{C(x\upto n)}{n}.\]

There are more effective versions of Hausdorff and packing dimension.
A machine is a partial computable function whose domain is a subset of $2^{<\om}$.
Let $C_M$ and $K_M$ denote the plain and the prefix-free Kolmogorov complexity relative to a machine $M$, respectively.
A {\em decidable machine} \cite[Definition 7.3.1]{DHBook} is a machine $M$ having the decidable halting problem, that is, ${\rm dom}(M)$ is computable.
A {\em computable measure machine} \cite[Definition 7.1.14]{DHBook} is a prefix-free machine $M$ whose halting probability $\Omega_M$ is computable.
Note that every computable measure machine is decidable.

\begin{obs}\label{obs:dec-vs-cmm}
For every decidable machine $M$, there is a computable measure machine $N$ such that $K_N(\tau)\leq C_M(\tau)+2\log C_M(\tau)+O(1)$.
\end{obs}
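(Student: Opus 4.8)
The plan is to apply the standard ``self-delimiting length prefix'' construction that converts a plain-complexity description into a prefix-free one, and then to use the decidability of $M$ to guarantee that the resulting halting probability is computable. First I would record the only place decidability is used on the combinatorial side: since $M$ is a decidable machine, $\mathrm{dom}(M)$ is computable, so the function $d(m)=|\{\sigma\in 2^m:M(\sigma)\mathord{\downarrow}\}|$ is computable. Next, fix once and for all a computable prefix-free self-delimiting coding $c\colon\om\to 2^{<\om}$ of the natural numbers with $|c(m)|\le 2\log m+O(1)$; concretely, take the binary expansion of $m$ with every bit doubled, followed by the separator $01$. The quantitative feature I will need later is that $2^{-|c(m)|}=O(m^{-2})$, so $\sum_{m>M_0}2^{-|c(m)|}\le c_0/M_0$ for a constant $c_0$ and all $M_0$.

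Then I would define $N$ as follows: on input $\rho$, search for the (necessarily unique, if it exists, since $c$ is prefix-free) decomposition $\rho=c(m)\,\sigma$ with $c(m)$ an initial segment of $\rho$ and $|\sigma|=m$; if this exists and $M(\sigma)\mathord\downarrow$, output $M(\sigma)$, and otherwise diverge. Because $c$ is prefix-free, $\mathrm{dom}(N)=\{c(m)\,\sigma:m\in\om,\ \sigma\in 2^m,\ M(\sigma)\mathord\downarrow\}$ is a prefix-free set, so $N$ is a prefix-free machine. The complexity bound is then immediate: if $\sigma$ is a shortest $M$-program for $\tau$, so that $|\sigma|=C_M(\tau)$, then $c(|\sigma|)\,\sigma\in\mathrm{dom}(N)$ and $N(c(|\sigma|)\,\sigma)=\tau$, whence $K_N(\tau)\le|c(|\sigma|)|+|\sigma|\le C_M(\tau)+2\log C_M(\tau)+O(1)$.

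It remains to verify that $N$ is a \emph{computable measure} machine, i.e.\ that $\Omega_N$ is computable. Grouping $\mathrm{dom}(N)$ by the value of $m$ gives $\Omega_N=\sum_{m\in\om}2^{-|c(m)|}\cdot 2^{-m}\cdot d(m)$. Each summand is at most $2^{-|c(m)|}=O(m^{-2})$, so the tail $\sum_{m>M_0}$ is at most $c_0/M_0$; hence, to approximate $\Omega_N$ to within $2^{-k}$ it suffices to choose $M_0$ with $c_0/M_0<2^{-k}$ and compute the finite sum $\sum_{m\le M_0}2^{-|c(m)|}2^{-m}d(m)$, which is a rational number computable from the (computable) values $d(m)$. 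This shows $\Omega_N$ is computable, and together with prefix-freeness of $N$ it establishes that $N$ is a computable measure machine, completing the argument.

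The main point requiring care — and the only genuine subtlety — is exactly the computability of $\Omega_N$: Kraft's inequality only tells us that $\sum_m 2^{-|c(m)|}$ converges, not that it converges at a computable rate, so I must exploit the explicit estimate $2^{-|c(m)|}=O(m^{-2})$ on the length code (this is also why a slightly wasteful $2\log$-bit length prefix is used rather than an optimal one), in combination with the decidability of $\mathrm{dom}(M)$, which is what makes the truncated partial sums exactly computable rather than merely left-c.e.
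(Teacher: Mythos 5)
Your construction is correct and is the same self-delimiting length-prefix trick used in the paper: encode $|\sigma|$ in a prefix-free way, concatenate with $\sigma$, and run $M$. The one place you deviate is in verifying that $\Omega_N$ is computable. The paper disposes of this step by first replacing $M$ with a total computable extension (which is possible exactly because $\mathrm{dom}(M)$ is computable, and which can only decrease $C_M$, so the inequality is preserved). Once $M$ is total, every string of the form $\langle\text{length code}\rangle\,\sigma$ with $|\sigma|$ matching the code is in $\mathrm{dom}(N)$, so $\Omega_N$ collapses to a fixed rational number independent of $M$, and computability is immediate. Your argument instead keeps $M$ partial, counts $d(m)=|\{\sigma\in 2^m:M(\sigma)\downarrow\}|$ (computable by decidability), and controls the tail of $\Omega_N$ via the explicit estimate $2^{-|c(m)|}=O(m^{-2})$. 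This is perfectly valid and gives the same bound, at the cost of an explicit convergence-rate calculation that the totalization trick renders unnecessary. Both are fine; the totalization route is a hair slicker and is worth keeping in mind, as it shows that the ``wasteful'' $2\log$ prefix is only needed for the length bound in the conclusion, not for making $\Omega_N$ computable.
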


\begin{proof}
One can assume that $M$ is a total machine by extending the domain.
Let $b_n$ denotes the binary presentation of $n\in\om$, and let $b^+_n$ be the result by inserting $0$ into each of consecutive bits in $b_n$, that is, 
\[b^+_n=b_n(0)0b_n(1)0b_n(2)0\dots 0b_{n}(|b_n|-1)0.\]
Then, define $N(b^+_{|\sigma|}11\sigma)=M(\sigma)$.
It is clear that $N$ is a computable measure machine, and satisfies the desired inequality.
\end{proof}

Unlike an (undecidable) machine, which only ensures the existence of a decompression algorithm, one of the most important features of a decidable machine is the relationship with a compression algorithm in the real world, cf.\ Bienvenu-Merkle \cite{BM07}.
Formally, a {\em compression algorithm} or a {\em compressor} \cite{BM07} is a partial computable injection $M:\subseteq 2^{<\om}\to 2^{<\om}$ such that the domain and the image of $M$ are computable.
Then the Kolmogorov complexity $C_M(\sigma)$ w.r.t.\ a compression algorithm $M$ is defined as the length of $M(\sigma)$.
There is only a constant difference between the Kolmogorov complexities w.r.t.\ a decidable machine and a compression algorithm; see \cite{BM07}. 

Let $\mathcal{M}_{\rm cm}$ and $\mathcal{M}_{\rm dm}$ be the collections of all computable measure machines and all decidable machines (or all compression algorithms), respectively.
Then the {\em Schnorr Hausdorff dimension} of $x\in 2^\om$, denoted by $\dim_H^{\rm Sch}(x)$, is given as follows (cf.\ \cite[Theorem 13.15.8]{DHBook}):
\[\dim_H^{\rm Sch}(x)=\inf_{M\in\mathcal{M}_{\rm cm}}\liminf_{n\to\infty}\frac{K_M(x\upto n)}{n}=\inf_{M\in\mathcal{M}_{\rm dm}}\liminf_{n\to\infty}\frac{C_M(x\upto n)}{n}.\]

The latter equality follows from Observation \ref{obs:dec-vs-cmm}.
Similarly, the {\em Schnorr packing dimension} of $x\in 2^\om$, denoted by $\dim_P^{\rm Sch}(x)$, is given as follows (cf.\ \cite[Theorem 13.15.9]{DHBook}):
\[\dim_P^{\rm Sch}(x)=\inf_{M\in\mathcal{M}_{\rm cm}}\limsup_{n\to\infty}\frac{K_M(x\upto n)}{n}=\inf_{M\in\mathcal{M}_{\rm dm}}\limsup_{n\to\infty}\frac{C_M(x\upto n)}{n}.\]

It is clear that $\dim_H(x)\leq\dim_H^{\rm Sch}(x)$ and $\dim_P(x)\leq\dim_P^{\rm Sch}(x)$.
We consider yet another notion of effective fractal dimension.

\begin{prop}\label{prop:Kurtz-Hausdorff}
Let $0\leq s\leq 1$ be a computable real.
The following are equivalent for any $x\in 2^\om$.
\begin{enumerate}
\item $x$ is contained in a Hausdorff $s$-null $\Pi^0_1$ set.
\item There are a computable measure machine $M$ and a computable order $g$ such that
\[(\forall k\in\om)(\exists n\in[g(k),g(k+1)))\quad\frac{K_M(x\upto n)+k}{n}<s.\]
\end{enumerate}
\end{prop}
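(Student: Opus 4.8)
The plan is to transfer the Kraft--Chaitin machine-existence argument that underlies the characterization of non-Kurtz-randomness, while keeping the fractal weights controlled: the standard trick of padding every cylinder of a cover to a common length is harmless for Lebesgue measure but destroys the $s$-weight once $s<1$, so I would avoid it throughout.

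\emph{From (1) to (2).} Suppose $x\in A$ with $A\subseteq 2^\om$ a $\Pi^0_1$ set and $\mathcal H^s(A)=0$. Since $A$ is compact and $s$ is computable, both ``$F$ is a finite cylinder cover of $A$'' and ``$\sum_{\sigma\in F}2^{-s|\sigma|}<2^{-m}$'' are $\Sigma^0_1$-confirmable; because $\mathcal H^s(A)=0$ supplies small-weight cylinder covers and compactness supplies finite subcovers, a search produces, uniformly in $m$, a finite antichain $F_m$ with $A\subseteq\bigcup_{\sigma\in F_m}[\sigma]$ and $\sum_{\sigma\in F_m}2^{-s|\sigma|}<2^{-m}$. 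The search always halts, so $L_m:=\max\{|\sigma|:\sigma\in F_m\}$ is a \emph{total computable} function, and every $\sigma\in F_m$ satisfies $s|\sigma|>m$. Feeding the request set consisting of the pairs $(\lceil s|\sigma|\rceil-m+O(1),\sigma)$ for $m\ge1$ and $\sigma\in F_{2m}$ --- whose total weight is at most $\sum_m 2^{O(1)+m}\sum_{\sigma\in F_{2m}}2^{-s|\sigma|}\le\sum_m 2^{O(1)-m}$, hence $\le1$ for a suitable constant --- into the machine-existence theorem produces a prefix-free $M$ with $K_M(\sigma)\le s|\sigma|-m+O(1)$ on $F_{2m}$; since the request weight is a computable real with a computable tail, $\Omega_M$ is computable and $M$ is a computable measure machine. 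For each $m$ some $\sigma^{(m)}\prec x$ lies in $F_{2m}$, so with $p_m:=|\sigma^{(m)}|$ one has $p_m>2m/s$, $p_m\le L_{2m}$, and $K_M(x\upto p_m)\le sp_m-m+O(1)$.

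\emph{Choosing the order.} The one step that is more than bookkeeping is packaging the ``good windows'' $(2m/s,L_{2m}]$ into a computable block structure: they overlap, and $L_{2m}$ may grow arbitrarily fast, but $L_{2m}$ is nonetheless a computable function, so the order can be built to outrun it. I would set $g(0)=O(1/s)$, $m_k:=\lceil sg(k)/2\rceil+1$, and $g(k+1):=\max\{L_{2m_k}+1,\ \lceil 2(k+O(1))/s\rceil\}$; then $g$ is computable and strictly increasing, hence an order, and an induction arranges $m_k\ge k+O(1)$ with the constant chosen so that, putting $n:=p_{m_k}$, one has $K_M(x\upto n)\le sn-m_k+O(1)\le sn-k-1$. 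Since $n=p_{m_k}>2m_k/s>g(k)$ and $n\le L_{2m_k}<g(k+1)$, we get $n\in[g(k),g(k+1))$ and $\frac{K_M(x\upto n)+k}{n}<s$, which is (2).

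\emph{From (2) to (1).} Given a computable measure machine $M$ and an order $g$ as in (2), let $D_k$ be the finite union of all cylinders $[\sigma]$ with $g(k)\le|\sigma|<g(k+1)$ and $K_M(\sigma)<s|\sigma|-k$; the hypothesis gives $x\in D_k$, and from $2^{-s|\sigma|}<2^{-k}2^{-K_M(\sigma)}$ together with $\sum_\sigma 2^{-K_M(\sigma)}\le\Omega_M\le1$ (prefix-freeness), these cylinders cover $D_k$ with $s$-weight $<2^{-k}$ and mesh $\le2^{-g(k)}$. To turn $\bigcap_k D_k$ into a genuine $\Pi^0_1$ class rather than a mere $\Pi^0_2$ one, I would use computability of $\Omega_M$: with $\mu_t$ the measure of $\mathrm{dom}(M)$ enumerated by stage $t$, compute a stage $t_k$ with $\Omega_M-\mu_{t_k}<2^{-(sg(k+1)-k)}$. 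Every program contributing a cylinder to $D_k$ has length $<sg(k+1)-k$ and hence weight $\ge2^{-(sg(k+1)-k)}$, so it must already appear by stage $t_k$; therefore the clopen set $D_k^\ast$ computed from the stage-$t_k$ approximation to $K_M$ coincides with $D_k$ and has a uniformly computable canonical index. Then $P:=\bigcap_k D_k^\ast$ is $\Pi^0_1$ (its complement $\bigcup_k(2^\om\setminus D_k^\ast)$ is c.e.\ open), it contains $x$, and since $P\subseteq D_k$ it is covered by cylinders of $s$-weight $<2^{-k}$ and mesh $\le2^{-g(k)}\to0$, so $\mathcal H^s(P)=0$. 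Thus $x$ lies in a Hausdorff $s$-null $\Pi^0_1$ set. The appeal to the computable stopping times $t_k$ is exactly where the hypothesis ``computable measure machine'' --- rather than an arbitrary prefix-free machine --- is essential, and I expect the block-structure step in the first direction and this closing-up step in the second to be the two points requiring the most care; the remainder is classical Kraft--Chaitin bookkeeping.
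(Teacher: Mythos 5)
Your argument is correct in substance but takes a more self-contained route than the paper's. The paper reduces the claim to known results: it introduces Kihara--Miyabe's notion of a Kurtz $s$-null set, cites that Hausdorff $s$-nullness and Kurtz $s$-nullness coincide for $\Pi^0_1$ classes (effective compactness), cites Kihara--Miyabe \cite[Theorem~5.2]{KiMi14} for the equivalence between $\{x\}$ being Kurtz $s$-null and condition~(2), and finishes with the one-line observation that a Kurtz $s$-null singleton is automatically contained in the Kurtz $s$-null $\Pi^0_1$ class $\bigcap_n[C_n]$. Your proposal essentially reproves the cited Kihara--Miyabe equivalence from scratch, and you correctly isolate its two non-bookkeeping steps: building an order $g$ that outruns the computable length bound $L_{2m}$ on the covers in direction~$(1)\Rightarrow(2)$, and exploiting computability of $\Omega_M$ to convert the a priori $\Pi^0_2$ intersection $\bigcap_k D_k$ into a genuine $\Pi^0_1$ class via the cut-off stages $t_k$ in direction~$(2)\Rightarrow(1)$.

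One small caveat in the latter step that the write-up glosses over: the predicate $K_{M,t_k}(\sigma)<s|\sigma|-k$ compares an integer against a computable but not necessarily rational real, so boundary cases with equality would make the test non-terminating and the canonical index of $D_k^\ast$ not obviously computable. The standard repair is to over-approximate: choose a rational $s_L\in[s,s+2^{-L}]$ with $2^{-L}g(k+1)\le1$ and define $D_k^\ast$ via $K_{M,t_k}(\sigma)<s_L|\sigma|-k$. This is now genuine integer-versus-rational arithmetic, the resulting clopen set still contains $D_k$ (hence $x$), and a short computation gives $s$-weight at most $2^{1-k}$, which suffices. With that adjustment your argument is complete; what it buys over the paper's is independence from the Kihara--Miyabe reference, at the cost of roughly a page of Kraft--Chaitin bookkeeping that the paper is content to cite.
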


\begin{proof}
Kihara-Miyabe \cite{KiMi14} introduced the following notion:
A set $E\subseteq 2^\om$ is {\em Kurtz $s$-null} if there is a computable sequence $(C_n)_{n\in\om}$ of finite sets of strings
such that $E\subseteq[C_n]$ and $\sum_{\sigma\in C_n}2^{-s|\sigma|}\leq 2^{-n}$ for all $n\in\om$.
If $P\subseteq 2^\om$ is $\Pi^0_1$, by effective compactness, $P$ is Hausdorff $s$-null, iff $P$ is Kurtz $s$-null (c.f.\ the proof of \cite[Theorem 13.6.1]{DHBook} for the details).
Thus, it suffices to show that $x$ is contained in a Kurtz $s$-null $\Pi^0_1$, iff the condition (2) holds for $x$.
Kihara-Miyabe \cite[Theorem 5.2]{KiMi14} showed that $\{x\}$ is Kurtz $s$-null iff (2) holds for $x$.
If $\{x\}$ is Kurtz $s$-null via $(C_n)_{n\in\om}$, then $\bigcap_{n\in\om}[C_n]$ is a Kurtz $s$-null $\Pi^0_1$ set containing $x$.
This concludes the proof.
\end{proof}

By Observation \ref{obs:dec-vs-cmm}, one can replace a computable measure machine in (2) with a decidable machine (or a compression algorithm).
If $x$ satisfies the condition (2), then we call $x$ {\em computably often $s$-compressible} (c.o.\ $s$-compressible).
Define $\kdim(x)$ as the infimum of the set of $s\in[0,\infty)$ such that $x$ is c.o.\ $s$-compressible.
Obviously, $\dim_H^{\rm Sch}(x)\leq\kdim(x)\leq\dim_P^{\rm Sch}(x)$.

\begin{cor}
Let $0\leq s\leq 1$ be a computable real.
Then, $x\in 2^\om$ is contained in a $\Pi^0_1$ set of Hausdorff dimension $\leq s$ iff $\kdim(x)\leq s$.
\end{cor}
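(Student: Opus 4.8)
The plan is to deduce the corollary directly from Proposition~\ref{prop:Kurtz-Hausdorff} together with the definition of $\kdim$, using only elementary manipulations of the quantifiers. The key point is that ``$x$ is contained in a $\Pi^0_1$ set of Hausdorff dimension $\leq s$'' should be rewritten, via effective compactness, as ``for every computable rational $t>s$, $x$ is contained in a Hausdorff $t$-null $\Pi^0_1$ set,'' since a compact set has Hausdorff dimension $\leq s$ iff it is $t$-null for all $t>s$, and the witnessing Kurtz-null covers can be taken uniformly.

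First I would handle the forward direction. Suppose $x\in P$ for some $\Pi^0_1$ set $P$ with $\dim_H(P)\leq s$. Fix any computable rational $t$ with $s<t\leq 1$. Then $P$ is Hausdorff $t$-null, hence by Proposition~\ref{prop:Kurtz-Hausdorff} (applied with $t$ in place of $s$) there are a computable measure machine $M$ and a computable order $g$ witnessing condition~(2) for $x$ at parameter $t$; in particular, infinitely often $K_M(x\upto n)/n<t$, so $\liminf_n K_M(x\upto n)/n\leq t$, which gives $\kdim(x)\leq t$ (using the decidable-machine / compressor reformulation of $\kdim$ noted after Proposition~\ref{prop:Kurtz-Hausdorff}, and Observation~\ref{obs:dec-vs-cmm} to pass between the two). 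Letting $t\to s$ through computable rationals yields $\kdim(x)\leq s$.

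For the converse, suppose $\kdim(x)\leq s$. Fix a computable rational $t$ with $s<t\leq 1$; then $x$ is c.o.\ $t$-compressible, i.e.\ condition~(2) of Proposition~\ref{prop:Kurtz-Hausdorff} holds at parameter $t$, so by that proposition $x$ lies in a Hausdorff $t$-null $\Pi^0_1$ set $P_t$; such a set has $\dim_H(P_t)\leq t\leq s+\tfrac{1}{m}$ for suitable $m$. To obtain a single $\Pi^0_1$ set, I would take a sequence $t_m\downarrow s$ of computable rationals and intersect: $P:=\bigcap_m P_{t_m}$ is $\Pi^0_1$, contains $x$, and satisfies $\dim_H(P)\leq\inf_m t_m=s$.

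The only mild obstacle is bookkeeping at the boundary case $s=1$ and the uniformity needed to intersect the $P_{t_m}$: one must check that each $P_{t_m}$ produced by Proposition~\ref{prop:Kurtz-Hausdorff} is genuinely $\Pi^0_1$ (which it is, as the proof of that proposition exhibits it as $\bigcap_n[C_n]$ for a computable sequence of finite clopen covers) and that a countable intersection of $\Pi^0_1$ sets is again $\Pi^0_1$, which is immediate. No new computability-theoretic idea is required; the corollary is a routine ``taking limits over the rational parameter'' wrapper around Proposition~\ref{prop:Kurtz-Hausdorff}.
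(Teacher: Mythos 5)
Your strategy --- reduce the corollary to Proposition~\ref{prop:Kurtz-Hausdorff} by working at computable rational levels $t>s$ and then passing to the limit --- is the same as the paper's one-sentence proof, and your forward direction is fine (though the detour through $\liminf_n K_M(x\upto n)/n$ is unnecessary: $x$ being c.o.\ $t$-compressible already means $t$ lies in the set whose infimum is $\kdim(x)$, so $\kdim(x)\leq t$ is immediate from the definition). The problem is the last step of your converse direction: you assert that ``a countable intersection of $\Pi^0_1$ sets is again $\Pi^0_1$, which is immediate.'' That is false in general. A nonuniform countable intersection of $\Pi^0_1$ subsets of $2^\om$ is only $\Pi^0_2$: its complement is a countable union of c.e.\ open sets, and such a union is c.e.\ open only when the sequence is \emph{uniformly} c.e. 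For a concrete counterexample, take any non-c.e.\ set $A\subseteq\om$ and put $P_m=2^\om\setminus[0^m11]$ when $m\in A$ and $P_m=2^\om$ otherwise; each $P_m$ is $\Pi^0_1$, but $\bigcap_m P_m=2^\om\setminus\bigcup_{m\in A}[0^m11]$ is closed and not $\Pi^0_1$.

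So what you actually need is that the sets $P_{t_m}$ be produced \emph{uniformly} in $m$, i.e.\ that the Kurtz-$t_m$-null witnesses $(C^{(m)}_n)_n$ --- equivalently, the computable measure machines $M_m$ and computable orders $g_m$ witnessing c.o.\ $t_m$-compressibility of $x$ --- be obtainable effectively from $m$. Proposition~\ref{prop:Kurtz-Hausdorff} is a pure existence statement at each fixed computable level, and $\kdim(x)\leq s$ by itself does not hand you such a uniformly computable family of witnesses across the levels $t_m\downarrow s$; your write-up acknowledges the uniformity question in passing but then dismisses it with the incorrect ``immediate'' claim rather than resolving it. To be fair, the paper's own proof of this corollary is a single sentence that glides over exactly the same point; you have reconstructed that argument faithfully, gap included, but the explicit assertion about intersections is wrong as stated and would need to be replaced by a genuine uniformity argument (or the intersection step avoided altogether).
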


\begin{proof}
Note that $\dim_H(x)\leq s$ iff for any $t>s$, there is an effective $\mathcal{H}^t$-null set containing $x$.
By Proposition \ref{prop:Kurtz-Hausdorff}, it is equivalent to saying that $x$ is c.o.\ $t$-compressible for all $t>s$, that is, $\kdim(x)\leq s$.
\end{proof}

\begin{obs}
Let $0\leq s\leq 1$ be a computable real.
Then, there is $x\in 2^\om$ such that $\dim_H(x)=\kdim(x)=s$.
\end{obs}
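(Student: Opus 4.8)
The plan is to reduce the statement to a standard \emph{dilution} construction. From the displayed inequalities one has $\dim_H(x)\leq\dim_H^{\rm Sch}(x)\leq\kdim(x)$ for every $x\in 2^\om$, and the corollary just proved says that $\kdim(x)\leq s$ precisely when $x$ lies in some $\Pi^0_1$ subset of $2^\om$ of Hausdorff dimension $\leq s$. Hence it is enough to exhibit a point $x\in 2^\om$ together with a $\Pi^0_1$ set $P\ni x$ satisfying $\dim_H(P)\leq s$ and $\dim_H(x)\geq s$: then $s\leq\dim_H(x)\leq\kdim(x)\leq s$, so $\dim_H(x)=\kdim(x)=s$. (The case $s=0$ is already witnessed by $x=0^\om$ with $P=\{0^\om\}$, so one may assume $s>0$, although the construction below covers $s=0$ as well.)

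First I would fix a computable infinite set $A=\{a_0<a_1<\dots\}\subseteq\om$ of asymptotic density exactly $s$; writing $m_n:=|A\cap[0,n)|$ this means $\lim_n m_n/n=s$. Such an $A$ exists for every computable real $s\in[0,1]$: for $s=0$ take $A=\{2^k:k\in\om\}$, and for $s>0$ let $a_k$ be the least $n>a_{k-1}$ for which some rational approximation of $sn$ to within $\tfrac14$ exceeds $k$, for which a short computation shows that $|a_k-k/s|$ stays bounded. Now set
\[
P=\{y\in 2^\om : y(n)=0\text{ for every }n\notin A\},
\]
which is a $\Pi^0_1$ subset of $2^\om$ (an intersection of clopen sets, since $A$ is computable). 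At level $n$ the set $P$ meets exactly $2^{m_n}$ basic clopen cylinders, each of diameter $2^{-n}$; hence for every rational $t>s$ the quantity $2^{m_n}\cdot(2^{-n})^t=2^{m_n-tn}$ tends to $0$ as $n\to\infty$ (because $m_n/n\to s<t$), so $\mathcal{H}^t(P)=0$. Therefore $\dim_H(P)\leq s$.

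Finally I would fix a Martin-L\"of random $R\in 2^\om$ and define $x\in 2^\om$ by $x(a_k)=R(k)$ for all $k\in\om$ and $x(n)=0$ for $n\notin A$. Then $x\in P$, so $\kdim(x)\leq s$ by the corollary. For the reverse estimate, observe that $R\upto m_n$ is computable from the pair $(x\upto n,n)$ (read off the bits of $x$ at the positions $a_0,\dots,a_{m_n-1}$, all below $n$ and computable from $n$), so $K(R\upto m_n)\leq K(x\upto n)+O(\log n)$; as $R$ is Martin-L\"of random, $K(R\upto m_n)\geq m_n-O(1)$, and hence $K(x\upto n)\geq m_n-O(\log n)$. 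Consequently $\dim_H(x)=\liminf_n K(x\upto n)/n\geq\liminf_n(m_n-O(\log n))/n=s$, and combined with the previous paragraph this gives $\dim_H(x)=\kdim(x)=s$. I expect the only genuinely fiddly step to be the construction of the computable density-$s$ set $A$ for an arbitrary computable real $s$ — since one cannot decide whether $sn$ is an integer, $A$ must be built from rational approximations together with a monotonicity repair — while everything else is routine Kolmogorov-complexity bookkeeping and a textbook Hausdorff-measure estimate.
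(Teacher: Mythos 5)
Your proof is correct, but it takes a more explicit route than the paper's. Both arguments share the same skeleton: produce a $\Pi^0_1$ set of Hausdorff dimension $s$ and then locate a point inside it of effective Hausdorff dimension $s$, using the preceding corollary for the upper bound $\kdim(x)\leq s$ and the $\liminf K(x\upto n)/n$ characterization for the lower bound $\dim_H(x)\geq s$. The paper leaves the $\Pi^0_1$ set $Q$ abstract and finds $x$ non-constructively: for each $t<s$ the $\Sigma^0_1$ cover generated by $\{\sigma:K(\sigma)\leq t|\sigma|-k\}$ cannot cover $Q$ for large $k$ (else $\mathcal{H}^t(Q)=0$), and a compactness argument then yields a single $x\in Q$ avoiding all these covers simultaneously, giving $K(x\upto n)\geq (s-2^{-j})n-O_j(1)$ for every $j$. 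You instead build both the set and the point by hand: the dilution set $P$ supported on a computable density-$s$ set $A$ is the $\Pi^0_1$ witness, and threading a Martin--L\"of random $R$ through the positions of $A$ yields a point whose complexity you read off directly from the classical bound $K(R\upto m)\geq m-O(1)$ (indeed your $O(\log n)$ correction term can be sharpened to $O(1)$, since $n=|x\upto n|$ is already available from $x\upto n$). Your route is fully constructive and makes the density-to-dimension transfer transparent; the paper's compactness argument is shorter once the $\Sigma^0_1$ covers are in place and, because it works inside an arbitrary $\Pi^0_1$ set of Hausdorff dimension $s$, implicitly establishes the slightly stronger correspondence principle that every such set contains a point of effective Hausdorff dimension $\geq s$.
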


\begin{proof}
It is clear that $\dim_H(x)\leq\kdim(x)$.
It is easy to construct a $\Pi^0_1$ set $Q\subseteq 2^\om$ such that $\dim_H(Q)=s$.
Then, for any $t<s$, $Q$ is not covered by the open set $U_t$ generated by $\{\sigma:K(\sigma)\leq t|\sigma|-k\}$ for large $k$.
By compactness of $Q$, there is $x\in Q\setminus\bigcup_jU_{s-2^{-j}}$.
Then, for any $j$, there is $k$ such that $K(x\upto n)>(s-2^{-j})n-k$ for all $n$.
This implies that $\dim_H(x)\geq s-2^{-j}$ for any $j$, and thus $\dim_H(x)\geq s$.
Moreover, $x\in Q$ implies $\kdim(x)\leq s$.
\end{proof}

\subsubsection{Finite dimensional spaces}

Given $x\in\mathbb{R}^n$, the {\em Kolmogorov complexity of $x$ at precision $r$} (cf.\ \cite{LuMa08,LuLu17,LuSt17a,LuSt17b}) is defined as follows:
\[K_r(x)=\min\{K(q):q\in\mathbb{Q}^n\mbox{ and }d(x,q)<2^{-r}\}.\]


We also define the Kolmogorov complexity w.r.t.\ a machine $M$ at precision $r$ in a straightforward manner, denoted by $C_{M,r}$ and $K_{M,r}$.
Then, the effective Hausdorff and packing dimension of $x\in\mathbb{R}^n$ is defined as $\dim_H(x)=\liminf_rK_r(x)/r$ and $\dim_P(x)=\limsup_rK_r(x)/r$.
The Schnorr Hausdorff and packing dimensions $\dim_H^{\rm Sch}$ and $\dim_P^{\rm Sch}$ are also defined in a similar manner.

\begin{prop}\label{prop:weakly1gen-Haus}
If $x\in\mathbb{R}^{n}$ is weakly $1$-generic, then $\dim_H(x)=0$.
\end{prop}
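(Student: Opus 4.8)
The plan is to use the standard reformulation of weak $1$-genericity: since the complement of a dense c.e.\ open set is a nowhere dense $\Pi^0_1$ set, and conversely the complement of a nowhere dense $\Pi^0_1$ set is a dense c.e.\ open set, a point $x\in\mathbb{R}^n$ is weakly $1$-generic if and only if it lies in every dense c.e.\ open subset of $\mathbb{R}^n$. So it suffices to construct, for each $k\geq 1$, a dense c.e.\ open set $U_k$ whose every point automatically admits a cheap rational approximation at some large precision; membership of $x$ in all the $U_k$ will then force $\liminf_r K_r(x)/r=0$.

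Concretely, I would take
\[
U_k=\bigcup\bigl\{\,B_{2^{-r}}(q)\ :\ q\in\mathbb{Q}^n,\ r\geq k,\ k\cdot K(q)\leq r\,\bigr\}.
\]
First I would verify that $U_k$ is c.e.\ open: the condition $k\cdot K(q)\leq r$ is $\Sigma^0_1$ in $(q,r)$ (enumerate programs of length at most $\lfloor r/k\rfloor$ and wait for output $q$), and $r\geq k$ is decidable, so $U_k$ is a computable union of rational open balls. Next I would verify density: for any rational ball $B_\rho(p)$ and any $r$ large enough that simultaneously $r\geq k$, $k\cdot K(p)\leq r$, and $2^{-r}\leq\rho$, we have $B_{2^{-r}}(p)\subseteq B_\rho(p)\cap U_k$; since the rational balls form a basis, $U_k$ is dense.

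Finally, since $x$ is weakly $1$-generic, $x\in U_k$ for every $k$, so there are $q_k\in\mathbb{Q}^n$ and $r_k\geq k$ with $k\cdot K(q_k)\leq r_k$ and $d(x,q_k)<2^{-r_k}$. By the definition of $K_{r_k}$ this yields $K_{r_k}(x)\leq K(q_k)\leq r_k/k$, hence $K_{r_k}(x)/r_k\leq 1/k$; since $r_k\geq k\to\infty$ and $K_r(x)/r\geq 0$ for all $r$, we conclude $\dim_H(x)=\liminf_{r\to\infty}K_r(x)/r=0$. I do not expect a genuine obstacle. The two points requiring care are recalling the duality between avoiding nowhere dense $\Pi^0_1$ sets and meeting dense c.e.\ open sets, and building the clause $r\geq k$ into $U_k$ so that the witnessing precisions $r_k$ really tend to infinity (otherwise the small ratios might in principle occur only at bounded precision and fail to contribute to the $\liminf$).
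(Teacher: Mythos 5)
Your proof is correct and follows essentially the same route as the paper's: both build, for each $k$, a dense c.e.\ open set consisting of small balls around rational points of low Kolmogorov complexity (you take radius $2^{-r}$ with $k\cdot K(q)\le r$, the paper takes radius $2^{-kr}$ with $K(q)\le r$; the two parameterizations are interchangeable), and then invoke weak $1$-genericity to place $x$ in all of them, forcing $\liminf_r K_r(x)/r=0$. The extra care you take about the clause $r\geq k$ (so the witnessing precisions tend to infinity) is a sensible sanity check, and the observation that $K(q)\le t$ is a $\Sigma^0_1$ condition is exactly the effectivity fact the paper relies on as well.
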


\begin{proof}
For any $k\in\om$, the set $S_k$ of all $x$ such that $K_{kr}(x)\leq r$ for some $r\geq k$ is dense, since if $p$ is computable then $K_{kr}(p)\leq r$ for almost all $r$.
We claim that $S_k$ is c.e.\ open.
To see this, consider the c.e.\ set $C_t=\{q\in\mathbb{Q}^n:K(q)\leq t\}$, which generates the c.e.\ open set $C_{t,r}=\bigcup\{B(q;2^{-r}):q\in C_t\}$.
By definition, $K_r(x)\leq t$ iff $x\in C_{t,r}$.
Hence, $S_k=\bigcup_{r\geq k}C_{r,kr}$ is c.e.\ open.
Thus, if $x$ is weakly $1$-generic, then $x\in S_k$ for any $k$, which implies that for any $k$ there is $r$ such that $K_{kr}(x)/kr\leq 1/k$.
Hence, we have $\liminf_rK_r(x)/r=0$, that is, $\dim_H(x)=0$.
\end{proof}

A point $x\in\repsp{X}$ is {\em weakly $n$-generic} if it is contained in any dense $\Sigma^0_n$-open set.
A point $x\in\repsp{X}$ is {\em $n$-generic} if it is not contained in the boundary $\partial U$ of a dense $\Sigma^0_n$-open set $U\subseteq\repsp{X}$.

\begin{prop}
If $x\in\mathbb{R}^{n}$ is weakly $2$-generic, then $\dim_P(x)=n$.
\end{prop}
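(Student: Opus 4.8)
The plan is to prove the two inequalities $\dim_P(x)\le n$ and $\dim_P(x)\ge n$ separately; the first holds for every $x\in\mathbb{R}^n$, and only the second uses weak $2$-genericity. For the upper bound I would, given $r$, round $x$ coordinatewise to a dyadic rational $q\in\mathbb{Q}^n$ with $d(x,q)<2^{-r}$; describing such a $q$ costs $nr+O_{n,x}(\log r)$ bits, so $K_r(x)\le nr+O_{n,x}(\log r)$ and hence $\limsup_r K_r(x)/r\le n$.

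For the lower bound I would run the scheme of the proof of Proposition~\ref{prop:weakly1gen-Haus}, but dualized. Fix $k\in\om$, put $t_r=\lceil (n-\tfrac1k)r\rceil$, and recall $C_t=\{q\in\mathbb{Q}^n:K(q)\le t\}$ together with the c.e.\ open set $C_{t,r}=\bigcup\{B(q;2^{-r}):q\in C_t\}$. The naive ``high complexity'' set $\{x:K_r(x)>t_r\}=\mathbb{R}^n\setminus C_{t_r,r}$ is closed rather than open, so it is useless against weak $2$-genericity; the key move is to pass to its interior, presented as
\[
O_k=\bigcup_{r\ge k}\ \bigcup\{\,B:\ B\text{ a rational open ball with }B\cap C_{t_r,r}=\emptyset\,\}.
\]
Since ``$B\cap C_{t_r,r}=\emptyset$'' is a $\Pi^0_1$ condition on the index of the rational ball $B$ (its failure is witnessed by enumerating $C_{t_r,r}$), $O_k$ is a union of rational balls over a $\Pi^0_1$ index set, hence $\Sigma^0_2$ and open. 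Moreover, if $x\in B$ and $B\cap C_{t_r,r}=\emptyset$ then $d(x,q)\ge 2^{-r}$ for every $q$ with $K(q)\le t_r$, i.e.\ $K_r(x)>t_r\ge(n-\tfrac1k)r$. Thus once we know $x\in O_k$ for every $k$ --- which weak $2$-genericity provides --- we obtain exponents $r_k\ge k$ with $K_{r_k}(x)/r_k>n-\tfrac1k$, and since $r_k\to\infty$ this yields $\dim_P(x)=\limsup_r K_r(x)/r\ge n$.

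The step that needs real work is the density of $O_k$. Here I would use a volume count: $C_{t_r,r}$ is a union of at most $2^{t_r+1}$ balls of radius $2^{-r}$, so $\overline{C_{t_r,r}}$ meets any fixed rational ball $B_0$ in a set of Lebesgue measure $O_n(2^{t_r-nr})=O_n(2^{-r/k})$, which tends to $0$. Hence for all sufficiently large $r$ this measure is below $\mathrm{vol}(B_0)$, so $B_0\setminus\overline{C_{t_r,r}}$ is a nonempty open set and therefore contains a rational ball $B$ with $B\cap C_{t_r,r}=\emptyset$; then $B\subseteq O_k$, so $O_k\cap B_0\ne\emptyset$.

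The main obstacle, as flagged above, is the mismatch of polarities: ``$x$ has near-maximal complexity at some scale'' is intrinsically a closed (indeed $F_\sigma$) condition, whereas weak $2$-genericity only bites on $\Sigma^0_2$-open sets. Replacing ``$x$ avoids $C_{t_r,r}$'' by ``$x$ sits inside a rational ball that avoids $C_{t_r,r}$'' is precisely what converts this into a genuinely open, and still $\Sigma^0_2$, property that continues to force $K_r(x)>t_r$; the accompanying volume estimate is what makes this property dense at arbitrarily large scales, so that the witnesses $r_k$ can be pushed to infinity.
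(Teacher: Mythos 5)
Your proof is correct, and it takes a genuinely different route from the paper's on both of the nontrivial pieces. For the open covering, the paper works directly with the sets
$E_{t,r}=\{x:d(x,q)>2^{-r}\text{ for all }q\in C_t\}$
and obtains $\Sigma^0_2$-openness by noting that $(C_t)_t$ is a $\emptyset'$-computable sequence of finite sets, so the $E_{t,r}$ are uniformly $\emptyset'$-c.e.\ open; your version achieves the same thing by enumerating rational balls $B$ with $B\cap C_{t_r,r}=\emptyset$, which is a $\Pi^0_1\subseteq\Sigma^0_2$ index condition. These describe essentially the same open set (the interior of the complement of $C_{t_r,r}$, which for finite $C_{t_r}$ is exactly $E_{t_r,r}$ up to the off-by-one in radii), so the difference is purely in the presentation of the index set. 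For density, the paper relies on the fact that Martin-L\"of random points are dense and satisfy $K_r(p)\geq nr-O(1)$, whereas you use a direct volume count: $C_{t_r,r}$ is covered by at most $2^{t_r+1}$ balls of radius $2^{-r}$, so its measure inside any fixed ball $B_0$ decays like $2^{-r/k}$ and must eventually miss some rational subball. Your argument is more self-contained (no appeal to randomness), while the paper's is a one-line shortcut for a reader who already has the incompressibility of randoms in hand. You also spell out the trivial upper bound $\dim_P(x)\leq n$ (rounding to a dyadic rational of precision $2^{-r}$), which the paper takes as read. The one remark worth flagging is your framing of the ``polarity'' issue: the set $\{x:K_r(x)>t_r\}$ is indeed closed, but the paper never used it as such -- its $E_{t,r}$ is already open (complement of a finite union of \emph{closed} balls), so passing to the interior is really just a re-presentation rather than an obstacle the paper fails to notice. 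That said, your rational-ball formulation is exactly the right way to make the $\Sigma^0_2$ bookkeeping visible, and the whole argument closes correctly.
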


\begin{proof}
For any $k\in\om$, the set $S_k$ of all $x$ such that $K_{r+1}(x)/r\geq n(1-1/k)$ for some $r\geq k$ is dense, since if $p$ is random then $K_{r}(p)\geq nr-O(1)$.
Again consider the c.e.\ set $C_t=\{q\in\mathbb{Q}^n:K(q)\leq t\}$.
Consider the set $E_{t,r}$ of all $x$ such that $d(x,q)>2^{-r}$ for all $q\in C_t$.
Note that $(C_t)_{t\in\om}$ is a $\emptyset'$-computable sequence of finite sets.
Then $E_{t,r}$ is open since $C_t$ is finite, and the sequence $(E_{t,r})$ is $\emptyset'$-computable.
Clearly, $S_k\subseteq S^\ast_k:=\bigcup_{r\geq k}E_{r n(1-1/k)}$, and thus the latter set is a dense $\emptyset'$-c.e.\ open set.
Hence, if $x$ is weakly $2$-generic then $x\in S^\ast_k$ for all $k$, which implies that $\limsup_nK_r(x)/r=n$, that is, $\dim_P(x)=n$.
\end{proof}

\begin{prop}\label{prop:2-gen-Sch-dim}
If $x\in\mathbb{R}^n$ is $2$-generic, then $\dim_{H}^{\rm Sch}(x)=0$, but $\dim(x)=n$.
\end{prop}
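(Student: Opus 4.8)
The statement splits into two independent claims, which I would handle separately. For $\dim(x)=n$: since $x$ is $2$-generic it is in particular weakly $1$-generic (any dense c.e.\ open set is a dense $\Sigma^0_2$-open set, and $2$-genericity forces membership in such a set), so Observation~\ref{obs:proper-n-dim-w1g}, which characterizes the weakly $1$-generic points of $\mathbb{R}^n$ as exactly the proper $n$-dimensional ones, gives $\dim(x)=n$ at once. Nothing more is needed here.

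The substance is the claim $\dim_H^{\rm Sch}(x)=0$. Since $\dim_H^{\rm Sch}(x)=\inf_{M\in\mathcal{M}_{\rm cm}}\liminf_{r\to\infty}K_{M,r}(x)/r$, it is enough to build one computable measure machine $N$ with $\liminf_{r\to\infty}K_{N,r}(x)/r=0$. The plan is to have $N$, at each ``level'' $i$, be able to name any of the first $i+1$ rationals $q_0,\dots,q_i$ of a fixed computable enumeration of $\mathbb{Q}^n$ using only $O(\log i)$ bits, and then to exploit weak $1$-genericity of $x$ to make $x$ fall, at infinitely many precisions $r=i$, inside the $2^{-i}$-ball about one of these cheaply nameable rationals; the $\liminf$ along that subsequence of precisions then drops to $0$.

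Concretely, I would take $D$ to be the total computable function with $D(\langle i,\ell\rangle)=q_\ell$ whenever $\ell\le i$ (standard pairing, arbitrary value elsewhere); then $D$ is a decidable machine with $C_D(q_\ell)=O(\log(\ell+1))$, and Observation~\ref{obs:dec-vs-cmm} yields a computable measure machine $N$ with $K_N(q_\ell)\le C_D(q_\ell)+2\log C_D(q_\ell)+O(1)=O(\log(\ell+1))$. For $k\in\om$ set
\[
U_k=\bigcup_{i\ge k}\ \bigcup_{\ell\le i}B(q_\ell;2^{-i}),
\]
which is c.e.\ open, and dense: given a rational ball $B$, pick $\ell$ with $q_\ell$ well inside $B$, then $i\ge\max(k,\ell)$ with $2^{-i}$ below the distance from $q_\ell$ to $\partial B$, so that $B(q_\ell;2^{-i})\subseteq B\cap U_k$. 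Since $x$ is weakly $1$-generic, $x\in U_k$ for every $k$; hence for every $k$ there are $i\ge k$ and $\ell\le i$ with $d(x,q_\ell)<2^{-i}$, so $K_{N,i}(x)\le K_N(q_\ell)=O(\log(i+1))$. As this holds for arbitrarily large $i$, it forces $\liminf_{r\to\infty}K_{N,r}(x)/r=0$, i.e.\ $\dim_H^{\rm Sch}(x)=0$.

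The only delicate point in this plan is that the witness must be a genuine \emph{computable measure} machine and not merely an arbitrary (or even merely decidable) machine; this is exactly what Observation~\ref{obs:dec-vs-cmm} supplies, at the cost of a $2\log$ term that is harmlessly absorbed by the $1/i$ normalization, so I expect the rest to be routine bookkeeping (verifying $U_k$ is c.e.\ open and dense, and that $K_{N,r}$ behaves as claimed). I would also note that only weak $1$-genericity of $x$ is actually used in either half; $2$-genericity is merely the natural single hypothesis to state, and it is also what makes $\dim_P(x)=n$ available from the preceding proposition.
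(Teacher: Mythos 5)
Your proof is correct, and it takes a genuinely different route from the paper's for the Schnorr Hausdorff dimension claim. The paper passes through a chain of degree-theoretic facts: it lifts $x$ to a $2$-generic $\tilde{x}\in 2^\om$, deduces that $\tilde{x}$ is neither high nor autocomplex (this is where the full strength of $2$-genericity enters, via ${\rm GL}_2$), then invokes a theorem of Downey--Hirschfeldt to conclude $\tilde{x}$ is computably i.o.\ traceable, and finally feeds a trace of a Cauchy name for $x$ into Kolmogorov's lemma to produce a computable measure machine with $K_M(q_r)\le 2\log r$ infinitely often. Your approach bypasses traceability entirely: you hard-code a decidable machine $D$ that names $q_\ell$ in $O(\log\ell)$ bits, promote it to a computable measure machine $N$ by Observation~\ref{obs:dec-vs-cmm}, and then use nothing more than weak $1$-genericity to put $x$ in the dense c.e.\ open sets $U_k$, which directly forces $K_{N,i}(x)=O(\log i)$ for a cofinal set of precisions $i$. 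This is more elementary, and as you correctly note it actually yields the conclusion under the weaker hypothesis of weak $1$-genericity (the paper already records $\dim_H(x)=0$ under weak $1$-genericity in Proposition~\ref{prop:weakly1gen-Haus}, and your argument shows the Schnorr strengthening holds there too). The trade-off is that the paper's traceability route generalizes smoothly to arbitrary computable gauge functions (as the citation of \cite{KiMi15} suggests), whereas your construction is tailored to the $\liminf K_{N,r}(x)/r=0$ goal; but for the statement as given, your proof is both correct and cleaner. For $\dim(x)=n$ you use the same route as the paper (Observation~\ref{obs:proper-n-dim-w1g}), so nothing further to say there.
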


\begin{proof}
Let $x=(x_i)_{i<n}$ be $2$-generic.
By Observation \ref{obs:proper-n-dim-w1g}, $\dim(x)=n$.
By the standard property of Cohen genericity, $(x_i)_{i<n}$ is mutually $2$-generic.
Since $x_i$ is irrational, $x_i$ has the unique binary expansion $\tilde{x}_i$.
Then $(\tilde{x}_i)_{i<n}$ is mutually $2$-generic, and therefore, $\tilde{x}=\bigoplus_{i<n}\tilde{x}_i$ is $2$-generic in $2^\om$.
Clearly $\tilde{x}\equiv_Tx$.
By $2$-genericity of $\tilde{x}$, we have $\tilde{x}''\leq_T\tilde{x}\oplus\emptyset''$ \cite[Theorem 2.24.3]{DHBook}; hence, $\tilde{x}$ cannot be high.
Every $1$-generic is diagonally computable \cite[Theorem 2.24.5]{DHBook}, which is equivalent to being non-autocomplex \cite[Theorem 8.16.4]{DHBook}.
By \cite[Theorem 8.16.8]{DHBook}, this implies that $\tilde{x}$ is computably i.o.\ traceable, that is, for any $f\leq_T\tilde{x}$, there is a computable sequence $(T_r)_{r<n}$ of (canonical indices of) finite sets such that $f(r)\in T_r$ and $|T_r|\leq r$ for infinitely many $r\in\om$.
The rest of the proof is just a few modification of the known fact that computable i.o.\ traceability implies effective Hausdorff nullness w.r.t.\ all computable gauge functions, cf.\ \cite{KiMi15}.

Let $p\leq_T\tilde{x}$ be a Cauchy name of $x$.
Then, there is a $p$-computable sequence $q=(q_r)_{r\in\om}$ of $n$-tuples of rationals such that $d(x,q_r)<2^{-r}$.
Since $q\leq_T\tilde{x}$, there is a computable sequence $(T_r)_{r\in\om}$ of finite sets such that $q_r\in T_r$ and $|T_r|\leq r$ for almost all $r\in\om$.
By Kolmogorov's lemma (cf.\ \cite[Theorem 3.2.2]{DHBook}), one can construct a computable measure machine $M$ such that $C_M(q_r)\leq \log|T_r|+\log r\leq 2\log r$ for infinitely many $r\in\om$.
Therefore,
\[\dim_H^{\rm Sch}(x)\leq \liminf_{r\to\infty}\frac{K_{M,r}(x)}{r}\leq\liminf_{r\to\infty}\frac{K_M(q_r)}{r}\leq\liminf_{r\to\infty}\frac{2\log r}{r}=0.\]

Consequently, we have $0=\dim_H^{\rm Sch}(x)<\dim(x)=n$ as desired.
\end{proof}

We say that $x\in\repsp{X}$ is {\em c.o.\ $s$-dimensional}, written $\kdim(x)\leq s$, if $x$ is contained in a $\Pi^0_1$ subset of $\mathcal{X}$ of Hausdorff dimension $\leq s$.

\begin{obs}\label{obs:basic-inequality-Kurtz-Haus}
Let $x$ be a point in a computable compactum.
Then $\dim(x)\leq\kdim(x)$ holds.
\end{obs}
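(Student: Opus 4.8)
The plan is to derive this from Theorem \ref{thm:universal-menger} together with the classical inequality between topological and Hausdorff dimension, so almost no new work is needed. If $\kdim(x)=\infty$ the inequality is vacuous, so I would fix a rational $s$ with $\kdim(x)<s<\infty$ and aim only to show $\dim(x)\leq s$; since this would hold for every such $s$, and $\inf\{s:s>\kdim(x)\}=\kdim(x)$, the inequality $\dim(x)\leq\kdim(x)$ follows. By the very definition of $\kdim$ (and the fact that the set of exponents $s$ for which $x$ lies in a $\Pi^0_1$ subset of $\repsp{X}$ of Hausdorff dimension $\leq s$ is upward closed), such an $s$ comes with a $\Pi^0_1$ set $P\subseteq\repsp{X}$ containing $x$ with $\dim_H(P)\leq s$.

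Next I would apply the classical Szpilrajn inequality $\dim(P)\leq\dim_H(P)$ (the case $Y=P$ of the Szpilrajn theorem recalled above; cf.\ \cite{HWBook}). Hence $P$ is finite dimensional and $n:=\dim(P)$ satisfies $n\leq s$. It remains to transport this to a statement about the $tt$-degree of $x$. Since $\repsp{X}$ is a computable compactum, it admits a computable embedding $\iota\colon\repsp{X}\to[0,1]^\om$; by Observation \ref{obs:basic-top-dim} its image $\iota[\repsp{X}]$ is computably compact, hence $\Pi^0_1$ in $[0,1]^\om$. Using Fact \ref{fact:MR} one checks directly that $\iota(x)\equiv_{tt}x$: the total computable map $\iota$ witnesses $\iota(x)\leq_{tt}x$, and the computable left inverse of $\iota$, which is defined on the $\Pi^0_1$ set $\iota[\repsp{X}]$, witnesses $x=\iota^{-1}(\iota(x))\leq_{tt}\iota(x)$. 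Moreover $\iota[P]$ is a $\Pi^0_1$ subset of $[0,1]^\om$ homeomorphic to $P$, so $\dim(\iota[P])=n$, and Theorem \ref{thm:universal-menger}, implication $(3)\Rightarrow(1)$, gives that $\iota(x)\in\iota[P]$ is $tt$-equivalent to a point in an $n$-dimensional computable compactum. Thus $\iota(x)$ — and hence $x$, since $x\equiv_{tt}\iota(x)$ — is $n$-dimensional, so $\dim(x)\leq n\leq s$, as desired.

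There is essentially no computability-theoretic obstacle; the only place requiring a little care is the passage from $\repsp{X}$ to $[0,1]^\om$, namely verifying that the standard Hilbert-cube embedding $x\mapsto(d(x,\alpha_k))_{k\in\om}$ (after rescaling the metric) is a computable embedding, that its image is $\Pi^0_1$, and that it preserves $tt$-degree. This is routine, but it is the one step where the hypothesis that $\repsp{X}$ is a \emph{computable} compactum (rather than merely a $\Pi^0_1$ set, which need not carry a computable dense sequence) is genuinely used. An alternative that avoids the embedding would be to argue directly that a $\Pi^0_1$ subset of a computable compactum of topological dimension $n$ is, up to $tt$-equivalence of its points, realized inside a $\Pi^0_1$ subset of $[0,1]^\om$ of dimension $n$; but routing through the embedding is cleaner.
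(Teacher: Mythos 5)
Your proof is correct and follows the same essential approach as the paper: apply Szpilrajn's inequality $\dim(P)\le\dim_H(P)$ to a $\Pi^0_1$ witness $P$ with $\dim_H(P)\le s$, and then invoke Theorem~\ref{thm:universal-menger}, $(3)\Rightarrow(1)$, to conclude that the point is at most $\lfloor s\rfloor$-dimensional. The paper's one-sentence argument leaves implicit the passage from the ambient computable compactum $\repsp{X}$ to $[0,1]^\om$; your construction of the embedding $\iota$, together with the checks via Observation~\ref{obs:basic-top-dim} and Fact~\ref{fact:MR} that $\iota(x)\equiv_{tt}x$ and that $\iota[P]$ is a $\Pi^0_1$ subset of $[0,1]^\om$ of the same dimension, carefully and correctly supplies that omitted step.
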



\begin{proof}
Since the topological dimension $\dim(P)$ is smaller than or equal to the Hausdorff dimension $\dim_H(P)$, every c.o.\ $s$-dimensional point is $\lfloor s\rfloor$-dimensional.
\end{proof}

\begin{prop}
For any $n\geq 1$, there is $x\in[0,1]^n$ such that $0=\dim_P(x)<\dim(x)=n$.
\end{prop}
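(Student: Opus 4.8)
The plan is to build, by an explicit stage construction, a weakly $1$-generic point $x\in[0,1]^n$ whose Kolmogorov complexity at precision $r$ grows only like $O(\log\log r)$, so that $\dim_P(x)=\limsup_r K_r(x)/r=0$. Since $[0,1]^n$ is an $n$-dimensional computable compactum, $x$ is $n$-dimensional, and by Observation \ref{obs:proper-n-dim-w1g} weak $1$-genericity forces $x$ to be \emph{proper} $n$-dimensional, i.e.\ $\dim(x)=n$; as $n\geq 1$ we then have $0=\dim_P(x)<\dim(x)=n$.

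I would represent $x$ as the unique point of a decreasing sequence of closed rational balls $\overline{B^{(0)}}\supseteq\overline{B^{(1)}}\supseteq\cdots$ contained in $(0,1)^n\subseteq[0,1]^n$, grouped into \emph{epochs}. Fix an enumeration $(U_e)_{e\in\om}$ of all c.e.\ open subsets of $(0,1)^n$ and a fast-growing computable function $H$, say $H(m)=2^m$. Epoch $j$ begins, at some stage $t_j$, with a committed ball $B^{(j)}=B(q^{(j)};2^{-a_j})$, and has two parts. In the \emph{padding part} (the next $H(t_j)$ stages) we merely shrink the current ball by a factor $2$ per stage while keeping the \emph{same centre} $q^{(j)}$; afterwards the committed radius exponent is $a_j':=a_j+H(t_j)$. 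In the \emph{search part} we keep shrinking by a factor $2$ per stage (still centred at $q^{(j)}$) and at each stage $s$ spend $s$ steps looking for the least unmet $e\leq s$ such that some rational ball $B'$ is formally contained both in the current ball and in $U_e$; when such an action is found, at stage $t_{j+1}$, we declare $e$ met, commit $B^{(j+1)}:=B'$, and pass to epoch $j+1$. The standard verification shows every \emph{dense} $U_e$ is met: by density a suitable $B'$ exists inside any ball, however small, so the forced padding cannot block it, and each index is postponed only for finitely many stages behind finitely many smaller indices. Since $(0,1)^n\setminus\{x\}$ is a dense c.e.\ open set that $x$ cannot meet, the construction must pass through infinitely many epochs, so $x:=\lim_j q^{(j)}$ exists, lies in $(0,1)^n$, is weakly $1$-generic there and hence in $\mathbb{R}^n$, giving $\dim(x)=n$ by Observation \ref{obs:proper-n-dim-w1g}.

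For the packing dimension: the construction is deterministic, so $q^{(j)}$ is computable from $j$ and $K(q^{(j)})\leq 2\log j+O(1)$. At the end of the padding part of epoch $j$ the ball $B(q^{(j)};2^{-a_j'})$ contains $x$, so $d(x,q^{(j)})<2^{-a_j'}$; hence for every precision $r\leq a_j'$ we get $K_r(x)\leq K(q^{(j)})\leq 2\log j+O(1)$. The exponents $a_0'<a_1'<\cdots$ are increasing, so for each $r$ let $j(r)$ be least with $r\leq a_{j(r)}'$. Then $r> a_{j(r)-1}'\geq H(t_{j(r)-1})\geq H(j(r)-1)=2^{\,j(r)-1}$, using $t_{j-1}\geq j-1$, so $j(r)<\log_2 r+1$ and therefore $K_r(x)\leq 2\log j(r)+O(1)=O(\log\log r)$. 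Consequently $\dim_P(x)=\limsup_r K_r(x)/r=0$.

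The main obstacle is the interaction of the forced padding with the genericity requirements, on two fronts. First, one must be sure that prefixing each search by a long stretch of centre-preserving shrinking never prevents a dense $U_e$ from eventually being serviced; this is exactly where density is used, since it supplies an action inside a ball of \emph{any} radius. Second, one must check that the complexity bookkeeping survives a requirement action: when $e$ is met the ball can shrink by an unbounded amount and the centre jumps to $q^{(j+1)}$, but this is harmless because the precision newly pinned down by $B^{(j+1)}$ is at least $a_j'$, already inside the "safe" range of the old centre, and $q^{(j+1)}$ is again computable from $j+1$. The choice $H(m)=2^m$ is what makes $j(r)$ only logarithmic in $r$; any computable $H$ with $H(m)/\log m\to\infty$ would serve equally well.
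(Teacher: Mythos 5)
Your overall strategy is sound and genuinely different from the paper's: where the paper imports degree-theoretic facts (there is a noncomputable c.e.\ set $B$ of array computable degree; such a $B$ is c.e.\ traceable, hence any $B$-computable Cauchy name of $x$ is captured by a computable trace of slowly growing size; any noncomputable c.e.\ set computes a weakly $1$-generic), you build the weakly $1$-generic point and the complexity bound simultaneously by an explicit epoch construction with padding. Both routes land on $\dim(x)=n$ via Observation \ref{obs:proper-n-dim-w1g}, and your padding/epoch bookkeeping does deliver the stronger estimate $K_r(x)=O(\log\log r)$ versus the paper's $O(\log r)$. So, if the construction converges, the proof is correct and more self-contained than the paper's.

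There is, however, a genuine gap in the construction as written: during the search part you keep shrinking the current ball by a factor $2$ per stage, while only allotting $s$ steps of search at stage $s$. The target of the search is therefore moving. A rational ball formally contained in $B_s=B(q^{(j)};2^{-(a_j'+s-s_j)})$ has description length growing linearly in $s$, so a dovetailing search needs exponentially many steps in $s$ to reach it, and if the enumeration of the relevant dense c.e.\ open set $U_e$ is slow, the search with budget $s$ can fail at every stage. Your fallback argument --- ``if the construction stalls then $x=q^{(j)}$ is rational and $(0,1)^n\setminus\{q^{(j)}\}$ is dense c.e.\ open, so it must be met'' --- runs into exactly the same race: being met requires finding a small ball avoiding $q^{(j)}$ inside the ever-shrinking $B_s$ in time $s$, which the shrinkage can outpace. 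The fix is simple and does not affect the complexity bookkeeping: once the padding part of epoch $j$ has ended, \emph{freeze} the ball at $B(q^{(j)};2^{-a_j'})$ for the duration of the search, so the cumulative search budget grows unboundedly against a fixed target; by density of $U_e$ a witness $B'$ exists in the fixed ball and is eventually enumerated and found. Since $x$ still lies in $B(q^{(j)};2^{-a_j'})$ after this modification, the estimate $K_r(x)\leq K(q^{(j(r))})+O(1)$ for $r\leq a_{j(r)}'$ and the chain $a_{j-1}'\geq H(t_{j-1})\geq 2^{j-1}$ go through unchanged, and $\dim_P(x)=0$ follows. With that one-line repair the argument is correct.
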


\begin{proof}
It is known that every noncomputable c.e.\ set $B\subseteq\om$ computes a weakly $1$-generic real $\alpha\in 2^\om$ (cf.\ \cite[Proposition 2.24.2]{DHBook}).
This $\alpha$ can be written as $\bigoplus_{i<n}\alpha_i$.
Note that $x=(0.\alpha_i)_{i<n}$ is weakly $1$-generic in $[0,1]^n$ since if $P\subseteq[0,1]^n$ is nowhere dense $\Pi^0_1$ then so is $\{\beta\in 2^\om:(0.\beta_i)_{i\in\om}\in P\}$.
Hence, by Observation \ref{obs:proper-n-dim-w1g}, $\dim(x)=n$.
Now let $B$ be a noncomputable c.e.\ set of array computable degree.
By \cite[Theorem 2.23.13]{DHBook}, $B$ is c.e.\ traceable, that is, for any $f\leq_TB$, there is a computable sequence $(T_r)_{r\in\om}$ of c.e.\ sets such that $f(r)\in T_r$ and $|T_r|\leq r$ for almost all $r\in\om$.
The rest of the proof is just a few modification of the known fact that c.e.\ traceability implies effective packing nullness w.r.t.\ all computable gauge functions, cf.\ \cite{KiMi15}.

Let $p\leq_TB$ be a Cauchy name of $x$.
Then, there is a $p$-computable sequence $q=(q_r)_{r\in\om}$ of $n$-tuples of rationals such that $d(x,q_r)<2^{-r}$.
Since $q\leq_TB$, there is a computable sequence $(T_r)_{r\in\om}$ of c.e.\ sets such that $q_r\in T_r$ and $|T_r|\leq r$ for almost all $r\in\om$.
By Kolmogorov's lemma (cf.\ \cite[Theorem 3.2.2]{DHBook}) as in the proof of Proposition \ref{prop:2-gen-Sch-dim}, we get
\[\dim_P(x)=\limsup_{r\to\infty}\frac{K_r(x)}{r}\leq\limsup_{r\to\infty}\frac{K(q_r)}{r}\leq\limsup_{r\to\infty}\frac{2\log r}{r}=0.\]

Consequently, we have $0=\dim_P(x)<\dim(x)=n$ as desired.
\end{proof}

\subsubsection{Box counting dimension}

It is known that the effective box-counting dimension of a point in $2^\om$ is equivalent to its effective packing dimension (cf.\ \cite[Section 13.11.4]{DHBook}).
One can also show a similar result for Schnorr dimensions in Euclidean spaces.

\begin{lemma}\label{lem:box-counting-Kolmogorov-complexity}
The following are equivalent:
\begin{enumerate}
\item $x$ is contained in a $\Pi^0_1$ set of upper box-counting dimension $<s$.
\item $\dim_P^{\rm Sch}(x)<s$, that is, there is a compression algorithm $M$ such that
\[\limsup_{r\to\infty}\frac{C_{M,r}(x)}{r}<s.\]
\end{enumerate}
\end{lemma}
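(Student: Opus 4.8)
The plan is to prove the two implications directly, by translating between (a) effective finite covers of a $\Pi^0_1$ set $P\ni x$ in $\mathbb{R}^n$ witnessing a small upper box-counting dimension and (b) short codewords for rational approximations of $x$; this follows the pattern of the zero-dimensional case (cf.\ \cite[Section 13.11.4]{DHBook}) while tracking the Euclidean ball geometry. In both directions I fix a computable real $s'$ strictly between the relevant quantity ($\ubdim(P)$, resp.\ $\limsup_r C_{M,r}(x)/r$) and $s$, and a threshold $r_0$ beyond which the corresponding inequality with $s'$ holds at scale/precision $r$. In the direction $(1)\Rightarrow(2)$ I also assume, harmlessly, that $P$ lies in a cube $[-N,N]^n$ (a set of finite upper box-counting dimension is bounded), so that $P$, being $\Pi^0_1$ inside an effectively compact set, is itself effectively compact.

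$(1)\Rightarrow(2)$. First I would produce, uniformly in $r$, a finite cover $\mathcal{U}_r$ of $P$ by rational open balls of diameter $\le 2^{-r}$ with $|\mathcal{U}_r|\le |P|_{2^{-r}}$: since $P$ is effectively compact, ``this finite family of rational balls covers $P$'' is a $\Sigma^0_1$ condition, and by Observation~\ref{obs:box-dimension-rational} a rational-ball cover of cardinality exactly $|P|_{2^{-r}}$ exists, so a dovetailed search over finite families (by increasing cardinality) terminates. For $r\ge r_0$ we get $|\mathcal{U}_r|\le 2^{s'r}$. Next I define a decidable machine $N$: its admissible inputs are the strings $\langle r\rangle\,\beta$ with $r\ge r_0$, where $\langle r\rangle$ is a self-delimiting encoding of the natural number $r$ of length $O(\log r)$ and $\beta$ is a block of exactly $\lceil\log|\mathcal{U}_r|\rceil$ bits coding an index $i<|\mathcal{U}_r|$ (this set of inputs is computable because $|\mathcal{U}_r|$ is computable from $r$), and on such an input $N$ outputs the rational center of the $i$-th ball of $\mathcal{U}_r$. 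Since $x\in P$, for each $r\ge r_0$ some ball of $\mathcal{U}_r$ contains $x$, and its center $q$ satisfies $d(x,q)<2^{-r}$ and is an $N$-output of an input of length at most $s'r+O(\log r)$; hence $\limsup_r C_{N,r}(x)/r\le s'<s$. Finally, by the constant-difference equivalence between decidable machines and compression algorithms (Bienvenu--Merkle~\cite{BM07}), there is a compressor $M$ with $C_M(q)\le C_N(q)+O(1)$ for all $q$, hence $C_{M,r}(x)\le C_{N,r}(x)+O(1)$, which yields $(2)$.

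$(2)\Rightarrow(1)$. Given a compressor $M$ with $\limsup_r C_{M,r}(x)/r<s'<s$, for $r\in\om$ set
\[
D_r=\bigcup\bigl\{\,\overline{B}_{2^{-r}}(q) : q\in\mathbb{Q}^n,\ |M(q)|<s'r\,\bigr\}.
\]
As $M$ is injective there are fewer than $2^{s'r+1}$ such $q$, and since the domain and image of $M$ are computable this finite list is computable from $r$; thus each $D_r$ is a finite union of closed rational balls, and $P:=\bigcap_{r\ge r_0}D_r$ is $\Pi^0_1$, its complement being the uniformly c.e.\ union $\bigcup_{r\ge r_0}(\mathbb{R}^n\setminus D_r)$ of c.e.\ open sets. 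For $r\ge r_0$ there is $q$ with $d(x,q)<2^{-r}$ and $|M(q)|<s'r$, so $x\in\overline{B}_{2^{-r}}(q)\subseteq D_r$, whence $x\in P$. Moreover $P\subseteq D_r$ is covered by fewer than $2^{s'r+1}$ balls of diameter $2^{-r+1}$, so $|P|_{2^{-r+1}}<2^{s'r+1}$; letting $r\to\infty$ and interpolating between these scales using monotonicity of $|P|_{\rho}$ in $\rho$ gives $\ubdim(P)\le s'<s$, i.e.\ $(1)$.

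The one genuinely delicate point is in $(1)\Rightarrow(2)$: a compression algorithm must have \emph{computable} domain and image, and the naive attempt --- reserving a short codeword for each rational ball-center that occurs in some $\mathcal{U}_r$ --- fails because that set of centers is merely c.e. The remedy above is to first build a \emph{decidable machine}, where one parametrizes the admissible inputs by pairs $(r,i)$ so that the domain is computable by construction, and only then pass to a compressor via the known constant-difference equivalence. Everything else --- the effective search for near-minimal rational-ball covers of a $\Pi^0_1$ set, the $O(\log r)$ bookkeeping, and the reduction of box-counting to a geometric sequence of scales --- is routine.
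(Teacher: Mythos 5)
Your proof is correct and follows essentially the same route as the paper's: in $(1)\Rightarrow(2)$, use effective compactness of the $\Pi^0_1$ set $P$ together with Observation~\ref{obs:box-dimension-rational} to extract, uniformly in $r$, a rational-ball $2^{-r}$-cover of size bounded by $2^{s'r}$, then build a decidable machine whose codewords are (self-delimited $r$, index in the $r$-th cover) pairs and pass to a compressor via the Bienvenu--Merkle equivalence; in $(2)\Rightarrow(1)$, set $D_r$ to be the finite union of closed $2^{-r}$-balls around rationals with short $M$-codes and intersect. Your write-up is in fact somewhat more careful than the paper's on two points: you interpolate through a computable $s'$ strictly between the relevant quantity and $s$ (the paper blurs $s$ and $s'$), and you verify explicitly that $x\in\bigcap_r D_r$ and that the resulting set has $\ubdim<s$. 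One small imprecision in your $(1)\Rightarrow(2)$: a dovetailed search over finite families cannot guarantee a cover of cardinality \emph{exactly} $|P|_{2^{-r}}$, only that, for $r\ge r_0$, some rational-ball cover of cardinality $\le 2^{s'r}$ exists and is effectively found once you restrict the search to families of that size; this is all that is used, so the argument stands. Your closing remark about why one must route through a decidable machine (the set of centers appearing across all scales is only c.e.) accurately isolates the same subtlety the paper handles by invoking Kolmogorov's lemma with the $C(r)$ term in the code.
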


\begin{proof}
Let $E$ be a $\Pi^0_1$ set such that $x\in E$ and $\ubdim(E)<s$.
Then there is a sufficiently small rational $q>0$ such that $|E|_r<r^{-s}$ whenever $0<r<q$.
By computable compactness of $E$ and by Observation \ref{obs:box-dimension-rational}, given a rational $r<q$, one can effectively find an open cover of $E$ witnessing the above inequality.
By Kolmogorov's lemma (cf.\ \cite[Theorem 3.2.2]{DHBook}), we get a decidable machine $M$ that $C_{M,r}(x)<\log(2^{-rs})+C(r)+O(1)\leq sr+\log(r)+O(1)$ whenever $r<q$.
This implies the desired inequality.

Conversely, let $q$ be such that any positive rational $r<q$ satisfies the inequality in (2) for a decidable machine $M$.
Consider $D_n=\{\overline{B}_{r_n}(p):p\in\mathbb{Q}^n\mbox{ and }C_M(p)<s\log(r_n^{-1})\}$.
Then $|D_n|\leq 2^{s\log(r_n^{-1})}=r_n^{-s}$.
Clearly $\bigcap_nD_n$ is $\Pi^0_1$.
\end{proof}

\begin{theorem}\label{thm:main-Kol-complexity}
The following are equivalent for $x\in[0,1]^\om$:
\begin{enumerate}
\item $x$ is $n$-dimensional.
\item There is $y\equiv_{tt}x$ such that $\kdim(y)<n+1$.
\item There is $y\equiv_{tt}x$ such that for any $\ep>0$, $\kdim(y)<n+\ep$.
\item There is $y\equiv_{tt}x$ such that for any $\ep>0$, $\dim_P^{\rm Sch}(x)<n+\ep$, that is, there is a compression algorithm $M$ such that
\[\limsup_{r\to\infty}\frac{C_{M,r}(y)}{r}<n+\ep.\]
\end{enumerate}
\end{theorem}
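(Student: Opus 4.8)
The plan is to prove the cycle $(1)\Rightarrow(4)\Rightarrow(3)\Rightarrow(2)\Rightarrow(1)$, so that essentially all the work is concentrated in $(1)\Rightarrow(4)$, where the topological embedding theory of Section~3 meets effective fractal dimension theory, while the remaining three implications amount to bookkeeping with the chain of inequalities $\dim\leq\dim_H\leq\dim_P\leq\ubdim\leq\dim_A$.

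For $(1)\Rightarrow(4)$, assume $x$ is $n$-dimensional. By Theorem~\ref{thm:universal-menger}, $x$ belongs to an $n$-dimensional $\Pi^0_1$ set $P\subseteq[0,1]^\om$. Applying Corollary~\ref{cor:Assouad-dimension}, I would obtain a computable embedding $\Phi$ of $P$ into a computable compact set $E\subseteq\mathbb{R}^{2n+1}$ with $\dim_A(E)=n$, which after an affine computable change of coordinates we may take inside $[0,1]^{2n+1}$. Put $y=\Phi(x)$. By Observation~\ref{obs:basic-top-dim} the image $\Phi[P]$ is $\Pi^0_1$, so Lemma~\ref{lem:main-lemma} yields $x\equiv_{tt}y$. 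Now $\Phi[P]\subseteq E$, and since the upper box-counting dimension is monotone under inclusion, $\ubdim(\Phi[P])\leq\ubdim(E)\leq\dim_A(E)=n$. Hence for every $\ep>0$, $\Phi[P]$ is a $\Pi^0_1$ set of upper box-counting dimension $<n+\ep$ containing $y$, so Lemma~\ref{lem:box-counting-Kolmogorov-complexity} provides a compression algorithm $M$ with $\limsup_{r\to\infty}C_{M,r}(y)/r<n+\ep$. This is exactly $(4)$.

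For $(4)\Rightarrow(3)$, let $y\equiv_{tt}x$ witness $(4)$; given $\ep>0$, from $\dim_P^{\rm Sch}(y)<n+\ep$ Lemma~\ref{lem:box-counting-Kolmogorov-complexity} places $y$ in a $\Pi^0_1$ set of upper box-counting dimension $<n+\ep$, which has Hausdorff dimension $<n+\ep$, so $\kdim(y)<n+\ep$. The implication $(3)\Rightarrow(2)$ is immediate, since $(3)$ already gives $\kdim(y)\leq n<n+1$. For $(2)\Rightarrow(1)$, from $\kdim(y)<n+1$ and Observation~\ref{obs:basic-inequality-Kurtz-Haus} we get $\dim(y)\leq\kdim(y)<n+1$, hence $\dim(y)\leq n$, i.e.\ $y$ is $n$-dimensional; since $x\equiv_{tt}y$, so is $x$.

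The \textbf{main obstacle} is $(1)\Rightarrow(4)$: the key point is that Corollary~\ref{cor:Assouad-dimension} controls the \emph{Assouad} (and hence upper box-counting) dimension of the ambient compactum $E$, not merely the topological dimension of the embedded image, whereas Lemma~\ref{lem:box-counting-Kolmogorov-complexity} genuinely needs an upper box-counting bound — monotonicity of $\ubdim$ under inclusion is what bridges the gap. A secondary point is to keep the witness $y$ inside a Euclidean cube throughout, so that both $\kdim(\cdot)$ and $\dim_P^{\rm Sch}(\cdot)$ are meaningful for it and the cited lemmas apply verbatim.
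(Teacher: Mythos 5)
Your proof is correct and follows essentially the same route as the paper: the cycle $(1)\Rightarrow(4)\Rightarrow(3)\Rightarrow(2)\Rightarrow(1)$ with $(1)\Rightarrow(4)$ carrying the weight via Corollary~\ref{cor:Assouad-dimension}, Lemma~\ref{lem:main-lemma}, and Lemma~\ref{lem:box-counting-Kolmogorov-complexity}, and $(2)\Rightarrow(1)$ via Observation~\ref{obs:basic-inequality-Kurtz-Haus}. The only cosmetic differences are that you argue $(2)\Rightarrow(1)$ directly where the paper takes the contrapositive, and that you spell out the steps the paper dismisses as obvious.
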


\begin{proof}
(4)$\Rightarrow$(3)$\Rightarrow$(2): Obvious.
(2)$\Rightarrow$(1):
We show the contrapositive.
If $x$ is not $n$-dimensional, then $\dim(x)\geq n+1$.
Assume that $y\equiv_{tt}x$ is given.
Then $\dim(y)\geq n+1$.
Therefore, $\kdim(y)\geq n+1$ by Observation \ref{obs:basic-inequality-Kurtz-Haus}.
(1) $\Rightarrow$ (4):
Since $x$ is $n$-dimensional, there is an $n$-dimensional $\Pi^0_1$ set $P\subseteq[0,1]^\om$ containing $x$.
By Corollary \ref{cor:Assouad-dimension}, $P$ is computably embedded into a computable compactum $M\subseteq\mathbb{R}^{2n+1}$ of Assouad dimension $n$.
Let $Q$ be the embedded image, which is $\Pi^0_1$ since $Q$ is the image of a computable function $p$ on a computably compact set $P$.
Then $\ubdim(Q)\leq\dim_A(Q)\leq\dim_A(M)=n$.
By Lemma \ref{lem:main-lemma}, $y=p(x)\in Q$ is $tt$-equivalent to $x$.
By Lemma \ref{lem:box-counting-Kolmogorov-complexity}, this $y$ satisfies the desired inequality.
\end{proof}

It is not known whether we can remove $\ep$ from the above characterization.

\begin{question}
If $x$ is $n$-dimensional, does there exist $y\equiv_{tt}x$ such that $\kdim(y)=\dim_{P}^{\rm Sch}(y)=n$?
\end{question}

\section{Degree structures}

\subsection{Pseudo-arc}

By Fact \ref{fact:MR-computable-arc}, a point is contained in a computable planar arc iff it has a $tt$-degree of a point in $\mathbb{R}$.
In particular, if $A\subseteq\mathbb{R}^2$ is a nontrivial computable arc, then its $tt$-degree structure $\mathcal{D}_{tt}(A)$ is equal to $\mathcal{D}_{tt}(\mathbb{R})$.
In this section, we give an example of a computable {\em arc-like} continuum whose $tt$-degree structure is very different from $\mathcal{D}_{tt}(\mathbb{R})$.

By a {\em compactum} we mean a compact metric space, and by a {\em continuum} we mean a connected compactum.
A continuum is {\em nondegenerated} if it has at least two points.
A continuum is {\em hereditarily indecomposable} if one of two given nondegenerated continua is included in the other.
A continuum is {\em arc-like} (or {\em chainable}) if it is the inverse limit of a sequence of arcs (equivalently, it has a chain-open cover of arbitrarily small mesh).
Every arc-like continuum is one-dimensional.
A {\em pseudo-arc} is a hereditarily indecomposable arc-like continuum.

It is easy to check that the standard zig-zag construction of a pseudo-arc gives a computable presentation within $\mathbb{R}^2$.

\begin{obs}
There is a computable planar pseudo-arc.
\end{obs}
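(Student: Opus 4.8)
The plan is to carry out Bing's classical construction of a pseudo-arc as a nested sequence of \emph{crooked chains}, taking all links to be rational open balls so that each inductive step reduces to a finite search over rational data. Recall the setup: a \emph{chain} is a finite sequence $\mathcal{C}=(C_0,\dots,C_k)$ of open sets with $C_i\cap C_j\neq\emptyset$ iff $|i-j|\leq 1$; write $\mathcal{C}^\ast=\bigcup_i C_i$ and $\mathrm{mesh}(\mathcal{C})=\max_i\mathrm{diam}(C_i)$. A chain $\mathcal{D}$ \emph{refines} $\mathcal{C}$ if the closure of each link of $\mathcal{D}$ lies in some link of $\mathcal{C}$, and $\mathcal{D}$ is \emph{crooked} in $\mathcal{C}$ if, whenever a link of $\mathcal{D}$ lies in $C_s$ and a strictly later link lies in $C_t$ with $t\geq s+2$, the intermediate subchain of $\mathcal{D}$ must first enter $C_{t-1}$ and afterwards re-enter $C_{s+1}$ (and symmetrically when $s\geq t+2$). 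Bing's theorem then states: if $(\mathcal{C}_n)_{n\in\om}$ is a sequence of chains in $\mathbb{R}^2$ with $\overline{\mathcal{C}_{n+1}^\ast}\subseteq\mathcal{C}_n^\ast$, each $\mathcal{C}_{n+1}$ crooked in $\mathcal{C}_n$, and $\mathrm{mesh}(\mathcal{C}_n)\to 0$, then $P=\bigcap_n\overline{\mathcal{C}_n^\ast}$ is a pseudo-arc.

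The effective content I would supply is the following. Start with $\mathcal{C}_0$ a short chain of rational open balls joining $(0,0)$ to $(1,0)$. Given $\mathcal{C}_n$, a chain of rational balls, the quantitative form of Bing's existence argument produces a chain $\mathcal{C}_{n+1}$ that is crooked in $\mathcal{C}_n$, has mesh $<2^{-n-1}$, has the closure of each link formally included in a link of $\mathcal{C}_n$, and has every link meeting the ``through'' subchain, hence meeting $P$. Each of these requirements is preserved under sufficiently small rational perturbations of the centers and radii, so $\mathcal{C}_{n+1}$ may be taken to consist of rational balls; and each requirement is a decidable predicate of the finite rational data $(\mathcal{C}_n,\mathcal{C}_{n+1})$, since formal inclusion and formal disjointness of rational balls are decidable and crookedness is a finite combinatorial condition on the link-inclusion pattern. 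Hence one can computably search for a valid $\mathcal{C}_{n+1}$, and the search halts because a witness exists. This yields a computable sequence $(\mathcal{C}_n)_{n\in\om}$ of finite chains of rational open balls.

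It then remains to read off the computability of $P$. The sets $\mathbb{R}^2\setminus\overline{\mathcal{C}_n^\ast}$ are uniformly c.e.\ open, so $P=\bigcap_n\overline{\mathcal{C}_n^\ast}$ is $\Pi^0_1$ in $\mathbb{R}^2$; since each $\mathcal{C}_n$ is a finite cover of $P$ by rational open balls of mesh $<2^{-n}$ and the list $(\mathcal{C}_n)_n$ is computable, $P$ is computably compact; and since every link of $\mathcal{C}_n$ meets $P$, the centers of those links form a computable $2^{-n}$-net for $P$, so their union over $n$ is a computable dense sequence, making $P$ a computable planar compactum. By Bing's theorem $P$ is a pseudo-arc, so $P$ is a computable planar pseudo-arc.

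I do not expect a substantive obstacle. The only delicate point is the second paragraph: one must extract from Bing's construction the fact that the next crooked chain can always be realized with rational links and that the defining conditions are decidable --- that is, the quantitative rather than merely existential form of the crooked-refinement lemma, together with the observation that ``each link meets $P$'' can be maintained inductively. Everything genuinely topological --- connectedness, nondegeneracy, arc-likeness, hereditary indecomposability --- is then quoted from the classical theory and requires no effective input.
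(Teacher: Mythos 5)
Your overall plan matches the paper's one-line remark --- effectivize Bing's crooked-chain (zig-zag) construction with rational-ball links and quote Bing's theorem to identify the limit $P=\bigcap_n\overline{\mathcal{C}_n^\ast}$ as a pseudo-arc --- and your derivation that $P$ is $\Pi^0_1$ and computably compact is correct. The gap is in the last step, where you assert that the link centers form a computable dense sequence and hence that $P$ is a computable compactum. The paper's definition of a computable metric space requires the dense sequence $\{\alpha_n\}$ to be a subset of the space itself, but the rational centers of your balls are not in $P$: the pseudo-arc is nowhere dense in $\mathbb{R}^2$, and more generally a nonempty computably compact $\Pi^0_1$ subset of $\mathbb{R}^2$ need not contain even one computable point (push a $\Pi^0_1$ class with no computable members into $\mathbb{R}$ via the middle-thirds Cantor set). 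So \emph{$\Pi^0_1$ and computably compact} is strictly weaker than \emph{computable compactum}; you must actually exhibit computable points of $P$.

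The fix uses structure the construction already provides. The refinement condition assigns to each link index $j$ of $\mathcal{C}_{n+1}$ a link index $\pi(n,j)$ of $\mathcal{C}_n$ with $\overline{C^{n+1}_j}$ formally included in $C^n_{\pi(n,j)}$, and formal inclusion is decidable on the rational data, so $\pi$ is computable. Since $\mathcal{C}_{n+1}$ is crooked in $\mathcal{C}_n$ and runs from the first link of $\mathcal{C}_n$ to the last, the map $j\mapsto\pi(n,j)$ is onto, so one can computably choose $\sigma(n,i)$ with $\pi(n,\sigma(n,i))=i$. Iterating $\sigma$ from any starting index produces a computable nested sequence of rational balls of shrinking diameter whose intersection is a single computable point of $P$ lying in $C^n_i$. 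Collecting these points over all $n$ and $i$ gives a computable dense sequence in $P$, and your argument then closes.
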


We show that the $tt$-degree structures of an arc and a pseudo-arc form a ``{\em minimal pair}'' in the following sense:

\begin{prop}\label{prop:pseudo-arc}
The $tt$-degree structure of a computable pseudo-arc $\repsp{A}$ is incomparable with that of $\mathbb{R}$.
Moreover, 
\[\mathcal{D}_{tt}(\repsp{A})\cap\mathcal{D}_{tt}(\mathbb{R})=\mathcal{D}_{tt}(2^\om).\]
\end{prop}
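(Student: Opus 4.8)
The plan is to prove the displayed equality $\mathcal{D}_{tt}(\repsp{A})\cap\mathcal{D}_{tt}(\mathbb{R})=\mathcal{D}_{tt}(2^\om)$ first and then read off the incomparability as a formal consequence. Two ingredients will do the work. The \emph{topological} one is that a hereditarily indecomposable continuum contains no arc (an arc is decomposable), so no nondegenerate subcontinuum of $\repsp{A}$ can be homeomorphic to any subcontinuum of $\mathbb{R}$. The \emph{effective} one is that the $0$-dimensional $tt$-degrees are precisely the degrees of points of $2^\om$: a point of $2^\om$ is $0$-dimensional because $2^\om$ is a $0$-dimensional computable compactum, and conversely, by Theorem~\ref{thm:universal-menger} in the case $n=0$, every $0$-dimensional point is $tt$-equivalent to a point of the ternary Cantor set $M^1_0(\mathbf{3})\subseteq\mathbb{R}$, which is computably homeomorphic to $2^\om$.

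For the inclusion ``$\subseteq$'' I would take a point $x\in\repsp{A}$ with $x\equiv_{tt}y$ for some $y\in\mathbb{R}$ and apply Lemma~\ref{lem:main-lemma} to get a $\Pi^0_1$ set $P\subseteq\repsp{A}$ with $x\in P$ together with a computable embedding $\Phi$ of $P$ into $\mathbb{R}$ whose image $\Phi[P]$ is $\Pi^0_1$, hence compact, in $\mathbb{R}$. The key step is to show that $P$ must be zero-dimensional: since $P$ is compact metrizable, if it were not zero-dimensional it would have a connected component $C$ with at least two points, and then $C$ would be a nondegenerate subcontinuum of $\repsp{A}$ while $\Phi[C]$ would be a nondegenerate subcontinuum of $\mathbb{R}$, i.e.\ an arc, contradicting hereditary indecomposability of $\repsp{A}$. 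So $P$ is a zero-dimensional $\Pi^0_1$ set containing $x$; passing through a computable embedding of $\mathbb{R}^2$ into $[0,1]^\om$ (under which $P$ maps onto a zero-dimensional $\Pi^0_1$ set, by Observation~\ref{obs:basic-top-dim}), Theorem~\ref{thm:universal-menger} with $n=0$ then tells us that $x$ is $tt$-equivalent to a point of $2^\om$, as desired.

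For ``$\supseteq$'' the inclusion $\mathcal{D}_{tt}(2^\om)\subseteq\mathcal{D}_{tt}(\mathbb{R})$ is immediate (the Cantor set is a $\Pi^0_1$ copy of $2^\om$ in $\mathbb{R}$), so the point is to show $\mathcal{D}_{tt}(2^\om)\subseteq\mathcal{D}_{tt}(\repsp{A})$, for which, by Lemma~\ref{lem:main-lemma}, it suffices to embed $2^\om$ computably into $\repsp{A}$ with $\Pi^0_1$ image. Here I would use that $\repsp{A}$, being a nondegenerate continuum, is perfect --- in fact \emph{effectively} perfect: given a rational ball $B$ meeting $\repsp{A}$, the set $B\cap\repsp{A}$ is infinite, so there are two disjoint rational sub-balls of $B$ each meeting $\repsp{A}$, and such a pair can be found by an unbounded but necessarily terminating search, since ``$C\cap\repsp{A}\neq\emptyset$'' is $\Sigma^0_1$ in a rational ball $C$. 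A routine Cantor-scheme construction then yields the required $\Pi^0_1$ copy of $2^\om$ inside $\repsp{A}$, completing the equality.

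With the equality in hand, incomparability is automatic. Since $\repsp{A}$ is $1$-dimensional, Observation~\ref{obs:point-to-set-topdimension} supplies a point $x\in\repsp{A}$ with $\dim(x)=1$, so ${\rm deg}_{tt}(x)\in\mathcal{D}_{tt}(\repsp{A})\setminus\mathcal{D}_{tt}(2^\om)$; were $\mathcal{D}_{tt}(\repsp{A})\subseteq\mathcal{D}_{tt}(\mathbb{R})$ we would get $\mathcal{D}_{tt}(\repsp{A})=\mathcal{D}_{tt}(\repsp{A})\cap\mathcal{D}_{tt}(\mathbb{R})=\mathcal{D}_{tt}(2^\om)$, a contradiction. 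Symmetrically, a weakly $1$-generic real is proper $1$-dimensional by Observation~\ref{obs:proper-n-dim-w1g}, so $\mathcal{D}_{tt}(\mathbb{R})\setminus\mathcal{D}_{tt}(2^\om)\neq\emptyset$ and hence $\mathcal{D}_{tt}(\mathbb{R})\not\subseteq\mathcal{D}_{tt}(\repsp{A})$. I expect the main obstacle to be the ``$\subseteq$'' inclusion, namely combining Lemma~\ref{lem:main-lemma} with the ``no arc in $\repsp{A}$'' fact to force the ambient $\Pi^0_1$ set down to dimension zero, after which the effective imbedding machinery behind Theorem~\ref{thm:universal-menger} closes the argument; everything else is routine bookkeeping.
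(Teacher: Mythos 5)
Your argument is correct and takes essentially the same route as the paper's proof: both directions of the displayed equality are obtained by combining Lemma~\ref{lem:main-lemma} with (a) the fact that any nondegenerate subcontinuum of $\repsp{A}$ is hereditarily indecomposable and hence cannot be homeomorphic to any nondegenerate subcontinuum of $\mathbb{R}$, which forces the $\Pi^0_1$ witness in $\repsp{A}$ to be zero-dimensional, and (b) Theorem~\ref{thm:universal-menger} for the zero-dimensional case, together with the standard observation that a perfect computable compactum contains a $\Pi^0_1$ (indeed computable) copy of $2^\om$. The only cosmetic differences are that the paper phrases the ``no arc'' step via punctiformity of zero-dimensional compacta and simply cites Moschovakis for the Cantor-scheme embedding, whereas you spell both out, and that you derive the incomparability explicitly from the equality using Observations~\ref{obs:point-to-set-topdimension} and~\ref{obs:proper-n-dim-w1g}, while the paper invokes Corollary~\ref{cor:dimension-is-invariant}; none of these changes the substance of the argument.
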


\begin{proof}
It is well-known that a perfect computable compactum contains a computable copy of Cantor space, cf.\ \cite[Exercise 3D.15]{MosBook}.
Thereofore, $\mathcal{D}_{tt}(2^\om)\subseteq\mathcal{D}_{tt}(\repsp{A})$ since $\repsp{A}$ is a perfect computable compactum.
Indeed, $\mathcal{D}_{tt}(2^\om)\subsetneq\mathcal{D}_{tt}(\repsp{A})$ since the topological dimension is first-level invariant (Corollary \ref{cor:dimension-is-invariant}), and $\repsp{A}$ is one-dimensional while $2^\om$ is zero-dimensional.

To verify the second assertion, given $x\in \repsp{A}$ and $y\in\mathbb{R}$, assume that $x\equiv_{tt}y$.
By Lemma \ref{lem:main-lemma}, there are $\Pi^0_1$ sets $D\subseteq \repsp{A}$ and $E\subseteq\mathbb{R}$ such that $x\in D$, $y\in E$, and $D$ is homeomorphic to $E$.
If $D$ is zero-dimensional, by Theorem \ref{thm:universal-menger}, $x$ and $y$ have $2^\om$-$tt$-degrees.
Otherwise, $D$ (and hence $E$) contains a nondegenerated continuum (since a compactum is zero-dimensional iff it is punctiform, cf.\ \cite[Theorem 1.4.5]{EngBook}), and every nondegenerated subcontinuum of $D\subseteq\repsp{A}$ is hereditarily indecomposable.
However, this means that $E$ has a hereditarily indecomposable continuum as a subspace, and thus $\mathbb{R}$ contains a hereditarily indecomposable continuum, which is impossible.
%
\end{proof}


A similar argument shows that there is a pair of $(n+1)$-dimensional compacta which have no common proper $(n+1)$-dimensional uniform degrees.

\begin{prop}
For every $n$, there is a computable $(n+1)$-dimensional continuum $\repsp{B}_{n+1}$ such that the common $tt$-degrees of $\mathbb{R}^{n+1}$ and $\repsp{B}_{n+1}$ are only $n$-dimensional ones.
\end{prop}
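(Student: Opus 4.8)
The plan is to take $\repsp{B}_{n+1}$ to be a computable $(n+1)$-dimensional hereditarily indecomposable continuum and then to rerun the dichotomy from the proof of Proposition~\ref{prop:pseudo-arc} one dimension higher. For the existence of such a space, recall that Bing constructed an $m$-dimensional hereditarily indecomposable continuum for every $m\geq 1$ (the pseudo-arc being the case $m=1$, which the excerpt already realizes computably via the zig-zag construction). A standard route is to form an inverse limit of finite polyhedra with sufficiently ``crooked'' piecewise-linear bonding maps, chosen so as to keep the dimension equal to $n+1$; since the polyhedra, the subdivisions, and the bonding maps are finitary data and the crookedness requirements are combinatorial, a fixed effective choice produces a computable inverse system, hence a computable compactum $\repsp{B}_{n+1}$, and the usual verifications that the limit is connected, $(n+1)$-dimensional, and hereditarily indecomposable carry over unchanged. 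I expect this effectivization to be the main point demanding care; everything after it is routine.

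Granting $\repsp{B}_{n+1}$, let $\mathbf{d}\in\mathcal{D}_{tt}(\mathbb{R}^{n+1})\cap\mathcal{D}_{tt}(\repsp{B}_{n+1})$ be witnessed by $x\in\repsp{B}_{n+1}$ and $y\in\mathbb{R}^{n+1}$ with $x\equiv_{tt}y$. By Lemma~\ref{lem:main-lemma} there are $\Pi^0_1$ sets $D\subseteq\repsp{B}_{n+1}$ with $x\in D$ and $E\subseteq\mathbb{R}^{n+1}$ with $y\in E$, together with a homeomorphism $\Phi\colon D\to E$. Since $D$ is homeomorphic to a subspace of $\mathbb{R}^{n+1}$, we have $\dim(D)=\dim(E)\leq n+1$, so either $\dim(D)\leq n$ or $\dim(D)=n+1$. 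In the first case, transporting along a computable embedding of the computable compactum $\repsp{B}_{n+1}$ into $[0,1]^\om$ puts $x$ in an at most $n$-dimensional $\Pi^0_1$ subset of $[0,1]^\om$, so $x$, and hence $\mathbf{d}$, is $n$-dimensional by Theorem~\ref{thm:universal-menger}.

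It remains to rule out $\dim(D)=n+1$. In that case $\dim(E)=n+1$ with $E\subseteq\mathbb{R}^{n+1}$, so by \cite[Theorem~IV.3]{HWBook} the set $E$ contains a nonempty open subset of $\mathbb{R}^{n+1}$, hence a nondegenerate line segment $A$; then $\Phi^{-1}[A]\subseteq\repsp{B}_{n+1}$ is a nondegenerate arc, that is, a decomposable subcontinuum of $\repsp{B}_{n+1}$, contradicting hereditary indecomposability. This step is the precise analogue of the fact that $\mathbb{R}$ contains no nondegenerate hereditarily indecomposable continuum which drives Proposition~\ref{prop:pseudo-arc}: the underlying point is simply that $\mathbb{R}^{n+1}$ likewise contains no $(n+1)$-dimensional hereditarily indecomposable continuum, since any $(n+1)$-dimensional subcontinuum of $\mathbb{R}^{n+1}$ has nonempty interior and therefore contains an arc. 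Hence every common $tt$-degree is $n$-dimensional. Finally the statement is not vacuous: $\repsp{B}_{n+1}$ is genuinely $(n+1)$-dimensional by construction, so it contains a proper $(n+1)$-dimensional point by Observation~\ref{obs:point-to-set-topdimension} (whose $tt$-degree, by the above, is not in the intersection), whereas both $\repsp{B}_{n+1}$ and $[0,1]^{n+1}\subseteq\mathbb{R}^{n+1}$ being perfect computable compacta forces $\mathcal{D}_{tt}(2^\om)$ into the intersection.
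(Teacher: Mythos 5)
Your proof is correct and follows essentially the same route as the paper: take $\repsp{B}_{n+1}$ to be a computable hereditarily indecomposable $(n+1)$-dimensional continuum, apply Lemma~\ref{lem:main-lemma} to obtain a homeomorphism $\Phi\colon D\to E$ between $\Pi^0_1$ subsets, and use the fact that an $(n+1)$-dimensional subset of $\mathbb{R}^{n+1}$ has nonempty interior to force a decomposable (indeed, arc) subcontinuum of $\repsp{B}_{n+1}$, contradicting hereditary indecomposability. Your contradiction step is marginally more direct than the paper's (you pull back a line segment from $E$ immediately, whereas the paper first extracts an $(n+1)$-dimensional subcontinuum $C\subseteq P$ via \cite[Theorem~VI.8]{HWBook} and pushes it forward), but the key lemma, the choice of space, and the topological input are the same.
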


\begin{proof}
Let $\repsp{B}_{n+1}$ be a hereditarily indecomposable $(n+1)$-dimensional continuum.
It is easy to check that the construction in van Mill \cite[Corollary 3.8.3]{vMBook} is effective.
Thus, such $\repsp{B}_{n+1}$ can be computable.
Suppose for the sake of contradiction that there is a proper $n$-dimensional point $x\in\repsp{B}_{n+1}$ which is $tt$-equivalent to a point in $\mathbb{R}^{n+1}$.
Then, by Lemma \ref{lem:main-lemma}, there is a $\Pi^0_1$ set $P\subseteq\repsp{B}_{n+1}$ with $x\in P$ is computably embedded into $\mathbb{R}^{n+1}$.
However, $P$ must be $(n+1)$-dimensional since $x$ is not $n$-dimensional, and $x\in P$.
Thus, $P$ contains an $(n+1)$-dimensional continuum $C$ (cf.\ \cite[Theorem VI.8]{HWBook}).
Since $\repsp{B}_{n+1}$ is hereditarily indecomposable, so is $C$.
Thus, the embedded image of $C$ in $\mathbb{R}^{n+1}$ is a hereditarily indecomposable $(n+1)$-dimensional continuum.
However, a subset of $\mathbb{R}^{n+1}$ is $(n+1)$-dimensional if and only if it contains a homeomorphic copy of $\mathbb{R}^{n+1}$ (see \cite[Theorem IV.3]{HWBook}), and $\mathbb{R}^{n+1}$ clearly contains a decomposable continuum, a contradiction.
Consequently, if $x\in\repsp{B}_{n+1}$ is $tt$-equivalent to a point in $\mathbb{R}^n$, then $x$ has to be $n$-dimensional.
\end{proof}



%

\subsection{Arc-like continua}

Recall that an arc-like continuum is an inverse limit of arcs.
In this section, we discuss a technique for studying the $tt$-degrees of points in simple inverse limits of arcs.
For instance, consider the piecewise linear function $f:\interval\to\interval$ defined by
\[
f(x)=
\begin{cases}
2x&\mbox{ if }x\leq 1/2,\\
2-2x&\mbox{ if }x\geq 1/2.
\end{cases}
\]
That is, $f$ is a tent map.
For the inverse system $(I_n,f_n)$, where $I_n=\interval$ and $f_n=f$, the inverse limit $K=\varprojlim (I_n,f_n)$ is known as {\em Knaster's bucket handle}.

Assume that a continuous function $f\colon\interval\to\interval$ is given.
A point $x\in\interval$ is {\em preperiodic} (a.k.a.\ {\em eventually periodic}) if there is $n$ such that $f^n(x)$ is periodic, that is, the (forward) orbit of $x$ is finite.
A point $x\in\interval$ is {\em asymptotically periodic} if the $\om$-limit set of $x$ (that is, the set of cluster points of the orbit of $x$) is a periodic orbit.
It is clear that every preperiodic point is asymptotically periodic.
Moreover, note that the closure of the orbit $O$ of an asymptotically periodic point is the union of $O$ and a finite orbit.
We say that a point $x\in I$ is {\em effectively asymptotically periodic} (or {\em e.a.\ periodic}) if the closure $\overline{O}$ of the orbit $O$ of $x$ is $\Pi^0_1$, and if $\overline{O}$ is the union of $O$ and a finite orbit.
Clearly, every preperiodic point is e.a.\ periodic, and every e.a.\ periodic point has a $\Delta^0_2$-orbit.

For a function $f:J\to K$, where $J$ and $K$ are closed subsets of the unit interval $\mathbb{I}$, let $\locex(f)\subseteq J$ be the set of all local extrema of $f$ except for end points.
Hereafter, by a local extremum we mean a point in $\locex(f)$, and by a local extremum value we mean a point in $\locexv(f):=f[\locex(f)]$.
For instance, if $f$ is a tent map, then $\locex=\{1/2\}$, and $\locexv=\{1\}$.

\begin{example}
The tent map is piecewise linear, finite-to-one, computable function, all of whose local extrema are preperiodic, but not periodic.
\end{example}

\begin{example}
There is a finite-to-one function which has a non-preperiodic local extremum, but each of whose local extremum is e.a.\ periodic.
For instance, consider the piecewise linear function whose graph is the union of five line segments connecting six points $(0,0)$, $(1/5,1/6)$, $(2/5,4/5)$, $(3/5,1/5)$, $(4/5,5/6)$, and $(1,1)$.
The orbit $O$ of a local extremum $x$ approaches to either $0$ or $1$, but it is not necessarily finite.
\end{example}

We see that, if a computable arc-like continuum is constructed by a simple inverse limit, then it contains no more than $(\mathbb{R}\times 2^\om)$-$tt$-degrees.

\begin{theorem}\label{thm:Kbuc-han}
Let $K$ be an inverse limit of arcs with a single bonding map $f$, where $f$ is a piecewise monotone, finite-to-one, computable function, all of whose local extrema are e.a.\ periodic.
Then, every point in $K$ is $tt$-equivalent to a point in $\mathbb{R}\times 2^\om$, that is, $\mathcal{D}_{tt}(K)\subseteq\mathcal{D}_{tt}(\mathbb{R}\times 2^\om)$.
\end{theorem}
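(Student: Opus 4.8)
The plan is to show directly that every $\mathbf{x}=(x_n)_{n\in\om}\in K=\varprojlim([0,1],f)$ (so $f(x_{n+1})=x_n$ for all $n$) is $tt$-equivalent to a point of $\mathbb{R}\times 2^\om$, by separating $K$ according to whether $\mathbf{x}$ is trapped inside the closure of the forward orbits of the local extrema. First I would fix notation: let $c_1<\dots<c_{k-1}$ be the local extrema of $f$ (which we may take to be computable points, as in all the intended examples), let $J_0,\dots,J_{k-1}$ be the corresponding maximal intervals of monotonicity, and, using that finite-to-one forces strict monotonicity on each piece, let $r_i\colon f[J_i]\to J_i$ be the computable monotone inverse of $f\upto J_i$ (note $f[J_i]$ is a closed subinterval with computable endpoints). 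Call $a\in\{0,\dots,k-1\}^\om$ an \emph{itinerary} of $\mathbf{x}$ if $x_n\in J_{a(n)}$ for all $n\geq 1$. There is a partial reconstruction map $\Psi$ sending $(t,a)$ to the sequence $(z_n)$ with $z_0=t$ and $z_{n+1}=r_{a(n+1)}(z_n)$; its domain $\{(t,a): z_n\in f[J_{a(n+1)}]\text{ for all }n\}$ is $\Pi^0_1$ and $\Psi$ is computable on it. A routine induction shows that if $a$ is an itinerary of $\mathbf{x}$ then $(x_0,a)\in\mathrm{dom}(\Psi)$ and $\Psi(x_0,a)=\mathbf{x}$; hence, via Fact~\ref{fact:MR}, $\Psi$ witnesses $\mathbf{x}\leq_{tt}(x_0,a)$ (and $\{0,\dots,k-1\}^\om$ is computably homeomorphic to $2^\om$ when $k\geq 2$; the case $k=1$ is trivial, since then $f$ is injective and $\pi_0\colon K\to\mathbb{R}$ is a computable embedding onto the $\Pi^0_1$ set $\bigcap_n f^n[[0,1]]$). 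So the whole issue is to produce, for a given $\mathbf{x}$, an itinerary that is computable from $\mathbf{x}$ on some $\Pi^0_1$ domain.

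Next I would introduce $E:=\bigcup_{c}\overline{O(c)}$, the union over the finitely many local extrema $c$ of the closures of their forward orbits. By effective asymptotic periodicity each $\overline{O(c)}$ is $\Pi^0_1$ and equals $O(c)$ together with a finite periodic orbit; thus $E$ is $\Pi^0_1$, countable (hence zero-dimensional, and with computable distance function since it is a computable compact set), and forward invariant, i.e.\ $f(E)\subseteq E$. Now split into cases. If every coordinate of $\mathbf{x}$ lies in $E$, then $\mathbf{x}\in K_E:=\varprojlim(E,f\upto E)$, which is a $\Pi^0_1$ subset of $[0,1]^\om$; moreover $K_E$ is zero-dimensional, being a closed subspace of the (compact, totally disconnected, hence zero-dimensional) space $E^\om$. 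By Theorem~\ref{thm:universal-menger} applied with $n=0$, $\mathbf{x}$ is $tt$-equivalent to a point of $M^{1}_{0}(\mathbf{3})\subseteq\mathbb{R}$, and we are done in this case.

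In the remaining case some coordinate $x_{n^*}$ is not in $E$; by forward invariance $x_m\notin E$ for all $m\geq n^*$, so in particular $x_m\notin\locex(f)$ for all $m\geq n^*$. Here I would build $P\ni\mathbf{x}$ by pinning the first $n^*$ coordinates: for each $m<n^*$, if $x_m=c_j$ is a local extremum impose $y_m=c_j$, and otherwise impose $y_m\in\overline{B}_{\ep_m}(x_m)$ for a rational $\ep_m$ so small that this closed ball lies in the interior of a single piece $J_{i(m)}$; also impose $d(y_{n^*},E)\geq\delta$ for a rational $\delta<d(x_{n^*},E)$ (legitimate since $d(\cdot,E)$ is computable). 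Then $\mathbf{x}\in P$ and $P$ is $\Pi^0_1$. On $P$ the map $g(\mathbf{y})=\bigl(y_0,(a_m(\mathbf{y}))_{m\geq 1}\bigr)$ is computable, where $a_m(\mathbf{y})$ equals $i(m)$ at the pinned non-extremal low coordinates, the least index of a piece having $c_j$ as an endpoint at the pinned extremal low coordinates, and, for $m\geq n^*$, the index obtained by searching for the unique open piece containing $y_m$ — a search which halts on $P$ because there $y_m\notin\locex(f)$. One checks that $(a_m(\mathbf{x}))_m$ is an itinerary of $\mathbf{x}$ (at an extremal level $m$ both indices $i$ adjacent to $c_j$ satisfy $r_i(f(c_j))=c_j=x_m$, so the least-index choice is fine). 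Hence $g$ witnesses $g(\mathbf{x})\leq_{tt}\mathbf{x}$ and $\Psi$ witnesses $\mathbf{x}\leq_{tt}g(\mathbf{x})$, so $\mathbf{x}\equiv_{tt}g(\mathbf{x})\in\mathbb{R}\times\{0,\dots,k-1\}^\om\cong\mathbb{R}\times 2^\om$.

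The main obstacle, and the place where effective asymptotic periodicity is essential, is precisely the treatment of coordinates that land exactly on a local extremum: this is what forces the dichotomy above, and the hypothesis is used twice at once — it makes $E$ a $\Pi^0_1$ set (so that ``$\mathbf{x}\notin K_E$'' yields a genuinely $\Pi^0_1$ constraint bounding the problematic coordinates and making the itinerary computable) and it makes $E$ zero-dimensional (so that the trapped points fall under Theorem~\ref{thm:universal-menger}). Putting the two cases together gives $\mathcal{D}_{tt}(K)\subseteq\mathcal{D}_{tt}(\mathbb{R}\times 2^\om)$ as claimed.
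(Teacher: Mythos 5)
Your proof is correct modulo minor repairs and takes a genuinely different route from the paper's. The paper proves Lemmas \ref{lem:Ex-effective} and \ref{lem:Ex-eff-uniform}: it analyses the set $\locex(K)$ of points hitting a local extreme value, showing that it is $\mathbf{\Delta}^0_2$ and that $\locext(x)$ and $[x]_{\locext}$ are simple enough that a single itinerary-coding argument produces the required $\Pi^0_1$ domains for all $x$ at once. You instead decompose by whether the coordinate sequence of $\mathbf{x}$ eventually escapes the $\Pi^0_1$, countable, forward-invariant set $E=\bigcup_c\overline{O(c)}$: trapped points are handled by zero-dimensionality of $K_E$ together with Theorem~\ref{thm:universal-menger}, and escaping points by a bespoke $\Pi^0_1$ set that pins the finitely many pre-escape coordinates and exploits forward invariance afterward. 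Your decomposition makes the two roles of e.a.\ periodicity transparent (it makes $E$ both $\Pi^0_1$ and countable) and offloads the trapped case to the already-established Menger machinery, at the cost of a case split; the paper's argument is more uniform and extends directly to a sequence of bonding maps (Lemma~\ref{lem:Ex-eff-uniform}).

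Three small repairs. (i) You flag the computability of the extrema $c_j$ as an extra assumption, but it actually follows from the hypotheses: a non-periodic e.a.\ periodic point is non-recurrent and hence isolated in the $\Pi^0_1$ set $\overline{O(c_j)}$, so $\{c_j\}=\overline{O(c_j)}\cap\overline{B}_r(p)$ is $\Pi^0_1$ for suitable rationals $p,r$, and a $\Pi^0_1$ singleton in $[0,1]$ is a computable point (the periodic case reduces to a finite $\Pi^0_1$ orbit). (ii) The pinning constraint ``$y_m\in\overline{B}_{\ep_m}(x_m)$'' should be a rational closed subinterval of $J_{i(m)}^\circ$ containing $x_m$, since $x_m$ need not be rational. (iii) To see that $\mathrm{dom}(\Psi)$ is $\Pi^0_1$, first extend each $r_i$ to a total computable map on $[0,1]$ (e.g.\ precompose with the nearest-point retraction onto $f[J_i]$), so that the orbit $(z_n)$ is total computable in $(t,a)$ and the conditions $z_n\in f[J_{a(n+1)}]$ become a genuine countable conjunction of uniformly $\Pi^0_1$ predicates. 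None of these change the shape of the argument.
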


To prove Theorem \ref{thm:Kbuc-han} we need to examine the property of the orbit of a local extremum.
Note that the inverse limit $K$ is of the following form:
\[K=\{x\in[0,1]^\om:(\forall n\in\om)\;f(x(n+1))=x(n)\}\]

Let $\locex(K)$ be the set of all points $x\in K$ such that $x(n)$ is a local extremum value of $f$ at some $n$, that is,
\[\locex(K)=\{x\in K:(\exists n)\;x(n)\in \locexv(f)\}.\]
This is always $F_\sigma$ whenever $f$ is continuous.
For $x\in K$, consider
\begin{align*}
\locext(x)&=\{n\in\om:x(n)\in \locexv(f)\},\\
[x]_{\locext}&=\{y\in K:\locext_f(x)=\locext_f(y)\}.
\end{align*}



\begin{lemma}\label{lem:Ex-effective}
Assume that $K$ satisfies the assumption in Theorem \ref{thm:Kbuc-han}.
Then $\locex(K)$ is $\Delta^0_2$, $\locext(x)$ is computable, and $[x]_{\locext}$ is $\Sigma^0_2$ for any $x\in K$.
\end{lemma}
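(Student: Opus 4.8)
The plan is to separate a purely dynamical core from routine effectivity bookkeeping. For $x\in K$ one has $x(m)=f^{n-m}(x(n))$ whenever $m\le n$. Since $f$ is piecewise monotone, finite-to-one and computable, $\locex(f)$ is finite, and its points --- hence the finite set $\locexv(f)=\{v_1,\dots,v_l\}$ of local extremum values --- are computable reals (I take this as part of the data of such an $f$; it holds in the paper's examples). Thus ``$y(n)=v_i$'' is a $\Pi^0_1$ condition and ``$y(n)\ne v_i$'' a $\Sigma^0_1$ condition on $y\in[0,1]^\om$, uniformly in $n,i$. By the e.a.\ periodicity hypothesis, for each $c\in\locex(f)$ the forward-orbit closure $\overline{O_c}$ is $\Pi^0_1$ and equals $O_c\cup F_c$ with $F_c$ a finite cycle; put $E=\bigcup_{c}\overline{O_c}$, a $\Pi^0_1$ compact set with $f[E]\subseteq E$ and $\locexv(f)\subseteq E$, and $F=\bigcup_c F_c$, so that $f$ restricts to a permutation of the finite set $F$.

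I will use three dynamical facts about $x\in K$. (i) If $x(n)\in\locexv(f)$ then $x(m)\in E$ for all $m\le n$ (forward-invariance of $E$). (ii) For each $i$, the set $\{n:x(n)=v_i\}$ is empty, a singleton, or --- when $v_i$ is $f$-periodic of minimal period $\pi$ --- a gap-free set $\{n_0,n_0+\pi,n_0+2\pi,\dots\}\cap\om$ with $0\le n_0<\pi$: two hits force $v_i=f^{d}(v_i)$ and hence periodicity, minimality of $n_0$ together with $x(n_0-\pi)=f^\pi(v_i)=v_i$ forces $n_0<\pi$, and a gap is impossible since $x(n_0+(k-1)\pi)=f^\pi(x(n_0+k\pi))$. (iii) If $x(n)\in E$ for every $n$, then in fact $x(n)\in F$ for every $n$; consequently $\{x\in K:\forall n\,x(n)\in E\}$ is a finite $\Pi^0_1$ set, namely the backward orbits inside $F$, which are determined by $x(0)\in F$. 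The proof of (iii) is the one substantive argument: ``$x(n)\notin F$'' propagates to all larger $n$ (if $x(n+1)\in F$ then $x(n)=f(x(n+1))\in f[F]=F$), and while $x$ is outside $F$ each $x(n)$ is a point $f^{m}(c)$ on one of the finitely many ``orbit rays'' of $E$; but the indices at which the backward orbit revisits a fixed ray strictly decrease (again from $v=f^{d}(v)\Rightarrow$ periodicity), so over the finitely many rays only finitely many $n$ are available --- impossible if $x$ stays outside $F$ forever.

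From these, everything follows. First, $\locext(x)=\bigcup_{i\le l}\{n:x(n)=v_i\}$ is, by (ii) and finiteness of $\locexv(f)$, a finite union of eventually periodic subsets of $\om$, hence eventually periodic, hence computable (no effectivity of $f$ is needed here). The key effectivity statement is: for every computable $S\subseteq\om$ the set $P(S):=\{y\in K:y(n)\notin\locexv(f)\text{ for all }n\notin S\}$ is $\Sigma^0_2$. One proves this by partitioning $P(S)$ according to the first time the backward orbit leaves $E$, into: (a) $\{y\in K:y(0)\notin E\}$, which lies in $P(S)$ by (i) and is $\Sigma^0_1$; (b) for each $m\ge1$, the set $\{y\in K:y(0),\dots,y(m-1)\in E,\ y(m)\notin E,\ y(n)\notin\locexv(f)\text{ for }n<m\text{ with }n\notin S\}$, which lies in $P(S)$ by (i) (once $y(m)\notin E$, no $y(n)$ with $n\ge m$ can be in $\locexv(f)$) and is a Boolean combination of $\Pi^0_1$ and $\Sigma^0_1$ conditions (computability of $S$ is used here), so the union over $m$ is $\Sigma^0_2$; and (c) $P(S)\cap\{y\in K:\forall n\,y(n)\in E\}$, a subset of the finite $\Pi^0_1$ set of (iii), hence a finite set of computable points, hence $\Pi^0_1$. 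So $P(S)$ is $\Sigma^0_2$.

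The lemma is now immediate: $\locex(K)=\bigcup_n\{x\in K:x(n)\in\locexv(f)\}$ is a countable union of $\Pi^0_1$ sets, hence $\Sigma^0_2$, while $K\setminus\locex(K)=P(\emptyset)$ is $\Sigma^0_2$ by the key statement, so $\locex(K)$ is $\Delta^0_2$; $\locext(x)$ is computable as noted; and, with $S=\locext(x)$ (computable, by the previous point), $[x]_{\locext}=\{y\in K:\forall n\,(y(n)\in\locexv(f)\leftrightarrow n\in S)\}$ equals $\{y\in K:y(n)\in\locexv(f)\text{ for all }n\in S\}\cap P(S)$, an intersection of a $\Pi^0_1$ set with a $\Sigma^0_2$ set, hence $\Sigma^0_2$. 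The only real obstacle is fact (iii): it is what makes the ``stays inside $E$'' part of $P(S)$ collapse to a finite set instead of remaining a possibly genuinely $\Pi^0_2$ set, and it is the sole place where the finite-to-one / piecewise-monotone hypotheses (the finite ray structure of $E$) and the e.a.\ periodicity hypothesis ($\Pi^0_1$-ness of the $\overline{O_c}$, hence of $E$) are combined; the rest is bookkeeping.
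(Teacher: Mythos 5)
Your proof is correct but follows a genuinely different route from the paper's. The paper establishes $\Delta^0_2$-ness of $\locex(K)$ by constructing, for each local extremum $z$ and each index $n$, a c.e.\ open set $U_n^z$ isolating $f(z)$ inside $(f^n)^{-1}[O_z]$, with a preperiodic versus non-preperiodic case split on $z$; it then reads off the bound from the equivalence $x\in\locex(K)\iff\exists z\,\exists n\,[x(0)\in O_z \text{ and } x(n)\in U_n^z]$, and for $[x]_{\locext}$ it argues directly (in the infinite-$\locext$ case) that $[x]_{\locext}$ is a finite union of finite sets of computable points. You instead pass through the single $\Pi^0_1$ forward-invariant set $E=\bigcup_c\overline{O_c}$ with finite permuted core $F$, prove the one genuinely dynamical statement (iii) --- any backward orbit confined to $E$ lies in $F$, so there are only finitely many such points, all computable --- and then derive all three conclusions from the auxiliary claim that $P(S)$ is $\Sigma^0_2$ for computable $S$, which you establish by decomposing $P(S)$ by the first exit time from $E$. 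This cleanly separates the dynamics from the $\Sigma^0_2$ bookkeeping and avoids the preperiodic/non-preperiodic dichotomy; the paper's $U_n^z$ construction is more local and explicit. The same hypotheses (finitely many extrema, $\Pi^0_1$ orbit closures from e.a.\ periodicity, finite-to-one-ness) do the same essential work in both arguments.

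One small imprecision worth flagging: in your fact (ii) you assert that when $v_i$ is periodic of minimal period $\pi$ and there are at least two hits, the hit set $\{n:x(n)=v_i\}$ equals the full progression $\{n_0,n_0+\pi,n_0+2\pi,\dots\}$. Your argument --- from $x(n_0+(k-1)\pi)=f^\pi(x(n_0+k\pi))$ --- shows only that the hit set is a downward-closed subset of this progression; since $f^\pi$ need not be injective on preimages, a hit at $n$ does not force a hit at $n+\pi$, so the set may be a finite initial segment $\{n_0,n_0+\pi,\dots,n_0+m\pi\}$. This does not affect anything downstream: in either case the hit set is computable, which is all you use.
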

 
\begin{proof}
We first show that $\locex(K)$ is $\Delta^0_2$.
Note that $f$ has only finitely many local extrema $z$.
Since $z$ is e.a.\ preriodic, the orbit $O_z$ of $z$ is $\Delta^0_2$.
If $z$ is preperiodic, since $O_z$ is finite, and $f$ is finite-to-one, $(f^n)^{-1}[O_z]$ is finite.
Thus, uniformly in $n$, one can find a c.e.\ open set $U_n^z$ in $I=\interval$ such that $U_n^z\cap(f^n)^{-1}[O_z]=\{f(z)\}$.
Next assume that $z$ is not preperiodic.
Note that $f^n(z)$ is not contained in the closure of $f^{n+1}[O_z]$.
Otherwise, either $f^n(z)=f^m(z)$ for some $m>n$ or $f^n(z)\in\overline{O_z}\setminus O_z$.
The former means that $f^n(z)$ is periodic, and therefore $z$ is preperiodic.
The latter means that $f^n(z)$ is contained in a finite orbit since $z$ is asymptotically periodic.
Thus, this implies that $z$ is preperiodic, which contradicts our assumption.
The above argument also shows that the closure of $f^{n+1}[O_z]$ is $\Pi^0_1$ since it is the difference of $\overline{O_z}$ and $\{z,f(z),\dots, f^n(z)\}$, and $f^j(z)$ is isolated in $\overline{O_z}$ for any $j$.
Now, there is an open neighborhood $V$ of $f^n(z)$ such that $V$ does not intersect with the closure of $f^{n+1}[O_z]$.
Therefore, given a computable sequence of open balls $(B_i)_{i\in\om}$ such that $\overline{f^{n+1}[O_z]}=I\setminus\bigcup_iB_i$, there is $j$ such that $f^n(z)\in B_j$.
One can effectively find such a $B_j$, and then define $V_n=(f^{n-1})^{-1}[B_j]$.
Then, note that
\[f(z)\in V_n\cap (f^{n-1})^{-1}[O_z]\subseteq(f^{n-1})^{-1}\{z,f(z),f^2(z),\dots,f^n(z)\}.\]
The latter set is finite since $f$ is finite-to-one.
Thus, one can effectively find an open set $U^z_{n-1}\subseteq V_n$ such that $U^z_{n-1}\cap (f^{n-1})^{-1}[O_z]=\{f(z)\}$.

Consequently, $x\in\locex(K)$ if and only if there is $z\in\locex(f)$ such that $x(0)\in O_z$ and $x(n)\in U^z_n$ for some $n\in\om$.
This gives a $\Delta^0_2$ definition of $\locex(K)$ since $\locex(f)$ is finite, and $O_z$ is $\Delta^0_2$.

If $x(n)$ is a local extremum value $z$ for some $n$, but no such $z$ is periodic, then $\locext(x)$ must be finite, since $f$ has only finitely many local extrema $z$.
In this case, it is clear that $[x]_{\locext}$ is $\Sigma^0_2$.
Therefore, if $\locext(x)$ is infinite, then $x(n)$ is a periodic local extremum value $z$ for infinitely many $n\in\om$.
This means that $(x(n))_{n\in\om}$ repeats a finite sequence $\sigma(z)$ determined by $z$ forever.
In particular, $\locext(x)$ is computable.

To estimate the complexity of $[x]_{\locext}$, let $p$ be the period of $z$.
Then, if $x$ attains $z$ infinitely often, then there are only $p$ many candidate for such $x$, that is, there is $m<p$ such that
\[\langle x(n):kp+m\leq n<(k+1)p+m\rangle=\sigma(z)\]
for any $k$.
Now, if $\locext(x)$ is infinite, then $x$ has to attain some periodic local extremum value $z$ infinitely often, but there are only finitely many such $z$.
This means that $[x]_{\locext}$ is a finite union of finite sets of computable points.
Consequently, $[x]_{\locext}$ is $\Sigma^0_2$.
\end{proof}

\begin{remark}
A point $x\in I$ is {\em recurrent} if for any open neighborhood $U$ of $x$, $f^m(x)\in U$ for some $m>0$.
We say that $x\in I$ is {\em prerecurrent} if there is $n$ such that for any open neighborhood $U$ of $x$, $f^m(x)\in f^{n}[U]$ for some $m>n$.
Clearly, every periodic point is recurrent, and every preperiodic point is prerecurrent.
The proof of Lemma \ref{lem:Ex-effective} shows that an asymptotically periodic point is preperiodic if and only if it is prerecurrent.
\end{remark}

We will prove Theorem \ref{thm:Kbuc-han} in a slightly more general form to include an inverse system of many bonding maps $f_n:I_{n+1}\to I_n$.
We now say that an inverse system $(I_n,f_n)$ is {\em effectively basic} if
\begin{enumerate}
\item $I_n$ is a finite union of subintervals of $\interval$.
\item $f_n$ is a piecewise monotone, finite-to-one, function.
\item $(I_n,f_n)$ be uniformly computable.
\item Given $n$, one can effectively enumerate all local extreme values of $f_n$ w.r.t.\ the usual ordering $<$ on reals without repetition.
\end{enumerate}
Note that the condition (2) ensures that $\locex(f_n)$ is finite.
Then we consider:
\begin{align*}
\locex(K)&=\{x\in K:(\exists n)\;x(n)\in\locexv(f_n)]\},\\
\locext(x)&=\{n\in\om:x(n)\in \locexv(f_n)]\},\\
[x]_{\locext}&=\{y\in K:\locext(x)=\locext(y)\}.
\end{align*}

Most natural inverse limits with unimodal bonding maps also satisfy this property.
Now we prove Theorem \ref{thm:Kbuc-han}.

\begin{lemma}\label{lem:Ex-eff-uniform}
Let $K$ be an inverse limit of an effectively basic inverse system, which satisfies the conclusion in Lemma \ref{lem:Ex-effective}.
Then, every point in $K$ is $tt$-equivalent to a point in $\mathbb{R}\times 2^\om$, that is, $\mathcal{D}_{tt}(K)\subseteq\mathcal{D}_{tt}(\mathbb{R}\times 2^\om)$.
\end{lemma}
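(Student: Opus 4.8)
The plan is to realize each $x\in K$ inside $\mathbb{R}\times 2^\om$ by recording its first coordinate $x(0)$ together with its itinerary. By effective basicness one may fix, uniformly in $n$, an effective enumeration of the laps of $f_n$ --- the maximal intervals of monotonicity --- and of the turning points separating adjacent laps. Define $\iota(x)\in\om^\om$ by letting $\iota(x)(n)$ be the index of the lap containing $x(n+1)$ when $x(n+1)$ lies in the interior of a lap, and the index of $x(n+1)$ itself when $x(n+1)$ is a turning point. The map $\Phi(x)=(x(0),\iota(x))$ is injective, since $x(n+1)$ is recovered from $x(n)$ and $\iota(x)(n)$ either as the indicated turning point or as the preimage of $x(n)$ under the injective restriction of $f_n$ to the indicated closed lap, and then one iterates up the tower. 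Hence, by Lemma~\ref{lem:main-lemma}, it suffices to produce, for each $x\in K$, a $\Pi^0_1$ set $P\subseteq\interval^\om$ with $x\in P$ on which $\Phi$ is a computable embedding whose image is $\Pi^0_1$; for then $\Phi[P]$ is a computably compact subset of $\mathbb{R}\times\om^\om$ and so embeds computably, with $\Pi^0_1$ image, into $\mathbb{R}\times 2^\om$, carrying $x$ to a point $tt$-equivalent to it.

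The hard part, and the only place where the assumed conclusion of Lemma~\ref{lem:Ex-effective} enters, is that $\Phi$ is neither globally continuous nor globally computable: at a point $x$ some $x(n+1)$ may be a turning point of $f_n$ --- equivalently, $n\in\locext(x)$, since a turning point is mapped by $f_n$ to a local extreme value --- and there the lap component of $\iota$ jumps, while even pointwise the naive algorithm would have to compare the real $x(n+1)$ with a turning point to which it is equal. To get around this, fix $x$, put $S=\locext(x)$, which is a computable subset of $\om$ by hypothesis, and --- using that $[x]_{\locext}$ is $\Sigma^0_2$, hence (as $K$ is $\Pi^0_1$) a countable union of $\Pi^0_1$ subsets of $\interval^\om$ --- fix a $\Pi^0_1$ set $P\subseteq[x]_{\locext}$ with $x\in P$. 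On $[x]_{\locext}$ the map $\iota$ is locally constant at every level: at a level $n$ at which $y(n+1)$ lies in an open lap this is clear, and at a level $n$ at which $y(n+1)$ is a turning point one has $n\in S$, so any $y'\in[x]_{\locext}$ sufficiently close to $y$ has $y'(n)=y(n)$ (because $y'(n)\in\locexv(f_n)$ and $\locexv(f_n)$ is finite) and hence $y'(n+1)=y(n+1)$ (because $y'(n+1)\in f_n^{-1}(y(n))$, which is finite by finite-to-oneness). Therefore $\Phi\upto P$ is continuous, and it is computable as well: to compute $\iota(y)(n)$ for $y\in P$ one first decides whether $n\in S$; if $n\notin S$ then $y(n+1)$ is not a turning point, so comparing it with the finitely many turning points of $f_n$ locates its lap in finite time, while if $n\in S$ one locates $y(n)$ among the finitely many local extreme values of $f_n$ and then $y(n+1)$ among the finite set $f_n^{-1}(y(n))$ --- both steps being effective for $y\in P$ by effective basicness --- and then outputs the appropriate index.

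It remains to assemble the pieces. Since $P$ is $\Pi^0_1$ in the computable compactum $K$, it is computably compact, so $\Phi[P]$ is computably compact and therefore $\Pi^0_1$; and the argument of Observation~\ref{obs:basic-top-dim}(3), which goes through verbatim when the domain is a $\Pi^0_1$ set rather than a computable compactum, shows that $\Phi\upto P$ admits a computable left inverse, i.e., is a computable embedding of $P$ into $\mathbb{R}\times\om^\om$ with $\Pi^0_1$ image. By the reduction of the first paragraph and Lemma~\ref{lem:main-lemma} we get $x\equiv_{tt}z$ for some $z\in\mathbb{R}\times 2^\om$, and since $x\in K$ was arbitrary, $\mathcal{D}_{tt}(K)\subseteq\mathcal{D}_{tt}(\mathbb{R}\times 2^\om)$. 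I expect the main obstacle to be exactly the second paragraph: the crucial point is that passing to a $\Pi^0_1$ piece of $[x]_{\locext}$, together with computability of $\locext(x)$, at once removes the discontinuity of $\Phi$ and forces the itinerary algorithm to halt; the remaining effective-compactness bookkeeping is routine.
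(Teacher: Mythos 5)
Your proof is correct and follows essentially the same route as the paper's: both record $x(0)$ together with a computably bounded discrete itinerary (you record the lap/turning-point index of $x(n+1)$, the paper records the index of $x(n+1)$ inside $f_n^{-1}\{x(n)\}$, which is the same data), and both use the computability of $\locext(x)$ and the $\Sigma^0_2$-ness of $[x]_{\locext}$ from Lemma~\ref{lem:Ex-effective} to find a $\Pi^0_1$ piece on which the itinerary map is a computable embedding with $\Pi^0_1$ image. Your unified treatment via $[x]_{\locext}$ in place of the paper's separate handling of $x\notin\locex(K)$ and $x\in\locex(K)$ is a cosmetic tidying, not a different argument.
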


\begin{proof}
We will code each $x\not\in\locex(K)$ as $(x(0),c_x)\in I_0\times \om^{<\om}$, where $c_x$ is computably dominated, and therefore, has a $2^\om$-$tt$-degree.
Given $n$, the increasing enumeration $(a_i)_{i<b}$ of $\locexv(f_n)$ divides $I_n$ into finitely many subintervals $I_n\cap[a_{i-1},a_{i}]$, where $a_{-1}=-0.1$ and $a_b=1.1$.
Then, for any $y,z\in (a_{i-1},a_i)$, the level sets $f_n^{-1}\{y\}$ and $f_n^{-1}\{z\}$ have the same cardinalities, say $\ell(i)$, since $f_n$ does not attain a local extreme value in this open interval.
One can effectively find a computable function $g_{n,i}:(a_{i-1},a_i)\to I_{n+1}^{\ell(i)}$ such that $g_{n,i}(y)$ enumerates $f_n^{-1}\{y\}$ w.r.t.\ the usual ordering $<$ on reals without repetition.
Now, we can effectively find a unique $i\leq b$ such that $x(n)\in (a_{i-1},a_i)$, and then we define $c_x(n)=k$ if and only if $x(n+1)$ is the $k$-th element of $f_n^{-1}\{x(n)\}$, that is, $g_{n,i}(x(n))(k)$.
By effectivity of $g_{n,i}$ uniformly in $n$ and $i$, and by our assumption that $\locex(K)$ is $\Delta^0_2$ (hence, the complement of $\locex(K)$ is a countable union of $\Pi^0_1$ sets), the code $c_x$ is $tt$-reducible to $x$.

Put $E_0=\locexv(f_0)$.
We claim that, indeed, this argument gives a computable function $s\mapsto T(s)$ such that $x\mapsto(x(0),c_x)$ is a computable homeomorphism between $K\setminus{\rm Ex}(K)$ and $\{(s,t):s\in I_0\setminus E_0\mbox{ and }t\in [T(s)]\}$ where $T(s)\subseteq\om^{<\om}$ is a computably bounded finite branching computable tree.
Assume that $(x(0),\sigma)$, where $\sigma\in\om^n$, is a code of a finite sequence $(x(k))_{k\leq n}$.
Then, as mentioned above, we can compute $i$ such that $x(k)\in  (a_{i-1},a_i)$, and thus get $\ell(i)$ and $g_{n,i}$ in an effective manner.
We declare that $T(x(0))$ has $\ell(i)$ many immediate successors of $\sigma$, and $(x(0),\sigma\fr k)$ codes $g_{n,i}(x(n))(k)$.
Clearly, the decoding procedure is effective, and defined on $\{x(0)\}\times [T(x(0))]$.
This verifies the claim.
Consequently, $x$ is $tt$-equivalent to $(x(0),c_x)$, that is, $x$ has an $(\mathbb{R}\times 2^\om)$-uniform degree.

We next consider $x\in\locex(K)$.
By our assumption, $A:=\locext(x)$ is computable, and $[x]_{\locext}$ is $\Sigma^0_2$.
If $n\not\in A$, we have $x(n)\not\in\locexv(f_n)$, and then we define $c_x(n)$, which codes $x(n+1)$, as before.
If $n\in A$, we have $x(n)\in \locexv(f_n)=\{a_i\}_{i<b}$.
Since $\{a_i\}_{i<b}$ is discrete, one can compute $i<b$ such that $x(n)=i$.
We then effectively enumerate $f_n^{-1}\{a_i\}$, and define $c_x(n)=k$ if and only if $x(n+1)$ is the $k$-th element of $f_n^{-1}\{a_i\}$.
By the same argument as above, and by the assumption that $[x]_{\locext}=\{y:\locext(y)=A\}$ is $\Sigma^0_2$ (hence, a countable union of $\Pi^0_1$ sets), this shows that $x$ is $tt$-equivalent to $(x(0),c_x)$.
\end{proof}

Theorem \ref{thm:Kbuc-han} follows from Lemmas \ref{lem:Ex-effective} and \ref{lem:Ex-eff-uniform}.
%
We are now interested in how many $tt$-degree structures of computable arc-like continua there exist.

\begin{obs}\label{obs:arc-like-example}
The $tt$-degree structures of the following spaces are realized by those of computable arc-like planar continua:
\[\mathbb{R},\;\mathbb{R}\times 2^\om,\;\repsp{A},\;\repsp{A}\cup\mathbb{R}\mbox{, and }\repsp{A}\cup(\mathbb{R}\times 2^\om),\]
where $\repsp{A}$ is a computable pseudo-arc.
\end{obs}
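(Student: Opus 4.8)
The plan is to realize each of the five spaces by an explicit computable arc-like planar continuum, combining the results already proved with two classical facts of continuum theory. First, an arc is trivially arc-like, so a nondegenerate computable planar arc $J$ is a computable arc-like planar continuum, and by Fact~\ref{fact:MR-computable-arc} (and the remark following it) $\mathcal{D}_{tt}(J)=\mathcal{D}_{tt}(\mathbb{R})$. Second, if a continuum $\repsp{C}$ is arc-like and has an \emph{endpoint} $c$ --- meaning that for every $\ep>0$ there is an $\ep$-chain cover of $\repsp{C}$ whose first link is the only one meeting $c$ --- then the one-point union of $\repsp{C}$ with any other such continuum, glued along endpoints, is again arc-like: concatenate the two chains, one of them reversed. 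A computable arc, a computable planar pseudo-arc (whose existence is the observation above), and Knaster's bucket handle all come equipped with endpoints via their standard constructions (a crooked inverse limit of arcs fixing $0$ and $1$ for the pseudo-arc; the tent-map inverse limit for the bucket handle, whose unique endpoint is the image of the fixed point $0$), and all of these constructions, as well as the embeddings of arc-like continua into the plane, can be performed computably.

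The cases $\mathbb{R}$ and $\repsp{A}$ are immediate: a nondegenerate computable planar arc realizes $\mathcal{D}_{tt}(\mathbb{R})$ as just noted, and a pseudo-arc is by definition arc-like, so a computable planar pseudo-arc $\repsp{A}$ already realizes $\mathcal{D}_{tt}(\repsp{A})$.

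For $\mathbb{R}\times 2^\om$ I would use Knaster's bucket handle $K=\varprojlim([0,1],f)$ with $f$ the tent map. Since $f$ is piecewise monotone, finite-to-one and computable and its only local extremum is preperiodic, hence e.a.\ periodic, Theorem~\ref{thm:Kbuc-han} gives $\mathcal{D}_{tt}(K)\subseteq\mathcal{D}_{tt}(\mathbb{R}\times 2^\om)$. For the reverse inclusion, the assignment sending $(t,w)\in[1/4,3/4]\times 2^\om$ to the point $x\in K$ with $x(0)=t$ and $x(n+1)=x(n)/2$ if $w(n)=0$, $x(n+1)=1-x(n)/2$ if $w(n)=1$, is an injective computable map whose image is $\{x\in K:x(0)\in[1/4,3/4]\}$; this image is $\Pi^0_1$ in $K$, and by Observation~\ref{obs:basic-top-dim} the map is a computable homeomorphism onto it. (One checks that, starting from $x(0)\in[1/4,3/4]$, every coordinate $x(n)$ stays away from the critical value $1/2$ by a computable amount, so the construction is effective.) By Lemma~\ref{lem:main-lemma} every point of $[1/4,3/4]\times 2^\om$ is $tt$-equivalent to a point of $K$; since $[1/4,3/4]\times 2^\om$ is computably homeomorphic to $[0,1]\times 2^\om$ and every point of $\mathbb{R}\times 2^\om$ lies in a $\Pi^0_1$ copy of $[0,1]\times 2^\om$, this yields $\mathcal{D}_{tt}(\mathbb{R}\times 2^\om)\subseteq\mathcal{D}_{tt}(K)$, whence $\mathcal{D}_{tt}(K)=\mathcal{D}_{tt}(\mathbb{R}\times 2^\om)$.

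Finally, for $\repsp{A}\cup\mathbb{R}$ and $\repsp{A}\cup(\mathbb{R}\times 2^\om)$, read as topological sums, I glue a computable planar pseudo-arc $\repsp{A}$ at an endpoint $p$ to, respectively, a computable arc $J$ at an endpoint of $J$, or the bucket handle $K$ at its endpoint, using a computable isometry of the plane so that the two pieces meet exactly at the glued point. The union $\repsp{A}\cup\repsp{C}$ (with $\repsp{C}=J$ or $K$) is then a computable arc-like planar continuum in which $\repsp{A}$ and $\repsp{C}$ are $\Pi^0_1$ subsets that together cover it; applying Lemma~\ref{lem:main-lemma} to the two inclusions gives $\mathcal{D}_{tt}(\repsp{A}\cup\repsp{C})=\mathcal{D}_{tt}(\repsp{A})\cup\mathcal{D}_{tt}(\repsp{C})$, which equals $\mathcal{D}_{tt}(\repsp{A})\cup\mathcal{D}_{tt}(\mathbb{R})$ for $\repsp{C}=J$ and $\mathcal{D}_{tt}(\repsp{A})\cup\mathcal{D}_{tt}(\mathbb{R}\times 2^\om)$ for $\repsp{C}=K$ by the previous paragraphs, i.e.\ exactly $\mathcal{D}_{tt}(\repsp{A}\cup\mathbb{R})$ and $\mathcal{D}_{tt}(\repsp{A}\cup(\mathbb{R}\times 2^\om))$. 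The step I expect to require the most care is the purely topological one --- that these one-point unions really are arc-like --- which rests on the pseudo-arc and the bucket handle genuinely possessing endpoints and on the chain-concatenation argument; the accompanying effectivity checks and the degree computations via Lemma~\ref{lem:main-lemma} are routine.
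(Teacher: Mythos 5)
Your proposal is correct and follows essentially the same route as the paper: realize $\mathbb{R}$ by an arc, $\mathbb{R}\times 2^\om$ by Knaster's bucket handle via Theorem~\ref{thm:Kbuc-han} together with an explicit $\Pi^0_1$ copy of $[0,1]\times 2^\om$ inside $K$ (the paper uses the fiber set over $[1/8,1/4]$, you use $[1/4,3/4]$ --- both work by the same "stays away from the critical orbit" analysis), and realize the two union cases by gluing a pseudo-arc to an arc or to $K$ at endpoints. Your added detail on the explicit coding of preimages in $K$ and the chain-concatenation justification of arc-likeness under one-point unions are both correct elaborations of what the paper leaves implicit.
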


\begin{proof}
First claim that the $tt$-degree structure of Knaster's bucket handle $K$ is that of $\mathbb{R}\times 2^\om$.
This follows from a simple observation that the set $\{x\in K:x(0)\in[1/8,1/4]\}$ is homeomorphic to $\interval\times 2^\om$ because for any $x\in[1/8,1/4]$, it is easy to see that $x\not\in\locex(K)$, and the tree $T(x)$ in Lemma \ref{lem:Ex-eff-uniform} is exactly the full binary tree $2^{<\om}$.
Then  Theorem \ref{thm:Kbuc-han} implies that the $tt$-degree structure of $K$ is the same as that of $\mathbb{R}\times 2^\om$.

Next, let $B$ be the result by connecting $[0,1]$ at an end point of a computable pseudo-arc $\repsp{A}$.
It is clear that $B$ is a computable arc-like continuum, and its $tt$-degree structure is that of $\repsp{A}\cup\mathbb{R}$.
In a similar manner, by connecting Knaster's bucket handle and the pseudo-arc, the $tt$-degree structure $\repsp{A}\cup(\mathbb{R}\times 2^\om)$ can be realized as the $tt$-degree structure of a computable arc-like continuum.
\end{proof}

We do not know any other example even if we allow a space to be any computable circle-like, or tree-like, non-planar continuum.
For instance, consider a circle-like continuum, known as a solenoid.
For a prime number $p$, define $f_p:\mathbb{T}\to\mathbb{T}$ by $f_p(z)=z^p$, where $\mathbb{T}$ is the unit circle in the complex plane $\mathbb{C}$.
For a sequence of prime numbers $P=(p_{i})_{i\in\om}$, the {\em $P$-solenoid} is defined by $S_P=\varprojlim(\mathbb{T},f_{p_i})$.
Note that if $P$ is a computable sequence, the $P$-solenoid is computable. 
It is known that if $P$ and $Q$ are sufficiently different, $S_P$ and $S_Q$ are not homeomorphic.
However, it is not hard to check that all computable solenoids have the same $tt$-degree structures, that of $\mathbb{R}\times 2^\om$.

\begin{obs}
There is a computable one-dimensional planar continuum whose $tt$-degree structure is different from those in Observation \ref{obs:arc-like-example}.
\end{obs}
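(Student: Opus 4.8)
The plan is to take $C=M^2_1(\mathbf{3})$, the Sierpi\'nski carpet. This is a computable compactum (since $\mathbf{3}$ is computable), it is connected, it sits inside $[0,1]^2\subseteq\mathbb{R}^2$, and $\dim(C)=1$; so it is a computable one-dimensional planar continuum. I would prove that ${\rm deg}_{tt}(x)$ belongs to $\mathcal{D}_{tt}(C)$ but to none of $\mathcal{D}_{tt}(\mathbb{R})$, $\mathcal{D}_{tt}(\mathbb{R}\times 2^\om)$, $\mathcal{D}_{tt}(\repsp{A})$, $\mathcal{D}_{tt}(\repsp{A}\cup\mathbb{R})$, $\mathcal{D}_{tt}(\repsp{A}\cup(\mathbb{R}\times 2^\om))$, where $x$ is a point of $C$ that is weakly $1$-generic in $C$. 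Such an $x$ exists by the effective Baire category theorem, since $C$ is a (perfect) computable compactum.

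The topological heart of the argument is that \emph{every nonempty open subset $U$ of the Sierpi\'nski carpet contains a topological circle}. Indeed, for a large enough level $k$, the open set $U$ contains one of the $8^k$ surviving squares $Q$ of side $3^{-k}$, and $C\cap Q$ is a rescaled copy of $C$; and $C$ contains a circle, for instance the boundary $\partial([0,1]^2)$ of the unit square, whose points are never deleted in the construction. Since $x$ is weakly $1$-generic in $C$, any $\Pi^0_1$ set $P\subseteq C$ with $x\in P$ is not nowhere dense; being closed, $P$ then has nonempty interior, hence $P$ contains a topological circle.

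Now suppose towards a contradiction that ${\rm deg}_{tt}(x)\in\mathcal{D}_{tt}(S)$ for one of the five spaces $S$ above (for the two spaces built from $\repsp{A}$ I would instead work with the computable arc-like continuum realizing $\mathcal{D}_{tt}(S)$ furnished by Observation \ref{obs:arc-like-example}). Then there is $y\in S$ with $x\equiv_{tt}y$, so by Lemma \ref{lem:main-lemma} there are a $\Pi^0_1$ set $P\subseteq C$ with $x\in P$ and a computable embedding $\Phi$ of $P$ into $S$ with $\Phi(x)=y$. By the previous paragraph $P$, hence its homeomorphic image $\Phi[P]\subseteq S$, contains a topological circle, so $S$ contains a topological circle. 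But none of the five spaces does: $\mathbb{R}$ has only points and arcs as subcontinua; in $\mathbb{R}\times 2^\om$ every connected subset projects to a single point of $2^\om$, so again its subcontinua are subcontinua of $\mathbb{R}$; and the pseudo-arc $\repsp{A}$ together with the two spaces obtained from it are arc-like (Observation \ref{obs:arc-like-example}), while $\mathbb{S}^1$ is not chainable and subcontinua of chainable continua are chainable. This contradiction shows ${\rm deg}_{tt}(x)\notin\mathcal{D}_{tt}(S)$ for every such $S$, so $\mathcal{D}_{tt}(C)$ differs from each of the five structures in Observation \ref{obs:arc-like-example}.

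The only part that requires genuine care is the topological lemma of the second paragraph: one must verify that a weakly $1$-generic point of the carpet is really trapped inside circle-containing $\Pi^0_1$ sets. This is exactly the feature that fails for a circle, a theta-curve, or any finite graph --- there a generic point lies inside an arc --- so self-similarity of the carpet is essential, and I would make the ``nonempty open $\Rightarrow$ contains a rescaled carpet $\Rightarrow$ contains a circle'' step precise in terms of the defining grid squares. Everything else (existence of $x$, computability, planarity and one-dimensionality of the carpet, absence of circles in the five listed spaces) is routine.
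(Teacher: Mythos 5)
Your proposal is correct, but it takes a genuinely different route from the paper. Both arguments run a weakly $1$-generic point $x$ of the Sierpi\'nski carpet $C$ through Lemma~\ref{lem:main-lemma} and exploit the self-similarity fact that every nonempty open subset of $C$ contains a copy of $C$, but the topological payload differs. The paper's proof feeds this into a \emph{join-irreducibility} lemma for $\mathcal{D}_{tt}(C)$: if a weakly $1$-generic point of $C$ were $tt$-equivalent to a point of a disjoint sum $A\sqcup B$, then $C$ itself would embed into $A$ or $B$. Combined with the (nontrivial) universality fact that $C$ contains both a pseudo-arc $\repsp{A}$ and Knaster's bucket handle $K$ --- giving $\mathcal{D}_{tt}(\repsp{A})\subsetneq\mathcal{D}_{tt}(C)$ and $\mathcal{D}_{tt}(K)\subsetneq\mathcal{D}_{tt}(C)$ --- join-irreducibility yields $\mathcal{D}_{tt}(\repsp{A}\sqcup K)\subsetneq\mathcal{D}_{tt}(C)$, and since every structure in Observation~\ref{obs:arc-like-example} is contained in $\mathcal{D}_{tt}(\repsp{A}\cup(\mathbb{R}\times 2^\om))=\mathcal{D}_{tt}(\repsp{A}\sqcup K)$, they are all strictly below $\mathcal{D}_{tt}(C)$. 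Your proof instead isolates a single elementary obstruction: any $\Pi^0_1$ neighborhood of $x$ in $C$ contains a topological circle (an elementary consequence of self-similarity plus the fact that grid-square boundaries survive the construction), whereas none of the five target spaces contains a circle (arcs and pseudo-arcs are chainable, chainability passes to subcontinua, and circles are not chainable; for $\mathbb{R}$ and $\mathbb{R}\times 2^\om$ one argues directly that all subcontinua are arcs). This directly exhibits a $tt$-degree in $\mathcal{D}_{tt}(C)$ missing from every one of the five structures, without needing join-irreducibility or the universality of the carpet. Your argument is thus somewhat more elementary and more explicit (it produces a separating degree rather than inferring strict inclusion abstractly), while the paper's lemma is reusable machinery that also explains \emph{why} $C$ cannot be captured as a disjoint sum of smaller pieces; both are correct.
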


\begin{proof}
Note that a universal planar curve $C=M^2_1(\mathbf{3})$ (a.k.a.\ Sierpinsk\'i's carpet) contains both a pseudo-arc $\repsp{A}$ and Knaster's bucket handle $K$.
Typically, such a universal space can be computably embedded into any nonempty open subset of the space.
This implies that the $tt$-degree structure of $C$ is {\em join-irreducible} in the sense that if the $tt$-degree structures of spaces $A$ and $B$ are strictly smaller than that of $C$, then so is the $tt$-degree structure of the disjoint sum $A\sqcup B$.
This is because if a weakly $1$-generic point $x\in C$ is $tt$-equivalent to a point $y\in A\sqcup B$ (so $y\in A$ or $y\in B$), then by Lemma \ref{lem:main-lemma}, there is a $\Pi^0_1$ set $P\subseteq C$ with $x\in C$ is computably embedded into $A\sqcup B$.
Let $f$ be such a computable embedding.
Then, both $P\cap f^{-1}[A]$ and $P\cap f^{-1}[B]$ are $\Pi^0_1$ since $A$ and $B$ are clopen in $A\sqcup B$.
Either $P\cap f^{-1}[A]$ or $P\cap f^{-1}[B]$ contains $x$, and therefore contains a nonempty interior.
As mentioned before, any nonempty open subset of $C$ contains a computable copy of $C$.
Hence, $C$ is computably embedded into either $A$ or $B$, which implies that the $tt$-degree structure of $C$ is included in either $A$ or $B$.
This verifies join-irreducibility of $C$.
Consequently, $C$ has more $tt$-degrees than $\repsp{A}\sqcup K$.
\end{proof}

\subsection{Condensation of singularities}

In this section, we investigate the order-theoretic property of a collection of $tt$-degree structures.
More precisely, for a collection $\mathcal{S}$ of computable metric spaces, consider the following ordered structure:
\[\mathfrak{D}_{tt}(\mathcal{S})=(\{\mathcal{D}_{tt}(\repsp{X}):\repsp{X}\in\mathcal{S}\},\subseteq).\]
This set forms a countable upper semilattice (the join is given as the $tt$-degree structure of the disjoint sum $\repsp{X}\sqcup\repsp{Y}$).
Let $\mathcal{K}_n$ be the collection of all $n$-dimensional computable compacta, and define $\mathfrak{D}_{tt}(n\text{-dim})=\mathfrak{D}_{tt}(\mathcal{K}_n)$, the collection of the $tt$-degree structures of $n$-dimensional computable compacta.
For $n>1$, we show that this is a universal countable upper semilattice.

\begin{theorem}\label{thm:embedding-usl}
For any $n>1$, any countable upper semilattice can be embedded into $\mathfrak{D}_{tt}(n\text{-}\mathrm{dim})$.
\end{theorem}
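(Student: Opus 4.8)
The plan is to factor the problem through combinatorics before doing any topology. First I would reduce to realising a single computably presented \emph{universal} countable upper semilattice: since the variety of semilattices has the amalgamation property, the class of finite upper semilattices (with least element) is a Fra\"iss\'e class, so it has a Fra\"iss\'e limit $\mathbb{U}$, which is a countable upper semilattice with a computable presentation into which every countable upper semilattice embeds. Hence I may identify the universe of $\mathbb{U}$ with $\om$, with $\leq_\mathbb{U}$ and $\vee_\mathbb{U}$ computable, and it suffices to embed $\mathbb{U}$. For $u\in\mathbb{U}$ put $S_u=\{v\in\om:u\not\leq_\mathbb{U}v\}$; this is a computable subset of $\om$ uniformly in $u$, and $u\mapsto S_u$ is a semilattice embedding of $\mathbb{U}$ into $(\mathcal{P}(\om),\cup)$ (it is order-reflecting because the principal ideal $\mathord{\downarrow}v$ separates, and $S_{u\vee_\mathbb{U}u'}=S_u\cup S_{u'}$ by the definition of join). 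So the task reduces to assigning to each computable $S\subseteq\om$ a computable $n$-dimensional compactum $\repsp{X}_S$ with: (i) $S\subseteq T\Rightarrow\mathcal{D}_{tt}(\repsp{X}_S)\subseteq\mathcal{D}_{tt}(\repsp{X}_T)$; (ii) $S\not\subseteq T\Rightarrow\mathcal{D}_{tt}(\repsp{X}_S)\not\subseteq\mathcal{D}_{tt}(\repsp{X}_T)$; and (iii) $\mathcal{D}_{tt}(\repsp{X}_{S\cup T})$ equal to the join of $\mathcal{D}_{tt}(\repsp{X}_S)$ and $\mathcal{D}_{tt}(\repsp{X}_T)$ in $\mathfrak{D}_{tt}(n\text{-dim})$.

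Next I would build $\repsp{X}_S$ by a condensation of singularities. Fix the ambient continuum $A=[0,1]^n$ (a locally connected $n$-dimensional computable continuum), index $m=\langle i,k\rangle$, and fix a computable array $(p_m)_{m\in\om}$ of points of $A$ with $\{p_{\langle i,k\rangle}:k\in\om\}$ dense in $A$ for every $i$. The key external input is a uniformly computable family $(G_m)_{m\in\om}$ of dimensionally homogeneous (every nonempty open subset $n$-dimensional) $n$-dimensional hereditarily indecomposable continua that is \emph{hereditarily rigid}: every continuous map from a subcontinuum of $G_m$ into $G_{m'}$ with $m\neq m'$ is constant; such families exist for $n>1$ by an effective Cook/Bing-type construction. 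Let $\repsp{X}_S$ be the compactum obtained from $A$ by attaching to $p_{\langle i,k\rangle}$, for each $i\in S$ and $k\in\om$, a copy of $G_{\langle i,k\rangle}$ rescaled to diameter $\leq 2^{-i-k}$ and glued at the single point $p_{\langle i,k\rangle}$; for computable $S$ this is a computable compactum, $n$-dimensional by the countable sum theorem. Property (i) holds because, for $S\subseteq T$, $\repsp{X}_S$ is computably homeomorphic to the $\Pi^0_1$ subset of $\repsp{X}_T$ obtained by deleting the open sets $G_{\langle i,k\rangle}\setminus\{p_{\langle i,k\rangle}\}$ with $i\in T\setminus S$, so the easy direction of Lemma \ref{lem:main-lemma} applies. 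Property (iii) holds because $\repsp{X}_{S\cup T}=\repsp{X}_S\cup\repsp{X}_T$ glued along $A$, with $\repsp{X}_S,\repsp{X}_T$ being $\Pi^0_1$ (hence computable) subspaces, so $\mathcal{D}_{tt}(\repsp{X}_{S\cup T})=\mathcal{D}_{tt}(\repsp{X}_S)\cup\mathcal{D}_{tt}(\repsp{X}_T)=\mathcal{D}_{tt}(\repsp{X}_S\sqcup\repsp{X}_T)$.

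The heart of the argument is property (ii). Given $i\in S\setminus T$, fix $k$ and a point $x$ of $G:=G_{\langle i,k\rangle}$ that is weakly $1$-generic in $G$ and is not the gluing point; then a neighbourhood of $x$ in $\repsp{X}_S$ is homeomorphic to an open subset of $G$. If ${\rm deg}_{tt}(x)\in\mathcal{D}_{tt}(\repsp{X}_T)$, then by Lemma \ref{lem:main-lemma} there is a $\Pi^0_1$ set $P\ni x$ in $\repsp{X}_S$ embedding onto a $\Pi^0_1$ subset of $\repsp{X}_T$; by weak $1$-genericity $P$ contains a nonempty open subset of $G$, which by dimensional homogeneity of $G$ contains an $n$-dimensional, hence hereditarily indecomposable, subcontinuum $C\subseteq G$, so $C$ embeds into $\repsp{X}_T$. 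But each $p_m$ is a cut point of $\repsp{X}_T$ separating $G_m\setminus\{p_m\}$ from the rest, so a continuum in $\repsp{X}_T$ that is contained neither in $A$ nor in a single block $G_m$ would have a cut point and hence be decomposable; thus a copy of $C$ in $\repsp{X}_T$ lies in $A$ or in a single block $G_m$ with $m=\langle j,\ell\rangle$, $j\in T$, so $j\neq i$. The first option is impossible because for $n>1$ an $n$-dimensional subcontinuum of $[0,1]^n$ has nonempty interior and therefore contains a copy of $[0,1]^n$, which is decomposable, contradicting hereditary indecomposability of $C$; the second is impossible because $C$ is a subcontinuum of $G_{\langle i,k\rangle}$ and every map from it into $G_{\langle j,\ell\rangle}\neq G_{\langle i,k\rangle}$ is constant. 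Hence ${\rm deg}_{tt}(x)\notin\mathcal{D}_{tt}(\repsp{X}_T)$, giving (ii).

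Composing the embeddings $L\hookrightarrow\mathbb{U}\hookrightarrow(\mathcal{P}(\om),\cup)\hookrightarrow\mathfrak{D}_{tt}(n\text{-dim})$ — the last one via $S\mapsto\mathcal{D}_{tt}(\repsp{X}_S)$, which is injective and order-reflecting by (ii), order-preserving by (i), and join-preserving by (iii) — finishes the proof. I expect the main obstacle to be the topological input of the second paragraph: producing a uniformly \emph{computable} family of dimensionally homogeneous $n$-dimensional hereditarily indecomposable continua that is hereditarily rigid in the stated sense. This is exactly where the hypothesis $n>1$ enters — for $n=1$ the proper subcontinua of the natural candidates (pseudo-arcs, pseudo-solenoids) fail to be rigid — and verifying it requires carefully effectivising a Cook/Bing-type construction. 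A secondary point to check with care is the cut-point/boundary-bumping decomposition of subcontinua of $\repsp{X}_T$ used in the third paragraph, though this is routine continuum theory.
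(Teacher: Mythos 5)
Your high-level scheme agrees with the paper's: reduce to embedding a universal semilattice into the computable subsets of $\om$, then realise each computable $S\subseteq\om$ by a computable $n$-dimensional compactum $\repsp{X}_S$ via a condensation of singularities, using Lemma~\ref{lem:main-lemma} and weak $1$-genericity to force the required incomparabilities. The bookkeeping around (i), (iii), and the cut-point case analysis in (ii) is sound, as is the step ``an $n$-dimensional subcontinuum of $[0,1]^n$ has nonempty interior, hence contains a decomposable copy of $[0,1]^n$.'' What differs is the topological engine that drives (ii), and that is where the proposal is incomplete.

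You posit a \emph{uniformly computable} family $(G_m)_{m\in\om}$ of $n$-dimensional, dimensionally homogeneous, hereditarily indecomposable continua that is hereditarily rigid (every map between subcontinua of distinct members is constant), and you assert that such a family exists ``by an effective Cook/Bing-type construction'' --- only to concede in your closing paragraph that producing it is ``exactly the main obstacle'' and ``requires carefully effectivising.'' That is the gap. The classical Cook and Ma\'ckowiak constructions of rigid hereditarily indecomposable continua are delicate even without effectivity, the dimensional-homogeneity requirement is an additional constraint, and I know of no uniformly computable presentation of such a family in dimension $n>1$. As written, your proof assumes precisely the ingredient it needs and does not supply it.

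The paper avoids rigid continua entirely. In its condensation $S(\interval,K,\mathbb{Q})$ the singular fibers $K$ are the explicitly computable chains of $n$-spheres $K^n_g$, and the separating lemma (Lemma~\ref{lem:main-ChPol}, taken from Chatyrko--Pol) asserts that if $K,L$ are at least two-dimensional continua of \emph{incomparable Fr\'echet types}, then no nonempty open subset of $S(\interval,L,\mathbb{Q})$ embeds into $S(\interval,K,\mathbb{Q})$. Fr\'echet-type incomparability of $K^n_f$ and $K^n_g$ for almost-disjoint $f,g$ is an elementary, robustly effective fact about spheres, and $n$-dimensionality is verified by the $n$-good metric argument of Lemma~\ref{lem:n-good} rather than by a sum theorem over the attached pieces. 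This is what lets the paper go further and prove Theorem~\ref{thm:usl-embedding2} --- a universal embedding \emph{inside any fixed hyperimmune total $T$-degree}, with continua rather than just compacta --- without ever invoking a rigidity hypothesis. If you want to salvage your route, the place to work is an honest, uniform effectivisation of a Cook/Ma\'ckowiak-type family; absent that, the Fr\'echet-type argument is the more elementary and more economical way home.
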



One can obtain a more effective content.
As seen in Propositions \ref{prop:hypim-free-str} and \ref{prop:nontotal-T-vs-tt}, if $x$ is either hyperimmune-free or non-total, then
\[(\forall y)\;[\dim(x)<\infty\;\Longrightarrow\;(y\leq_Tx\iff y\leq_{tt}x).\]

Thus, it is natural to ask what happens if $x$ is hyperimmune and total.
We will see that Theorem \ref{thm:embedding-usl} holds inside any hyperimmune total $T$-degree.
To be precise, consider the following notions for any $T$-degree $\mathbf{d}$:
\begin{align*}
\mathcal{D}^\mathbf{d}_{tt}(\repsp{X})&=\{\deg_{tt}(x):x\in\repsp{X}\mbox{ and }\deg_T(x)=\mathbf{d}\},\\
\mathfrak{D}^\mathbf{d}_{tt}(\mathcal{S})&=(\{\mathcal{D}^\mathbf{d}_{tt}(\repsp{X}):\repsp{X}\in\mathcal{S}\},\subseteq).
\end{align*}

Moreover, let $\mathcal{C}_n$ be the collection of all $n$-dimensional computable continua, and define $\mathfrak{D}_{tt}^\mathbf{d}(n\text{-}\mathrm{dim},\mathrm{cont})=\mathfrak{D}^\mathbf{d}_{tt}(\mathcal{C}_n)$.

\begin{example}
If $\mathbf{d}$ is hyperimmune-free and total, then $\mathfrak{D}^\mathbf{d}_{tt}(n\mbox{-}\dim)$ is a singleton.
\end{example}

We claim that any countable upper semilattice embeds into $\mathfrak{D}^\mathbf{d}_{tt}(n\mbox{-}\dim)$ whenever $\mathbf{d}$ is hyperimmune and total.



\begin{theorem}\label{thm:usl-embedding2}
Let $\mathbf{d}$ be a hyperimmune total $T$-degree, and $n>0$.
Then, any countable upper semilattice can be embedded into $\mathfrak{D}_{tt}^\mathbf{d}(n\text{-}\mathrm{dim},\mathrm{cont})$.
\end{theorem}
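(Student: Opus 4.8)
The plan is to reduce to the construction of one uniformly computable family of $n$-dimensional continua and then perform a \emph{condensation of singularities}. Every countable upper semilattice embeds, as a join-semilattice, into a fixed computable universal countable upper semilattice $\repsp{U}$ (e.g.\ the Fra\"iss\'e limit of the class of finite upper semilattices, which admits a computable presentation), and the map $a\mapsto\repsp{U}\setminus{\uparrow}a$ is a join-embedding of $\repsp{U}$ into $(\mathcal{P}(\om),\subseteq,\cup)$ whose values $S_a:=\repsp{U}\setminus{\uparrow}a$ are uniformly computable subsets of $\om$. So it suffices to attach to each $a\in\repsp{U}$ a computable $n$-dimensional continuum $\repsp{X}_a$ such that (K1) $S_a\subseteq S_b$ implies $\mathcal{D}^{\mathbf{d}}_{tt}(\repsp{X}_a)\subseteq\mathcal{D}^{\mathbf{d}}_{tt}(\repsp{X}_b)$; (K2) $S_a\not\subseteq S_b$ implies $\mathcal{D}^{\mathbf{d}}_{tt}(\repsp{X}_a)\not\subseteq\mathcal{D}^{\mathbf{d}}_{tt}(\repsp{X}_b)$; and (K3) $\mathcal{D}^{\mathbf{d}}_{tt}(\repsp{X}_{a\vee b})=\mathcal{D}^{\mathbf{d}}_{tt}(\repsp{X}_a)\cup\mathcal{D}^{\mathbf{d}}_{tt}(\repsp{X}_b)$; indeed (K1)--(K3) then force the join in $\mathfrak{D}^{\mathbf{d}}_{tt}(\mathcal{C}_n)$ to be computed correctly. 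Fix $\alpha\in 2^\om$ with $\deg_T(\alpha)=\mathbf{d}$; since $\mathbf{d}$ is hyperimmune it computes a function $g$ escaping every computable function (equivalently, a weakly $1$-generic member of $2^\om$).

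For the construction I would fix a ``featureless'' base: an $n$-dimensional computable continuum $Z$ with a computable dense sequence $(q_i)_{i\in\om}$ and pairwise disjoint rational neighbourhoods $B_i$ of radius $\le 2^{-i}$ (for $n=1$ an arc; for $n\ge 2$ the cube $[0,1]^n$, which contains no hereditarily indecomposable $n$-dimensional subcontinuum, by \cite[Theorem IV.3]{HWBook}), together with a uniformly computable sequence $(G_i)_{i\in\om}$ of $n$-dimensional continua that are \emph{independent}: for $j\ne i$, no nondegenerate subcontinuum of $G_i$ of a prescribed ``$G_i$-peculiar'' type embeds into $G_j$ or into $Z$ (for $n\ge 2$, pairwise non-embeddable hereditarily indecomposable $n$-dimensional continua, each of whose nonempty open subsets carries such a subcontinuum, produced effectively as in \cite[Corollary 3.8.3]{vMBook}; for $n=1$, circle-like hereditarily indecomposable continua indexed by pairwise incomparable supernatural numbers). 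Define $\repsp{X}_a$ by condensation of singularities: start from $Z$, and for each $i\in S_a$ glue a geometrically shrunken computable copy of $G_i$ onto $Z$ at $q_i$ inside $B_i$. Since $S_a$ is computable uniformly in $a$ and the surgeries have mesh $\le 2^{-i}$, $\repsp{X}_a$ is a computable continuum, and it is $n$-dimensional by the countable sum theorem (cf.\ \cite[Theorem III.2]{HWBook}).

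The verification of (K1) is immediate: if $S_a\subseteq S_b$ then $\repsp{X}_a$ sits inside $\repsp{X}_b$ as a computably embedded $\Pi^0_1$ copy, so Lemma~\ref{lem:main-lemma} yields $\mathcal{D}^{\mathbf{d}}_{tt}(\repsp{X}_a)\subseteq\mathcal{D}^{\mathbf{d}}_{tt}(\repsp{X}_b)$. For (K2) fix $i\in S_a\setminus S_b$; inside the $G_i$-copy of $\repsp{X}_a$ I would, by running an effective Baire-category construction relative to $\alpha$ and meeting its density requirements along the escaping stage sequence supplied by $g$, pin down a point $x_i$ with $\deg_T(x_i)=\mathbf{d}$ that is weakly $1$-generic for the $\Pi^0_1(\alpha)$ subsets of $G_i$; then $\deg_{tt}(x_i)\in\mathcal{D}^{\mathbf{d}}_{tt}(\repsp{X}_a)$, whereas if $\deg_{tt}(x_i)\in\mathcal{D}^{\mathbf{d}}_{tt}(\repsp{X}_b)$ then by Lemma~\ref{lem:main-lemma} some $\Pi^0_1$ set $P\ni x_i$ embeds into $\repsp{X}_b$, and genericity of $x_i$ forces $P$ to contain a $G_i$-peculiar subcontinuum, which by independence cannot embed into $\repsp{X}_b=Z\cup\bigcup_{j\in S_b}G_j$ when $i\notin S_b$, a contradiction. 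For (K3), the inclusion $\supseteq$ follows from (K1) since $S_a,S_b\subseteq S_{a\vee b}$; for the reverse, any point of $\repsp{X}_{a\vee b}$ of $T$-degree $\mathbf{d}$ lies in a $\Pi^0_1$ neighbourhood meeting only the $Z$-part together with at most one $G_i$ ($i\in S_a$ or $i\in S_b$), so its $tt$-degree is already realised in $\repsp{X}_a$ or in $\repsp{X}_b$.

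The main obstacle is exactly the content of (K2): exhibiting, for the given $n$ --- and especially for $n=1$ --- a computable sequence of $n$-dimensional continua that is simultaneously topologically independent enough to make the Lemma~\ref{lem:main-lemma} argument bite (``every $\Pi^0_1$ set containing a sufficiently generic point of $G_i$ carries a subcontinuum that, up to computable embedding, occurs only inside $G_i$'') and yet admits, inside $G_i$, a point of the prescribed \emph{hyperimmune total} $T$-degree $\mathbf{d}$ realising such a generic type. Propositions~\ref{prop:hypim-free-str} and \ref{prop:nontotal-T-vs-tt} show that both hypotheses on $\mathbf{d}$ are indispensable, and the delicate balance is to make the genericity notion fine enough to force the rigidity while keeping it admissible within a single hyperimmune total $T$-degree: hyperimmunity is used to meet the density requirements along the escaping stage sequence given by $g$, and totality to anchor $\deg_T(x_i)=\mathbf{d}$ through $\alpha$. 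The residual points --- uniform computability of the condensation surgery and preservation of computable $n$-dimensional connectedness --- are routine given the effective dimension theory of Section~3.
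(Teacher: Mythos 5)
The architecture you set up — reduce the semilattice to a family of uniformly computable subsets of $\om$, attach to each index an $n$-dimensional computable continuum, take unions for joins — is essentially the same as the paper's, and your (K1)/(K3) bookkeeping is correct in outline. Your algebraic reduction via the Fra\"iss\'e limit and $a\mapsto\repsp{U}\setminus{\uparrow}a$ is a more roundabout route than what is needed (the paper just notes that the free countable Boolean algebra, which contains every countable upper semilattice, sits inside the Boolean algebra of computable subsets of $\om$), but it works. One small structural plus of your version: you glue onto a common base $Z$, so $\repsp{X}_a$ is actually connected, whereas the paper takes disjoint sums $\bigsqcup_{i\in A}\repsp{S}_i$.

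The genuine gap is precisely where you say it is, and you have not closed it. Your (K2) needs, inside $G_i$, a point of $T$-degree exactly $\mathbf{d}$ that is weakly $1$-generic for the $\Pi^0_1$ subsets of $G_i$ (unrelativized, not $\Pi^0_1(\alpha)$ — relativizing to $\alpha$ makes the class strictly larger and the genericity claim false in general). There is no reason such a point should exist in an arbitrary $n$-dimensional computable continuum $G_i$, and ``an effective Baire-category construction meeting density requirements along $g$'' does not produce one: that recipe gives you a weakly $1$-generic point \emph{computable from $g$}, not a point whose $T$-degree is pinned to $\mathbf{d}$, and pinning the $T$-degree from above and below simultaneously while maintaining Cohen-style genericity inside a rigid continuum is exactly the difficulty. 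The paper avoids this entirely. It never asks for a generic point of $G_i$; instead it takes $\repsp{S}_i=S(\interval,K^n_{g_i},\mathbb{Q})$, the Chatyrko–Pol condensation over \emph{all} of $\mathbb{Q}\subseteq\interval$, which comes equipped with a projection $p\colon\repsp{S}_i\to\interval$ whose fibers over irrationals are singletons. One then starts from a weakly $1$-generic real $x\in\interval$ of $T$-degree $\mathbf{d}$ (this exists because $\mathbf{d}$ is hyperimmune; totality gives $y_i\equiv_Tx$ through the inverse-limit maps) and pushes it up to the unique fiber point $y_i$. Lemma \ref{lem:weakly-1-gen-sep} then shows that any $\Pi^0_1$ set $P\ni y_i$ can be shrunk, using $p$, to one of the form $p^{-1}[\interval\setminus V]$; weak $1$-genericity of $x$ in $\interval$ forces $\interval\setminus V$ to have interior, so $P$ contains $p^{-1}[G]$ for some open $G$, and the Chatyrko–Pol rigidity lemma (Lemma \ref{lem:main-ChPol}) finishes. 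Your single-point gluing of a shrunk $G_i$ at $q_i$ has no such fibration over $\interval$, so this transfer argument is unavailable, and you cannot replace it by direct genericity inside $G_i$ for the reason above. The auxiliary dimension control (that $S(\interval,K,\mathbb{Q})$ stays $n$-dimensional) also requires the paper's $n$-good metric machinery (Lemmas \ref{lem:n-good}, \ref{lem:n-dim-com-cont}), which your ``countable sum theorem'' appeal does not substitute for, since the mesh-shrinking gluings you describe need a quantitative argument to keep the local dimension down. In short: the outline is faithful to the paper's strategy, but the proposal omits the actual mechanism (the $S(\interval,K,\mathbb{Q})$ fibration, Chatyrko--Pol's lemma, and the transfer of weak $1$-genericity from $\interval$) that makes (K2) true, and the substitute you sketch would fail.
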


To prove this, we need a straightforward effectivization of Chatyrko-Pol's construction \cite{ChPo} which utilizes the method of {\em condensation of singularities}.
Our argument below is merely a careful analysis of the construction in \cite{ChPo}.

\subsubsection{The space $S(E,K,t)$.}
Let $E$ and $K$ be computable continua in $[0,1]^\om$.
Fix a computable dense subset $\{a_i\}_{i\in\om}\subseteq K$.
Then let $L_i\subseteq[0,1]^\om\times[2^{-(i+1)},2^{-i}]$ be the line segment from $(a_i,2^{-i})$ to $(a_{i+1},2^{-(i+1)})$.
Fix a computable injective parametrization $h:[0,\infty)\to L:=\bigcup_iL_i$ such that $h(i)=(a_i,2^{-i})$.
Given a computable point $t\in\om$, define $f:E\setminus\{t\}\to L$ as the following computable function:
\[f(x)=h(d(x,t)^{-1}).\]
Then, we define $S(E,K,t)\subseteq E\times\overline{L}$ as the closure of the graph of $f$ in $E\times[0,1]^\om\times[0,1]$, that is,
\[S(E,K,t)={\rm Graph}(f)\cup(\{t\}\times K\times\{0\}).\]
Note that $S(E,K,t)$ is still a computable continuum.
Define a computable embedding $f^\ast:E\setminus\{t\}\hookrightarrow S(E,K,t)$ as follows:
\[f^\ast(x)=(x,f(x)).\]
Given a dense subset $\{q_i\}_{i\in\om}\subseteq E\setminus\{t\}$, clearly, $\{f^\ast(q_i)\}_{i\in\om}$ forms a dense subset of $S(E,K,t)$.
Let $p:S(E,K,t)\to E$ be the projection.
Note that
\[p^{-1}(t)=\{t\}\times K\times\{0\},\qquad p^{-1}(x)=\{f^\ast(x)\}\mbox{ for $x\not=t$}.\]
This has the following property (see \cite[Lemma 2.1]{ChPo}):
\begin{enumerate}
\item The fiber $p^{-1}(t)$ is a copy of $K$, and the other fibers are singletons.
\item If $L$ is a continuum in $S(E,K,t)$ such that $p[L]$ is nondegenerated and it contains $t$, then $p^{-1}(t)\subseteq L$.
\end{enumerate}

For instance, $S([0,1],[0,1],0)$ looks quite similar to topologist's sine curve.

\subsubsection{The space $S(E,K,Q)$.}
We now iterate this procedure.
Fix a computable dense subset $Q=\{q_i\}_{i\in\om}\subseteq E$.
Begin with $E_0=E$ and $f_0^\ast={\rm id}$, and first consider $E_1=S(E_0,K,q_0)$.
Then, we get a computable embedding $f^\ast_1:E\setminus\{q_0\}\hookrightarrow E_1$ as above.
Assume that we have constructed $E_n$ and $f^\ast_n$.
We write $f^\ast_{m,n}$ for $f^\ast_n\circ f^\ast_{n-1}\circ\dots\circ f^\ast_{m+1}\circ f^\ast_m$.
Then, we define $q_n^\ast\in E_n$ and $E_{n+1}\subseteq E\times\overline{L}^{n+1}$ as follows: 
\[q_{n}^\ast=f^\ast_{0,n}(q_{n}),\qquad E_{n+1}=S(E_n,K,q_{n}^\ast).\]
Let $p_{n+1}:E_{n+1}\to E_n$ be the projection.
Note that for $p_{0,n+1}=p_0\circ p_1\circ\dots\circ p_{n+1}:E_{n+1}\to E_0$, we have
\[p_{0,n+1}^{-1}(x)=
\begin{cases}
\{f_{0,n+1}^\ast(x)\},&\mbox{ if }x\not\in\{q_0,\dots,q_n\},\\
\{q_n^\ast\}\times K\times\{0\},&\mbox{ if }x=q_n,\\
f_{i+2,n+1}^\ast[\{q_i^\ast\}\times K\times\{0\}],&\mbox{ if }x=q_i\mbox{ for some $i<n$}.
\end{cases}
\]
In particular, the fibers $p_{0,n+1}^{-1}(q_0),\dots,p_{0,n+1}^{-1}(q_n)$ are copies of $K$, and the other fibers are singletons.

Then, we define $S(E,K,Q)$ as the following inverse limit:
\[S(E,K,Q)=\varprojlim(E_n,p_n).\]
We naturally identify $S(E,K,Q)$ with the following set:
\[\{(z_i)_{i\in\om}\in E\times\overline{L}^\om:(\forall n)\;\langle z_i:i\leq n\rangle=p_{n+1}(\langle z_i:i\leq n+1\rangle)\}.\]
Let $p:S(E,K,Q)\to E$ be the projection, that is, $p(z)=z_0$.
The following is the key property of the space $S(E,K,Q)$:

\begin{lemma}[{\cite[Lemma 2.6]{ChPo}}]\label{lem:main-ChPol}
Let $K$ and $L$ be at least two-dimensional continua whose Fr\'echet types are incomparable.
Then, no nonempty open subset of $S(\interval,L,\mathbb{Q})$ embeds in $S(\interval,K,\mathbb{Q})$.
\end{lemma}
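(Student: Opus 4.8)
The plan is to argue by contradiction, using the two structural properties of the construction $S(\cdot,\cdot,\cdot)$ recorded above together with a dimension-theoretic detection of the ``singular fibres''. Suppose $\phi$ is a homeomorphism of a nonempty open set $U\subseteq S(\interval,L,\mathbb{Q})$ onto its image in $S(\interval,K,\mathbb{Q})$; write $p_L\colon S(\interval,L,\mathbb{Q})\to\interval$ and $p_K\colon S(\interval,K,\mathbb{Q})\to\interval$ for the canonical projections. Recall that the fibres of $p_L$ over the points $q_i$ are copies of $L$ and all other fibres are singletons, and symmetrically for $p_K$. The first step is to show that $U$ contains an entire singular fibre, i.e.\ that there is $i$ with $p_L^{-1}(q_i)\subseteq U$ and $p_L^{-1}(q_i)\cong L$. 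For this I would use that the connecting segments $L_j$ accumulate on $K\times\{0\}$ and that this is iterated through the inverse system, so that in the inverse-limit metric the singular fibre over $q_j$ has diameter tending to $0$ as $j\to\infty$ and, in the limit, collapses onto a point of $S(\interval,L,\mathbb{Q})$; since the collapse points form a dense set (being cluster points of the dense set $\{f^\ast(q_i)\}_i$), every nonempty open set catches one such collapsing fibre in its entirety. Thus $U$ contains a subcontinuum $D$ with $D\cong L$.

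Next I would transport $D$ through $\phi$ and project it. Set $C:=\phi[D]\subseteq S(\interval,K,\mathbb{Q})$, a continuum homeomorphic to $L$, and consider $p_K[C]\subseteq\interval$. If $p_K[C]$ is a single point $x$, then $C\subseteq p_K^{-1}(x)$, which is either a singleton — impossible, as $C$ is a nondegenerate continuum of dimension $\ge 2$ — or a copy of $K$; hence $L\cong C\hookrightarrow K$. If $p_K[C]$ is nondegenerate, it is an interval and therefore contains some $q_j$ from the construction of $S(\interval,K,\mathbb{Q})$; then the analogue of property~(2) for the inverse limit — obtained by pulling property~(2) of $S(E,K,t)$ back through the tower $(E_n,p_n)$ — gives $p_K^{-1}(q_j)\subseteq C$, and since $p_K^{-1}(q_j)\cong K$ we conclude $K\hookrightarrow C\cong L$.

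In either case one of $K\hookrightarrow L$ or $L\hookrightarrow K$ holds, which contradicts the hypothesis that the Fr\'echet types of $K$ and $L$ are incomparable; hence no nonempty open subset of $S(\interval,L,\mathbb{Q})$ embeds in $S(\interval,K,\mathbb{Q})$. The only genuinely delicate point is the first step: showing that an arbitrary nonempty open subset of $S(\interval,L,\mathbb{Q})$ really contains a full topological copy of $L$. Everything else is a formal manipulation of the fibre structure and of property~(2). This is exactly the content of the analysis in Chatyrko--Pol, and Lemma~\ref{lem:main-ChPol} may be quoted from \cite{ChPo}; the remaining verification is only that the construction described above is a faithful computable metrisation of theirs.
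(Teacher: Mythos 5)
The paper does not prove this lemma: it is imported as a black box from Chatyrko--Pol \cite[Lemma~2.6]{ChPo}, so there is no in-paper argument to compare against. Your sketch supplies what the paper omits, and the global strategy is correct: locate a full singular fibre inside the open set $U$, transport it through the purported embedding, project down to $\interval$, and then use properties (1) and (2) of the $S(E,K,t)$ construction to force either $L\hookrightarrow K$ or $K\hookrightarrow L$, contradicting incomparability of the Fr\'echet types.

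Two steps are asserted rather than argued, and the phrasing of the first is misleading. The fibres $p_L^{-1}(q_j)$ do not ``collapse onto a point'' in any limit; each is a full copy of $L$. What is true is that for $j\geq N$ the fibre $p_L^{-1}(q_j)$ equals $\pi_N^{-1}(f^\ast_{0,N}(q_j))$ --- its image in $E_N$ is the single regular point $f^\ast_{0,N}(q_j)$ --- so these fibres have diameter tending to $0$ in the inverse-limit metric, and since $\{f^\ast_{0,N}(q_j):j\geq N\}$ is dense in $E_N$, every basic open set $\pi_N^{-1}[W]$, and hence every nonempty open $U$, contains some entire fibre. That is the precise form of your first step. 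Similarly, the ``analogue of property (2) for the inverse limit'' needs an honest induction: apply property (2) at level $j+1$ to obtain $\{q_j^\ast\}\times K\times\{0\}\subseteq\pi_{j+1}[\phi[D]]$; lift through the tower using that $p_m$ is injective away from $q_{m-1}^\ast$, so that $f^\ast_{j+2,m}[\{q_j^\ast\}\times K\times\{0\}]\subseteq\pi_m[\phi[D]]$ for all $m>j+1$; and then pass to the inverse limit using compactness of $\phi[D]$. You flag both of these as the delicate points, which is fair, but they constitute the substance of the lemma rather than formalities, and the proposal currently leaves them as promissory notes.
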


By combining the Baire category argument with this lemma, we get the following:

\begin{lemma}\label{lem:weakly-1-gen-sep}
Assume that $X$ and $Y$ are at least two-dimensional computable continua whose Fr\'echet types are incomparable.
Let $x$ be a weakly $1$-generic real in $\interval$, and $x^\ast$ be the unique element in the $x$-th fiber in $S(\interval,X,\mathbb{Q})$.
Then, $x^\ast$ is not $tt$-equivalent to a point in $S(\interval,Y,\mathbb{Q})$.
\end{lemma}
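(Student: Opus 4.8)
The plan is to argue by contradiction and reduce the statement to Lemma~\ref{lem:main-ChPol}. Suppose $x^\ast\equiv_{tt}z$ for some $z\in S(\interval,Y,\mathbb{Q})$. Since $S(\interval,X,\mathbb{Q})$ and $S(\interval,Y,\mathbb{Q})$ are computable continua, hence computable Polish spaces, Lemma~\ref{lem:main-lemma} supplies a $\Pi^0_1$ set $P\subseteq S(\interval,X,\mathbb{Q})$ with $x^\ast\in P$ together with a computable embedding $\Phi$ of $P$ into $S(\interval,Y,\mathbb{Q})$. If I can show that $P$ has nonempty interior in $S(\interval,X,\mathbb{Q})$, then the restriction of $\Phi$ to that interior is a topological embedding of a nonempty open subset of $S(\interval,X,\mathbb{Q})$ into $S(\interval,Y,\mathbb{Q})$, which is impossible by Lemma~\ref{lem:main-ChPol} applied with $L=X$ and $K=Y$, because $X$ and $Y$ are at least two-dimensional continua with incomparable Fr\'echet types. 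A $\Pi^0_1$ set is closed, so it has nonempty interior as soon as it is not nowhere dense; hence the entire argument reduces to showing that $x^\ast$ lies in no nowhere dense $\Pi^0_1$ subset of $S(\interval,X,\mathbb{Q})$, equivalently that $x^\ast$ belongs to every dense c.e.\ open set $U\subseteq S(\interval,X,\mathbb{Q})$.

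To prove this I would transfer weak $1$-genericity along the section map. A weakly $1$-generic $x\in\interval$ is irrational (it avoids the nowhere dense $\Pi^0_1$ set $\{q\}$ for each $q\in\mathbb{Q}$), so $x\notin\mathbb{Q}$, the fibre $p^{-1}(x)$ is the singleton $\{x^\ast\}$, and $x^\ast=\iota(x)$ for the section $\iota\colon\interval\setminus\mathbb{Q}\to S(\interval,X,\mathbb{Q})$ given by $\iota(s)=(f^\ast_{0,n}(s))_{n\in\om}$. Each $f^\ast_{0,n}$ is a composition of the computable partial maps $f^\ast_j$ and is undefined exactly at $q_0,\dots,q_{n-1}$ (the map $f^\ast_j$ blows up at $q^\ast_{j-1}$), so $\iota$ is a partial computable function with domain precisely $\interval\setminus\mathbb{Q}$; note $G:=\iota[\interval\setminus\mathbb{Q}]=S(\interval,X,\mathbb{Q})\setminus\bigcup_i p^{-1}(q_i)$. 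The one genuinely topological ingredient is that $G$ is dense in $S(\interval,X,\mathbb{Q})$. This is inherent in the condensation-of-singularities construction: already in a single step $S(E,K,t)$ the arc-system $L$ is dense in $K\times\{0\}$ (since $\{a_i\}$ is dense in $K$), so the graph of $f^\ast$ accumulates onto every point of the bad fibre $\{t\}\times K\times\{0\}$; iterating this through $S(E,K,Q)=\varprojlim(E_n,p_n)$ one checks by induction that $\operatorname{Graph}(f^\ast_{0,n})$ is dense in $E_n$ at every level, hence that $G$ accumulates onto all of each $p^{-1}(q_i)$. Since $G\cap p^{-1}(q_i)=\emptyset$, each $p^{-1}(q_i)$ is then nowhere dense, so $\bigcup_i p^{-1}(q_i)$ is a meager $F_\sigma$ set and $G$ is a dense $G_\delta$.

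With these facts, fix a dense c.e.\ open $U\subseteq S(\interval,X,\mathbb{Q})$. For every rational ball $B\subseteq\interval$ the set $p^{-1}[B]$ is nonempty open, so $U\cap p^{-1}[B]$ is nonempty open and therefore meets the dense set $G$; any point of $U\cap p^{-1}[B]\cap G$ has the form $\iota(s)$ with $s=p(\iota(s))\in B$ and $s\notin\mathbb{Q}$, so $s\in\iota^{-1}[U]\cap B$. Thus $\iota^{-1}[U]$ is dense in $\interval$. As $\iota$ is partial computable with domain $\interval\setminus\mathbb{Q}$, there is a c.e.\ open $V\subseteq\interval$ with $\iota^{-1}[U]=V\cap(\interval\setminus\mathbb{Q})$, and $V$ is dense since it contains the dense set $\iota^{-1}[U]$. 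Because $x$ is weakly $1$-generic, $x\in V$; being irrational, $x\in V\cap(\interval\setminus\mathbb{Q})=\iota^{-1}[U]$, so $x^\ast=\iota(x)\in U$, as required. This closes the reduction and the proof.

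I expect the main obstacle to be the topological claim that $G$ is dense in $S(\interval,X,\mathbb{Q})$, i.e.\ that each bad fibre $p^{-1}(q_i)$ of the iterated inverse limit is nowhere dense. The single-step version is immediate from density of $L$ in $K\times\{0\}$, but one must make precise how this winding propagates through the inverse limit; the cleanest device is the induction sketched above showing $\overline{\operatorname{Graph}(f^\ast_{0,n})}=E_n$, together with the elementary observation that the $p_{n+1}$-preimage of a dense subset of $E_n$ is dense in $E_{n+1}$. Everything else --- the two reductions via Lemmas~\ref{lem:main-lemma} and~\ref{lem:main-ChPol}, and the transfer of weak $1$-genericity along the computable section $\iota$ --- is then routine bookkeeping.
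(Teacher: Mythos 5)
Your argument is correct modulo the density claim you flag, and it takes a genuinely different route from the paper's. Both reduce, via Lemma~\ref{lem:main-lemma}, to showing that a $\Pi^0_1$ set $P\ni x^\ast$ has nonempty interior and then invoke Lemma~\ref{lem:main-ChPol}. But where you prove that $x^\ast$ is weakly $1$-generic in $S=S(\interval,X,\mathbb{Q})$ by pulling a dense c.e.\ open $U\subseteq S$ back along the partial computable section $\iota$, the paper instead pushes forward along the projection $p$: it forms $Q=S\setminus p^{-1}[p[S\setminus P]]$, asserts $Q=p^{-1}[\interval\setminus V]$ for a c.e.\ open $V\subseteq\interval$, and then applies weak $1$-genericity of $x$ \emph{in $\interval$} to obtain a nonempty open $G\subseteq\interval\setminus V$ with $p^{-1}[G]\subseteq P$. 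Your route needs one extra ingredient, the density of $G=\iota[\interval\setminus\mathbb{Q}]$ in $S$, which you rightly isolate; it is sound, and the cleanest way to finish is exactly what you outline: $\mathrm{Graph}(f^\ast_{0,m})$ is dense in each $E_m$ because $E_m$ is defined as the closure of a graph, so each rational fibre $p_{0,m}^{-1}(q_i)$ is nowhere dense in $E_m$, and since the bonding projections $\pi_m\colon S\to E_m$ are surjective and $S$ is a compact metric (hence Baire) space, $G=\bigcap_i(S\setminus p^{-1}(q_i))$ is a dense $G_\delta$. What your approach buys is that it makes this topological input explicit and sidesteps a delicate point in the paper's version: the projection $p$ is \emph{not} an open map (near a rational $q_i$, the winding of $f$ around $K\times\{0\}$ means $p$-images of small open sets accumulate at $q_i$ but do not contain a neighbourhood of it), so $p[S\setminus P]$ is in general only $\Sigma^0_2$ rather than c.e.\ open. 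Your section-based transfer of genericity is exactly the argument that makes the intended pullback to $\interval$ rigorous, so it can be read both as an alternative proof and as a tightening of the paper's.
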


\begin{proof}
Let $P$ be a $\Pi^0_1$ subset of $S=S(\interval,X,\mathbb{Q})$ such that $x^\ast\in P$.
Put $U=S\setminus P$.
We claim that $x^\ast\not\in p^{-1}[p[U]]$.
Note that if $t\in p[U]$, then $U$ intersects with $p^{-1}(t)$.
Thus, if moreover $t\not\in\mathbb{Q}$, then $p^{-1}(t)$ is a singleton; hence $p^{-1}(t)\subseteq U$.
However, since $x\not\in\mathbb{Q}$, $p^{-1}(x)=\{x^\ast\}$, and $x^\ast\not\in U$, we have $x\not\in p[U]$, that is, $x^\ast\not\in p^{-1}[p[U]]$.
Now, $Q:=S\setminus p^{-1}[p[U]]\subseteq P$ is also a $\Pi^0_1$ set which contains $x^\ast$.
Note that $Q$ is of the form $p^{-1}[\interval\setminus V]$ for some c.e.\ open set $V\subseteq\interval$.
Then, $\interval\setminus V$ has a nonempty interior since $\interval\setminus V$ is a $\Pi^0_1$ set containing a weakly $1$-generic real $x$.
Let $G\subseteq \interval\setminus V$ be a nonempty set which is open in $\interval$.
Then, $Q$ contains $p^{-1}[G]$, which is open in $S$.
By Lemma \ref{lem:main-ChPol}, $p^{-1}[G]$ cannot be embedded into $S(\interval,Y,\mathbb{Q})$ since $X$ and $Y$ have incomparable Fr\'echet types.
Therefore, $P$ does not embed into $S(\interval,Y,\mathbb{Q})$, by $p^{-1}[G]\subseteq Q\subseteq P$.
This implies that $x^\ast$ is not $tt$-equivalent to a point in $S(\interval,Y,\mathbb{Q})$ by Lemma \ref{lem:main-lemma}.
\end{proof}

\subsubsection{Dimension of $S(E,K,t)$.}
We hope to know the dimension-theoretic property of $S(E,K,t)$.
For instance, whenever $E$ and $K$ are $n$-dimensional, is $S(E,K,t)$ also $n$-dimensional?
To study the dimension-theoretic property of $S(E,K,t)$, we introduce an auxiliary notion.
We use $B_\ep(x)$ to denote the $\ep$-ball centered by $x$, and by $B_{\leq \ep}(x)$ and $B_{=\ep}(x)$ we mean its formal closure and formal boundary, that is, all points $y$ with $d(x,y)\leq \ep$ and $d(x,y)=\ep$, respectively.
We say that a metric $d$ on $S$ is {\em $n$-good} if the following condition holds:
\begin{itemize}
\item For any $x\in S$, there is a positive real $\ep>0$ such that for any $y\in S$ and $\delta>0$, if $d(x,y)\not=\delta$, then $B_\ep(x)\cap B_{=\delta}(y)$ is at most $(n-1)$-dimensional.
\end{itemize}
This condition clearly implies that the small induction dimension of $S$ is at most $n$; hence $\dim(S)\leq n$.
For instance, the usual Euclidean distance on $\mathbb{R}^n$ is an $n$-good metric.

\begin{lemma}\label{lem:n-good}
Suppose that $d_H$ is a metric on $\interval^\om$ such that $d_K:=d_H\upto K$ is an $n$-good metric on $K$.
If a space $E$ admits an $n$-good metric, then so does $S(E,K,t)$.
\end{lemma}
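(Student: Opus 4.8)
The plan is to write down an explicit metric $D$ on $S=S(E,K,t)$ assembled from the given $n$-good metric $d_E$ on $E$ and from $d_K=d_H\upto K$, and then to verify the $n$-good condition point by point. Write $\pi\colon S\to E$ and $\sigma\colon S\to\overline L$ for the two projections, so that $S\subseteq E\times\overline L$ with $\overline L=L\cup(K\times\{0\})$ and $L=\bigcup_iL_i$ the polygonal line through the reference points $a_0,a_1,\dots$. As a preliminary normalisation I would re-choose the dense sequence $\{a_i\}\subseteq K$ so that $d_K(a_i,a_{i+1})\le 2^{-i}$ for every $i$; this is possible, and computably so, because a continuum is well-chained, and its effect is that each segment $L_i$ is a metrically negligible thickening of the single point $a_i$. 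On $\overline L$ I would take the metric $\rho$ pulled back along the embedding $u\mapsto(\widetilde d(u,\cdot),c(u))\in C(K)\times[0,\tfrac12]$ (with the $\ell^1$-sum of the sup-metric and the usual metric), where $c(u)$ is the height of $u$, $\widetilde d((k,0),\cdot)=d_K(k,\cdot)$, and $\widetilde d(u,\cdot)=(1-\lambda)d_K(a_i,\cdot)+\lambda d_K(a_{i+1},\cdot)$ for $u\in L_i$ with affine parameter $\lambda$. The interpolation kills the jumps of the reference point at segment endpoints, so this map is continuous and injective on the compactum $\overline L$; hence $\rho$ is a compatible metric with $\rho\upto(K\times\{0\})=d_K$, and one computes $\rho((k,0),u)=(1-\lambda)d_K(k,a_i)+\lambda d_K(k,a_{i+1})+c(u)$ for $u\in L_i$. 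Finally I set $D(z,z')=d_E(\pi z,\pi z')+\rho(\sigma z,\sigma z')$. Summing rather than maximising is the point: for $z_0\in S$ the hypothesis ``$D(z_0,y)\ne\delta$'' then decouples into precisely the strict inequalities needed to invoke the $n$-goodness of $d_E$ on the $E$-coordinate and of $d_K$ on the fibre.

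Checking $n$-goodness splits into two cases. If $\pi z_0\ne t$, take $\varepsilon=\min\{\varepsilon_E,\tfrac12 d_E(\pi z_0,t)\}$, where $\varepsilon_E$ witnesses the $n$-goodness of $d_E$ at $\pi z_0$; then $B_\varepsilon(z_0)$ lies in the graph and $\pi$ restricts there to a homeomorphism onto an open subset of $B_{\varepsilon_E}(\pi z_0)$, so since $D\ge d_E\circ(\pi\times\pi)$ one checks that $\pi$ carries $B_\varepsilon(z_0)\cap B_{=\delta}(y)$ into a countable closed union of sets $B_{\varepsilon_E}(\pi z_0)\cap B_{=\delta'}(\pi y)$ with $d_E(\pi z_0,\pi y)\ne\delta'$; that union is $\le(n-1)$-dimensional by the $n$-goodness of $d_E$ and the countable sum theorem, and $\pi$ being a homeomorphism transports the bound back.

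The substantive case is $z_0=(t,k_0,0)$, on the fibre $\pi^{-1}(t)\cong K$. Take $\varepsilon$ below the $n$-good radius of $d_K$ at $k_0$ and split $B_\varepsilon(z_0)\cap B_{=\delta}(y)$ into its \emph{fibre part} ($\pi=t$) and its \emph{graph part} ($z=f^\ast(x)$, $x\ne t$); these are closed and $\sigma$-compact respectively, and cover. By the sum form of $D$, the fibre part is $B_\varepsilon^K(k_0)$ intersected with a level set of $k\mapsto\rho((k,0),\sigma y)$ which, thanks to $D(z_0,y)\ne\delta$, avoids $k_0$: when $\sigma y\in K\times\{0\}$ this level set is a genuine $d_K$-sphere and $n$-goodness of $d_K$ applies at once, while when $\sigma y$ lies in a segment $L_i$ one uses that the level set differs from a $d_K$-sphere about $a_i$ by at most $d_K(a_i,a_{i+1})\le 2^{-i}$ and squeezes it, over the at most countably many relevant radii, between $(n-1)$-dimensional sphere-pieces. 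For the graph part one exploits that $f(x)=h(d_E(x,t)^{-1})$ depends on $x$ only through $r=d_E(x,t)$ and that $\rho(h(r^{-1}),\sigma y)$ is affine in $r^{-1}$ on each segment interval when $\sigma y\in K\times\{0\}$ (and convex in general); together with the constraint $d_E(x,t)<\varepsilon$, which already confines $r$ to a sparse union of thin shells around $t$, this should leave only countably many admissible $r$, whereupon $\pi$ places the graph part inside a countable closed union of sets $B_\varepsilon^E(t)\cap B_{=r}(t)$, each $(n-1)$-dimensional by $n$-goodness of $d_E$ at $t$. The sum theorem then assembles a $\le(n-1)$-dimensional set, which is the $n$-good condition, and as all the ingredients are produced computably the effective content comes for free.

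I expect the genuine work — ``the careful analysis of the construction in \cite{ChPo}'' — to be concentrated in the last case, and specifically in two places: controlling the level sets in $K$ of the distance-to-a-segment-point functions $\rho((\cdot,0),\sigma y)$, which are not $d_K$-spheres but only thin annular neighbourhoods of them, and the bookkeeping that keeps the admissible-radius sets on the graph side (for general $y$) genuinely countable rather than an arc. The two levers that should make both go through are the rapid decay $d_K(a_i,a_{i+1})\le 2^{-i}$ and the affinity of the arc-length parametrisation of $L$; if either were dropped, $B_\varepsilon(z_0)\cap B_{=\delta}(y)$ could acquire an $n$-dimensional ``annular'' piece and the whole estimate would collapse, so these are exactly the details the proof must not be cavalier about.
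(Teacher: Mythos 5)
Your plan diverges from the paper's proof in one place that turns out to be load-bearing, and that divergence creates a genuine gap.

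The paper puts the \emph{sup} metric on $S\subseteq E\times\interval^\om\times\interval$, namely $d_S=\max\{d_E,d_H,|\cdot|\}$. The whole argument hinges on the identity
\[
B_{=\delta}(y)=(B_{=\delta}(y_0)\times B_{\leq\delta}(y_1))\cup(B_{\leq\delta}(y_0)\times B_{=\delta}(y_1)),
\]
which is a feature of the $\ell^\infty$-sphere: it is a union of \emph{two} product sets, in each of which the ``$=\delta$'' constraint sits on a single coordinate. Working with the second piece, one localizes $\sigma z$ to a single segment $L_k$ (for $z_0\neq t$) or to $L\cup(K\times\{0\})$ (for $z_0=t$), and the crucial finiteness input is that $B_{=\delta}(y_1)\cap L_k$ has at most two points — the distance to $y_1$, restricted to the line segment $L_k$, is convex, so a level set meets it in at most two points. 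Pulling back through $f$, these two points become two spheres $B_{=p}(t)\cup B_{=q}(t)$ in $E$, and $n$-goodness of $d_E$ finishes. The sum theorem is only ever applied to a \emph{countable} family, and that countability is produced by the product decomposition plus the sphere-meets-segment count.

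You instead take the sum ($\ell^1$) metric $D=d_E\circ(\pi\times\pi)+\rho\circ(\sigma\times\sigma)$, precisely so that ``$D(z_0,y)\ne\delta$ decouples'' the two coordinates. But the $\ell^1$-sphere does \emph{not} decompose as a finite union of products: $\{a+b=\delta\}$ is the full one-parameter family $(s,\delta-s)$, $s\in[0,\delta]$. Concretely, on the graph part the sphere condition reads $d_E(\pi z,y_0)=\delta-\rho\bigl(h(d_E(\pi z,t)^{-1}),\sigma y\bigr)$, whose right-hand side varies continuously with $\pi z$ through $r=d_E(\pi z,t)$. The set of admissible radii $r$ is therefore an interval, not a countable set, and your key claim --- ``this should leave only countably many admissible $r$'' --- is unsupported and in general false. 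Piecewise affinity or convexity of $\rho(h(r^{-1}),\sigma y)$ in $r^{-1}$ only bounds, per segment, how many $r$ hit a \emph{fixed} value of the right-hand side; it does nothing when the target value itself sweeps an interval as $\pi z$ moves on a sphere around $t$. The same problem already sinks your first case $\pi z_0\neq t$: ``$\pi$ carries $B_\varepsilon(z_0)\cap B_{=\delta}(y)$ into a countable closed union of $B_{\varepsilon_E}(\pi z_0)\cap B_{=\delta'}(\pi y)$'' is exactly what fails for the sum metric. The countable sum theorem cannot be applied to an uncountable union, and $B_\varepsilon(z_0)\cap B_{=\delta}(y)$ could perfectly well pick up an $n$-dimensional annular piece.

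The fixes you lean on --- renormalizing so $d_K(a_i,a_{i+1})\leq 2^{-i}$, and the custom metric $\rho$ on $\overline L$ built from the embedding into $C(K)\times[0,\tfrac12]$ --- are not what is missing, and the second is also unnecessary overhead: the paper simply restricts $d_H$. What is missing is the product decomposition of the sphere. Switch to the max metric; then the first product factor is handled by $n$-goodness of $d_E$, the second by the at-most-two-points count on segments (and, when $z_0=t$, additionally by $n$-goodness of $d_K$ on the fiber piece $B_{=\delta}(y_1)\cap(K\times\{0\})$), and the countable sum theorem closes each case.
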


\begin{proof}
Let $d_E$ be an $n$-good metric of $E$.
Put $S=S(E,K,t)$.
Recall that $S\subseteq E\times \interval^\om\times\interval$.
Define the metric $d_S$ on $S$ by the sup metric, that is, 
\[d_S((x_0,x_1,x_2),(y_0,y_1,y_2))=\max\{d_E(x_0,y_0),d_H(x_1,y_1),|x_2-y_2|\}.\]
Fix $z=(z_0,z_1)\in S\subseteq E\times\overline{L}$.
We will define $\ep>0$ such that $B_\ep(z)\cap B_{=\delta}(y)$ is at most $(n-1)$-dimensional for any point $y\in S$ and positive real $\delta\not=d_S(z,y)$.

We first consider the case $z_0\not=t$.
Then let $\varepsilon(0)$ be a positive rational such that $t\not\in B_{\varepsilon(0)}(z_0)$.
Since $z_0\not=t$ implies $z_1\in L$, for a sufficiently small $\ep(1)$, we have $B_{\ep(1)}(z_1)\subseteq L_k$ for some $k$.
Choose $\ep<\min\{\ep(0),\ep(1)\}$ such that $\ep$ witnesses that $d_E$ is $n$-good.
Fix $y=(y_0,y_1)\in S$ and $\delta>0$ such that $d_S(z,y)\not=\delta$.
We need to show that $B_\ep(z)\cap B_{=\delta}(y)$ is at most $(n-1)$-dimensional.
Note that
\[B_{=\delta}(y)=(B_{=\delta}(y_0)\times B_{\leq \delta}(y_1))\cup(B_{\leq\delta}(y_0)\times B_{=\delta}(y_1)).\]

For the former product, recall that, if $t\not\in B\subseteq E$, then $p^{-1}[B]=f^\ast[B]$ is homeomorphic to $B$.
Since $\ep<\ep(0)$, $B_\ep(z)\cap(B_{=\delta}(y_0)\times\overline{L})$ embeds into $B_\ep(z_0)\cap B_{=\delta}(y_0)$, which is at most $(n-1)$-dimensional by $n$-goodness of $d_E$.

For the latter one, our assumption $\ep<\ep(1)$ implies that
\[p[B_\ep(z)]\subseteq B_\ep(z_0)\cap f^{-1}[L_k].\]
Moreover, we always have $B_{\leq\delta}(y_0)\times B_{=\delta}(y_1)\subseteq p^{-1}f^{-1}[B_{=\delta}(y_1)]\cup p^{-1}(t)$, but our assumption $\ep<\ep(0)$ implies that $B_\ep(z)\cap p^{-1}(t)$ is empty; hence
\begin{align}\label{equation:a}
B_\ep\cap(B_{\leq\delta}(y_0)\times B_{=\delta}(y_1))\subseteq p^{-1}f^{-1}[B_{=\delta}(y_1)\cap L_k]=ff^{-1}[B_{=\delta}(y_1)\cap L_k].
\end{align}
Now, $B_{=\delta}(y_1)\cap L_k$ has at most two points, since $B_{=\delta}(y_1)$ is a sphere, and $L_k$ is a line segment.
Thus, there are at most two reals $p,q$ such that
\begin{align}\label{equation:b}
f^{-1}[B_{=\delta}(y_1)\cap L_k]\subseteq B_{=p}(t)\cup B_{=q}(t).
\end{align}
By (\ref{equation:a}) and (\ref{equation:b}), $B_\ep(z)\cap(B_{\leq \delta}(y_0)\times B_{=\delta}(y_1))$ embeds into $B_\ep(z_0)\cap (B_{=p}(t)\cup B_{=q}(t))$, which is at most $(n-1)$-dimensional by $n$-goodness of $d_E$.

Consequently, $B_\ep(z)\cap B_{=\delta}(y)$ is the union of two closed sets each of which is at most $(n-1)$-dimensional.
Thus, by the sum theorem, $B_\ep(z)\cap B_{=\delta}(y)$ is also at most $(n-1)$-dimensional.

Next consider the case $z_0=t$.
Then let $\ep$ be a witness of $n$-goodness of both $d_E$ and $d_K$.
Fix $y\in S$ and $\delta>0$ such that $\delta\not=d(z,y)$.
First consider $B_0=B_\ep(z)\cap(B_{=\delta}(y_0)\times B_{\leq\delta}(y_1))$.
Since $z_0=t\not\in B_{=\delta}(y)$, if $r=(r_0,r_1)\in B_0$ then $r_0\not=t$.
This means that $B_0$ can be embedded into $B_\ep(z_0)\cap B_{=\delta}(y_0)$ as before, which is at most $(n-1)$-dimensional.

Then define $B_1=B_\ep(z)\cap(B_{\leq\delta}(y_0)\times B_{=\delta}(y_1))$.
Note that $B_{=\delta}(y_1)$ is the disjoint union of $C_0=B_{=\delta}(y_1)\cap L$ and $C_1=B_{=\delta}(y_1)\cap (K\times \{0\})$.
Again, for any $k$, $B_{=\delta}(y_1)\cap L_k$ has at most two points.
Thus, there are at most countably many $(r(k))_{k\in\om}$ such that
\[f^{-1}[B_{=\delta}(y_1)\cap L]\subseteq \bigcup_kB_{=r(k)}(t).\]
Note that $B_\ep(z_0)\cap B_{=r(k)}(t)$ is at most $(n-1)$-dimensional by $n$-goodness, and homeomorphic to $B_\ep(z)\cap p^{-1}[B_{=r(k)}(t)]$.
Now $B_\ep(z)\cap p^{-1}[B_{=r(k)}(t)]$ is an at most $(n-1)$-dimensional closed subset of $B_\ep(z)$, and we have
\[B_\ep(z)\cap C_0=B_\ep(z)\cap\bigcup_kp^{-1}[B_{=r(k)}(t)].\]
This concludes that $B_\ep(z)\cap C_0$ is a countable union of closed subsets each of which is at most $(n-1)$-dimensional.

Now $B_\ep(z_1)\cap C_1$ is at most $(n-1)$-dimensional by $n$-goodness, and 
\[S\cap (E\times K\times \{0\})\subseteq p^{-1}(t).\]
This means that for any $C\subseteq K\times\{0\}$, $S\cap (E\times C)$ is homeomorphic to $C$.
Thus, $B_\ep(z)\cap(E\times C_1)$ can be embedded into $C_1$, which is at most $(n-1)$-dimensional.
Consequently, $B_\ep(z)\cap B_{=\delta}(y)$ is the union of countably many closed subsets each of which is at most $(n-1)$-dimensional.
By the sum theorem, this concludes that $B_\ep(z)\cap B_{=\delta}(y)$ is at most $(n-1)$-dimensional.
\end{proof}

\begin{lemma}\label{lem:n-dim-com-cont}
If $K$ is a computable continuum, so is $S(\interval,K,\repsp{Q})$.
Moreover, if $K$ is $n$-dimensional, and $d_H\upto K$ is an $n$-good metric, then $S(\interval,K,\repsp{Q})$ is also $n$-dimensional.
\end{lemma}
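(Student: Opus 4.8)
The plan is to handle the two assertions separately; the dimension bound is where the real work, already carried out in Lemma~\ref{lem:n-good}, gets invoked. \emph{Computability.} I would first note that the inverse system $(E_j,p_j)_{j\in\om}$ is \emph{uniformly} computable: $E_0=\interval$ is a computable continuum, the points $q_j^\ast=f^\ast_{0,j}(q_j)$ are obtained by composing the uniformly computable embeddings $f^\ast_m$ and hence form a computable sequence, and given a computable presentation of the computable continuum $E_j$ together with $q_j^\ast$, the earlier observation that $S(E,K,t)$ is a computable continuum supplies, uniformly in $j$, a computable presentation of $E_{j+1}=S(E_j,K,q_j^\ast)$ and of the bonding map $p_{j+1}$. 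Since the inverse limit of a uniformly computable sequence of computable compacta with computable bonding maps is again a computable compactum, and the inverse limit of a sequence of continua is a continuum, $S(\interval,K,\repsp{Q})=\varprojlim(E_j,p_j)$ is a computable continuum.

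\emph{Lower bound.} Writing $S=S(\interval,K,\repsp{Q})$ and $p\colon S\to E_0$ for the projection, I would exhibit a copy of $K$ inside $S$. Taking the points of $\repsp{Q}$ to be pairwise distinct, the special point $q_j^\ast$ lies, for every $j\geq 1$, over the point $q_j\neq q_0$ of $E_0$, so by the fibre description recorded before Lemma~\ref{lem:main-ChPol} every fibre $p_{0,j}^{-1}(q_0)$ is a copy of $K$; moreover, since each $f^\ast$ is a section of the corresponding projection, the bonding maps restrict to homeomorphisms $p_{0,j+1}^{-1}(q_0)\to p_{0,j}^{-1}(q_0)$. Hence $p^{-1}(q_0)=\varprojlim_j p_{0,j}^{-1}(q_0)$ is homeomorphic to $K$, so $\dim S\geq\dim K=n$.

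\emph{Upper bound.} I would run an induction along the inverse sequence. The Euclidean metric on $E_0=\interval$ is $1$-good, and since an at most $0$-dimensional set is at most $(n-1)$-dimensional (here $n\geq 1$) it is $n$-good. Assuming $E_j$ admits an $n$-good metric, Lemma~\ref{lem:n-good}, applied with $E=E_j$, $t=q_j^\ast$ and with $d_H$ the fixed metric on $\interval^\om$ whose restriction to $K$ is $n$-good by hypothesis, furnishes an $n$-good metric on $E_{j+1}=S(E_j,K,q_j^\ast)$. Thus every $E_j$ admits an $n$-good metric, so $\dim E_j\leq n$ for all $j$, and by the inverse-limit theorem for the covering dimension of compacta (cf.\ \cite{EngBook}) we get $\dim S=\dim\varprojlim(E_j,p_j)\leq n$. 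Together with the lower bound this yields $\dim S=n$.

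The only delicate point — and it is bookkeeping rather than a genuine obstacle — is that the ambient spaces $E_j\subseteq\interval\times\overline{L}^j$ grow with $j$, so one must check that Lemma~\ref{lem:n-good}'s hypotheses really are reproduced at each stage: the $n$-good metric it manufactures on $E_{j+1}$ is the sup-metric built from the $n$-good metric on $E_j$ together with $d_H$ and the Euclidean metric on $\interval$, while the restriction to $K$ stays equal to the fixed $n$-good metric $d_H\upto K$; hence the lemma applies verbatim at every step. All the substantive dimension-theoretic content has already been absorbed into Lemma~\ref{lem:n-good}.
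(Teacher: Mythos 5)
Your overall structure mirrors the paper's: establish computability of $S(\interval,K,\repsp{Q})$, exhibit a copy of $K$ in $S$ to obtain $\dim(S)\geq n$, and propagate $n$-goodness through the inverse sequence via Lemma~\ref{lem:n-good} plus the inverse-limit theorem for covering dimension to obtain $\dim(S)\leq n$. The dimension argument matches the paper essentially verbatim (you supply the details of the embedded copy of $K$ that the paper dismisses with a ``clearly''), and your remark about the bookkeeping in the induction is accurate.

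The gap is in the computability step. You invoke the assertion that ``the inverse limit of a uniformly computable sequence of computable compacta with computable bonding maps is again a computable compactum,'' but this is not a theorem. The inverse limit is always co-c.e.\ closed in the product and hence computably compact \emph{as a set}; but a computable compactum in this paper's sense must also be a computable metric space, i.e.\ it must carry a computable dense sequence, and that does not come for free. Concretely, take a computable binary tree $T$ whose path set $[T]$ is nonempty but has no computable member, put $X_j=T\cap 2^j$ (finite discrete, trivially a computable compactum uniformly in $j$) and let $p_j\colon X_{j+1}\to X_j$ be truncation; then $\varprojlim(X_j,p_j)\cong[T]$ has no computable point at all, hence no computable dense sequence. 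What makes the computability go through in the present lemma is the special structure of this inverse system, and this is exactly what the paper exploits: over each irrational in $\interval$ the fiber of $p\colon S\to\interval$ is a singleton, obtained by iterating the computable sections $f^\ast_n$, so a computable dense sequence $(\tilde q_e)$ in $\interval\setminus\repsp{Q}$ lifts to a uniformly computable sequence $(a_e)$ that is dense in $S$. Your argument should invoke these sections explicitly rather than appeal to an unproved general inverse-limit principle.
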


\begin{proof}
We first show that $S(\interval,K,\repsp{Q})$ admits a computable metrization.
Note that we have constructed each $S(\interval,K,t)$ as a subset of $\interval\times\interval^\om\times\interval$, which is effectively homeomorphic to $\interval^\om$.
Thus, we think of $S(\interval,K,\repsp{Q})$ as a subset of $\interval^\om$ is a straightforward manner, and let $d$ be a computable metric on $\interval^\om$.
Let $(\tilde{q}_e)_{e\in\om}$ be a computable dense subset of $\interval\setminus \repsp{Q}$, and then define $a_e\in S(\interval,K,\repsp{Q})$ as the unique element in the fiber $p^{-1}(\tilde{q}_e)$.
Clearly, $\{a_e:e\in\om\}$ is a dense subset of $S(\interval,K,\repsp{Q})$.
Note that we can get $a_e$ by applying computable functions $(f^\ast_n)_{n\in\om}$ to the computable point $\tilde{q}_e$, and therefore, $a_e$ is also computable uniformly in $e$.
Thus, $(d,e)\mapsto d(a_d,a_e)$ is computable.
This shows that $S(\interval,K,\repsp{Q})$ is computably metrizable via the induced metric.

We show that $S=S(\interval,K,\repsp{Q})$ is $n$-dimensional.
Clearly $\dim(S)\geq n$ since $K$ embeds into $S$.
By Lemma \ref{lem:n-good}, one can inductively see that each $E_s$ is metrized by an $n$-good metric.
In particular, $\dim(E_s)\leq n$ for any $s\in\om$.
Thus, $S$ is $n$-dimensional since $S$ is an inverse limit of a sequence of $n$-dimensional compacta (cf.\ Engelking \cite[Theorem 1.13.4]{EngBook}).
\end{proof}

\subsubsection{Independent Fr\'echet types}
Let $\mathbb{S}^n$ be the $n$-sphere, fix two homeomorphic disjoint closed neighborhoods $A,B\in\mathbb{S}^n$, and let $h$ be a homeomorphism between $A$ and $B$.
Then, let $K^n_m$ be an union of $m$ spheres obtained by gluing the closed neighborhood $A$ in the $i$-th sphere to the closed neighborhood $B$ in the $(i+1)$-th sphere, that is,
\[K^n_m=\{(i,x):i<m\mbox{ and }x\in\mathbb{S}^n\}/\sim,\]
where $(i,a)\sim(i+1,h(a))$ for any $a\in A$ and $i<m-1$.
If $n>1$, then it is clear that $K^n_m$ is an $n$-dimensional computable continuum which is not disconnected by a point, and moreover, if $k>m$ then $K^n_k$ cannot be embedded into $K^n_m$.

Given $(K^n_m)_{m\in\om}$, one can get a countable collection of $n$-dimensional computable continua with pairwise incomparable Fr\'echet types as in \cite[Lemma 3.2]{ChPo}.
For completeness, we here present an explicit construction.
Let $a_m$ ($b_m$, resp.)\ be a point in the first (last, resp.)\ $n$-sphere in $K^n_m$.
Fix a sufficiently fast-growing computable function $\kappa:\om\to\om$.
For any increasing function $g:\om\to\om$, connect $\kappa\circ g(0)$ many $K^n_{g(0)}$'s by identifying $a_{g(0)}$ in the $i$-th $K^n_{g(0)}$ with $b_{g(0)}$ in the $(i+1)$-th $K^n_{g(0)}$.
Similarly, connect $\kappa\circ g(1)$ many $K^n_{g(1)}$ in a similar manner.
Then, we connect these two chains by identifying $a_{g(0)}$ in the last link $K^n_{g(0)}$ of the first chain with $b_{g(1)}$ in the first link $K^n_{g(1)}$ of the second chain.
Continue this procedure.
We eventually connect infinitely many chains, and then consider the one-point compactification.
We write $K^n_g$ for the resulting continuum.

If $f$ and $g$ are almost disjoint, $K^n_f$ and $K^n_g$ have incomparable Fr\'echet types (see \cite[Lemma 3.2]{ChPo} for the detail).
If $g$ is computable, it is clear that $K^n_g$ is also computable.
Note that $K^n_g$ is $n$-dimensional, and admits a computable $n$-good metric; to see this, for instance, embed the $n$-sphere into $\mathbb{R}^{n+1}$ as the surface of an $(n+1)$-dimensional hypercube, and then consider the standard Euclidean metric.

\begin{cor}\label{cor:incomparable-degrees}
For any $n>1$, there is a collection $(\repsp{S}_i)_{i\in\om}$ of $n$-dimensional computable continua satisfying the following:
For any weakly $1$-generic real $x\in\interval$, there are points $y_i\in\repsp{S}_i$, $i\in\om$, such that 
\[\pt{x}{\interval}<_{tt}\pt{y_i}{\repsp{S}_i}\equiv_T\pt{x}{\interval},\]
and $\pt{y_i}{\repsp{S}_i}$ is not $tt$-equivalent to a point in $\repsp{S}_j$ for any $j\not=i$.
\end{cor}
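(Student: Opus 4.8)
The plan is to realize the $\repsp{S}_i$ as condensation-of-singularities spaces built over the continua $K^n_{g_i}$ of pairwise incomparable Fr\'echet type. First I would fix a uniformly computable family $(g_i)_{i\in\om}$ of strictly increasing functions $g_i\colon\om\to\om$ that are pairwise almost disjoint --- e.g.\ let the range of $g_i$ be the $i$-th piece of a computable partition of $\om$ into infinitely many infinite sets --- and set $\repsp{S}_i=S(\interval,K^n_{g_i},\mathbb{Q})$. By the construction of $K^n_g$ recalled above, each $K^n_{g_i}$ is an $n$-dimensional computable continuum carrying a computable $n$-good metric, uniformly in $i$; hence by Lemma \ref{lem:n-dim-com-cont} each $\repsp{S}_i$ is an $n$-dimensional computable continuum, again uniformly in $i$. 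Given a weakly $1$-generic real $x\in\interval$, note that $x$ is irrational, so the fibre $p^{-1}(x)$ in $\repsp{S}_i$ is a singleton; let $y_i$ be its unique point.

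Next I would dispatch the two reductions involving $x$. The projection $p\colon\repsp{S}_i\to\interval$ is total computable with $p(y_i)=x$, giving $\pt{x}{\interval}\leq_{tt}\pt{y_i}{\repsp{S}_i}$. For $\pt{y_i}{\repsp{S}_i}\leq_T\pt{x}{\interval}$, the coordinates of $y_i$ are obtained from $x$ by iterating the maps $f^\ast_n$, each of which amounts to evaluating a fixed computable parametrization at a point of the form $d(\cdot,q^\ast_n)^{-1}$; since $x$ avoids every rational $q_n$, all these distances are strictly positive, and with $x$ as an oracle one may search (unboundedly) for a positive rational lower bound on each of them and then compute the coordinate. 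This yields $\pt{y_i}{\repsp{S}_i}\equiv_T\pt{x}{\interval}$, and the fact that the search is genuinely unbounded --- Turing but not truth-table --- is the conceptual core of the statement.

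For strictness I would show the stronger fact that $y_i$ is not $tt$-equivalent to any point of $\interval$, imitating the proof of Lemma \ref{lem:weakly-1-gen-sep}. Let $P$ be any $\Pi^0_1$ subset of $\repsp{S}_i$ containing $y_i$ and put $U=\repsp{S}_i\setminus P$; if an irrational $t$ lies in $p[U]$ then $p^{-1}(t)$ is a singleton contained in $U$, so (as $x$ is irrational and $y_i\notin U$) we get $x\notin p[U]$ and $Q:=\repsp{S}_i\setminus p^{-1}[p[U]]\subseteq P$ is a $\Pi^0_1$ set of the form $p^{-1}[\interval\setminus V]$ for a c.e.\ open $V\subseteq\interval$ with $x\notin V$. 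By weak $1$-genericity of $x$ the closed set $\interval\setminus V$ has nonempty interior $G$, so $P\supseteq Q\supseteq p^{-1}[G]$, and $p^{-1}[G]$ contains a copy of $K^n_{g_i}$ and hence has dimension $n\geq 2$; therefore $P$ is not homeomorphic to any subset of the one-dimensional space $\interval$. By Lemma \ref{lem:main-lemma}, $y_i$ is not $tt$-equivalent to any point of $\interval$, so in particular $\pt{x}{\interval}<_{tt}\pt{y_i}{\repsp{S}_i}$.

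Finally, for $j\neq i$, the continua $K^n_{g_i}$ and $K^n_{g_j}$ are at least two-dimensional computable continua with incomparable Fr\'echet types (cf.\ \cite[Lemma 3.2]{ChPo}), so Lemma \ref{lem:weakly-1-gen-sep} applied with $X=K^n_{g_i}$ and $Y=K^n_{g_j}$ shows that $y_i$ --- the unique point in the $x$-fibre of $\repsp{S}_i$ --- is not $tt$-equivalent to any point of $\repsp{S}_j$. I do not expect a single hard step here: all the topological and forcing-style work is already carried out in Lemmas \ref{lem:weakly-1-gen-sep} and \ref{lem:n-dim-com-cont}. The main thing to get right is uniformity --- arranging that the whole family $(\repsp{S}_i)_{i\in\om}$ is uniformly computable, which is exactly where the uniformly computable almost-disjoint family $(g_i)$ is used --- together with keeping the Turing-versus-truth-table distinction straight in the verification that $\pt{y_i}{\repsp{S}_i}\equiv_T\pt{x}{\interval}$ while $\pt{x}{\interval}<_{tt}\pt{y_i}{\repsp{S}_i}$.
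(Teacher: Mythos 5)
Your proposal follows essentially the same route as the paper: define pairwise almost-disjoint (in fact disjoint-range) computable increasing functions $g_i$, set $\repsp{S}_i=S(\interval,K^n_{g_i},\mathbb{Q})$, invoke Lemma \ref{lem:n-dim-com-cont} for the dimension/computability claims, take $y_i$ as the unique point in the $x$-fiber, get $x\leq_{tt}y_i$ from the projection and $y_i\leq_T x$ from iterating the $f^\ast_n$ on irrationals, and then derive pairwise non-$tt$-equivalence from Lemma \ref{lem:weakly-1-gen-sep}. The one place you go beyond the paper's terse proof is in supplying an explicit argument for the strictness $\pt{x}{\interval}<_{tt}\pt{y_i}{\repsp{S}_i}$: you re-run the Lemma \ref{lem:weakly-1-gen-sep}-style argument to show that any $\Pi^0_1$ subset of $\repsp{S}_i$ containing $y_i$ contains an open set $p^{-1}[G]$, which in turn contains a fiber copy of the $n\geq 2$-dimensional $K^n_{g_i}$ and hence cannot embed into $\interval$; the paper leaves this step implicit (it also follows quickly from the dimension-invariance of $tt$-degrees, Corollary \ref{cor:dimension-is-invariant}, since $\dim(y_i)\geq 2>\dim(x)$), and your version is a reasonable way to make it explicit.
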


\begin{proof}
Define $g_i(n)={\langle i,n\rangle}$, and put $\repsp{S}_i=S(\interval,K^n_{g_i},\mathbb{Q})$.
By Lemma \ref{lem:n-dim-com-cont}, $\repsp{S}_i$ is an $n$-dimensional computable continuum.
Then, let $y_i$ be the unique element in the $x$-th fiber of $\repsp{S}_i$.
We have $x\leq_{tt}y_i$ since the projection is total and $p(y_i)=x$.
We also have $y_i\leq_{T}x$ since we can effectively get $y_i$  from $x$ by iterating $f^\ast_n$ (which are defined on irrationals).
For the latter part, the ranges of $g_i$ and $g_j$ are disjoint whenever $i\not=j$.
Thus, by Lemma \ref{lem:weakly-1-gen-sep}, $\pt{y_i}{\repsp{S}_i}$ is not $tt$-equivalent to a point in $\repsp{S}_j$ as desired.
\end{proof}

\begin{proof}[Proof of Theorem \ref{thm:usl-embedding2}]
It suffices to construct a usl-embedding of the free countable Boolean algebra into $\mathfrak{D}_{tt}^\mathbf{d}(n\text{-}\mathrm{dim},\mathrm{cont})$.
Let $(\repsp{S}_i)_{i\in\om}$ be the family in Corollary \ref{cor:incomparable-degrees}, and define $\repsp{S}_A$ as the topological sum $\bigsqcup_{i\in A}\mathcal{S}_i$ for any $A\subseteq\om$.
Then, $A\mapsto\repsp{S}_A$ is clearly a usl-embedding of the Boolean algebra of computable subsets of $\om$ into $\mathfrak{D}_{tt}^\mathbf{d}(n\text{-}\mathrm{dim},\mathrm{cont})$.
This concludes the proof since the Boolean algebra of computable subsets of $\om$ includes the free countable Boolean algebra, which also includes any countable upper semilattice.
\end{proof}

\section{Open Question}

\begin{question}
Does there exist infinitely many $tt$-degree structures of computable arc-like continua?
\end{question}

\begin{question}
Does there exists a pair of computable metric spaces which are first-level Borel isomorphic, but have different $tt$-degree structures?
\end{question}


\subsection*{Acknowledgements}
The author was partially supported by JSPS KAKENHI Grant 17H06738 and 15H03634.
The author also thanks JSPS Core-to-Core Program (A. Advanced Research Networks) for supporting the research.
The author would like to thank Arno Pauly for valuable discussions.

\bibliographystyle{plain}
\bibliography{first-level.bib}

\end{document}